\documentclass[11pt]{article}
\usepackage{amsmath, amssymb, amscd, amsthm, amsfonts,bbm}
\usepackage{mathrsfs}
\usepackage{mathtools}
\usepackage{booktabs,longtable,array}
\usepackage{microtype}
\usepackage{graphicx}
\usepackage{float} 
\usepackage{xcolor}
\usepackage{subfigure}
\usepackage[hyperfootnotes=false,colorlinks=true,linkcolor=blue!55!black,citecolor=blue!55!black,urlcolor=blue!55!black]{hyperref}
\usepackage{enumitem}
\numberwithin{equation}{section}
\usepackage{listings}
\usepackage{setspace}
\title{The Brownian loop-catcher}
\author{Gefei Cai}
\date{\today}

\newcommand{\R}{\mathbb{R}}

\newcommand{\Q}{\mathbb{Q}}
\newcommand{\D}{\mathbb{D}}
\newcommand{\C}{\mathbb{C}}
\newcommand{\Z}{\mathbb{Z}}

\newcommand{\cL}{\mathcal{L}}

\newcommand{\E}{\mathbb{E}}

\renewcommand{\P}{\mathbb{P}}

\newcommand{\BE}{\mathrm{BE}}
\newcommand{\LE}{\mathrm{LE}}

\DeclareMathOperator{\SLE}{SLE}

\newcommand{\dist}{\mathrm{dist}}

\newtheorem{theorem}{Theorem}[section]
\newtheorem{definition}[theorem]{Definition}
\newtheorem{lemma}[theorem]{Lemma}

\newtheorem{corollary}[theorem]{Corollary}

\newtheorem{algorithm}[theorem]{Algorithm}
\newtheorem{proposition}[theorem]{Proposition}
\newtheorem{remark}[theorem]{Remark}

\newcommand\wh[1]{\widehat{#1}}
\newcommand\ol[1]{\overline{#1}}

\makeatletter
\def\@rst #1 #2other{#1}
\newcommand\MR[1]{\relax\ifhmode\unskip\spacefactor3000 \space\fi
  \MRhref{\expandafter\@rst #1 other}{#1}}
\newcommand{\MRhref}[2]{\href{http://www.ams.org/mathscinet-getitem?mr=#1}{MR#2}}

\def\MR#1{\href{http://www.ams.org/mathscinet-getitem?mr=#1}{MR#1}}
\makeatother

\providecommand{\Cref}[1]{\ref{#1}}

\newcommand{\diam}{\operatorname{diam}}
\newcommand{\supp}{\operatorname{supp}}
\newcommand{\one}{\mathbf 1}
\newcommand{\Prob}{\mathbb P}
\newcommand{\dd}{\,\mathrm d}
\newcommand{\Loop}{\mu^{\mathrm{loop}}}

\newcommand{\Lc}{\mathcal L}

\newcommand{\Ac}{\mathcal A}
\newcommand{\Gc}{\mathcal G}
\newcommand{\rad}{\mathrm{rad}}
\newcommand{\balpha}{\boldsymbol{\alpha}}
\newcommand{\bbeta}{\boldsymbol{\beta}}
\newcommand{\bp}{\mathbf p}
\newcommand{\bq}{\mathbf q}
\newcommand{\br}{\mathbf r}
\newcommand{\bh}{\mathbf h}
\newcommand{\bH}{\mathbf H}
\newcommand{\brho}{\boldsymbol{\rho}}
\newcommand{\bnu}{\boldsymbol{\nu}}

\newcommand{\Cell}{\operatorname{Cell}}

\begin{document}

\maketitle

\begin{center}
\emph{Dedicated to Greg Lawler on the occasion of his 70th birthday.}
\end{center}
\vspace{0.5em}

\begin{abstract}
We introduce a family of random connected closed subsets of planar Brownian motion, called Brownian loop-catchers, which interpolate between the continuum loop-erased random-walk (LERW) and the Brownian trace. This provides the canonical continuation of Brownian loop soup clusters to central charges $-2\le c<0$. For each such $c$, the corresponding loop-catcher satisfies the following recovery property: adding all loops from an independent Brownian loop soup of intensity $-c/2$ that intersect it recovers the Brownian trace. Furthermore, no such law exists for $c<-2$. We also show that its outer boundary is locally SLE$_\kappa$ with $\kappa = \frac{1}{3}\left(13 - c - \sqrt{(1-c)(25-c)}\right)\in[2,\frac83)$, and the probability that it intersects an interior ball of radius $\varepsilon$ is asymptotically proportional to $|\log\varepsilon|^{-1+\frac{c}{2}}$ when $-2<c<0$. Therefore, a planar Brownian trace contains an SLE$_\kappa$-type curve for every $\kappa\in[2,\frac83]$. 

Our construction begins with a random-walk loop-catcher on any finite graph such that recursively inserting loops from an independent random-walk loop soup to it recovers the original random walk. The key ingredient is a new entangled multipath LERW, which recovers a union of independent random-walk paths when decorated with a single common random-walk loop soup.
We then prove that the random-walk loop-catcher converges to the Brownian loop-catcher under lattice approximations. 
To this end, we propose a novel Green function test which converts the recovery property of the Brownian loop-catcher into all mixed moments of Green functions in the remaining domain, based on the entangled multipath LERW. Consequently, the recovery property characterizes the full law of the Brownian loop-catcher, not only its filling. The Green function test also extends to the three-dimensional case.
\end{abstract}

\section{Introduction}

Planar Brownian motion is the most basic conformally invariant stochastic process in two dimensions, and it is the canonical random object of central charge zero in the language of conformal field theory (CFT). As shown in~\cite{lsw-restriction}, its outer boundary is described by Schramm--Loewner evolution (SLE) with parameter $\kappa=\frac{8}{3}$. To construct random objects with positive central charge $c\in(0,1]$, one can use the Poisson point process of Brownian loops introduced in~\cite{LW04}, namely the Brownian loop soup of intensity $\frac{c}{2}$, and consider the clusters formed by these loops.  The outer boundary of each loop-soup cluster is locally $\SLE_\kappa$~\cite{shef-werner-cle}, with
\begin{equation}\label{eq:c-kappa}
        \kappa = \frac{1}{3}\left(13 - c - \sqrt{(1-c)(25-c)}\right).
\end{equation}
Note that $\kappa\in(\frac{8}{3},4]$ when $c\in(0,1]$, and $c=0$ corresponds to $\kappa=\frac{8}{3}$.

While the Brownian loop soup provides a powerful probabilistic construction for positive central charge, it is natural to ask whether an analogous Brownian construction exists for \emph{negative} central charge and hence whether one can obtain $\SLE_\kappa$ with $\kappa<\frac{8}{3}$ from Brownian motion. Little was previously known in this direction: there is no Brownian loop soup of negative intensity, and the only known example is the loop-erased random walk (LERW), corresponding to $c=-2$ with scaling limit $\SLE_2$~\cite{schramm0,LawlerSchrammWerner2004}.

In this paper we give an affirmative and sharp answer to this question. For each
$c\in[-2,0)$, in
every bounded Jordan domain \(D\subset\C\) with two
distinct marked boundary points \(a,b\), we show that there is a canonical law of random connected closed set $K$ joining \(a\) to \(b\) such that, by adding
to $K$ all loops from an independent Brownian loop soup of intensity
$\lambda:=-\frac{c}{2}$ that intersect $K$, one recovers the law of the Brownian excursion in $D$ from $a$ to $b$. The $c=0$ case reduces to the Brownian excursion itself.  At $c=-2$ (i.e.~$\lambda=1$), such $K$ is the scaling limit of LERW, namely 
$\SLE_2$ in two dimensions. For $c<-2$, no such law of $K$ exists.  We call $K$ a
\emph{$\lambda$-Brownian loop-catcher}.  Furthermore, we show that the outer
boundary of $K$ is described by $\SLE_\kappa$, where
$\kappa\in[2,\frac{8}{3})$ satisfies~\eqref{eq:c-kappa}.  In particular, a
Brownian trace contains an $\SLE_\kappa$-type curve for every
$\kappa\in[2,\frac{8}{3}]$.

We first work on arbitrary finite graphs. There we introduce the \emph{entangled multipath LERW}, and then recursively use it to construct a directed path such that adding loops from an independent random-walk loop soup with intensity $\lambda\in[0,1]$ recovers the original random-walk path (when $\lambda=1$, it reduces to the LERW). We call its range the $\lambda$-\emph{random-walk loop-catcher}; see Section~\ref{subsec:organization}.

Under lattice
approximations of a Jordan domain, the family of random-walk loop-catchers is tight and has subsequential limits. To identify all subsequential limits, we introduce a \emph{Green function test} using random interior multipath probes, which are constructed by subsequential limits of multipath versions of $\lambda$-random walk loop-catchers.
This Green function test then characterizes the limiting closed set, not only its hull, and yields the desired full convergence (Theorems~\ref{thm:catcher-convergence} and~\ref{thm:full-trace-unique}). Similar boundary Poisson kernel tests
identify its outer boundary with SLE (Theorems~\ref{thm:hull-unique} and~\ref{thm:sle}).  At $\lambda=1$, such a characterization solves the
two-dimensional chordal version of~\cite[Conjecture~1.3]{SS18}; see
Theorem~\ref{thm:le-uniqueness}. We emphasize that such a Green function test is quite robust and does not require any simple connectedness, and can also work for even three dimensions; see Section~\ref{rmk:3d}.

We also prove the one-point density of Brownian loop-catcher in Theorem~\ref{thm:one-arm}. As shown in this paper, the geometric properties of Brownian loop-catcher are parallel to those of Brownian loop soup clusters, indicating that it is the canonical continuation of Brownian loop soup clusters.

Below, we first state precise results on Brownian loop-catchers and random-walk loop-catchers on finite graphs; see Theorems~\ref{thm:main} and~\ref{thm:finite-positivity}, respectively.

\subsection{The Brownian loop-catcher}\label{subsec:continuum-main}

Let \(D\subset\C\) be a bounded
Jordan domain with distinct marked points \(a,b\in\partial D\).  Let \(\gamma_{\BE}^{D;a,b}\) denote the trace of Brownian excursion
in \(D\) from \(a\) to \(b\), and let \(\mu_D^{\mathrm{loop}}\) be the Brownian
loop measure in \(D\).  For compact sets \(K,A\subset\overline D\), let
\begin{equation}
 m_D(K,A)
 :=\mu_D^{\mathrm{loop}}\{\ell:\ell\cap K\ne\varnothing,
        \ell\cap A\ne\varnothing\}.
\end{equation}
Throughout the paper we set $\lambda:=-\frac{c}{2}$.

\begin{theorem}[The Brownian loop-catcher]\label{thm:main}
For every \(0<\lambda\le 1\) (i.e.~\(-2\le c<0\)), there is a
unique probability law on compact connected sets
\(K\subset\overline D\) with
\(K\cap\partial D=\{a,b\}\) such that
\begin{equation}\label{eq:intro-main-id}
        \E\left[\one_{\{K\cap A=\varnothing\}}
        \exp\left\{-\lambda m_D(K,A)\right\}\right]
        =\Prob\left[\gamma_{\BE}^{D;a,b}\cap A=\varnothing\right],
\end{equation}
for every compact \(A\subset\overline D\) such that $A\cap\{a,b\}=\varnothing$ (with no requirement that \(A\) attach to \(\partial D\)).

For \(\lambda>1\) (i.e.~\(c<-2\)), no probability law on compact
connected sets satisfies~\eqref{eq:intro-main-id}.
\end{theorem}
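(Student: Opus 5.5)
The proof splits into existence, uniqueness, and nonexistence for $\lambda>1$; I will take them in that order.

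\emph{Existence.} Fix $\lambda\in(0,1]$ and lattice approximations $D_\delta$ of $D$ with boundary vertices $a_\delta\to a$, $b_\delta\to b$. On each $D_\delta$ I use the $\lambda$-random-walk loop-catcher $K_\delta$ built from the entangled multipath LERW. By construction, inserting the loops of an independent random-walk loop soup of intensity $\lambda$ that meet $K_\delta$ recovers the random-walk excursion from $a_\delta$ to $b_\delta$.

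On the discrete side this gives, for every set $A$,
\[
\E\bigl[\one_{\{K_\delta\cap A=\varnothing\}}\exp\{-\lambda\, m^{\mathrm{rw}}_{D_\delta}(K_\delta,A)\}\bigr]=\Prob[\gamma_\delta\cap A=\varnothing].
\]
Indeed, the decorated path avoids $A$ exactly when $K_\delta$ avoids $A$ and no soup loop hits both $K_\delta$ and $A$. The latter event has probability $\exp\{-\lambda m\}$ by the Poisson property.

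Tightness of $K_\delta$ in the Hausdorff topology is inherited from the excursion, since $K_\delta\subset\gamma_\delta$. Connectedness is preserved in the limit, and $K\cap\partial D=\{a,b\}$ follows from standard boundary estimates for the random-walk excursion.

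I then pass to a subsequential limit $K$ along a coupling in which $K_\delta\to K$. The right-hand side converges by the invariance principle for excursions. On the left-hand side, the random-walk loop measure converges to $\mu^{\mathrm{loop}}_D$ on macroscopic loops (Lawler--Trujillo Ferreras coupling). The microscopic loops that hit both $K$ and $A$ have negligible mass when $\dist(K,A)>0$. The remaining difficulty is the events where $K$ comes close to $A$ without touching it. I handle these by a monotone approximation of $A$ by slightly shrunken and enlarged compacts, together with the fact that Brownian loops intersecting a set also enter a neighborhood of it. This establishes \eqref{eq:intro-main-id}.

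\emph{Uniqueness.} This is the main step, and I would use the Green function test. Identity \eqref{eq:intro-main-id} determines, for every compact $A$, the functional $K\mapsto \one_{K\cap A=\varnothing}e^{-\lambda m_D(K,A)}$ in expectation. I would then probe with multipath configurations: replace $A$ by unions of small balls together with the traces of independent interior multipath loop-catchers. Using the recovery property of the entangled multipath LERW (a single common loop soup recovers several independent paths), these expectations should translate into all mixed moments of Green functions $G_{D\setminus K}(z_i,w_i)$. Mixed moments of this family should in turn determine the law of the random domain $D\setminus K$ together with $K$ itself, not merely its filling. Consequently any two laws satisfying \eqref{eq:intro-main-id} coincide. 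In particular all subsequential limits above agree, which also gives full convergence.

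The delicate points are two. First, the moment problem must be determinate, which requires bounds on the moments so that they determine the law. Second, the interior probes must be justified in the continuum, and for this I would use subsequential limits of the multipath discrete catchers.

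\emph{Nonexistence for $\lambda>1$.} Suppose a law of $K$ satisfies \eqref{eq:intro-main-id}. Take $A$ to be a small ball of radius $\varepsilon$ around an interior point $z$ and let $\varepsilon\to0$. Since $K$ is connected and joins $a$ to $b$, it has positive diameter. The loop mass $m_D(K,B_\varepsilon(z))$ should then behave like $\log$-capacity asymptotics, roughly $2\pi G_{D\setminus K}(z)/|\log\varepsilon|$ to leading order, to be compared with $\Prob[\gamma_{\BE}\cap B_\varepsilon=\varnothing]=1-c_z/|\log\varepsilon|+\dots$.

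Expanding both sides to first order in $1/|\log\varepsilon|$ gives an identity of the form
\[
\E[\text{harmonic-measure-type quantity of }K]=\lambda^{-1}\times(\text{excursion quantity}).
\]
When $\lambda=1$ the equality is saturated by LERW, and for $\lambda>1$ it should force an expectation to exceed its pointwise maximum. A cleaner route is through the discrete approximation: there the first-order identity becomes $\E[G_{D\setminus K}]$-type bounds, and the Green function of the complement of a set containing the path is at most that of the LERW complement. That would give the contradiction $\lambda\le1$. I expect this comparison, namely that the LERW is extremal among connected sets joining $a$ to $b$, to require the most care.
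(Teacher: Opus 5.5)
\textbf{Existence and uniqueness.} These parts follow the paper's route, with three points to tighten.
\begin{itemize}
\item Existence uses subsequential limits of lattice $\lambda$-random-walk loop-catchers. The delicate near-miss loops there are those that come close to the discrete set $K_\delta$ without hitting it. The paper controls them by a Beurling estimate that uses the connectedness and macroscopic diameter of $K_\delta$. Approximating $A$ only handles the other side.
\item Uniqueness uses the Green function test, with the continuum multipath probe $C$ used directly as the obstacle $A=C$, plus symmetry of $m_D$ and Fubini. No moment bounds are needed, since $G_{D\setminus K}(x,y)/G_D(x,y)\in[0,1]$.
\item That these coordinates determine $K$ itself, not just its filling, is a genuine lemma that you only assert. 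Boundary bumping gives a nondegenerate subcontinuum of $K_2$ away from $K_1$. It has positive capacity, and at a regular point of it the two Green functions differ.
\end{itemize}

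\textbf{The gap: nonexistence for $\lambda>1$.} Set $A=\overline B(z,\varepsilon)$, let $U$ be the component of $D\setminus K$ containing $z$, and put $s=\log(\operatorname{crad}_D(z)/\varepsilon)$ and $T=\log(\operatorname{crad}_D(z)/\operatorname{crad}_U(z))$. Then $m_D(K,A)=\log\frac{s}{s-T}$ up to an error of order $e^{-(s-T)}$. The excursion avoids $A$ with probability $1-q_z/s$ up to exponentially small error.

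At first order you therefore only get $\lambda\E[T]+\lim_{s\to\infty}s\,\Prob[K\cap A\neq\varnothing]=q_z$. For $\lambda>1$ this says $\E[T]\le q_z/\lambda$, an upper bound on the mean of a nonnegative variable. That contradicts nothing, because $T$ has no positive lower bound over connected sets joining $a$ to $b$: a curve running close to $\partial D$ makes $T$ arbitrarily small. So the proposed extremality of LERW among connected sets is false.

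The discrete route is also unavailable. A hypothetical law for $\lambda>1$ is not given as a scaling limit of anything. Moreover, on lattice approximations there is no $\lambda$-random-walk loop-catcher for $\lambda>1$ at all (Section~\ref{subsec:finite-lambda-greater-one}).

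\textbf{How the paper does it.} Take $\kappa<2$ corresponding to $c=-2\lambda$. The paper uses Qian's generalized radial restriction samples as radial probes, together with a Mellin/Laplace-transform uniqueness argument. This identifies $\Prob[0\in D_K^-]$ with the probability that $0$ lies off the one-sided hull of chordal $\SLE_\kappa(\kappa-2)$. That probability exceeds $1/2$ because $\kappa-2<0$. Symmetrically $\Prob[0\in D_K^+]>1/2$, which contradicts $D_K^+\cap D_K^-=\varnothing$.

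\textbf{Rescuing the small-ball idea.} If you prefer to keep small balls, you must go to second order.
\begin{itemize}
\item First derive $\lambda\E[T]=q_z$ (including $T<\infty$ a.s.).
\item For $\lambda>1$, strict convexity of $x\mapsto(1-x)^\lambda$ then gives the catcher side an extra positive term of order $\E[T^2]\,s^{-2}$.
\item The corrections from $K$ coming near $z$ are either $o(s^{-2})$ or of the same sign.
\item The excursion side has no $s^{-2}$ term, which forces $T=0$ a.s., impossible since $K$ meets $D$.
\end{itemize}
Making this rigorous needs the sharp Field--Lawler estimates used in Section~\ref{sec:one-arm}.
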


We call \(K_\lambda^{D;a,b}:=K\) the \(\lambda\)-Brownian loop-catcher in
\((D;a,b)\).
Let
\(\cL_D^\lambda\) be an independent Brownian loop soup in \(D\) with
loop-measure intensity $\lambda=-\frac{c}{2}$, and define the closed union
\begin{equation*}
        \mathcal D_\lambda(K):=\ol{K\cup
        \bigcup_{\ell\in\cL_D^\lambda:\,\ell\cap K\ne\varnothing}\ell}.
\end{equation*}
Then we have $\Prob[\mathcal D_\lambda(K)\cap A=\varnothing\mid K]=\one_{\{K\cap A=\varnothing\}}\exp\left\{-\lambda m_D(K,A)\right\}$ for each compact $A\subset\overline D$ away from $a$ and $b$,
and hence \(\Prob[\mathcal D_\lambda(K)\cap A=\varnothing]=\Prob[\gamma_{\BE}^{D;a,b}\cap A=\varnothing]\) by~\eqref{eq:intro-main-id}. Since these avoidance probabilities determine the law of a random compact set (see e.g.~\cite[Section~1.1]{Molchanov2017}), we have
\begin{equation}\label{eq:decoration}
\mathcal D_\lambda(K)\overset{\rm law}=\gamma_{\BE}^{D;a,b}.
\end{equation}
At \(\lambda=1\), the scaling limit of LERW on $(D;a,b)$ (i.e.~chordal \(\SLE_2\)) satisfies~\eqref{eq:intro-main-id}~\cite{SS18}; see also~\cite{Ambrosio2020,BS26}.
Thus \(K\) interpolates between the trace of Brownian excursion and the scaling limit of LERW in \((D;a,b)\).

Since~\eqref{eq:intro-main-id} characterizes the law of $K_\lambda^{D;a,b}$ (see Theorem~\ref{thm:full-trace-unique} below), by conformal covariance of Brownian excursion and Brownian loop measure, we have
\[
 \phi(K_\lambda^{D;a,b})\ \overset{\rm law}=
 K_\lambda^{D';\phi(a),\phi(b)}
\]
for every conformal map \(\phi:D\to D'\) between bounded Jordan domains;
the maps extend homeomorphically to the boundary.

The following theorem gives the one-point density of the Brownian loop-catcher $K_\lambda^{D;a,b}$.
\begin{theorem}\label{thm:one-arm}
Let $(D;a,b)$ be as above. For every $z\in D$ and $0<\lambda<1$, there is a constant
\(C_{D,a,b,z,\lambda}\in(0,\infty)\) such that as $\varepsilon\downarrow0$,
\begin{equation}\label{eq:one-arm-main}
 \Prob\!\left[K_\lambda^{D;a,b}\cap B(z,\varepsilon)\ne\varnothing\right]
= C_{D,a,b,z,\lambda}|\log\varepsilon|^{-1-\lambda}(1+o(1)).
\end{equation}
Furthermore, we have \(C_{D,a,b,z,\lambda}\to 0\) as \(\lambda \uparrow 1\).
\end{theorem}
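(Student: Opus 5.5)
Set \(B_\varepsilon:=B(z,\varepsilon)\) and \(p(\varepsilon):=\Prob[K\cap B_\varepsilon\ne\varnothing]\), with \(K=K_\lambda^{D;a,b}\). The plan is to get the asymptotics of \(p(\varepsilon)\) from the recovery identity \eqref{eq:intro-main-id} with \(A=\overline{B_\varepsilon}\), by splitting the expectation according to whether \(K\) avoids the ball. On \(\{K\cap B_\varepsilon=\varnothing\}\) we have \(m_D(K,B_\varepsilon)\to0\) as \(K\) stays away from \(z\), so the identity reads
\[
 \E\bigl[\one_{\{K\cap B_\varepsilon=\varnothing\}}\bigl(1-e^{-\lambda m_D(K,B_\varepsilon)}\bigr)\bigr]-p(\varepsilon)
 =-\Prob[\gamma_{\BE}^{D;a,b}\cap B_\varepsilon\ne\varnothing].
\]
The right-hand side is the classical excursion hitting probability, which is \(\asymp |\log\varepsilon|^{-1}\) with an explicit constant. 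It equals \(H_D(z;a,b)/(H_D(a,b)\,|\log\varepsilon|)\), up to a factor involving Green function normalisation, times \((1+o(1))\).

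The loop-mass term is where the extra exponent enters. For \(K\) at distance \(r\gg\varepsilon\) from \(z\), the standard Brownian loop-measure estimate gives \(m_D(K,B_\varepsilon)\approx \log(\cdot)/\log(1/\varepsilon)\)-type quantities. More precisely, \(m_D(K,B_\varepsilon)=\log\frac{\log(R/\varepsilon)}{\log(r/\varepsilon)}+O(1/|\log\varepsilon|)\) when \(K\) separates scales suitably. So I would introduce the logarithmic distance variable \(s:=\log(1/\dist(z,K))\) and write everything in terms of the tail \(F(t):=\Prob[\dist(z,K)<e^{-t}]\), so that \(p(\varepsilon)=F(|\log\varepsilon|)\) approximately. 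The identity then becomes a renewal-type integral equation of the form
\[
 F(T)-\int_0^T\Bigl(1-\Bigl(\tfrac{T-t}{T}\Bigr)^{\lambda}\Bigr)\,dF(t)\,(\ldots)=\frac{c_0}{T}(1+o(1)),\qquad T=|\log\varepsilon|.
\]
More naturally, with \(e^{-\lambda m}\approx (t/T)^{\lambda}\), it becomes \(\E[(s/T)^{\lambda}\one_{s<T}]=1-c_0/T+\dots\). Equivalently \(\int_T^\infty\) and \(\int_0^T(1-(t/T)^\lambda)\,dF(t)\) sum to \(c_0/T\).

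Plugging in the ansatz \(F(t)\sim C t^{-1-\lambda}\) and computing \(\int_0^T (1-(t/T)^\lambda)\,|dF(t)|\) shows that the main contributions come from \(t\) of order one. This piece gives \(\frac{\lambda}{T}\E[\log\text{-type moment}]\), so consistency fixes the order \(T^{-1}\) on both sides. The actual exponent must be extracted from the next-order behaviour. Hence rather than the crude \(A=B_\varepsilon\) alone, I would use the Green function test of the paper: the mixed-moment identities derived from the entangled multipath LERW give \(\E[G_{D\setminus K}(z,\cdot)]\)-type quantities. In particular, the expected Green function with a pole near \(z\) in \(D\setminus K\) behaves like \(\E[\log(1/\dist(z,K))^{\lambda}]\)-type moments, and these are computable. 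Using the one-point function on the discrete side (the probability that the random-walk loop-catcher visits a vertex, given by a ratio of random-walk quantities with loop soup weights \(\lambda\)), one gets, I expect, that the discrete visiting probability at a point is the walk's visiting probability times \(\E[(\text{Green at } z \text{ in remaining domain})^{-\lambda}]\)-type correction. Taking the scaling limit then gives a Tauberian statement: the Laplace/Mellin transform in the log-scale behaves like \(u^{\lambda}\) near zero, and a Tauberian theorem (monotonicity of \(F\) is available) yields \(F(T)\sim C T^{-1-\lambda}\).

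The constant \(C_{D,a,b,z,\lambda}\) will come out as an explicit conformally covariant quantity, namely the excursion Poisson kernel ratio times a factor \(\Gamma\)-type in \(\lambda\) such as \(1/\Gamma(1-\lambda)\). This would give \(C\to0\) as \(\lambda\uparrow1\), consistent with the LERW/SLE\(_2\) exponent \(3/4\) being polynomial rather than logarithmic. The main obstacle is the second step: controlling \(m_D(K,B_\varepsilon)\) uniformly in the geometry of \(K\) (which is only known at distance scale, not shape), with errors \(o(1/T)\). I would first try to handle it by conditioning on the configuration at scale \(e^{-t}\) and using conformal invariance of the loop measure in the annulus \(B(z,e^{-t})\setminus B_\varepsilon\). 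This works because the loop mass between a connected set crossing to \(\partial B(z,e^{-t})\) and \(B_\varepsilon\) equals \(\log(T/t)+O(1/t)\) regardless of the shape.
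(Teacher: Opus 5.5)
Your first step goes as far as it can: it essentially recovers the mean $\E T=q/\lambda$. Note a sign slip: the identity reads $\Prob[K\cap B_\varepsilon\ne\varnothing]+\E[\one_{\{K\cap B_\varepsilon=\varnothing\}}(1-e^{-\lambda m_D(K,B_\varepsilon)})]=\Prob[\gamma_{\BE}^{D;a,b}\cap B_\varepsilon\ne\varnothing]$.

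The real gap is that the ball test cannot see the exponent at all. This is worse than ``the exponent sits at the next order''. The right variable is $T=\log(\operatorname{crad}_D(z)/\operatorname{crad}_U(z))$, where $U$ is the component of $D\setminus K$ containing $z$. With $T_\varepsilon=\log(\operatorname{crad}_D(z)/\varepsilon)$ one has $e^{-\lambda m_D(K,B_\varepsilon)}\approx(1-T/T_\varepsilon)_+^\lambda$. The shape-dependent correction has relative size about $e^{-(T_\varepsilon-T)}$, so it is not small exactly when $K$ comes near the ball; your ``regardless of the shape'' claim fails there.

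Now suppose $\Prob[T>t]\sim Ct^{-1-\lambda}$ and put $h(u)=(1-u)_+^\lambda-1+\lambda u$. Then
\[
 \E\bigl[(1-T/T_\varepsilon)_+^\lambda\bigr]-1+\frac{\lambda\E T}{T_\varepsilon}
 =C\,T_\varepsilon^{-1-\lambda}\int_0^\infty h'(u)\,u^{-1-\lambda}\,du+o\bigl(T_\varepsilon^{-1-\lambda}\bigr).
\]
The integral vanishes. By analytic continuation of the Beta integral it equals $\lambda\bigl(-\tfrac1\lambda-\mathrm{B}(-\lambda,\lambda)\bigr)+1=0$, since $1/\Gamma(0)=0$.

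Equivalently, in Laplace variables: a term $c_1p^{1+\lambda}$ in $\E e^{-pT}$ is multiplied by $\Gamma(1+\lambda)p^{-1-\lambda}$, the transform of $T_\varepsilon\mapsto\E[(T_\varepsilon-T)_+^\lambda]$. It becomes a constant, i.e.\ a multiple of $\delta_0$ in $T_\varepsilon$, living at macroscopic scale where no ball test exists. So the tail coefficient is invisible in the small-ball asymptotics. At the level of exact transforms it competes with $O(1)$ contributions you do not control: shape errors near the ball, and the missing large scales. No refinement of the ball identity gives the exponent or the positivity of the constant.

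Your proposed remedy does not supply the missing mechanism. The Green function test only gives $\E[\prod_iX_{x_i,y_i}(K)]=\E_C\bigl[\Prob[\gamma_{\BE}^{D;a,b}\cap C=\varnothing]\bigr]$ for probes $C$ whose laws are not explicit. So these moments, which are integer moments and not $\lambda$-th powers, are not computable. The discrete one-point identity $\E\bigl[\one_{\{v\notin S_\lambda\}}(\Gc_{V\setminus S_\lambda}(v,v)/\Gc_V(v,v))^\lambda\bigr]=\bh(V\setminus\{v\})$ is just your ball test at scale $\delta$, since $\Gc_{V\setminus S}(v,v)$ is, up to $O(1)$, a multiple of $\log(\operatorname{crad}_U(v)/\delta)$. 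It has the same blindness. The claim that some transform ``behaves like $u^\lambda$'' is not derived.

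The missing idea is to test $K$ against the $\SLE_\kappa$ loop measure $\mu^{c,z}$ surrounding $z$, with $\kappa$ as in \eqref{eq:c-kappa}. Its conformal restriction density is exactly $\one_{\{\ell\subset\Omega'\}}e^{-\lambda m_\Omega(\ell,\Omega\setminus\Omega')}$, with the same $\lambda$.

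Taking $A=\ell$ in \eqref{eq:intro-main-id} and integrating gives the exact identity $\E[\int g\,d\mu_U^{c,z}]=\int gF_\D^{-1,1}\,d\mu_\D^{c,z}$. With $g=e^{-pL}$, conformal invariance and $L=T+L_U+O(e^{-L_U})$ turn this into $Z(p)(1-\E e^{-pT})=J(p)+\mathcal E(p)$. Here $Z(p)=A_\lambda\Gamma(1+\lambda)p^{-1-\lambda}+z_0(p)$, from $\mu^{c,z}\{L\in dL\}\approx A_\lambda L^\lambda\,dL$. Also $J(p)=A_\lambda q\Gamma(\lambda)p^{-\lambda}+j_0(p)$. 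Both are deterministic, with $z_0,j_0$ holomorphic near $0$.

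Crucially, the $K$-dependent error is only $\mathcal E(p)=O(p)$. The shape error now enters through the Lipschitz test function $e^{-pL}$, not through the weight $e^{-\lambda m}$. This gives $\E e^{-pT}-1+mp\sim\Delta_0p^{1+\lambda}/(A_\lambda\Gamma(1+\lambda))$ with $m=q/\lambda$ and a deterministic $\Delta_0=-j_0(0)$. Positivity of $\Delta_0$ follows by contradiction from $\E e^{-pT}-1+mp\gtrsim p^2$. Karamata's theorem then gives $\Prob[T>t]\sim\Delta_0\sin(\pi\lambda)/(\pi A_\lambda)\,t^{-1-\lambda}$, and Koebe converts $T$ into $\dist(z,K)$.

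Finally, $C\to0$ as $\lambda\uparrow1$ needs a uniform bound on $\Delta_0/A_\lambda$ in addition to $\sin(\pi\lambda)\to0$. Your guessed factor $1/\Gamma(1-\lambda)$ is in the right spirit, since $\sin(\pi\lambda)/\pi=1/(\Gamma(\lambda)\Gamma(1-\lambda))$. But you never control the companion factor, and nothing in your argument makes the constant explicit.
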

For $\lambda=1$, since $K_\lambda^{D;a,b}$ is chordal $\SLE_2$, the right side of~\eqref{eq:one-arm-main} is instead  $C\varepsilon^{3/4}(1+o(1))$~\cite{beffara-dim,lawler-rezai-nat}. This indicates that the Brownian loop-catcher undergoes a phase transition when $\lambda\uparrow 1$.

Theorem~\ref{thm:one-arm} suggests that a $\lambda$-Brownian loop-catcher has Hausdorff dimension two and Minkowski gauge $x\mapsto x^2|\log x|^{1+\lambda}$. For $c\in(0,1]$, Brownian loop soup clusters have Minkowski gauge $x\mapsto x^2|\log x|^{1-\frac{c}{2}+o(1)}$~\cite{jego2023conformally}. Since $\lambda=-\frac{c}{2}$, Theorem~\ref{thm:one-arm} then indicates that Brownian loop-catchers could be viewed as the continuation of Brownian loop soup clusters to $c<0$.

\subsection{The random-walk loop-catcher on finite graphs}\label{subsec:finite-main}

The proof of Theorem~\ref{thm:main} relies on the construction of
\(\lambda\)-\emph{random-walk loop-catchers} on finite graphs, which satisfies the discrete analog of~\eqref{eq:decoration}.

Let $V$ be a finite set of vertices, let $Q=(Q_{xy})_{x,y\in V}$ be a nonnegative substochastic transition matrix with spectral radius $\rho(Q)<1$, and let $\balpha=(\alpha_x)_{x\in V},\bbeta=(\beta_x)_{x\in V}$ be entrance and exit weights with $\balpha,\bbeta\ge0$ and $\balpha^{\mathsf T}(I-Q)^{-1}\bbeta>0$.  The associated path weights have partition function
\[
 Z_V=\sum_{n\ge0}\sum_{x_0,x_1,\ldots,x_n\in V}
 \alpha_{x_0}\left(\prod_{i=0}^{n-1}Q_{x_i x_{i+1}}\right)\beta_{x_n}
 =\balpha^{\mathsf T}(I-Q)^{-1}\bbeta.
\]
The normalized \emph{random-walk excursion} assigns probability $Z_V^{-1}\alpha_{x_0}\Bigl(\prod_{j<n}Q_{x_jx_{j+1}}\Bigr)\beta_{x_n}$ to a path \(\omega=(x_0,\ldots,x_n)\).
For \(R\subset V\), let \(\balpha_R\) and \(\bbeta_R\) be the coordinate restrictions of \(\balpha\) and \(\bbeta\) to \(R\), respectively, and let \(Q_R\) be the principal submatrix of \(Q\) indexed by \(R\). Write
\begin{equation}\label{eq:z-d-bh}
   Z_R=\balpha_R^{\mathsf T}(I-Q_R)^{-1}\bbeta_R,
   \qquad
   d_R=\det(I-Q_R),
   \qquad
   \bh(R)=\frac{Z_R}{Z_V}
\end{equation}
with \(d_\varnothing=1\) and \(Z_\varnothing=0\).
Here $\bh(R)$ is the probability that the excursion trace $\omega$ stays in $R$.

Let \(m_V(S,A)\) be the mass of unrooted random-walk loops hitting both $S$ and $A$.
Thus, for
a vector \(\bp=(\bp(S))_{S\subset V}\), the finite-graph counterpart of~\eqref{eq:intro-main-id} is
\begin{equation}\label{eq:finite-transform-intro}
   (T_\lambda^V\bp)(R)
   :=\sum_{S\subset V}\bp(S)\one_{\{S\subset R\}}
     \exp\{-\lambda m_V(S,V\setminus R)\}
   =\sum_{S\subset R}\bp(S)
     \left(\frac{d_Vd_{R\setminus S}}{d_{V\setminus S}d_R}\right)^\lambda
   =\bh(R),
   \qquad \forall R\subset V.
\end{equation}
Here we used the relation between loop masses and determinants; see e.g.~\cite[Proposition~5.2]{Lawler2018}.

Note that the matrix $T_\lambda^V$ is triangular on the Boolean lattice $2^V$ and has
positive diagonal, so~\eqref{eq:finite-transform-intro} always has a
unique real solution $\bp_\lambda$.  There is, however, no a priori
reason for its coefficients to be nonnegative.  Proving this nonnegativity
is our main contribution on finite graphs.

\begin{theorem}[The random-walk loop-catcher]\label{thm:finite-positivity}
For every finite datum $(V,Q,\balpha,\bbeta)$ as above and every $\lambda\in[0,1]$, the unique real solution $\bp_\lambda=(T_\lambda^V)^{-1}\bh$
of~\eqref{eq:finite-transform-intro} is a probability measure on $2^V$. Namely,
\begin{equation}
        \bp_\lambda(S)\ge0\quad(\forall S\subset V),
        \qquad
        \sum_{S\subset V}\bp_\lambda(S)=1.
\end{equation}
When $\lambda>1$, there exists $(V,Q,\balpha,\bbeta)$ such that the corresponding $\bp_\lambda$ has a negative coordinate.
\end{theorem}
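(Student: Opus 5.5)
Total mass is immediate. Take $R=V$ in~\eqref{eq:finite-transform-intro}. Then $m_V(S,\varnothing)=0$ for every $S$, so all weights equal $1$, and we get $\sum_S\bp_\lambda(S)=\bh(V)=1$. The real content is nonnegativity, and we plan to prove it by an explicit probabilistic construction rather than by inverting $T_\lambda^V$ directly.

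\textbf{Step 1: a recursive loop-insertion construction.} We build a random set $S$ whose law $\bq$ satisfies $T_\lambda^V\bq=\bh$. Uniqueness of the triangular solution then forces $\bq=\bp_\lambda$, so $\bp_\lambda\ge0$.

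The model case is $\lambda=1$. Take $S$ to be the range of the loop-erasure of the excursion $\omega$. The classical loop-decomposition of a random-walk path says that $\omega$ has the law of $\LE(\omega)$ with the loops of an independent loop soup of intensity $1$ that hit $\LE(\omega)$ inserted back in the appropriate order. This gives~\eqref{eq:finite-transform-intro} at $\lambda=1$; it is the determinant identity $\sum_{\eta\subset R}\mathrm{wt}(\eta)\,d_R^{-1}\cdots$ of \cite{Lawler2018}.

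For general $\lambda\in[0,1]$, we split the unit-intensity loop soup as $\mathcal L^1=\mathcal L^\lambda\cup\mathcal L^{1-\lambda}$ and erase only the part attributable to $\mathcal L^{1-\lambda}$. Concretely, we would first write $\omega$ as $\LE(\omega)$ decorated by loops of $\mathcal L^1$. Next we add back to $\LE(\omega)$ the loops of $\mathcal L^{1-\lambda}$ that hit it; this produces a path whose range is a candidate $S$. The difficulty is that loops of $\mathcal L^{1-\lambda}$ can meet $\mathcal L^\lambda$-loops attached to them, so the set hit by $\mathcal L^\lambda$ changes. This is exactly where the entangled multipath LERW mentioned in the abstract enters. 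The set $S$ is a union of several paths, a bunch of independent excursions whose union is recovered by one common loop soup. So the construction must be recursive: we reveal $\mathcal L^\lambda$-loops touching the current set, and each newly caught region requires a multipath loop-erasure with a single shared soup.

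\textbf{Step 2: the multipath identity.} We then prove the finite-graph statement on which Step 1 rests. Let $\omega_1,\dots,\omega_k$ be independent walks with prescribed endpoints. We define an entangled LERW $\eta$ such that $\eta$ plus the loops of one unit-intensity soup hitting $\eta$ has the law of the union of ranges. We would prove this by a Fomin/Lindström–Gessel–Viennot-type determinant computation. The total weight of configurations with range in $R$ is a product of $Z_R$-type factors, and summing over loop-erased skeletons inside $R$ with loop weight $d_R^{-1}d_{R\setminus\eta}$ must reproduce this product. The induction is on $|V|$ via a Schur complement when removing one vertex.

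Given Step 2, Step 1 follows by induction on the number of revealing rounds. At each stage we check, via the determinant formula in~\eqref{eq:finite-transform-intro}, that the conditional avoidance probability factorizes as $(d_Vd_{R\setminus S}/d_{V\setminus S}d_R)^\lambda$ times the avoidance probability for the remaining walks. \textbf{The main obstacle} is making this recursion close, i.e.\ showing that fractional intensity $\lambda$ is compatible with the multipath erasure. I would first try the case $|V|\le 3$ symbolically to guess the correct recursive rule.

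\textbf{Counterexample for $\lambda>1$.} Take the smallest nontrivial example: $V=\{x,y\}$, $\alpha=\delta_x$, $\beta=\delta_y$, with $Q_{xx}=q$ (a self-loop) and $Q_{xy}=1/2$. Then $\bh(\{x,y\})=1$, $\bh(\{x\})=0$ and $\bh(\{y\})=0$. Every walk visits both vertices, so only $S=\{x,y\}$ can have positive mass in principle. Solving the triangular system gives $\bp(V)=1$ and $\bp(\{x\})=\bp(\{y\})=0$, which is nonnegative, so this graph is too small.

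Instead we take $V=\{x,y,z\}$ with two routes from $x$ to $z$, one direct and one through $y$, and with loops at $y$. Then $\bp(\{x,z\})=\bh(\{x,z\})-\bp(\cdots)$, and we compute
\[
\bp(V)=1-\bp(\{x,z\})\Bigl(\tfrac{d_Vd_{\{y\}}}{d_{\{x,z\}}\cdots}\Bigr)^\lambda .
\]
This quantity is an explicit function of $\lambda$ that equals $\bh$-consistent values at $\lambda=1$ (the LERW case). As the loop mass at $y$ grows, a negative value for $\lambda>1$ appears because the weight $(d_{\{y\}}^{-1})^{\lambda}$ exceeds the available mass. We would verify this with explicit numbers.
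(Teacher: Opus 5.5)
\textbf{There is a genuine gap: nonnegativity for $0<\lambda<1$ is not proved.} Your total-mass argument is correct. So is the reduction: build a random set $S$ with $T_\lambda^V\,\mathrm{Law}(S)=\bh$ and use triangular uniqueness. The heuristic of splitting intensity $1$ into $\lambda+(1-\lambda)$ and recursing with a shared-soup multipath erasure points in the paper's direction. But nonnegativity is the whole content of the theorem, and your proposal does not establish it.

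You never define the entangled LERW, and you leave the recursive rule to be guessed from small cases. The tool you propose for Step~2 also does not fit. Fomin/Lindstr\"om--Gessel--Viennot identities give signed determinants over non-intersecting families. What is needed instead is a \emph{nonnegative} expansion $d_R\prod_i\Gc_R(a_i,b_i)=\sum_{C\subset R}L(C)\,d_{R\setminus C}$ with $L\ge0$, for walks that intersect freely. The paper's appendix route via the generator needs this same expansion. The missing ingredients are the following.
\begin{itemize}
\item[(i)] A sequential erasure on legal records (heaps of loops). For the $i$-th walk, keep $\LE(\omega_i)$ together with exactly those erased loops that lie above $C_{i-1}$ in the entangled order $\prec$.
\item[(ii)] A fiber lemma. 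The discarded loops regroup uniquely into ordered lists rooted at the newly visited vertices $u_j$, confined to $D_j=R\setminus(A\cup\{u_1,\dots,u_{j-1}\})$. Their total weight is $\prod_j\Gc_{D_j}(u_j,u_j)=d_{R\setminus(A\cup\supp P)}/d_{R\setminus A}$. This yields both the nonnegative expansion and the intensity-one recovery.
\item[(iii)] A precise form of your soup splitting. Conditionally on the skeleton, the list at $u_j$ has law $\prod_s q(\xi_s)/G$. An intensity-$\alpha$ soup produces a given list with probability $G^{-\alpha}\frac{(\alpha)^{\overline d}}{d!}\prod_s q(\xi_s)$. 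Hence deleting an initial block of beta--binomial$(d;\lambda,1-\lambda)$ length splits the list into independent $\lambda$- and $(1-\lambda)$-lists.
\item[(iv)] The step that makes the recursion close. Retained loops must not be added wholesale; as you noticed, that breaks recovery. Instead, cut them into their maximal excursions in $B=R\setminus S$. Conditionally on their endpoints, these are independent normalized walks in $B$, so the algorithm can be rerun on this multipath tuple in the strictly smaller set $B$. The $\lambda$-soup loops contained in $B$ form a $\lambda$-soup in $B$, independent of those hitting $S$. Induction on $|R|$ then gives the recovery identity.
\end{itemize}
Without these, Step~1 restates the goal rather than proving it.

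\textbf{The $\lambda>1$ example also needs repair.} Your displayed formula is off. In row $R=V$ every weight is $1$, as you noted for the total mass, so $\bp_\lambda(V)=1-\bp_\lambda(\{x,z\})$. The loop factor enters in row $\{x,z\}$ instead: $\bp_\lambda(\{x,z\})=\bh(\{x,z\})e^{\lambda m}$ with $m=m_V(\{x,z\},\{y\})$. Comparing with $\lambda=1$ gives $\bp_\lambda(V)=1-(1-\bp_1(V))e^{(\lambda-1)m}$. This is negative iff $(\lambda-1)m>-\log(1-\bp_1(V))$.

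In your two-route design the LERW passes through $y$ with positive probability, so $\bp_1(V)>0$. The example therefore fails for $\lambda$ near $1$ unless you tune the weights with $\lambda$, for instance by taking $Q_{yz}$ tiny. You also need closed walks through both $y$ and $\{x,z\}$; self-loops at $y$ alone give $m=0$.

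The clean choice is to make $y$ a pendant vertex attached only to $x$, with symmetric weights. Then the LERW never visits $y$, so $\bp_1(V)=0$, while $m>0$. This gives $\bp_\lambda(V)=1-e^{(\lambda-1)m}<0$ for every $\lambda>1$. It is the smallest instance of the paper's argument: a shortest path plus one adjacent vertex, forming $U$ with $\bp_1(U)=0$.
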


We call a \emph{random set} with law $\bp_\lambda$ a \(\lambda\)-random-walk loop-catcher on $(V,Q,\balpha,\bbeta)$. By the same argument used
after Theorem~\ref{thm:main}, adding all loops that intersect this set
from an independent random-walk loop soup of intensity $\lambda$ recovers the law of the trace of the random-walk excursion associated with
$(Q,\balpha,\bbeta)$.
At $\lambda=1$, $\bp_1$ is the law of the corresponding LERW trace.  

In Section~\ref{sec:finite-positivity}, based on the entangled multipath LERW, we will construct a \emph{directed path} whose range satisfies~\eqref{eq:finite-transform-intro} and such that recursively inserting loops from an independent random-walk loop soup with intensity $\lambda$ to it recovers the law of the random walk (as a directed path)\footnote{One might wonder whether a ``partially loop-erased'' random walk could provide such a process, by erasing each newly formed loop of a random walk independently with some fixed probability \(q\in(0,1)\). Nevertheless, this is not the case, and for every \(q>0\), its scaling limit is expected to be the same as that of LERW (see~\cite[the end of Section 7]{wiese-fedorenko-lerw-field}).}. In Appendix~\ref{app:finite-positivity}, we provide an alternative linear-algebraic proof of Theorem~\ref{thm:finite-positivity} by showing that the logarithmic derivative $\mathcal{A}_\lambda^V$ of the operator $T_\lambda^V$ has zero column sums and becomes a pure-birth generator, and therefore~\eqref{eq:finite-transform-intro} has a unique nonnegative solution.

\begin{figure}[H]
    \centering

    \begin{minipage}[t]{0.32\textwidth}
        \centering
        \includegraphics[
            width=\linewidth,
            trim={180 53 180 53},
            clip
        ]{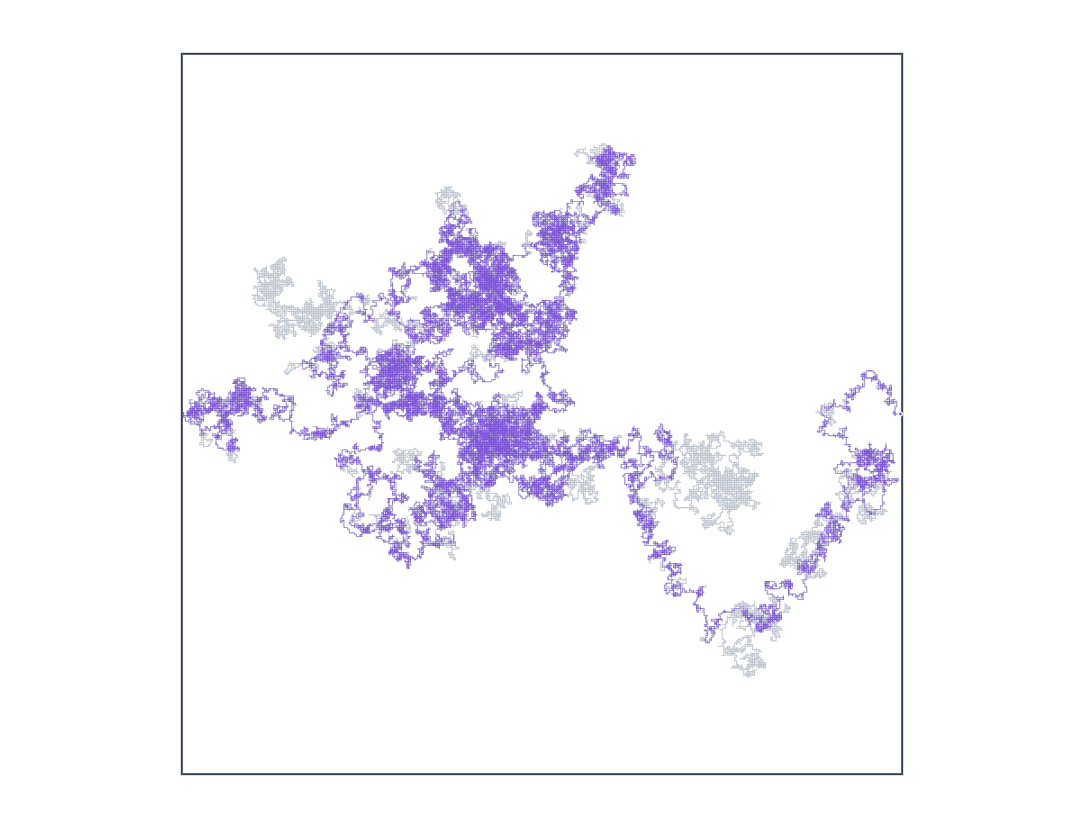}
        \vspace{-0.6em}

        {\small $\lambda=0.2$}
    \end{minipage}
    \hfill
    \begin{minipage}[t]{0.32\textwidth}
        \centering
        \includegraphics[
            width=\linewidth,
            trim={180 53 180 53},
            clip
        ]{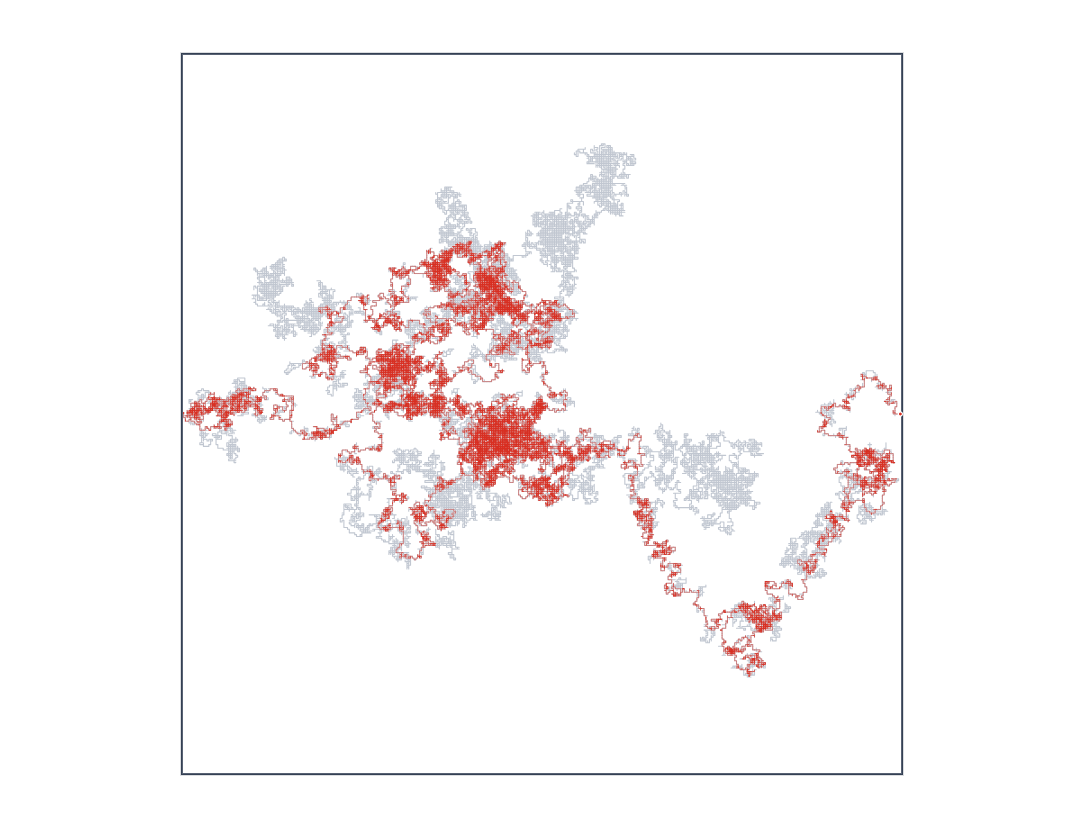}
        \vspace{-0.6em}

        {\small $\lambda=0.5$}
    \end{minipage}
    \hfill
    \begin{minipage}[t]{0.32\textwidth}
        \centering
        \includegraphics[
            width=\linewidth,
            trim={180 53 180 53},
            clip
        ]{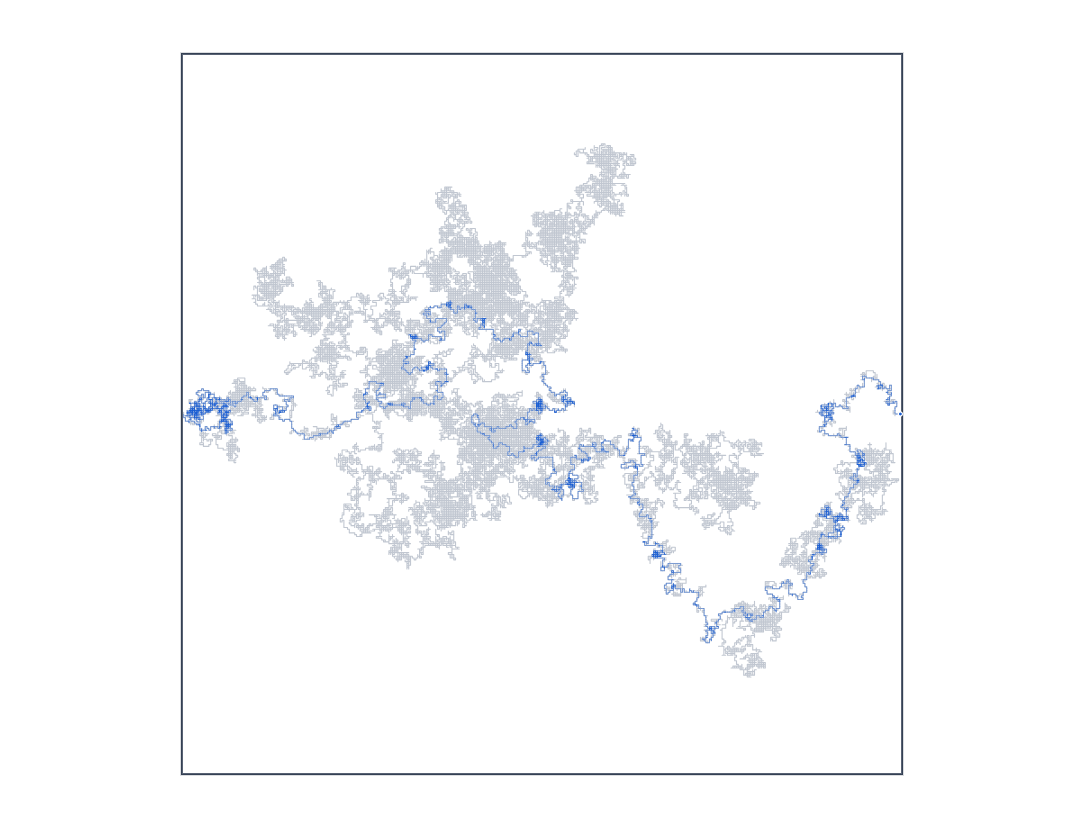}
        \vspace{-0.6em}

        {\small $\lambda=0.8$}
    \end{minipage}

    \caption{Simulations of random-walk loop-catchers on $\frac{1}{400}\Z^2\cap[-1,1]^2$ for different values of
    \(\lambda\), constructed from the same underlying random walk (shown in gray) using Algorithm~\ref{alg:time-catcher}.}
    \label{fig:loop-catcher-simulations}
\end{figure}

We then use square-lattice approximations to $(D;a,b)$ and take the limit to define the desired Brownian loop-catcher. See the beginning of Section~\ref{sec:continuum} for detailed convergence setup.

\begin{theorem}[Convergence]\label{thm:catcher-convergence}
For \(0<\lambda\leq1\), fix \((D;a,b)\) as in Section~\ref{subsec:continuum-main}.  Let \(\delta\downarrow0\), and let
\(D^\delta\) be a finite connected induced subgraph of
\(\delta\Z^2\cap D\) such that the union of its mesh squares is a Jordan domain converging
to \(D\) both in the Carath\'eodory topology and in boundary Fr\'echet
  distance, equipped with exterior boundary vertices
  \(\widehat a^\delta,\widehat b^\delta\) converging to 
  \(a,b\).  Set \(Q^\delta_{xy}=\frac14\one_{\{|x-y|=\delta\}}\) for
\(x,y\in D^\delta\), and assign weight \(\frac14\) to every edge entering
\(D^\delta\) from \(\widehat a^\delta\) or exiting \(D^\delta\) to
\(\widehat b^\delta\); these weights define the entrance and exit
vectors.  
  With \((D;a,b)\) fixed, let \(K_\lambda^\delta\) be its 
\(\lambda\)-random-walk loop-catcher on $D^\delta$, realized as the union of its closed mesh cells
  and two \(o_\delta(1)\)-diameter connectors from \(a\) and \(b\), respectively, to the corresponding endpoint cells.
Then in the Hausdorff topology on closed subsets of $\ol D$,
\[
 K_\lambda^{\delta}\ \Longrightarrow\ K_\lambda^{D;a,b}
 \qquad(\delta\downarrow0).
\]
\end{theorem}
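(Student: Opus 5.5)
The plan is the usual three-step scheme: tightness, passing the discrete identity to the limit, and uniqueness of the limit law.

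\textbf{Tightness.} The sets $K_\lambda^\delta$ are closed subsets of the compact set $\ol D$. By Blaschke's theorem the Hausdorff space of such sets is compact, so every sequence $\delta_n\downarrow0$ has a subsequence along which $K_\lambda^{\delta_n}\Rightarrow K$ for some random compact $K\subset\ol D$. Connectedness passes to Hausdorff limits, and $K_\lambda^\delta$ is connected, being the range of a directed path together with the connectors. Hence $K$ is connected and contains $a,b$. To get $K\cap\partial D=\{a,b\}$, I would use that $K_\lambda^\delta$ is contained in the random-walk excursion trace. The discrete walk converges to $\gamma_{\BE}^{D;a,b}$ and avoids $\partial D$ away from the endpoints with high probability, uniformly in $\delta$. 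I would show this by a Beurling-type estimate using the boundary Fr\'echet convergence of the domains.

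\textbf{Passing the identity to the limit.} Fix a compact $A\subset\ol D$ with $A\cap\{a,b\}=\varnothing$, and let $R^\delta$ be the set of vertices whose mesh cells avoid $A$. Identity~\eqref{eq:finite-transform-intro} with $R=R^\delta$ gives
\[
\E\bigl[\one_{\{K_\lambda^\delta\cap A=\varnothing\}}\exp\{-\lambda m_{D^\delta}(K_\lambda^\delta,A)\}\bigr]=\Prob[\omega^\delta\cap A=\varnothing].
\]
The right-hand side converges to $\Prob[\gamma_{\BE}^{D;a,b}\cap A=\varnothing]$ whenever $A$ is a continuity set, for instance after replacing $A$ by a slightly enlarged closed $\varepsilon$-neighbourhood. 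For the left-hand side I need joint continuity of $(S,A)\mapsto \one_{\{S\cap A=\varnothing\}}e^{-\lambda m(S,A)}$, together with convergence of random-walk loop masses to Brownian loop masses, $m_{D^\delta}(S^\delta,A^\delta)\to m_D(S,A)$, for Hausdorff-convergent connected sets that stay at positive distance from $A$. This follows from the Lawler--Trujillo Ferreras coupling of the loop soups together with the fact that the loop mass of loops of small diameter hitting two sets at distance $\ge\eta$ is small. When $K$ touches $A$, the exponential factor must tend to $0$. I would handle this by sandwiching $A$ between $\varepsilon$-neighbourhoods and using that a connected set of macroscopic diameter coming within $\varepsilon$ of $A$ is hit, together with $A$, by a loop mass that blows up as $\varepsilon\to0$. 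This is standard Brownian-loop harmonic-measure reasoning. Monotone limits in $\varepsilon$ then give~\eqref{eq:intro-main-id} for every subsequential limit $K$.

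\textbf{Uniqueness.} Once every subsequential limit satisfies~\eqref{eq:intro-main-id}, the uniqueness part of Theorem~\ref{thm:main}, that is, the characterization of the law by~\eqref{eq:intro-main-id} (Theorem~\ref{thm:full-trace-unique}, via the Green function test), shows that all subsequential limits have the same law. Full convergence follows. Existence in Theorem~\ref{thm:main} is in fact obtained this way.

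\textbf{Main obstacle.} I expect the hard part to be this uniqueness step. The identity~\eqref{eq:intro-main-id} directly determines only hitting functionals of the decorated set. Recovering the full law of $K$, not just its filling, requires the Green function test: from~\eqref{eq:intro-main-id} one extracts all mixed moments of Green functions of $D\setminus K$, using limits of entangled multipath loop-catchers as probes. I would take that characterization as given from the later theorem and concentrate the work here on the loop-mass continuity estimate near $A$.
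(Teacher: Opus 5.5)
Your proposal is correct and is essentially the paper's proof: the paper deduces the theorem from Proposition~\ref{prop:catcher-cluster} (compactness of $\mathcal K(\ol D)$, domination by the random-walk excursion to get $K\cap\partial D=\{a,b\}$, passing the discrete identity to the limit through enlarged obstacles $A^{(s)}$ with $s$ a continuity radius, then a monotone limit $s\downarrow0$), combined with the Green-function uniqueness of Theorem~\ref{thm:full-trace-unique}. Two refinements. First, the loop-mass convergence needs more than the Lawler--Trujillo Ferreras coupling and short-loop tails: it also needs a Beurling bound on loops that pass near the connected set without hitting it (Lemma~\ref{lem:loop-approach-bound}). Second, no loop-mass blow-up is needed when $K$ touches the obstacle, and such a blow-up fails anyway for polar $A$: if $s$ is chosen off the atoms of $\dist(K,A)$, the indicator eventually vanishes on $\{\dist(K,A)<s\}$ and the event $\{\dist(K,A)=s\}$ is null.
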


One can also define radial, bridge and loop versions of random-walk and Brownian loop-catchers similarly. For simplicity, we focus on the chordal setup in this paper.

\subsection{Characterization of the full trace and the outer boundary}\label{subsec:identify-sle}

Theorem~\ref{thm:catcher-convergence} relies on the following
full-trace characterization.
\begin{theorem}[Full-trace uniqueness]\label{thm:full-trace-unique}
Let \(D\) be a bounded Jordan domain and \(a\ne b\in\partial D\).
For \(0<\lambda\leq1\), a probability law $\P$ on compact connected
\(K\subset\overline D\) with \(K\cap\partial D=\{a,b\}\) is determined by
\begin{equation}
 \E\!\left[
 \one_{\{K\cap A=\varnothing\}}e^{-\lambda m_D(K,A)}\right],
 \qquad \forall A\subset \ol D\ \hbox{compact}.
\end{equation}
Here $\E$ denotes the expectation with respect to $\P$.
\end{theorem}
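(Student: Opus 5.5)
The plan is to convert the avoidance functional
\[
\Phi(A):=\E\bigl[\one_{\{K\cap A=\varnothing\}}e^{-\lambda m_D(K,A)}\bigr]
\]
into all mixed moments of Green functions of the random domain \(D\setminus K\), and then show that these moments determine the law of \(K\).

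The idea is to integrate \(\Phi\) against a random \emph{probe} \(A=P\), chosen independent of \(K\), whose law is itself a loop-catcher for a multipath. Fix points \(z_1,w_1,\dots,z_n,w_n\in D\), or boundary-to-interior data. Suppose \(P\) is a random compact set with the following property: adding to \(P\) all loops of an independent Brownian loop soup \(\cL_D^\lambda\) that hit \(P\) produces the union of \(n\) independent Brownian paths \(z_i\to w_i\) in \(D\), normalized by the product of the \(G_D(z_i,w_i)\). Then for any fixed compact \(K\) with \(K\cap P\) handled correctly, we would have
\[
\E_P\bigl[\one_{\{P\cap K=\varnothing\}}e^{-\lambda m_D(K,P)}\bigr]=\Prob[\text{all paths avoid }K]=\prod_{i=1}^n\frac{G_{D\setminus K}(z_i,w_i)}{G_D(z_i,w_i)} .
\]
This holds because the loops meeting both \(P\) and \(K\) are exactly those that would spoil avoidance. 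Fubini then gives
\[
\E_P[\Phi(P)]=\E\Bigl[\prod_i G_{D\setminus K}(z_i,w_i)\Bigr]\Big/\prod_i G_D(z_i,w_i),
\]
so \(\Phi\) determines all mixed Green moments. The main reason to use a \emph{multipath} probe rather than a union of independent single-path catchers is that the \(n\) paths must share a \emph{single} common loop soup. This is exactly what the entangled multipath LERW, and its \(\lambda\)-catcher analogue from Section~\ref{sec:finite-positivity}, provides on finite graphs.

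The steps, in order, are as follows.

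\begin{enumerate}
\item Construct the discrete multipath \(\lambda\)-catchers on \(D^\delta\) with interior endpoints. These satisfy the exact discrete identity, in determinant form as in \eqref{eq:finite-transform-intro}.
\item Obtain tightness, and extract a subsequential Hausdorff limit \(P\).
\item Pass the identity to the limit. This requires continuity of \((K,P)\mapsto \one_{\{K\cap P=\varnothing\}}e^{-\lambda m_D(K,P)}\) off a null set, and convergence of discrete to continuum avoidance probabilities for Brownian paths. We do not need uniqueness of the probe law, only the limiting identity for each fixed \(K\).
\item Regularize. Replace point-to-point Green functions by bounded smoothed versions, such as Poisson-kernel-type probes from small circles, or averages of \(G_{D\setminus K}(z,w)\) over small disks. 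Since \(0\le G_{D\setminus K}\le G_D\), the regularized quantities are bounded random variables, so their joint moments determine their joint law.
\item Recover \(K\). A point \(z\in D\) lies outside \(K\) if and only if the smoothed Green function near \(z\) is positive at small scales. Hence \(K\cap D\) is a measurable function of the Green field over countably many rational points and scales. Together with \(K\cap\partial D=\{a,b\}\), this determines the law of \(K\).
\end{enumerate}

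I expect the main obstacle to be step 3. The probe must attain the recovery identity exactly in the continuum even though \(P\) may touch \(K\) tangentially. I would first establish that \(\Prob[\operatorname{dist}(P,K)\in(0,\eta)]\to0\) as \(\eta\to0\), using one-arm-type estimates for the probe. Then I would use the fact that \(m_D(\cdot,\cdot)\) is continuous under Hausdorff convergence for pairs of disjoint sets, together with Brownian non-polarity of connected sets, to handle the endpoints \(z_i,w_i\) lying near \(K\).
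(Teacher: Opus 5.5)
Your overall architecture matches the paper's: interior probes are built as limits of multipath $\lambda$-catchers from the entangled multipath LERW, the probe identity is proved for each fixed compact set, and Fubini turns it into mixed moments of bounded Green coordinates, which are then shown to separate $K$. The genuine gap is step 5, which is where the injectivity actually happens.

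As you state it, the criterion reads: $z\notin K$ iff the smoothed Green function near $z$ is positive at small scales. This fails whenever $K$ has empty interior, for example the simple $\SLE_2$ curve at $\lambda=1$. Take $z\in K\cap D$ and $w$ in one side domain. Then $B(z,\rho)$ meets that side for every $\rho>0$, so the disk average of $G_{D\setminus K}(\cdot,w)$ over $B(z,\rho)$ is strictly positive at every scale. Positivity only detects the interior of $K$.

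What distinguishes points of $K$ is a vanishing \emph{limit}: $G_{D\setminus K}(u,w)\to0$ as $u\to z$. This holds only at regular boundary points, so you must show $K$ is nowhere thin.
\begin{itemize}
\item Boundary bumping gives, inside a small disk, a nondegenerate subcontinuum $J\subset K$ through $z$.
\item $J$ has positive logarithmic capacity.
\item By Kellogg's theorem (or Wiener's criterion), $J$ contains regular boundary points of the relevant complementary component.
\item At such a point the Green function of $D\setminus K$ tends to $0$, while the Green function of a competing domain not containing $J$ stays positive.
\end{itemize}
This is exactly Lemma~\ref{lem:sec5-green-injective}. It compares $K_1\ne K_2$, first disposes of the case where $K_2$ has interior in the disk, and then uses density of $B\setminus K_2$ to identify the local Green functions. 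Lusin--Suslin then supplies a Borel inverse, so no explicit reconstruction of $K$ is needed.

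Smaller points:
\begin{itemize}
\item Step 4 is unnecessary. The quantity $X_{x,y}(K)=G_{D\setminus K}(x,y)/G_D(x,y)=\Prob[\Gamma_D^{x,y}\cap K=\varnothing]$ already lies in $[0,1]$.
\item In step 3, no one-arm estimate is needed: $\Prob[\dist(P,K)\in(0,\eta)]\to0$ holds automatically by continuity of measure. The paper handles tangential contact differently. It tests the discrete identity against the fattened sets $E_{\delta,s}$, choosing $s$ outside the countably many atoms of $\dist(C,E)$ and $\dist(C^0,E)$. It then lets $s\downarrow0$ by monotone convergence.
\item $m_D(\cdot,\cdot)$ is not Hausdorff-continuous for general disjoint pairs: finite point sets, which are polar, can converge to a segment. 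What is actually used is discrete-to-continuum random-walk loop-mass convergence for unions of continua with diameter bounded below, against obstacles satisfying $m_D(C,E)=m_D(C,E^\circ)$. See Proposition~\ref{prop:separated-loop} and Lemmas~\ref{lem:parallel-regular} and~\ref{lem:moving-continuum}.
\end{itemize}
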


To prove Theorem~\ref{thm:catcher-convergence}, we first
extract subsequential limits of $K_\lambda^\delta$ and show that they satisfy~\eqref{eq:intro-main-id}. Theorem~\ref{thm:full-trace-unique} then identifies their laws, proving convergence of the entire sequence.

The proof of Theorem~\ref{thm:full-trace-unique} is based on deriving all the moments of Green functions in the remaining domain $D\setminus K$, using subsequential limits of $0<\lambda\le1$ evolutions of entangled multipath LERWs as test sets. This Green function test does not require $D\setminus K$ to be simply connected (it may have countably many connected components), and can be adapted to three-dimensional cases. Therefore, our proof of Theorem~\ref{thm:catcher-convergence} could be viewed as extensions of the conformal restriction characterization idea developed in~\cite{lsw-restriction,wu15,qian-trichordal,qian2021generalized,BJ2024,gordina2024infinitesimal,CG25}, which usually characterizes the filling of $K$.

To identify the outer boundary of $K_\lambda^{D;a,b}$ with $\SLE_\kappa$, we need the following characterization of its outer boundary, or its \emph{filling hull}.
Let \(\partial^+D\) and \(\partial^-D\) denote the two open components of
\(\partial D\setminus\{a,b\}\), with a fixed labelling.  For a compact
connected \(K\subset\ol D\) with \(K\cap\partial D=\{a,b\}\), let
\(D_K^+\) and \(D_K^-\) be the components of \(D\setminus K\) adjacent to
\(\partial^+D\) and \(\partial^-D\), respectively, and
set
\[
 \operatorname{Fill}(K)
 :=\ol{D\setminus(D_K^+\cup D_K^-)}.
\]
We call $\operatorname{Fill}(K)$ the \emph{two-sided filling hull} of $K$. Similarly,  we define the
\emph{one-sided filling hull} of $K$ to be $\operatorname{Fill}^{+}(K):=\ol{D\setminus D_K^-}$. We also call a closed connected subset $H\subset \ol D$ a two-sided (resp.~one-sided) hull if $H\cap\partial D=\{a,b\}$ and $D\setminus H$ has two connected components (resp.~if $H\cap\partial D=\partial^+D\cup\{a,b\}$ and $D\setminus H$ is simply connected with boundary containing $\partial^-D$).

\begin{theorem}[Hull uniqueness]\label{thm:hull-unique}
The following two statements hold.
\begin{enumerate}[label=\textup{(\roman*)},leftmargin=2.3em]
\item\label{item:two-sided-unique}
For \(0<\lambda\leq1\), the law $\mathsf P$ of a random two-sided hull $H$ is
determined by the numbers $\E_{\mathsf P}\left[\one_{\{H\cap A=\varnothing\}}
 e^{-\lambda m_D(H,A)}\right]$ for compact
\(A\subset\overline D\) with
\(A\cap\{a,b\}=\varnothing\) and \(D\setminus A\) simply connected.
\item\label{item:one-sided-unique}
For \(0<\lambda\leq1\), the law $\mathsf P$ of a random one-sided hull $H$ is
determined by the numbers $\E_{\mathsf P}\left[\one_{\{H\cap A=\varnothing\}}
 e^{-\lambda m_D(H,A)}\right]$ for compact
\(A\subset\overline D\) with
\(A\cap\partial^+D=\varnothing\) and \(D\setminus A\) simply connected.
\end{enumerate}
In particular, there is at most one law of two-sided (resp.~one-sided) hulls satisfying
\begin{equation}\label{eq:hull-id}
 \E\!\left[\one_{\{H\cap A=\varnothing\}}
 e^{-\lambda m_D(H,A)}\right]=\Prob\left[\gamma_{\BE}^{D;a,b}\cap A=\varnothing\right]
\end{equation}
for every allowed \(A\) in the corresponding case.
\end{theorem}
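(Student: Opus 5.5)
Since a random hull's law is determined by the avoidance probabilities $\P[H\cap A=\varnothing]$ (or, for two-sided hulls, by the law of the two boundary arcs), the task is to recover these from the weighted functionals
\[
F(A):=\E_{\mathsf P}\bigl[\one_{\{H\cap A=\varnothing\}}e^{-\lambda m_D(H,A)}\bigr].
\]
The plan is a boundary Poisson kernel test, the hull analogue of the Green function test announced for Theorem~\ref{thm:full-trace-unique}. On $\{H\cap A=\varnothing\}$ with $D\setminus A$ simply connected, $D\setminus H$ and $D\setminus A$ are simply connected (one-sided case) or split into the two components $D_H^\pm$ (two-sided case). Then $m_D(H,A)$ is expressible by conformal invariants: via the Lawler--Werner formula, loops hitting both sets give the log of a ratio of boundary-Poisson derivatives of conformal maps. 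For example, in the one-sided case,
\[
e^{-m_D(H,A)}=\frac{\Phi_{D\setminus A}'(\text{on }\partial H\ldots)}{\ldots}.
\]
Concretely, $\exp\{-\lambda m_D(H,A)\}=\bigl(\mathbf H_{D\setminus(H\cup A)}\mathbf H_D/(\mathbf H_{D\setminus H}\mathbf H_{D\setminus A})\bigr)^{\lambda\cdot(\text{const})}$ for excursion kernels between suitable boundary points. This is the continuum analogue of the determinant identity in~\eqref{eq:finite-transform-intro}.

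\textbf{Step 1 (enlarging the test class).} I would show that $F$ determines $\E[\one_{\{H\cap A=\varnothing\}}G(H,A)]$ for the richer family $G=\prod_i \mathbf H_{D\setminus (H\cup A)}(x_i,y_i)$ of products of boundary Poisson kernels of the remaining domain. The idea is to replace $A$ by $A\cup\eta$, where $\eta$ is an independent random test set: a one-sided (resp. multipath) loop-catcher-type hull attached to boundary points $x_i,y_i\in\partial D\setminus\{a,b\}$. These test sets exist as subsequential limits of multipath random-walk loop-catchers, using the positivity in Theorem~\ref{thm:finite-positivity} and tightness. By the defining recovery property of $\eta$ applied in the domain $D\setminus(H\cup A)$, together with the restriction formula, one has
\[
\E_\eta\bigl[\one_{\{\eta\cap H=\varnothing\}}e^{-\lambda m(\eta,H\mid D\setminus A)}\bigr]=\P[\text{Brownian excursions in }D\setminus A\text{ avoid }H],
\]
and the latter is a ratio of Poisson kernels. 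The cocycle property of $m_D$, namely $m_D(H,A\cup\eta)=m_D(H,A)+m_{D\setminus A}(H,\eta)$, makes the weights factor correctly. Averaging $F(A\cup\eta)$ over $\eta$ therefore yields $\E[\one_{\{H\cap A=\varnothing\}}e^{-\lambda m_D(H,A)}\prod_i \mathbf H_{D\setminus(H\cup A)}(x_i,y_i)/\mathbf H_{D\setminus A}(x_i,y_i)]$.

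\textbf{Step 2 (removing the weight).} Letting the number of probes and their positions vary, these mixed moments determine the joint law of the random function $(x,y)\mapsto \mathbf H_{D\setminus(H\cup A)}(x,y)$ under the tilted measure $e^{-\lambda m_D(H,A)}\,d\mathsf P$ restricted to $\{H\cap A=\varnothing\}$. The kernels are bounded by deterministic ones, so the moment problem is determinate. The weight $e^{-\lambda m_D(H,A)}$ is itself a measurable function of these kernels: letting probe points approach $\partial A$ and using the Lawler--Werner formula, the loop mass is a limit of logs of kernel ratios. Hence I can divide it out. This gives $\E[\one_{\{H\cap A=\varnothing\}}\Psi]$ for bounded functionals $\Psi$ of the kernel field, in particular $\P[H\cap A=\varnothing]$. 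For two-sided hulls, I would put probes on both $\partial^\pm D$ and use $D\setminus A$ simply connected so that each side is a simply connected domain whose Poisson kernel determines it up to the conformal structure. The resulting avoidance probabilities over this class of $A$ (a $\pi$-system generating the hull $\sigma$-algebra, since hulls are determined by which simply connected complements they avoid) determine $\mathsf P$.

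\textbf{Main obstacle.} I expect the main obstacle to be Step 1's construction of the probes with the exact restriction identity in the continuum when $\lambda<1$. At $\lambda=1$, $\SLE_2$ and Poisson kernel martingales suffice, but for $\lambda<1$ one must pass the discrete entangled multipath identity to the limit, controlling boundary behaviour near $x_i,y_i$. The other delicate point is justifying the division by the weight in Step 2. Here I would first try the alternative of iterating Step 1 in $\lambda$: a M\"obius-type inversion using independent loop soups, so that adding an independent $\lambda$-soup restricted to $D\setminus A$ converts weighted avoidance into plain avoidance.
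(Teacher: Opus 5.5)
\textbf{Genuine gap: the weight-removal step in Step~2.} Your architecture is right: loop-catcher-type probes give mixed moments of $[0,1]$-valued kernel coordinates, and moment determinacy pins down their joint law. The failure is in how you remove the weight.

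The displayed identity writing $e^{-\lambda m_D(H,A)}$ as a power of $\mathbf H_{D\setminus(H\cup A)}\mathbf H_D/(\mathbf H_{D\setminus H}\mathbf H_{D\setminus A})$ is false. Brownian loop masses are the continuum analogue of the determinant ratios $d_Vd_{R\setminus S}/(d_{V\setminus S}d_R)$ in~\eqref{eq:finite-transform-intro}, not of Green-function ratios. For example, if $H$ and $A$ nearly touch at a point far from $x,y$, then $m_D(H,A)\to\infty$, while the kernel ratio at $(x,y)$ does not degenerate. Letting probe points on $\partial D$ merge yields at best local, Schwarzian-type information at that boundary point, not the global mass $m_D(H,A)$.

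So ``dividing out the weight'' is unjustified as written. The weight is a measurable function of the kernel coordinates only because those coordinates determine $H$ (given $A$). That injectivity statement is the real crux of the theorem, and you only gesture at it (``determines it up to the conformal structure'').

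\textbf{What the paper does instead.} Once injectivity is available, the detour through a base set $A$, weight removal, and avoidance probabilities is unnecessary. Take $A=\varnothing$: the probes alone give all mixed moments of the kernel coordinates under $\mathsf P$ itself, which fixes their joint law. A Borel injection from the standard Borel space of hulls then gives $\mathsf P=\mathsf Q$.

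This is the paper's proof, with two differences from your plan.
\begin{enumerate}
\item It uses boundary-to-interior Poisson kernels $P_{D_H^\pm}(\xi_\pm,z)$ rather than boundary-to-boundary excursion kernels. The probes come from the interior multipath probes of Proposition~\ref{prop:sec5-continuum-probe} with one endpoint pushed to $\xi$, which avoids the two-boundary-endpoint control you flag as the main obstacle.
\item Injectivity (Lemma~\ref{lem:sec5-poisson-injective}) is immediate. Since $P_U(\xi,\cdot)=C\operatorname{Im}F$ for the normalized conformal map $F:U\to\mathbb H$, equal coordinates force $F_1=F_2$ on a collar of $\xi$, and then $U_1=U_2$ by the identity theorem.
\end{enumerate}

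Your excursion-kernel coordinates are also injective, but you must prove it. Integrating the kernel over rectangles in a free boundary arc gives cross-ratios of a uniformizing map of the side domain, hence the map up to M\"obius; Schwarz reflection and the identity theorem then finish. Your test sets $A\cup\eta$ also disconnect $D$, so they must be filled to become admissible.

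\textbf{Your fallback cannot work.} Removing the weight with an extra loop soup fails because weighted avoidance of $H$ is exactly plain avoidance of the decorated set $\mathcal D_\lambda(H)$. Undoing that decoration is the uniqueness problem itself.
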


Its proof parallels that of Theorem~\ref{thm:full-trace-unique}, using moments of boundary Poisson kernels instead of Green functions.
As a corollary, Theorem~\ref{thm:hull-unique} implies the uniqueness of generalized chordal restriction measures introduced in~\cite{qian2021generalized} with central charges $-2\le c<0$. The same proof strategy also applies to generalized radial and trichordal restriction measures~\cite{qian-trichordal,qian2021generalized}, extending the uniqueness result of~\cite{CG25}. We also expect that the uniqueness of generalized chordal restriction measures can be proved using the boundary variations and Virasoro structures from~\cite{BJ2024} (see also~\cite{gordina2024infinitesimal}).

Now we can identify the two-sided filling hull of the Brownian loop-catcher with SLE.  Let \(a^+\) and \(a^-\) be the two approaches to
\(a\) along \(\partial^+D\) and \(\partial^-D\), respectively. The following Theorem~\ref{thm:sle} follows readily from Theorem~\ref{thm:hull-unique} and~\cite[Proposition~6.2]{qian2021generalized}; see the end of Section~\ref{subsec:sec5-boundary-fans}.
\begin{theorem}[Boundary identification]\label{thm:sle}
For \(\lambda=-c/2\in(0,1]\), let
\(\kappa\in[2,\frac83)\) be in~\eqref{eq:c-kappa}.  
Let $\eta_-$ and $\eta_+$ be the lower and upper boundaries of \(K_\lambda^{D;a,b}\), respectively (i.e.~$\eta_\pm=\ol{\partial D_K^\pm\setminus \partial^\pm D}$). Then \(\eta_-\) is chordal
\(\SLE_\kappa(\kappa-2)\) in \(D\) from \(a\) to \(b\), with force point
at \(a^+\); conditionally on \(\eta_-\), \(\eta_+\) is chordal \(\SLE_\kappa(\kappa-4)\) in the component adjacent
to \(\partial^+D\),
with force point at \(a^-\).\footnote{At \(\kappa=2\), the law of the second curve
can be viewed as the \(\rho\downarrow-2\) limit of chordal \(\SLE_2(\rho)\) with force
point at \(a^-\); equivalently, $\eta_\pm$ are the two prime-end sides
of a chordal \(\SLE_2\) trace.} As a consequence, we have the following coupling between $\SLE_\kappa$ and Brownian excursion:
\[
 \eta_-\cup\eta_+\subset\gamma_{\BE}^{D;a,b}.
\]
\end{theorem}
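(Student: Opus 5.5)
The plan is to identify the law of the two-sided filling hull \(\operatorname{Fill}(K)\), where \(K=K_\lambda^{D;a,b}\), through the hull characterization of Theorem~\ref{thm:hull-unique}\ref{item:two-sided-unique}. We then match it with the generalized chordal restriction measure built from \(\SLE_\kappa(\kappa-2)\) and \(\SLE_\kappa(\kappa-4)\) in \cite[Proposition~6.2]{qian2021generalized}.

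First, check that \(H:=\operatorname{Fill}(K)\) is a random two-sided hull and that it satisfies \eqref{eq:hull-id}. Fix a compact \(A\subset\ol D\) with \(A\cap\{a,b\}=\varnothing\) and \(D\setminus A\) simply connected. Then every connected component of \(A\) meets \(\partial D\) or lies in a region that cannot be enclosed by \(K\) without also meeting it.

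The key deterministic claim is the following. If \(K\cap A=\varnothing\), then \(A\) lies in \(D_K^+\cup D_K^-\), hence \(H\cap A=\varnothing\). Moreover, \(m_D(K,A)=m_D(H,A)\), because a loop disjoint from \(K\) that hits \(A\) stays in one of the components \(D_K^\pm\) and cannot reach the enclosed part of \(H\). Conversely, if \(H\cap A=\varnothing\) then \(K\cap A=\varnothing\) since \(K\subset H\). A loop hitting \(H\) and \(A\) must then cross \(\partial D_K^\pm\subset K\), so it hits \(K\).

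This claim needs care. The point is that simple connectedness of \(D\setminus A\) forces \(A\) away from bounded components of \(D\setminus K\). Indeed, a component of \(A\) strictly inside a bounded component of \(D\setminus K\) would be a compact set whose complement in \(D\) is disconnected, which is impossible. Hence the left side of \eqref{eq:hull-id} for \(H\) equals the left side of \eqref{eq:intro-main-id} for \(K\), and by Theorem~\ref{thm:main} this is \(\Prob[\gamma_{\BE}^{D;a,b}\cap A=\varnothing]\).

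Second, invoke \cite[Proposition~6.2]{qian2021generalized}. For the pair \((\kappa,\lambda)\) related through \eqref{eq:c-kappa} with \(c=-2\lambda\), the hull \(H'\) between a curve \(\eta_-\sim\SLE_\kappa(\kappa-2)\) and a conditional curve \(\eta_+\sim\SLE_\kappa(\kappa-4)\) satisfies
\[
\E[\one_{\{H'\cap A=\varnothing\}}e^{-\lambda m_D(H',A)}]=\Phi_A'(a)^{\alpha}\Phi_A'(b)^{\alpha},
\]
where \(\Phi_A\) is the conformal map from \(D\setminus A\) to \(D\) fixing \(a,b\). Here the restriction exponent is \(\alpha=1\), which is the Brownian excursion exponent, so the right side equals \(\Prob[\gamma_{\BE}\cap A=\varnothing]\). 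I expect the main obstacle to be bookkeeping: matching the exponent and central-charge conventions of \cite{qian2021generalized} and checking that \(\alpha=1\) lies in its admissible range for every \(\kappa\in[2,\frac83)\). The endpoint \(\kappa=2\) needs the degenerate interpretation from the footnote, where \(\rho\downarrow-2\) gives the two sides of \(\SLE_2\), consistent with \(K\) being \(\SLE_2\) at \(\lambda=1\).

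Finally, Theorem~\ref{thm:hull-unique}\ref{item:two-sided-unique} gives \(H\overset{\rm law}=H'\). The lower and upper boundaries \(\eta_\pm\) of \(K\) are exactly the boundaries of \(H\), since \(\partial D_K^\pm=\partial D_H^\pm\). Hence they have the stated joint law.

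For the coupling, use \eqref{eq:decoration} to realize \(\gamma_{\BE}^{D;a,b}=\mathcal D_\lambda(K)\supset K\supset\eta_-\cup\eta_+\), where \(\eta_\pm\subset K\) because \(\partial D_K^\pm\setminus\partial D\subset K\) and \(K\) is closed.
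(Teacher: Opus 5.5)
Your proposal is correct and follows the paper's proof. You show that the two-sided filling hull of \(K_\lambda^{D;a,b}\) satisfies \eqref{eq:hull-id}, note via \cite[Proposition~6.2]{qian2021generalized} with \(\alpha=1\) that the hull between \(\SLE_\kappa(\kappa-2)\) and the conditional \(\SLE_\kappa(\kappa-4)\) also satisfies it, and conclude with Theorem~\ref{thm:hull-unique}\ref{item:two-sided-unique}. Your deterministic check is more detailed than the paper's one-line assertion, but fix one slip: a component of \(A\) inside a hole of \(K\) is ruled out because \(D\setminus A\) would then fail to be simply connected (equivalently, every component of \(A\) must meet \(\partial D\)), not because it would be disconnected.
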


That $\SLE_\kappa$ can be contained in a planar Brownian motion was previously known only at \(\kappa=\frac83\) \cite{lsw-restriction}
and \(\kappa=2\)~\cite{zhan2010}.
Theorem~\ref{thm:sle} gives all the intermediate values. Note that for $c\in(0,1]$, the lower boundary of the closed union of $\gamma_{\BE}^{D;a,b}$ and all its intersecting Brownian loop soup clusters from an independent $\cL_D^{c/2}$ is chordal $\SLE_\kappa(\kappa-2)$ for $\kappa\in(\frac{8}{3},4]$ in~\eqref{eq:c-kappa}~\cite{werner-wu-cle-sle}. (The joint law of lower and upper boundaries can be further identified by~\cite{CG25}.) Thus, Theorem~\ref{thm:sle} further indicates that Brownian loop-catchers are the canonical negative-central-charge counterparts of Brownian loop soup clusters.

Theorem~\ref{thm:full-trace-unique} also has the following application. In dimensions two and three, as shown in~\cite{SS18}, Brownian motion can be decomposed into a simple path and the loops from an independent intensity-one Brownian loop soup that hit it. The authors then asked whether this decomposition characterizes the law of that simple path~\cite[Conjecture~1.3]{SS18}. Taking $c=-2$, Theorem~\ref{thm:full-trace-unique} immediately proves the corresponding two-dimensional chordal case.

Let $\mathfrak m$ be a probability measure on unparametrized simple curves in $\overline\D$ from $-1$ to $1$.  We say that $\mathfrak m$ satisfies the \emph{loop-erased property} if, when $\gamma\sim\mathfrak m$ and \(\cL_\D^1\) is an independent intensity-one Brownian loop soup in $\D$, the closed union of $\gamma$ and all loops in \(\cL_\D^1\) that intersect $\gamma$ has the law of $\gamma_{\BE}^{\D;-1,1}$.

\begin{theorem}\label{thm:le-uniqueness}
Chordal $\SLE_2$ on $\D$ from $-1$ to $1$ is the unique probability measure on unparametrized simple curves satisfying the loop-erased property above.
\end{theorem}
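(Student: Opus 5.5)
The proof reduces Theorem~\ref{thm:le-uniqueness} to Theorem~\ref{thm:full-trace-unique} at $\lambda=1$, together with the known fact that chordal $\SLE_2$ has the loop-erased property.

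\emph{Existence.} First, chordal $\SLE_2$ on $(\D;-1,1)$ satisfies the loop-erased property. This is the continuum Brownian-motion decomposition of~\cite{SS18} (see also~\cite{Ambrosio2020,BS26}), and it also follows from Theorem~\ref{thm:main} and Theorem~\ref{thm:catcher-convergence} at $\lambda=1$. In that case $\bp_1$ is the LERW trace law, and LERW converges to $\SLE_2$ by~\cite{LawlerSchrammWerner2004}, so $K_1^{\D;-1,1}$ is the $\SLE_2$ trace.

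\emph{Uniqueness.} Let $\gamma\sim\mathfrak m$ have the loop-erased property. Then $K:=\gamma$, viewed as a compact set, is connected and $K\subset\ol\D$. I must check that $K\cap\partial\D=\{-1,1\}$. Suppose $\gamma$ touches $\partial\D$ at some other point. The decorated set $\mathcal D_1(K)$ contains $K$, but the Brownian excursion $\gamma_{\BE}^{\D;-1,1}$ almost surely meets $\partial\D$ only at $\pm1$. Since the decorated set has the law of the excursion, the event that $\gamma$ touches elsewhere has probability zero.

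Next, for each compact $A\subset\ol\D$ avoiding $\pm1$, conditioning on $K$ and using the Poisson property of the loop soup gives
\[
\Prob[\mathcal D_1(K)\cap A=\varnothing\mid K]=\one_{\{K\cap A=\varnothing\}}e^{-m_\D(K,A)}.
\]
Taking expectations, $\E[\one_{\{K\cap A=\varnothing\}}e^{-m_\D(K,A)}]=\Prob[\gamma_{\BE}\cap A=\varnothing]$. The same identity holds for $\SLE_2$.

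Theorem~\ref{thm:full-trace-unique} is stated for all compact $A$, but it is only needed for $A$ avoiding $\{\pm1\}$. If $A$ contains $a$ or $b$, then $K\cap A\neq\varnothing$ almost surely, so the functional vanishes for both laws. Hence the two laws of $K$ coincide as laws of random compact sets.

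\emph{Passing from sets to curves.} $\mathfrak m$ is a law on unparametrized simple curves, while the uniqueness above concerns laws of traces. A simple curve from $-1$ to $1$ is determined, up to reparametrization, by its trace and its endpoints. The trace is homeomorphic to an arc, and an arc determines its parametrization up to monotone reparametrization. The orientation is fixed by the endpoints. The map from traces to unparametrized curves is also measurable, being the inverse of a continuous injection on the relevant space. Therefore $\mathfrak m$ equals the $\SLE_2$ law.

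The only real content is Theorem~\ref{thm:full-trace-unique}. The main point to verify is this measurability and identification step, together with the hypothesis $K\cap\partial D=\{a,b\}$, which is the most delicate part.
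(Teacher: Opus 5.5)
Your proposal is correct and follows the paper's proof: the loop-erased property for $\SLE_2$ comes from \cite{SS18}, and uniqueness comes from Theorem~\ref{thm:full-trace-unique} at $\lambda=1$. The extra checks you supply are sound, and the paper leaves them implicit: $K\cap\partial\D=\{-1,1\}$ via $K\subset\mathcal D_1(K)$, the trivial case $A\ni\pm1$, and recovering the curve law from the trace law by a Borel-injection argument.
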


\begin{proof}[Proof of Theorem~\ref{thm:le-uniqueness}, assuming Theorem~\ref{thm:full-trace-unique}]
The loop-erased property is equivalent to
\eqref{eq:intro-main-id} with \(c=-2\) and \(\lambda=1\).  By~\cite[Theorem 1.1]{SS18} (see also~\cite{Ambrosio2020,BS26}), chordal $\SLE_2$ satisfies the loop-erased property. The result follows from Theorem~\ref{thm:full-trace-unique}.
\end{proof}

We mention that uniqueness of the usual generalized restriction measures does not by itself imply Theorem~\ref{thm:le-uniqueness}, because the loop-erased property provides neither the laws of conformal images of $\mathfrak m$ nor the corresponding Radon--Nikodym derivatives. As mentioned before, our proof of Theorem~\ref{thm:full-trace-unique} is based on deriving moments of Green functions in the remaining domain, which can also be extended to the three-dimensional case of~\cite[Conjecture 1.3]{SS18}; see Section~\ref{rmk:3d}.

\subsection{Proof strategies}\label{subsec:organization}

Our proof has three stages, organized around two innovations introduced in this paper: the entangled multipath LERW and the Green function test.

\medskip
\noindent\emph{Finite graphs.}
We first introduce the entangled multipath LERW on arbitrary finite graphs. Fix $r\ge2$ ordered endpoint pairs \((a_i,b_i)_{i=1}^r\), and sample independent
normalized random-walk paths \(\omega_i:a_i\to b_i\).  Chronological loop
erasure of \(\omega_i\) gives its LERW path \(\eta_i\) and an
ordered record list of erased loops.  Starting with \(C_1=\eta_1\),
process these records successively.  For each \(2\le i\le r\), inspect the erased
loops in their erasure order and retain a loop precisely when it
intersects \(C_{i-1}\) or a loop already retained at that stage (and discard it otherwise).  After
all erased loops have been inspected, add \(\supp(\eta_i)\) to obtain
\(C_i\).  We call the final output \(C_r\) the (range of) \emph{entangled multipath LERW}.

The motivation of the entangled multipath LERW is to obtain a random set whose decoration by an
independent intensity-one random-walk loop soup has the law of the union
of the independent random-walk paths.  The construction above satisfies
\[
 \E\!\left[
 \one_{\{C_r\subset R\}}
 e^{-m_V(C_r,V\setminus R)}
 \right]
 =
 \prod_{i=1}^r
 \frac{\Gc_R(a_i,b_i)}{\Gc_V(a_i,b_i)}.
\]
See Section~\ref{subsec:entangled-multipath} for details.
The additionally retained erased loops exactly record interactions with paths processed earlier, which
are absent from the union of the individual LERWs.

\begin{figure}[H]
    \centering
    \begin{minipage}[t]{0.4\textwidth}
        \centering
        \includegraphics[
            width=\linewidth,
            trim={280 140 280 90},
            clip
        ]{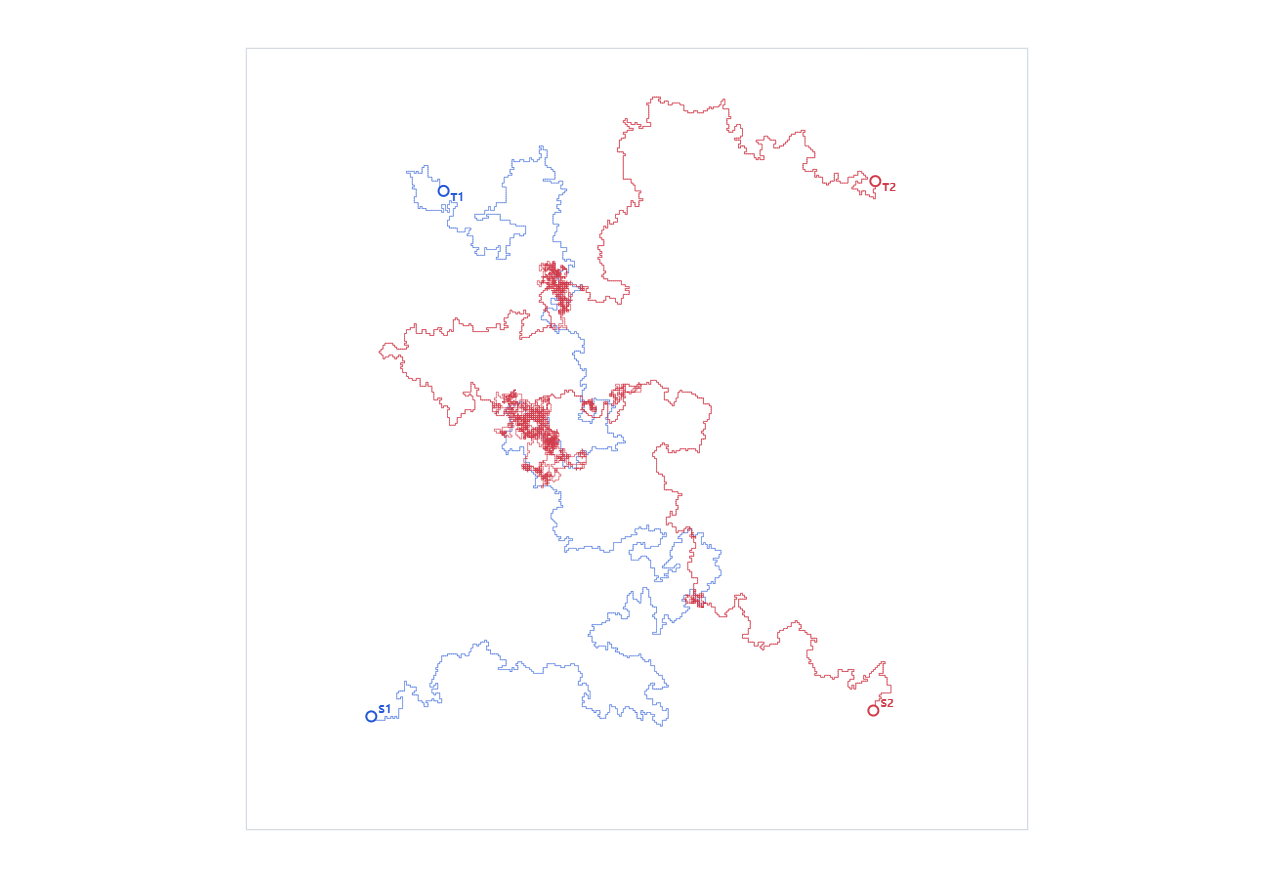}
    \end{minipage}
    \hspace{0.05\textwidth}
    \begin{minipage}[t]{0.4\textwidth}
        \centering
        \includegraphics[
            width=\linewidth,
            trim={280 140 280 90},
            clip
        ]{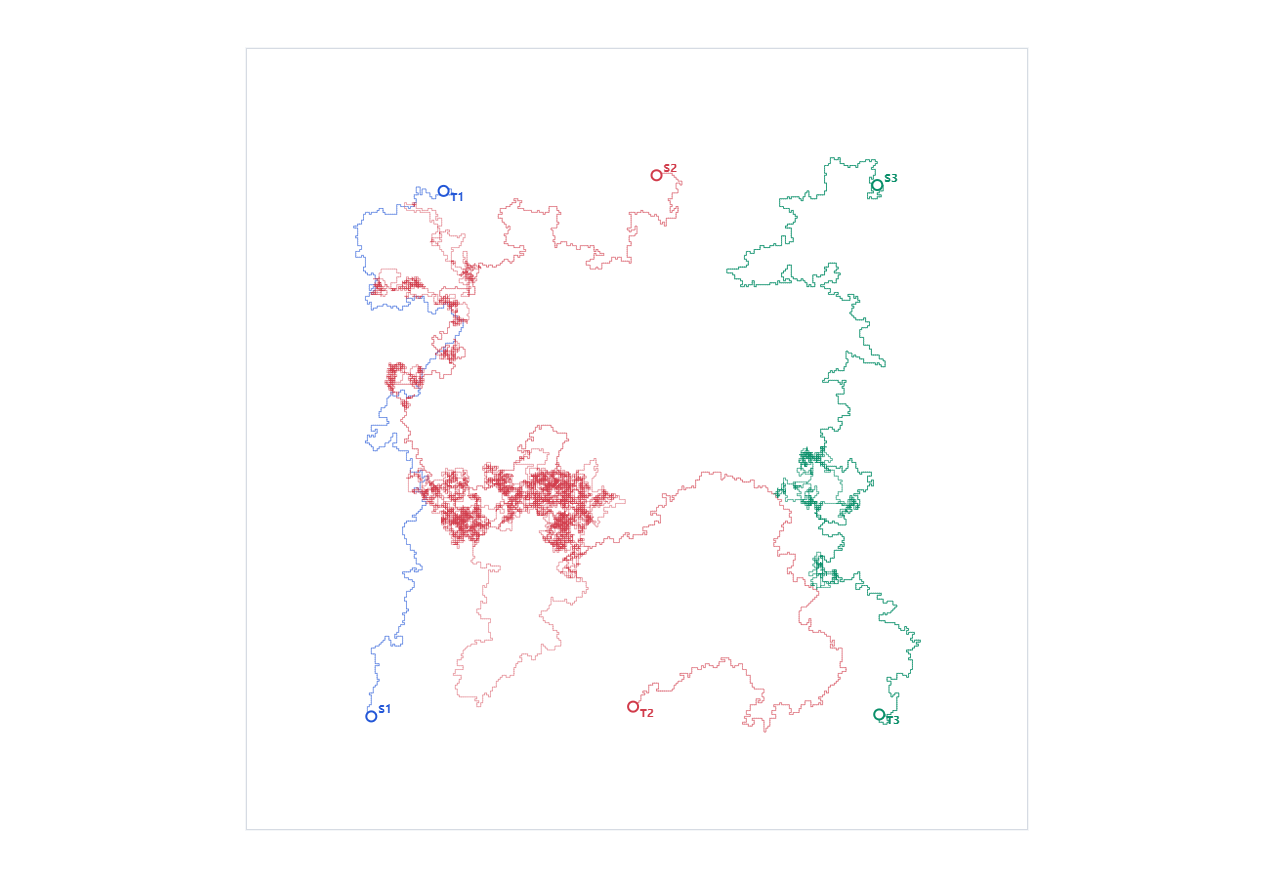}
    \end{minipage}
    \caption{Simulations of entangled multipath LERWs on $\frac{1}{400}\Z^2\cap[-1,1]^2$. \textbf{Left}: $r=2$; \textbf{Right}: $r=3$.}
    \label{fig:entangled-multipath-lerw-simulations}
\end{figure}

We now explain our main idea of the construction of the (time-parametrized) random-walk loop-catcher. For \(0\leq\lambda\leq1\), start from a random walk \(\omega\) and its chronological loop erasure \(\eta\). The random-walk loops attached at the first-visit times of \(\eta\) are deleted independently with probability \(\lambda\). Each retained loop is then cut into its maximal excursions in the complement of \(\eta\), and the entangled multipath loop-erasure is applied to the resulting ordered collection of random-walk excursions. This produces a collection of paths together with further random-walk loops in the complement of \(\eta\). We recursively apply the same decomposition to these loops. Since each recursive call takes place in a strictly smaller vertex set, the procedure terminates after finitely many steps. Substituting the resulting paths back through the recursive layers produces a finite directed path, which is the \emph{time-parametrized} \(\lambda\)-random-walk loop-catcher (and its \emph{range} is the $\lambda$-random-walk loop-catcher in Section~\ref{subsec:finite-main}). At \(\lambda=0\) (resp.~$\lambda=1$), the output is $\omega$ (resp.~$\eta$).
Conversely, inserting an independent random-walk loop soup with intensity \(\lambda\) from the deepest recursive layer back to layer $0$ restores the deleted loops at every stage, and hence recovers the original random walk with its time parametrization. See Algorithms~\ref{alg:time-catcher} and~\ref{alg:time-recovery} for details.

\medskip
\noindent\emph{Subsequential limits to continuum.}
For lattice approximations of the Jordan domain $D$, coupling the random-walk loop-catchers with the
random-walk excursions gives tightness in the Hausdorff topology and connected
subsequential limits. Convergence of random-walk excursions and loop measures passes the discrete identity~\eqref{eq:finite-transform-intro} to~\eqref{eq:intro-main-id}, and the Beurling estimate controls possible discontinuities. The main ingredients in this step come from~\cite{KozdronLawler2005,LawlerTrujillo2007,LawlerLimic2010}. This yields the existence of Brownian loop-catchers, while the uniqueness is left to the next step.

\medskip
\noindent\emph{Green function tests: characterization of continuum limits.}
The second innovation of the paper is to extend the
conformal-restriction testing idea from boundary-attached obstacles to
random interior multipath probes, which captures the moments of Green functions on any interior points. Based on the entangled multipath LERW introduced above and taking subsequential limits, Proposition~\ref{prop:sec5-continuum-probe} constructs, for any finite list of pairs of interior points $(x_i,y_i)$, a random compact \(C\Subset D\) satisfying
\[
 \E_C\!\left[\one_{\{C\cap E=\varnothing\}}e^{-\lambda m_D(C,E)}\right]
 =\prod_i\frac{G_{D\setminus E}(x_i,y_i)}{G_D(x_i,y_i)},\qquad \forall \text{ compact } E\subset\ol D.
\]
Taking such $C$ in~\eqref{eq:intro-main-id}, we then obtain all mixed moments of
\(X_{x,y}(K)=G_{D\setminus K}(x,y)/G_D(x,y)\).  Different pairs give joint products, while repeated pairs give powers.
We emphasize that our multipath construction is crucial as a single-path probe gives only first moments of Green functions.
Since these coordinates take values in \([0,1]\), their mixed moments
determine their joint laws.
We also show that the collection of Green functions on a countable dense set of
pairs determines the law of $K$ (see Lemma~\ref{lem:sec5-green-injective}). Consequently, all subsequential
limits obtained in the above stage coincide, proving full convergence in
Theorem~\ref{thm:catcher-convergence} and conformal invariance. Boundary probes give the analogous hull characterization in Theorem~\ref{thm:hull-unique}, and comparing with the conformal restriction properties of SLEs~\cite{dubedat-rho,qian2021generalized} then yields Theorem~\ref{thm:sle}. Theorem~\ref{thm:one-arm} follows from comparing Brownian loop-catchers and $\SLE_\kappa$ loop measures, and then applying a Tauberian theorem.

At a conceptual level, this idea of Green function tests can be viewed as the negative central charge extension of~\cite{CG25} in proving the uniqueness of generalized restriction measures. We stress again that the Green function test does not require any simple connectedness, and is also valid for three dimensions (see below).

\subsection{The Green function test in three dimensions}\label{rmk:3d}

In this section, we outline the proof of the three-dimensional (3D) case of~\cite[Conjecture 1.3]{SS18} using the Green function test. A fully detailed proof will appear in our future work~\cite{CCLS26}.

To be explicit, let \(B:=\{x\in\R^3:|x|<1\}\) be the unit ball in $\R^3$, and let \(\Gamma\) be the space of simple curves $K$ from \(0\) to \(\partial B\). As in the two-dimensional case, we say that a probability measure $\mathfrak m$ on $\Gamma$ satisfies the \emph{loop-erased property} if, when $K\sim\mathfrak m$ and \(\cL_B^1\) is an independent intensity-one Brownian loop soup in $B$, the closed union of $K$ and all loops in \(\cL_B^1\) that intersect $K$ has the same law as the trace of the 3D standard Brownian motion starting from $0$ until hitting $\partial B$. Then
\begin{equation}\label{thm:appB-conditional-uniqueness}
\textit{There is a unique probability measure } \mathfrak m \textit{ on } \Gamma \textit{ that satisfies the loop-erased property above}.
\end{equation}

Here we sketch the proof of \eqref{thm:appB-conditional-uniqueness}. Let $K$ be sampled from $\mathfrak m$. Note that if a local piece $J$ of $K$ were polar, then with positive probability no Brownian loop attached to $K$ would enter the neighborhood of $J$, leaving a bare polar $J$ of dimension at most \(1\) (see e.g.~\cite[Theorem 5.9.6]{ArmitageGardiner2001}), whereas every nonempty local piece of a 3D Brownian trace has dimension \(2\).
Thus, every piece of $K$ is nonpolar in $\R^3$, and hence $K$ is determined by all the Green functions in $B\setminus K$. Then compared with the proof of Theorem~\ref{thm:full-trace-unique} in Section~\ref{subsec:sec5-full-trace},
only two new dimension-dependent inputs are needed (recall that the construction of the entangled multipath LERW holds on any finite graph).  First, one
needs to show that each subsequential scaling limit of the entangled multipath LERW in 3D is hittable (in the sense of~\cite[Theorem 3.1]{SS18}).  Second, one needs a 3D analogue of the loop-mass convergence in Section~\ref{sec:continuum}.  The first point essentially follows from the construction of entangled multipath LERW, see Lemma~\ref{lem:multipath-initial-segment}, which shows every part of the entangled multipath LERW is locally an ordinary LERW. Hence the quantitative hittability estimates in~\cite[Lemmas 3.2 and 3.3]{SS18} apply, and sufficiently strong polynomial tails there allow a direct union bound.  For the second point, the required coupling and
approximation estimates can be obtained from~\cite{SS18} (see also~\cite{Qian2026}). We will provide complete details in~\cite{CCLS26}.

Historically, the existence of the scaling limit of LERW on $\Z^3$ was first established in~\cite{kozma-lerw-3d}, together with its invariance under rotations and
dilations. Subsequent work proved the existence of
the growth exponent and identified it with the Hausdorff dimension of
the scaling limit, established convergence in the natural
parametrization and sharp H{\"o}lder regularity, and showed that the
limiting occupation measure is equivalent to the corresponding
Minkowski content~\cite{ShiGrowth,ShiDimension,LSNatural,LSHolder,HLSMinkowski}.
However, what remained open was an intrinsic characterization of the limiting
law, independent of lattice approximation. This was formulated in~\cite[Conjecture~1.3]{SS18} as the conjecture that the loop-erased property characterizes the scaling limit, which is stated in \eqref{thm:appB-conditional-uniqueness}. As noted in~\cite{SS18}, this characterization gives a new route
to the existence and rotational invariance of the scaling limit; it
also provides a route to inversion invariance. In~\cite{CCLS26}, we will
use it to establish the universality of 3D LERW and,
through Wilson's algorithm, of the 3D uniform spanning
tree.

We end this section by discussing the Brownian loop-catchers in 3D.
Note that random-walk loop-catchers can be defined on 3D lattice approximations; their traces are still tight and have subsequential scaling limits. We expect that our Green function test characterizes these subsequential limits as well: according to the construction in Algorithm~\ref{alg:time-catcher}, each subsequential limit locally stochastically dominates the scaling limit of 3D LERW~\cite{kozma-lerw-3d,SS18} and is therefore hittable. We will provide the full details in~\cite{CCLS26}. A further question is the Hausdorff dimension of a 3D Brownian loop-catcher $K^{(3)}_\lambda$.  At $\lambda=0$, one recovers a 3D Brownian trace (of Hausdorff dimension two), whereas at $\lambda=1$, it becomes the scaling limit of 3D LERW whose dimension is numerically $\beta\approx1.624$~\cite{Wilson2010}.  Does $K^{(3)}_\lambda$ have dimension two throughout $0\leq\lambda<1$ and then undergo a phase transition at $\lambda=1$ (similar to Theorem~\ref{thm:one-arm})?  Or does its Hausdorff dimension decrease continuously from $2$ to $\beta$? Numerical experiments appear to support the latter scenario, while a rigorous proof remains open. 

\subsection{Outlook}
\label{sec:future-directions}

We list several further directions and open questions arising from this work.

\begin{enumerate}[label=\textbf{\arabic*.},leftmargin=*]

\item \textbf{Analytic continuation from exact Brownian loop soup formulas.}
Recently, using couplings between SLE, Liouville quantum gravity, and Brownian loop soups, we computed exact probabilities for several natural topological events of Brownian loop soup clusters, including the touching probability~\cite{touching} and the disconnection probability~\cite{CFSX25,BM-intersection}.  The formulas are analytic in $c$ for $c\in[0,1]$ and admit analytic continuations to $c<0$. As indicated in this paper, we expect these analytic continuations to coincide precisely with the probabilities of the corresponding events for Brownian loop-catchers with $c<0$.  We plan to prove this rigorously in the future.

\item \textbf{Natural parametrization of Brownian loop-catcher.}
Section~\ref{sec:finite-positivity} realizes the finite-graph loop-catcher as the range of a directed path, such that adding loops from an independent random-walk loop soup recovers the original random walk with time parametrization.  We expect that for $0<\lambda<1$, after taking the normalization $|\log\delta|^{-\lambda}$, the parametrized random-walk loop-catcher on $\delta\Z^2\cap\D$ will converge to the corresponding Brownian loop-catcher with its natural parametrization. (Theorem~\ref{thm:catcher-convergence} currently gives only the convergence as random compact sets.)  At $\lambda=1$, this reduces to the natural parametrization of LERW and $\SLE_2$~\cite{lawler-viklund-lerw-nat}; while for $0<\lambda<1$, the Brownian loop-catcher is not a simple curve and requires further delicate estimates.

\item \textbf{A purely continuum construction of Brownian loop-catchers.}
Our construction of $K_\lambda$ is obtained by taking the scaling limit of random-walk loop-catchers on lattice approximations.  Can Brownian loop-catchers instead be constructed directly in the continuum, without reference to a lattice approximation?  It would be particularly interesting to find an explicit continuum operation that interpolates between the Brownian trace at $\lambda=0$ and its loop erasure at $\lambda=1$.

\item \textbf{Other SLEs inside planar Brownian motion.}
Theorem~\ref{thm:sle} extracts $\SLE_\kappa$-type curves with any $\kappa\in[2,\frac{8}{3}]$ from a planar Brownian trace by taking boundaries of Brownian loop-catchers.  Can one find $\SLE_\kappa$-type curves with other values of $\kappa$ inside planar Brownian motion?  In particular, can one find $\SLE_\kappa$ traces with $\kappa<2$?  This would further help to understand the geometric features of planar Brownian motion.

\end{enumerate}

\medskip
\noindent
\textbf{Organization of the paper.}
Section~\ref{sec:finite-positivity} constructs the time-parametrized random-walk loop-catcher on finite graphs and proves Theorem~\ref{thm:finite-positivity}. Section~\ref{sec:continuum} provides necessary convergence inputs, and shows the existence of Brownian loop-catchers. In Section~\ref{sec:sle-proof}, we use the Green function test to establish Theorems~\ref{thm:full-trace-unique} and~\ref{thm:hull-unique}, thus proving the full convergence (Theorem~\ref{thm:catcher-convergence}), boundary identification with SLE (Theorem~\ref{thm:sle}), as well as the non-existence of Brownian loop-catcher when $\lambda>1$. This completes Theorem~\ref{thm:main}. Section~\ref{sec:one-arm} proves the one-point density of Brownian loop-catcher, i.e.~Theorem~\ref{thm:one-arm}. Appendix~\ref{app:finite-positivity} gives an alternative linear-algebraic proof of Theorem~\ref{thm:finite-positivity}.

\medskip
\noindent
\textbf{Acknowledgments.}
We are grateful to Yifan Gao, Ewain Gwynne, Saraí Hernández-Torres, Tom Hutchcroft, Richard Kenyon, Greg Lawler, Xinyi Li, Wei Qian, Scott Sheffield, Daisuke Shiraishi, Xin Sun, Wendelin Werner, and Baojun Wu for many helpful discussions on various aspects of this work. We particularly thank Xinyi Li for suggesting the names \emph{Brownian loop-catcher} and \emph{entangled multipath LERW}. We also thank the organizers of the workshop \emph{Random Explorations: From Random Walks to Random Geometry}, held at the University of Chicago from July 6 to 10, 2026, in celebration of Greg Lawler's 70th birthday. Part of this work was presented there.

This work was supported by the National Key R\&D Program of China (No. 2021YFA1002700), Beijing Natural Science Foundation (JQ26001), and the National Natural Science Foundation of China (Grant No.12526204).

\section{The random-walk loop-catcher}\label{sec:finite-positivity}

Recall the notation from Section~\ref{subsec:finite-main}.  We fix a finite
set $V$, a nonnegative substochastic matrix $Q=(Q_{xy})_{x,y\in V}$ with
$\rho(Q)<1$, nonnegative entrance and exit vectors $\balpha,\bbeta$, and
$Z_V=\balpha^{\mathsf T}(I-Q)^{-1}\bbeta>0$.  For $R\subset V$, recall
$d_R$, $Z_R$, and $\bh(R)$ from~\eqref{eq:z-d-bh}, with
$d_\varnothing=1$ and $Z_\varnothing=0$.  For $A,B\subset V$, write
$Q_{A,B}=(Q_{xy})_{x\in A,y\in B}$ and $Q_A=Q_{A,A}$, and put
$\Gc_R=(I-Q_R)^{-1}$.  For a finite directed path or loop $\xi$, let
$q(\xi)$ be the product of its edge weights and $\supp(\xi)$ its vertex set.

For $a,b\in R$ with $\Gc_R(a,b)>0$, a normalized random walk $\omega:a\to b$ in $R$ has law $\Prob[\omega]=\frac{q(\omega)}{\Gc_R(a,b)}.$
We will always view $\omega$ as a directed path.

\subsection{The entangled multipath LERW}
\label{subsec:entangled-multipath}

We first introduce the following entangled multipath LERW.  Let $R\subset V$, and let $\omega$ be
a directed path in $R$ from $a$ to $b$, possibly of length zero.  Its
chronological loop erasure produces
\[
 \omega\longmapsto P(\omega)
 =\bigl(\eta;\ell_1,\ldots,\ell_m\bigr),
 \qquad \eta=\LE(\omega),
\]
where $\eta$ is self-avoiding and $\ell_1,\ldots,\ell_m$ are the simple
directed loops in erasure order; loops of length one are included.  We
identify directed loops under cyclic shifts of the starting vertex (but not
under reversal), and distinguish repeated copies.  We call $P(\omega)$ the
\emph{record} of $\omega$.

Generally, a \emph{record} is a tuple of this form, modulo adjacent exchanges
of directed simple loops with disjoint supports. For a record $P
 =\bigl(\eta;\ell_1,\ldots,\ell_m\bigr)$, let $\prec$ be the transitive closure of
\[
 \ell_i\prec\ell_j
 \quad\text{if }i<j\text{ and }
 \supp(\ell_i)\cap\supp(\ell_j)\ne\varnothing,
 \qquad
 \ell_i\prec\eta
 \quad\text{if }
 \supp(\ell_i)\cap\supp(\eta)\ne\varnothing.
\]
We call $\prec$ the \emph{entangled order}.  A record is \emph{legal} if
every directed simple loop precedes $\eta$.  Chronological loop erasure gives a
weight-preserving bijection between directed paths and legal records;
see \cite[Theorem~A.5]{Hel16}.  For a legal record
$P=(\eta;\ell_1,\ldots,\ell_m)$, write
\[
 q(P)=q(\eta)\prod_{j=1}^m q(\ell_j),
 \qquad
 \supp(P)=\supp(\eta)\cup\bigcup_j\supp(\ell_j).
\]
Thus $q(P(\omega))=q(\omega)$.

Fix $A\subset R$.  Given a legal record $P=(\eta;\ell_1,\ldots,\ell_m)$,
extend the entangled order by setting $A\prec\ell_k$ when $A\cap\supp(\ell_k)\ne\varnothing$, and take the transitive closure.  Choose a representative, start with $A_0=A$, and inspect its directed simple loops.  If
\[
  \supp(\ell_k)\cap A_{k-1}\ne\varnothing,
\]
retain $\ell_k$ and put
\[
  A_k=A_{k-1}\cup\supp(\ell_k);
\]
otherwise discard it and put $A_k=A_{k-1}$.  
Let \(\Phi_A(P)\) be the record obtained by retaining \(\eta\) and precisely those erased loops \(\ell\) satisfying \(A\prec\ell\). Then \(\Phi_A(P)\) is still legal.

\begin{definition}[entangled multipath loop-erasure]\label{def:entangled}
For $r\ge1$ and an ordered tuple of directed paths $\omega_i:a_i\to b_i$, $1\le i\le r$, put
\begin{equation}\label{eq:sequential-reduction}
 H_i=P(\omega_i),\qquad C_0=\varnothing,\qquad
 P_i=\Phi_{C_{i-1}}(H_i),\qquad
 C_i=C_{i-1}\cup\supp(P_i).
\end{equation}
Let $\gamma_i$ be the directed path corresponding to the record $P_i$ (equipped with the time parametrization).  We call $(\gamma_1,\ldots,\gamma_r)$
the entangled multipath loop-erasure of $(\omega_1,\ldots,\omega_r)$, whose range is $C_r$.
\end{definition}

Note that for $r=1$, no erased loop is retained, and $\gamma_1=\LE(\omega_1)$. When $(\omega_1,\ldots,\omega_r)$ is sampled from independent normalized random walks, we call the corresponding $(\gamma_1,\ldots,\gamma_r)$ the \emph{entangled multipath LERW}.
The following proposition establishes the recovery property: the original tuple of random walks can be recovered by adding loops to the entangled multipath LERW from a common independent random-walk loop soup.

\begin{proposition}\label{prop:entangled-recovery}
Let $(P_1,\ldots,P_r)$ be the output records of independent normalized random walks
$\omega_i:a_i\to b_i$ ($1\le i\le r$) in Definition~\ref{def:entangled}, and let $\gamma_i$ be the corresponding
 entangled multipath LERW.  Independently sample an unrooted random-walk loop soup in $R$ with intensity $1$, and assign each loop that intersects $C_r$ to the smallest $i$ for which
it intersects $\gamma_i$, and then to the earliest path time $s$ on $\gamma_i$
at which such an intersection occurs.  If the loop visits $\gamma_i(s)$ more than once, choose uniformly one of those visits as its starting time. For each time on the entangled multipath LERW, order its assigned random-walk loops by an independent
uniform permutation and insert them in that order. Then conditioned on
$(P_1,\ldots,P_r)$, the resulting tuple of paths has the same law as the conditional law of $(\omega_1,\ldots,\omega_r)$ given $(P_1,\ldots,P_r)$.
\end{proposition}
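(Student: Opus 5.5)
We will prove Proposition~\ref{prop:entangled-recovery} by reducing it to the single-path statement of~\cite{SS18,Hel16} (loop insertion into a LERW recovers the random walk) together with a bookkeeping bijection on records. The main work is to identify the conditional law of the discarded loops given the outputs.

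\emph{Step 1 (records and weights).} By the weight-preserving bijection $\omega\mapsto P(\omega)$ of~\cite[Theorem~A.5]{Hel16}, the tuple $(\omega_1,\dots,\omega_r)$ has joint weight $\prod_i q(H_i)/\Gc_R(a_i,b_i)$. For each $i$, split the erased loops of $H_i$ into the retained ones (those in $P_i$) and the discarded ones, which form a sub-record $D_i$. We show that, given $C_{i-1}$ and $P_i$, the admissible families $D_i$ are exactly the collections of directed simple loops, together with an entangled order among them, that avoid $C_i$. Indeed, a discarded loop cannot intersect $C_{i-1}$ or any retained loop, since otherwise it would have been retained. It also cannot intersect $\eta_i$ except through an entanglement chain that would force it to be retained. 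Conversely, any legal interleaving of such loops with $P_i$ yields a legal $H_i$ whose image under $\Phi_{C_{i-1}}$ is $P_i$. Legality of $H_i$ requires every loop of $D_i$ to precede $\eta_i$, so these loops must in fact hit $\supp(P_i)$ in the appropriate order. The precise statement is therefore the following. Given $P_i$, the discarded loops are the loops of the record that precede $\eta_i$ only through chains avoiding $C_{i-1}$. Reinserting them is equivalent to choosing a path $\omega_i'$ in $R\setminus C_{i-1}$ together with attachments.

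\emph{Step 2 (conditional law as a loop soup).} Summing the weights over the discarded parts, for fixed $(P_1,\dots,P_r)$ the total weight factorizes as $\prod_i q(P_i)$ times a sum over loop configurations. By the standard identity relating the total weight of legal loop insertions to the loop measure (the same computation behind $\det$/loop-mass formulas~\cite[Prop.~5.2]{Lawler2018}), inserting loops at each vertex $\gamma_i(s)$ in the region $R\setminus(C_{i-1}\cup\{\gamma_i(0..s-1)\})$ contributes $\exp\{\mu(\text{loops in }R\text{ hitting }\gamma_i(s)\text{ but not }C_{i-1}\cup\gamma_i[0,s))\}$. This is exactly the Poisson law of loops from an intensity-one soup assigned to $(i,s)$ by the rule "smallest $i$, then earliest $s$". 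The rules for choosing a uniform start among multiple visits and a uniform permutation of loops at each time are the usual conversion between unrooted soup loops and the rooted loop concatenations generated at a vertex, which is the Lawler--Werner/Le~Jan decomposition of a random walk into its LERW and soup loops. We apply this decomposition sequentially to each $i$ in the domain with the previous clusters taken into account.

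\emph{Step 3 (consistency and main obstacle).} Finally, we check that the insertion map sends the conditional soup law to the conditional law of $(\omega_i)$. We do this by induction on $r$. The case $r=1$ is the classical statement. For the inductive step, we condition on $(\omega_1,\dots,\omega_{r-1})$ and use that the soup loops not assigned to indices $<r$ are exactly a soup restricted to loops avoiding $C_{r-1}$ in the appropriate sense. The main obstacle is Step 1: verifying that the entangled-order description of $\Phi_{C_{i-1}}$ makes the set of possible reinsertions a product over times $s$ of free loop collections. In particular, the retained loops of $P_i$ are themselves attached to $C_{i-1}$ and lie on $\gamma_i$, so a discarded loop hitting one of them would contradict discarding. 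We will handle this by comparing the legality constraint on $H_i$ with the legality of $P_i$, using the retention rule, and by checking carefully the modulo-exchange equivalence of records.
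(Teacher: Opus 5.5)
Your overall architecture matches the paper's. You reduce to the fiber of $\Phi_{C_{i-1}}$ over $P_i$, and you identify the conditional law of the discarded material as independent ordered loop lists at the new vertices of $\gamma_i$, living in the shrinking domains $R\setminus(C_{i-1}\cup\gamma_i[0,s))$. You then match these lists with the soup via uniform rooting and uniform ordering. Your Step~2 region is the right one, and citing the classical single-vertex rooting computation is fine.

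However, the step you flag as the ``main obstacle'' is the actual content of the proof: it is the paper's Lemma~\ref{lem:one-step}. What you write toward it is incorrect. A discarded loop need not avoid $C_i$, nor the retained loops. Discarding only requires that no chain leads from $C_{i-1}$ up to the loop. So an erased loop can be discarded while meeting $\eta_i$ and also meeting a retained loop that is erased later. For instance, take $A=\{x\}$ and $\omega=(a,u,v,u,v,x,v,b)$. Then $\eta=(a,u,v,b)$, the loop $\ell_1=(u,v)$ is discarded, $\ell_2=(v,x)$ is retained, and $\supp(\ell_1)\cap\supp(\ell_2)=\{v\}$.

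Read literally, your Step~1 description (discarded loops avoid $C_i$) would make the fiber trivial, since no such loop could precede $\eta_i$. Your replacement condition (``precede $\eta_i$ only through chains avoiding $C_{i-1}$'') constrains successors rather than predecessors, and is also not the right criterion. Step~2 therefore silently assumes a product structure of unconstrained loop lists that Step~1 never establishes.

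What is missing is the explicit bijection. List $\supp(P_i)\setminus C_{i-1}$ as $u_1,\dots,u_m$ in first-visit order along $\gamma_i$, and put $D_j=R\setminus(C_{i-1}\cup\{u_1,\dots,u_{j-1}\})$. One must show that $\{H:\Phi_{C_{i-1}}(H)=P_i\}$ is in weight-multiplicative bijection with arbitrary tuples of closed walks at $u_j$ in $D_j$, inserted at the first-visit times.

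The forward direction needs an assignment procedure. The discarded loops form a $\prec$-down-set avoiding $C_{i-1}$. At $u_j$, take the last unassigned discarded loop through $u_j$ together with all its unassigned predecessors; with the trivial path $(u_j)$ these form a legal record, i.e.\ a closed walk at $u_j$ inside $D_j$. Then argue by following an increasing chain to $P_i$ that every discarded loop gets assigned.

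The converse needs that an inserted walk avoids $C_{i-1}$ and all earlier vertices of $\gamma_i$. Hence each new erased loop precedes every retained loop it meets, and is exactly what $\Phi_{C_{i-1}}$ discards. The walk at $u_j$ may freely visit later $u_k$'s, including vertices on retained loops, as in the example above.

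Only with this freeness does the fiber weight become $\prod_j\Gc_{D_j}(u_j,u_j)$, which gives the conditional law $\prod_s q(\xi_s)/\Gc_{D_j}(u_j,u_j)$ that your Step~2 Poisson matching then reproduces. Your induction on $r$ in Step~3 is fine, though Poisson restriction to the disjoint classes of loops assigned to different $(i,j)$ already gives the required independence directly.
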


The $r=1$ version of Proposition~\ref{prop:entangled-recovery} reduces to the classical decomposition of a random walk path via its loop erasure and an independent random-walk loop soup, see e.g.~\cite[Proposition 5.9]{Lawler2018}.
To prove Proposition~\ref{prop:entangled-recovery}, we need the following lemma.

\begin{lemma}\label{lem:one-step}
Fix $A\subset R$,  let $H$ be a legal record from $a$ to $b$, and set
$P=\Phi_A(H)$.  List $\supp(P)\setminus A$ as $u_1,\ldots,u_m$ in first-visit
order along the directed path $\gamma$ corresponding to $P$. For each $1\le j\le m$, let $t_j$ be the first-visit time of $u_j$
and
\[
 D_j=R\setminus\bigl(A\cup\{u_1,\ldots,u_{j-1}\}\bigr).
\]
Then there are unique ordered lists $(\ell_{j,s})_{1\le s\le n_j}$ of loops in
$D_j$ from $u_j$ to $u_j$, with no intermediate visit to $u_j$, whose insertion
in $\gamma$ at $t_j$ reconstructs $H$.  Conversely, every such collection of lists determines one legal
record $H$ with $\Phi_A(H)=P$, and
\begin{equation}\label{eq:positive-fiber-weight}
 q(H)=q(P)\prod_{j=1}^m\prod_{s=1}^{n_j}q(\ell_{j,s}).
\end{equation}
Consequently,
\begin{align}
 d_{R\setminus A}
 \sum_{\substack{H\text{ legal in }R\\\Phi_A(H)=P}}q(H)
 &=q(P)d_{R\setminus(A\cup\supp(P))},\label{eq:fiber}\\
 \Gc_R(a,b)d_{R\setminus A}
 &=\sum_{\substack{P:a\to b\text{ legal in }R\\\Phi_A(P)=P}}
 q(P)d_{R\setminus(A\cup\supp(P))}.\label{eq:one-step}
\end{align}

As a corollary, suppose $H$ is the record of a normalized random walk from $a$ to $b$ in $R$. Then conditioned on $P=\Phi_A(H)$, the lists are
independent and
\begin{equation}\label{eq:rooted-walk-law}
 \Prob[(\ell_{j,1},\ldots,\ell_{j,n_j})=(\xi_1,\ldots,\xi_d)\mid P]
 =\frac{\prod_{s=1}^d q(\xi_s)}{\Gc_{D_j}(u_j,u_j)}.
\end{equation}
\end{lemma}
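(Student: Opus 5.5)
Writing $H=(\eta;\ell_1,\dots,\ell_M)$ for the legal record, I would prove the lemma by comparing the Helmer bijection between legal records and directed paths (\cite[Theorem~A.5]{Hel16}) in two different domains. Rather than decomposing $H$ directly, I use the complementary split of its loops: those with $A\prec\ell$, which are kept in $P$, and those with $A\not\prec\ell$, which are discarded.

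\textbf{Step 1: structure of the discarded loops.} The discarded loops are closed downward under $\prec$ among loops. If $\ell'\prec\ell$ and $A\prec\ell'$, then $A\prec\ell$, so anything below a discarded loop is discarded too. Moreover, a discarded loop $\ell$ never meets $A$, and $\ell\not\succ\ell'$ for any kept loop $\ell'$ with which it shares support. Hence, through adjacent exchanges of disjoint loops, every discarded loop can be moved before all kept loops, giving a representative in which they come first.

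\textbf{Step 2: the kept loops and the lists.} Next I show that each kept loop passes through some vertex of $\supp(P)\setminus A$. This is where I use legality: every kept loop must precede $\eta$, and a kept loop meeting $A$ already lies in $\supp(P)$. I then run chronological loop erasure on the path $\gamma$ of $P$ after inserting the lists. The natural construction is to cut the discarded part, read in its entangled order, at its first visits to $u_1,u_2,\dots$. Concretely, I reconstruct the path $\omega_H$ of $H$ and look at its excursions. At the first visit of $\omega_H$ to $u_j$, the walk performs loops rooted at $u_j$ before it continues along $\gamma$; these are exactly the loops erased at $u_j$ in the LE of $\omega_H$, restricted to those that avoid $A\cup\{u_1,\dots,u_{j-1}\}$.

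This needs an important correction. The vertex set $\supp(P)\setminus A$ includes vertices of the retained loops, and $\gamma$ visits those vertices inside the retained loops. So the cleaner formulation is this: $\gamma$ is a genuine path, and $H$ is the record of the path $\omega_H$ obtained by inserting at $t_j$ rooted loops in $D_j$, split at their returns to $u_j$. The heart of the proof is to show that $\Phi_A$ of the record of such an $\omega_H$ equals $P$. Any inserted loop lives in $D_j$, so it avoids $A$ and all earlier $u_i$. Its loop-erased pieces therefore never become $\prec$-connected to $A$ except through vertices $u_j$, and those are handled by the retained structure. The delicate point is to show that the discarded loops do not alter the kept loops' erasure; I would argue this by induction on $j$, following the chronological erasure.

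\textbf{Step 3: the identities and the corollary.} Uniqueness of the lists follows from the bijection between paths and records. Weight multiplicativity \eqref{eq:positive-fiber-weight} is immediate, since insertion concatenates the weights.

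Summing over the lists, each factor gives $\sum_{n}(\text{first-return loop mass})^n=\Gc_{D_j}(u_j,u_j)$. The telescoping determinant identity $\prod_j\Gc_{D_j}(u_j,u_j)=d_{R\setminus A}/d_{R\setminus(A\cup\supp P)}$ then yields \eqref{eq:fiber}. Summing \eqref{eq:fiber} over the fixed points $P$ of $\Phi_A$, and using $\sum_H q(H)=\Gc_R(a,b)$, gives \eqref{eq:one-step}.

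The conditional law \eqref{eq:rooted-walk-law} follows by dividing \eqref{eq:positive-fiber-weight} by the fiber sum. Its product form gives the independence of the lists.

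I expect Step 2, the invariance $\Phi_A(H)=P$ together with injectivity, to be the main obstacle.
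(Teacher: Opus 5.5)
Your Step~3 bookkeeping matches the paper: summing each list to $\Gc_{D_j}(u_j,u_j)$, telescoping, summing over fixed points of $\Phi_A$, and dividing to get the conditional law. The combinatorial core of the lemma, however, is not proved.

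\textbf{The gap.} The lemma says that insertion of lists is a bijection onto the fiber $\{H:\Phi_A(H)=P\}$. Its essential half is surjectivity: \emph{every} legal $H$ with $\Phi_A(H)=P$ is $\gamma$ with closed walks in $D_j$ inserted at the times $t_j$. Your proposal never establishes this.

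Your first read-off rule takes the loops performed at the first visit of $\omega_H$ to $u_j$, restricted to those avoiding $A\cup\{u_1,\dots,u_{j-1}\}$. This rule is wrong, because $D_i\ni u_j$ for $j>i$. For example, take $A=\{a\}$, $\gamma=(a,u_1,u_2)$, the single loop $(u_1,u_2,v,u_2,u_1)$ inserted at $u_1$, and nothing at $u_2$. The path $(a,u_1,u_2,v,u_2,u_1,u_2)$ first visits $u_2$ inside the walk attached at $u_1$. The loop $(u_2,v,u_2)$ performed there avoids $A\cup\{u_1\}$, so your rule would also put it in the list at $u_2$.

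Your ``correction'' then treats only the converse direction, and even that is deferred to an unspecified induction. There are three further problems:
\begin{enumerate}
\item Moving the discarded loops to the front (Step~1) does not cut them into rooted closed walks.
\item The Step~2 claim that every kept loop meets $\supp(P)\setminus A$ is false: a self-loop at a vertex of $A$ is kept. It is also not what is needed.
\item Uniqueness does not follow from the path--record bijection. That bijection identifies $H$ with a path, but says nothing about how many ways that path splits as $\gamma$ plus insertions.
\end{enumerate}

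\textbf{The missing idea} is the paper's greedy assignment in the entangled order.
\begin{enumerate}
\item Process $u_1,\dots,u_m$ in turn. The still-unassigned discarded loops through $u_j$ pairwise intersect, so they are totally ordered by $\prec$. Assign the last of them together with all its unassigned predecessors.
\item This down-set, with trivial terminal path $(u_j)$, is a legal record. By Helmer's bijection it is a closed walk at $u_j$, which you cut at its returns to $u_j$. It lies in $D_j$ because discarded loops avoid $A$ and, after step $i$, no unassigned loop visits $u_i$.
\item Exhaustion comes from legality. A discarded loop has a $\prec$-chain to $\eta$. Where this chain first passes from a discarded loop into $P$, the contact is at a vertex of $\supp(P)\setminus A$, i.e.\ at some $u_j$. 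This forces assignment by step $j$.
\item For the converse, retained loops erased before $t_j$ lie in $A\cup\{u_1,\dots,u_{j-1}\}$. So each loop inserted at $t_j$ is erased before every retained loop it meets, and nothing in $P$ precedes an inserted loop. Hence $\Phi_A$ discards exactly the inserted loops, and the greedy procedure returns exactly the inserted lists. This is what yields uniqueness.
\end{enumerate}

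\textbf{A smaller slip.} Your telescoping ratio is inverted. Since $\Gc_D(u,u)=d_{D\setminus\{u\}}/d_D$, you get $\prod_j\Gc_{D_j}(u_j,u_j)=d_{R\setminus(A\cup\supp(P))}/d_{R\setminus A}$, which is what \eqref{eq:fiber} requires.
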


\begin{proof}
Let $\mathcal D$ be the directed simple loops of $H$ discarded by $\Phi_A$.  They
avoid $A$, and every predecessor of a loop in $\mathcal D$ is also in
$\mathcal D$.  Process $u_1,\ldots,u_m$ in order.  The unassigned loops visiting
$u_j$ are pairwise comparable because they intersect at $u_j$; assign the
last one and all its unassigned predecessors to $u_j$.  With the trivial terminal path
$(u_j)$ they form a legal record, hence a closed walk from $u_j$ to $u_j$; cutting it at its successive returns to $u_j$ yields the stated ordered list.  It lies in $D_j$, since after step
$i<j$ no unassigned loop visits $u_i$.

Note that every loop in $\mathcal D$ is assigned. Otherwise, follow an increasing chain from
a maximal unassigned loop to $P$; its first assigned member would force the
original loop to have been assigned at the same or an earlier step.  The
same observation preserves every order relation between intersecting loops,
so insertion at the times $t_j$ recovers $H$.  Conversely, the list inserted
at $u_j$ avoids $A,u_1,\ldots,u_{j-1}$; each new erased loop therefore
precedes every retained loop that it meets, and $\Phi_A$ discards exactly the
inserted loops.  Thus insertion is the inverse construction.  Multiplicativity of edge weights gives
\eqref{eq:positive-fiber-weight}.

The total weight of all ordered lists of such walks in $D$ is
$\Gc_D(u,u)$.  Hence
\[
 \prod_{j=1}^m\Gc_{D_j}(u_j,u_j)
 =\prod_{j=1}^m\frac{d_{D_j\setminus\{u_j\}}}{d_{D_j}}
 =\frac{d_{R\setminus(A\cup\supp(P))}}{d_{R\setminus A}};
\]
see e.g.~\cite[Proposition~3.5]{Lawler2018}.
Summing~\eqref{eq:positive-fiber-weight} proves \eqref{eq:fiber}, then
\eqref{eq:one-step}.
\end{proof}

\begin{proof}[Proof of Proposition~\ref{prop:entangled-recovery}]
For each $i$, list $\supp(P_i)\setminus C_{i-1}$ in first-visit order along
$\gamma_i$ as $(u_{i,1},\ldots,u_{i,m_i})$, and set
$D_{i,j}=R\setminus\bigl(C_{i-1}\cup \{u_{i,1},\ldots,u_{i,j-1}\}\bigr)$.  The loops assigned to $u_{i,j}$ are precisely those contained in
$D_{i,j}$ and visiting $u_{i,j}$. By Lemma~\ref{lem:one-step} and Poisson restriction, it suffices to show that at each $u_{i,j}$, the procedure in Proposition~\ref{prop:entangled-recovery} yields the same law of the ordered list $(\xi_1,\ldots,\xi_d)$ as that in Lemma~\ref{lem:one-step}.

Recall that we root every such loop assigned to $u_{i,j}$ uniformly at its visits to
$u_{i,j}$, and uniformly order these loops.  For a fixed ordered list
$(\xi_1,\ldots,\xi_d)$, the probability that it occurs is equal to
\[
 \frac{\prod_{s=1}^d q(\xi_s)}{\Gc_{D_{i,j}}(u_{i,j},u_{i,j})}
 \sum_{M\ge1}\ \sum_{k_1+\cdots+k_M=d}
 \frac1{M!k_1\cdots k_M}
 =\frac{\prod_{s=1}^d q(\xi_s)}{\Gc_{D_{i,j}}(u_{i,j},u_{i,j})};
\]
the left side sums over the cases that $(\xi_1,\ldots,\xi_d)$ is produced by $M$ loops from the random-walk loop soup, which have multiplicities $(k_1,...,k_M)$ at $u_{i,j}$, respectively. 
Here the sum is equal to the coefficient of $z^d$ in
$\exp(\sum_{j\ge1}z^j/j)=(1-z)^{-1}$, which is $1$. The empty list has probability
$\Gc_{D_{i,j}}(u_{i,j},u_{i,j})^{-1}$. This is exactly~\eqref{eq:rooted-walk-law}, as desired.
\end{proof}

As an immediate consequence of Proposition~\ref{prop:entangled-recovery}, we have for every $W\subset R$,
\begin{equation}\label{eq:multipath-recovery-at-one}
 \E\left[
  \one_{\{C_r\subset W\}}
  e^{-m_R(C_r,R\setminus W)}
 \right]
 =\prod_{i=1}^r\frac{\Gc_W(a_i,b_i)}{\Gc_R(a_i,b_i)},
\end{equation}
where the right-side is zero if an endpoint is outside $W$.

We also record the following lemma, which shows that every vertex of the entangled multipath LERW lies on the loop erasure of an initial segment of one of the random walks.

\begin{lemma}\label{lem:multipath-initial-segment}
Let $C_r$ be the range of the entangled multipath LERW obtained from
$(\omega_i)_{1\le i\le r}$.  For every \(w\in C_r\), there exists an \(\omega_i\) which has an initial segment \(\omega_i'\) ending at \(w\) such that $\supp\!\left(\LE(\omega_i')\right)\subset C_r$.
\end{lemma}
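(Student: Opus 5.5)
We argue by induction on the stage $i$, proving the stronger claim that every $w\in C_i$ admits some $k\le i$ and an initial segment $\omega_k'$ of $\omega_k$ ending at $w$ with $\supp(\LE(\omega_k'))\subset C_i$. Since $C_i\subset C_r$, the case $i=r$ gives the lemma. The base case is trivial: $C_0=\varnothing$.

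For the inductive step, fix $w\in C_i\setminus C_{i-1}$, so $w\in\supp(P_i)$. We use the time-parametrized picture of Lemma~\ref{lem:one-step} with $A=C_{i-1}$, $H=H_i=P(\omega_i)$ and $P=P_i$. By that lemma, $\omega_i$ is recovered from $\gamma_i$ by inserting, at each first-visit time $t_j$ of $u_j\in\supp(P_i)\setminus C_{i-1}$, a closed walk in $D_j$ based at $u_j$.

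\textbf{Case 1: $w$ is a vertex of $\gamma_i$.} Let $\omega_i'$ be the initial segment of $\omega_i$ up to the end of the inserted block at the first visit of $\gamma_i$ to $w$. The key observation is that loop-erasing a path in which a closed walk is inserted at a vertex already on the current loop-erasure gives the same result as loop-erasing without it. This holds because $\LE$ of a path $\sigma\oplus c$, where $c$ is a closed walk returning to the endpoint of $\sigma$, equals $\LE(\sigma)$ after the walk returns. Inductively this gives $\LE(\omega_i')=\LE(\gamma_i|_{[0,t]})$. That loop-erasure is a subsequence of vertices of $\gamma_i$, so it lies in $\supp(P_i)\subset C_i$.

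\textbf{Case 2: $w$ lies only on a retained loop $\ell$ of $P_i$.} Because $\ell\in\Phi_{C_{i-1}}(H_i)$, there is a chain $C_{i-1}\prec\ell_{k_1}\prec\cdots\prec\ell_{k_s}=\ell$ of retained loops. We take the initial segment of $\omega_i$ ending at the time the erased loop $\ell$ (in the chronological erasure of $\omega_i$) passes through $w$. The chronological loop-erasure of this segment consists of the current LE path just before $\ell$ closes, concatenated with the arc of $\ell$ from its base to $w$.

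The arc of $\ell$ lies in $C_i$, so it remains to see that the current LE path at that moment lies in $C_i$. Its vertices are vertices of $\eta_i$ together with base points of loops erased later, and these loops contain $\ell$'s base in their supports. Here we must use the entangled-order structure: every loop erased later that meets the current LE prefix is a successor of $\ell$ in $\prec$. Once $\ell$ is retained, any such successor also satisfies $C_{i-1}\prec\cdot$ and is therefore retained, so its support lies in $C_i$.

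\textbf{Main obstacle.} The delicate step is Case 2, namely verifying that every vertex of the current loop-erasure at the time $w$ is visited is covered by retained loops or $\eta_i$. Concretely, one must show that each such vertex either survives to $\eta_i$ or is erased by a later loop $\ell'$ with $\ell\prec\ell'$. The latter holds because $\ell'$ passes through the base point of $\ell$, or through some vertex of the prefix that $\ell$ also touches. We would make this precise by tracking the LE prefix in time: a vertex of the prefix at time $t$ can be erased later only by a loop whose support contains it. Following such loops back to $\ell$ via shared vertices yields the chain in $\prec$ (indices increase, and supports intersect along the prefix). If this chaining fails in some configuration, the fallback is to instead pick the segment ending at the \emph{last} time before $\ell$ closes, or to use the index $k$ of an earlier path via the inductive hypothesis applied to a vertex of $C_{i-1}$ met by the chain.
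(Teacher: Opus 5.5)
Your Case~1 contains a complete argument for the lemma, but its stated justification is false, and you put the difficulty in the wrong place. The ``key observation'' does not hold in the generality you state. Take $\sigma=(x,y,v)$ and the closed walk $c=(v,y,z,v)$: then $\LE(\sigma\oplus c)=(x,y,z,v)\ne(x,y,v)=\LE(\sigma)$, because $c$ revisits an earlier vertex of the current loop-erasure. What makes it true here is the $D_j$-constraint in Lemma~\ref{lem:one-step}:
\begin{itemize}
\item The block inserted at $t_j$ is a concatenation of loops in $D_j=R\setminus(C_{i-1}\cup\{u_1,\ldots,u_{j-1}\})$, with no intermediate visit to $u_j$.
\item By first-visit ordering, every vertex of $\gamma_i|_{[0,t_j]}$ lies in $C_{i-1}\cup\{u_1,\ldots,u_j\}$.
\end{itemize}
Hence each excursion of the block away from $u_j$ is disjoint from the current loop-erasure $\LE(\gamma_i|_{[0,t_j]})$ and only erases its own vertices. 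The return to $u_j$ then cuts back exactly to $\LE(\gamma_i|_{[0,t_j]})$. Together with $\LE(\sigma\oplus\tau)=\LE(\LE(\sigma)\oplus\tau)$, this is what makes your induction over $j$ work, and you need to state and use it.

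With that repaired, note that $\gamma_i$ is the path of the whole record $P_i$. It traverses the retained loops, so $\supp(\gamma_i)=\supp(P_i)\supset C_i\setminus C_{i-1}$; you seem to be treating $\gamma_i$ as if it were $\eta_i$. So Case~1 already handles every $w\in C_i\setminus C_{i-1}$, and Case~2 is vacuous.

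That is fortunate, because Case~2 as written is not a proof. Only the \emph{first} later loop that shortens the surviving prefix must pass through the base of $\ell$. The next one passes through the new endpoint of the prefix, and so on. The hedged ``fallback'' is not an argument.

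The paper's proof is your Case~2 done correctly. It works with the chronological erasure of $\omega_i$ and treats $w\in\supp(\eta_i)$ trivially. For $w$ on a retained loop $\ell$, it shows that the successive shortening loops form a $\prec$-chain starting at $\ell$, so all of them are retained. Hence every vertex of $\LE(\omega_i')$ lies on $\eta_i$ or on a retained loop.

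Your repaired Case~1 instead pushes this combinatorics into the reconstruction statement of Lemma~\ref{lem:one-step}. After that, $\supp(\LE(\omega_i'))=\supp(\LE(\gamma_i|_{[0,t_j]}))\subset\supp(P_i)$ is immediate, with no chain argument. This route is shorter, at the price of relying on the $D_j$ structure of the inserted walks. The induction on $i$ is harmless but unnecessary: taking $i$ least with $w\in C_i$, as the paper does, suffices.
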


\begin{proof}
Let \(i\) be least with \(w\in C_i\), and write
\(P(\omega_i)=(\eta_i;\ell_1^i,\ldots,\ell_{m_i}^i)\).
If \(w\in\supp(\eta_i)\), we can clearly take \(\omega_i'\) so that
\(\LE(\omega_i')\) is the initial segment of \(\eta_i\) ending at \(w\).

Otherwise \(w\in\supp(\ell_j^i)\) for a loop retained in \(P_i\), so
\(C_{i-1}\prec\ell_j^i\).  Let
\(\omega_i'\) end at the visit to \(w\) that survives until \(\ell_j^i\)
is erased, and write
\(\LE(\omega_i')=(z_0,\ldots,z_s)\).  After this erasure, exactly
\(z_0,\ldots,z_{p_0}\) remain, with
\(z_{p_0},\ldots,z_s\in\supp(\ell_j^i)\).
Whenever the first later loop \(\ell_{j_{q+1}}^i\) shortens the remaining
list \(z_0,\ldots,z_{p_q}\), write \(z_0,\ldots,z_{p_{q+1}}\) for the new
list.  Then \(p_{q+1}<p_q\) and
\(z_{p_q},z_{p_{q+1}}\in\supp(\ell_{j_{q+1}}^i)\).  Thus the successive
loops are ordered by \(\prec\).  Since \(C_{i-1}\prec\ell_j^i\), all of
them are retained in \(P_i\).  Every removed vertex lies on one of these
loops, while the remaining vertices lie on \(\eta_i\).  Hence
$ \supp\!\left(\LE(\omega_i')\right)
 \subset\supp(\eta_i)\cup
 \bigcup_{\{k:C_{i-1}\prec\ell_k^i\}}\supp(\ell_k^i)
 =\supp(P_i)\subset C_r$.
\end{proof}

\subsection{The (time-parametrized) random-walk loop-catcher}
\label{subsec:finite-construction}

Now we use the entangled multipath LERW to construct the random-walk loop-catcher.
Fix $0\le\lambda\le 1$.  
For $a\ge0$, write
\[
 (a)^{\overline0}=1,\qquad
 (a)^{\overline n}=a(a+1)\cdots(a+n-1),\quad n\ge1.
\]

The random-walk loop-catcher (with time parametrization) is defined by the following algorithm.
\begin{algorithm}[Construction]
\label{alg:time-catcher} Let $(\omega_1,\ldots,\omega_r)$ be an ordered tuple of directed paths in $R\subset V$.

\begin{enumerate}[label=\textup{(\arabic*)},leftmargin=2.4em]
\item Let $(P_i,C_i)$ be as in~\eqref{eq:sequential-reduction}; let $\gamma_i$
be the path corresponding to $P_i$, and put
\[
 S=C_r,\qquad B=R\setminus S.
\]
These $\gamma_i$ form the current layer.\label{alg:1}

\item At each first-visit time in Lemma~\ref{lem:one-step}, let
$\ell_1,\ldots,\ell_d$ be the uniquely determined ordered list of loops.  Independently
at different first-visit times, choose $K\in\{0,\ldots,d\}$ with
\begin{equation}\label{eq:initial-block-law}
 \Prob[K=k]
 =\frac{(\lambda)^{\overline k}}{k!}
  \frac{(1-\lambda)^{\overline{d-k}}}{(d-k)!},
 \qquad 0\le k\le d,
\end{equation}
which is the beta--binomial distribution with parameters $d$, $\lambda$, and $1-\lambda$.
Delete $\ell_1,\ldots,\ell_K$ and retain
$\ell_{K+1},\ldots,\ell_d$.\label{alg:2}

\item For every retained loop $\ell$, list in its time order all maximal
segments
\[
 x,b_0,\ldots,b_n,y,
 \qquad x,y\in S,\quad b_0,\ldots,b_n\in B,\quad n\ge0.
\]
View $(b_0,\ldots,b_n)$ as a directed path in $B$. Label
this path by $(i,t,h,s)$, where $i$ is the index of the current path
$\gamma_i$, $t$ is the first-visit time of $\gamma_i$ at which $\ell$ is
attached, $h$ is the index of $\ell$ in the ordered list of loops at time $t$, and $s$ is the starting time of the segment within $\ell$. Order the paths $(b_0,\ldots,b_n)$ lexicographically by these
labels. Denote the resulting ordered tuple by
$(\zeta_1,\ldots,\zeta_N)$.\footnote{The portions of each retained loop lying in $S$ are left unchanged. In particular, a retained loop contained entirely in $S$ contributes no path to $(\zeta_1,\ldots,\zeta_N)$.}\label{alg:3}

\item If $N>0$, apply this algorithm recursively to the ordered
tuple $(\zeta_1,\ldots,\zeta_N)$ in $B$.  Denote the returned paths by
$(\zeta_1^\lambda,\ldots,\zeta_N^\lambda)$.  Replace each $\zeta_j$ by
$\zeta_j^\lambda$ at its recorded position.  If $N=0$, make no recursive
call. Note that the recursion terminates since $B$ is a proper subset of $R$.

\item After these replacements, insert the resulting retained loops (in their recorded order) at the corresponding first-visit times of the
$\gamma_i$.
\end{enumerate}

The output is a tuple of directed paths
$\omega_1^\lambda,\ldots,\omega_r^\lambda$. We also set $S_{\lambda,\times}:=\bigcup_{i=1}^r\supp(\omega_i^\lambda)$.
\end{algorithm}

 Intuitively,~\eqref{eq:initial-block-law} is obtained by independently deleting the random-walk loops from the random-walk loop soup with intensity $1$. Since such a random-walk loop may contribute several returns to the corresponding first-visit time, the induced number $K$ of deleted loops in $\ell_1,\ldots,\ell_d$, conditional on the total number $d$, is beta--binomial rather than binomial. 

Note that $C_r\subset S_{\lambda,\times}
 \subset\bigcup_{i=1}^r\supp(\omega_i)$, and every component of $S_{\lambda,\times}$ contains a path joining one of the
labelled endpoint pairs. Moreover, $\omega_i^1=\gamma_i$ and $\omega_i^0=\omega_i$.

\begin{definition}\label{def:catcher}
When $r=1$ and $\omega=\omega_1$ is sampled from the normalized random-walk excursion on $(V,Q,\balpha,\bbeta)$, the corresponding output $S_{\lambda,\times}$ in Algorithm~\ref{alg:time-catcher} is called the $\lambda$-random-walk loop-catcher. We also call $\omega_1^\lambda$ the time-parametrized random-walk loop-catcher.
\end{definition}
In the following, we show the random-walk loop-catcher satisfies~\eqref{eq:finite-transform-intro}, thus prove the existence part of Theorem~\ref{thm:finite-positivity} when $0\le\lambda\le 1$.
We first index the recursive layers in Algorithm~\ref{alg:time-catcher}.  Set
\[
 R_0=R,\qquad
 \Omega^{(0)}
 =(\omega_1^\lambda,\ldots,\omega_r^\lambda).
\]
For $n\ge0$, given an ordered path tuple $\Omega^{(n)}$ in $R_n$, let $\boldsymbol\gamma^{(n)}
 =(\gamma_1^{(n)},\ldots,\gamma_{r_n}^{(n)})$ be its entangled multipath loop-erasure, and let
\[
 S_n=\bigcup_{i=1}^{r_n}\supp(\gamma_i^{(n)}),
 \qquad
 R_{n+1}=R_n\setminus S_n.
\]
Applying Lemma~\ref{lem:one-step} successively to $\Omega^{(n)}$ determines the ordered lists of loops attached at the first-visit times of $\boldsymbol\gamma^{(n)}$.
Let
$\Omega^{(n+1)}$ be the ordered tuple of all maximal subpaths
of these loops lying in \(R_{n+1}\), ordered as in
Step~\ref{alg:3} of Algorithm~\ref{alg:time-catcher}. 
If $\Omega^{(n+1)}$ is
empty, set $L=n$ and stop. The recursive
layers are then $\boldsymbol\gamma^{(0)},\ldots,\boldsymbol\gamma^{(L)}$, with $S_{\lambda,\times}=\bigcup_{n=0}^L S_n$.
Note that the integer $L$ and, for every $0\le n\le L$, the tuples
$\Omega^{(n)}$ and $\boldsymbol\gamma^{(n)}$, the set $R_n$,
the ordered list of loops attached at the first-visit times of
$\boldsymbol\gamma^{(n)}$, and the positions of their maximal paths in
$R_{n+1}$ are all determined by 
$\Omega^{(0)}=(\omega_1^\lambda,\ldots,\omega_r^\lambda)$.

We now define an algorithm that inserts an independent random-walk loop soup into the output of Algorithm~\ref{alg:time-catcher}, which will recover the original random walks.

\begin{algorithm}[Recovery]
\label{alg:time-recovery}
Let $\Omega^{(0)}=(\omega_1^\lambda,\ldots,\omega_r^\lambda)$ be the output of
Algorithm~\ref{alg:time-catcher}, and let
$\Lc_R^\lambda$ be a realization of the random-walk loop soup in $R$ with intensity $\lambda$.

\begin{enumerate}[label=\textup{(\arabic*)},leftmargin=2.4em]
\item Recover all recursive layers $0\le n\le L$ as above.

\item For each $\ell\in\Lc_R^\lambda$ that intersects
$S_{\lambda,\times}$, let $n$ be the smallest index such that
$\ell\cap S_n\ne\varnothing$. Within layer~$n$, choose the smallest path index $i$ for
which $\ell$ meets $\gamma_i^{(n)}$, and then the earliest path time $t$ at
which it is met. If $\ell$ visits $\gamma_i^{(n)}(t)$ more than once, root
it uniformly at one of those visits. At each fixed $(n,i,t)$, order the
assigned rooted loops by an independent uniform permutation.

\item Process the layers in the order $L,L-1,\ldots,0$. At each
first-visit time, prepend the assigned rooted loops (understood as cut at their successive returns to the assigned root), in their chosen order,
to the ordered list of loops already there, and insert the resulting list as
in Lemma~\ref{lem:one-step}. After completing layer~$n>0$, substitute the
resulting paths into their recorded positions in the loops retained at
layer~$n-1$.
\end{enumerate}

The output is the directed paths obtained at layer~$0$.
\end{algorithm}

\begin{proposition}\label{prop:path-recovery}
Fix $R\subset V$, ordered endpoint pairs $(a_i,b_i)_{1\le i\le r}$ with
$\Gc_R(a_i,b_i)>0$, and $0\le\lambda\le 1$. Let
$\omega_i:a_i\to b_i$ be independent normalized random walks in $R$, and let $\Omega^{(0)}
 =(\omega_1^\lambda,\ldots,\omega_r^\lambda)$ be the output of Algorithm~\ref{alg:time-catcher}. Sample an independent random-walk loop soup $\Lc_R^\lambda$ in $R$ with intensity $\lambda$. Then the output of Algorithm~\ref{alg:time-recovery} has
the same joint law as $(\omega_1,\ldots,\omega_r)$.
\end{proposition}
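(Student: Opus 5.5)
We will prove Proposition~\ref{prop:path-recovery} by induction on $|R|$, exploiting the recursive structure of Algorithms~\ref{alg:time-catcher} and~\ref{alg:time-recovery}. The base case $R=\varnothing$ is vacuous. It is more convenient to prove a slightly stronger statement, with a general ordered input tuple $(\omega_1,\dots,\omega_r)$ of independent normalized walks in $R$. Then the recursive call in Step~(4) is an instance of the same statement on the strictly smaller set $B=R\setminus S$.

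\textbf{Step 1: decomposition at the top layer.} By Lemma~\ref{lem:one-step}, applied successively with $A=C_{i-1}$, the tuple $(\omega_i)$ is in bijection with the data consisting of the records $(P_1,\dots,P_r)$ together with the ordered loop lists $(\ell_{i,j,s})$ attached at the first-visit vertices $u_{i,j}$. Conditionally on $(P_i)$, these lists are independent with law~\eqref{eq:rooted-walk-law} in $D_{i,j}$. By the computation in the proof of Proposition~\ref{prop:entangled-recovery}, each list has the law of the rooted, cut and uniformly ordered loops of an intensity-one loop soup restricted to loops in $D_{i,j}$ through $u_{i,j}$.

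\textbf{Step 2: thinning gives the beta--binomial split.} Split the intensity-one soup as the superposition of independent soups of intensities $\lambda$ and $1-\lambda$. Conditionally on the total list length $d$, we will show that the following two descriptions agree:
\begin{itemize}[leftmargin=*,label={}]
\item (a) the $\lambda$-part occupies an initial block of length $K$ (after a uniform ordering in which the $\lambda$-part loops are placed first), followed by an independent list from the $(1-\lambda)$-part;
\item (b) the list is split at a beta--binomial $K$ as in~\eqref{eq:initial-block-law}.
\end{itemize}
The check uses the generating function identity $(1-z)^{-\lambda}(1-z)^{-(1-\lambda)}=(1-z)^{-1}$, whose coefficients are $(\lambda)^{\overline k}/k!$ and $(1-\lambda)^{\overline{d-k}}/(d-k)!$. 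One must also verify that, given $K$, the two blocks are conditionally distributed as rooted $\lambda$- and $(1-\lambda)$-soup lists with weights proportional to $\prod q(\xi_s)$. This holds because both sides are proportional to $\prod_s q(\xi_s)$ times the same combinatorial coefficient. Consequently, Step~(2) deletes exactly an independent copy of the $\lambda$-soup loops meeting $C_r$ inside the relevant $D_{i,j}$. Algorithm~\ref{alg:time-recovery} reinserts a fresh independent $\lambda$-soup by the same rule (smallest $i$, earliest $t$, uniform root, uniform order, prepended).

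\textbf{Step 3: the retained loops and the recursion.} The retained $(1-\lambda)$-part loops are cut into excursions $\zeta_1,\dots,\zeta_N$ in $B$, with their pieces in $S$ kept fixed. Given $S$ and the skeleton data (endpoints in $S$ and the labels), we claim these excursions are independent normalized walks in $B$ between their recorded endpoints $b_0,b_n$. This follows from multiplicativity of $q$: summing over the interiors factorizes the weight into $\prod_j \Gc_B(b_0^{(j)},b_n^{(j)})$. We must also check that restricting the loops to $D_{i,j}\subset R$ does not constrain the $B$-excursions beyond lying in $B$. This holds because $D_{i,j}\supset B$ and the vertices in $S\setminus D_{i,j}$ are precisely forbidden only on the skeleton, which is fixed.

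The induction hypothesis on $B$ then says that running the catcher on $(\zeta_j)$ and recovering with a $\lambda$-soup in $B$ returns independent walks with the law of $(\zeta_j)$. The recovery soup $\Lc^\lambda_R$ restricted to loops avoiding $S_0$ is exactly a $\lambda$-soup in $R_1=B$, independent of the loops meeting $S_0$ by Poisson restriction. Algorithm~\ref{alg:time-recovery} uses these for layers $\ge1$ and uses the loops meeting $S_0$ at layer $0$. Combining Steps~1--3 reconstructs the joint law.

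\textbf{Main obstacle.} The main difficulty is the Markov/independence claim in Step~3 together with its compatibility with Step~2. One must verify that conditioning on the layer-$0$ data determined by $\Omega^{(0)}$ (namely $S_0$, the skeleton of retained loops, and the recorded positions) leaves the $\zeta_j$ as i.i.d.\ normalized walks in $B$. One must also verify that the catcher output at layer $0$ is unaffected by the recursive replacement: the layer-$0$ entangled multipath LERW of $\Omega^{(0)}$ must again equal $\boldsymbol\gamma^{(0)}$, which is what makes Algorithm~\ref{alg:time-recovery}(1) well defined. For the latter, we would argue that the substituted paths $\zeta_j^\lambda$ lie in $B$ and have the same endpoints, so the entangled order relative to $C_{i-1}$ is unchanged. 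We would handle all of this by writing the full weight identity $q(\omega)=q(\text{skeleton})\prod q(\zeta_j)\prod q(\text{deleted loops})$ and summing.
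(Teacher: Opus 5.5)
Your proposal is correct and follows essentially the same route as the paper. The shared steps are induction on $|R|$, the factorization of the rooted-list law into the beta--binomial split (the paper's \eqref{eq:alpha-root-list}--\eqref{eq:list-splitting}, which is exactly your identity $(1-z)^{-\lambda}(1-z)^{-(1-\lambda)}=(1-z)^{-1}$), conditional independence of the excursions $\zeta_j$ as normalized walks in $B$, Poisson restriction of $\Lc_R^\lambda$ to loops contained in $B$, and reassembly via Lemma~\ref{lem:one-step} and Proposition~\ref{prop:entangled-recovery}.

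You flag a consistency point the paper only asserts: that the layer decomposition of $\Omega^{(0)}$ reproduces $\boldsymbol\gamma^{(0)}$ and the retained lists. Your justification via the entangled order is loose; the cleanest argument is the converse part of Lemma~\ref{lem:one-step}. The modified loops still lie in $D_{i,j}\supset B$ and have no intermediate return to $u_{i,j}\in S$.
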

\begin{proof}
We first record a loop-soup calculation.  Fix $u\in D\subset R$ and write $G=\Gc_D(u,u)$.
Consider an intensity-$\alpha$ random-walk loop soup in $D$.  Root each
loop visiting $u$ uniformly at one of its visits to $u$, uniformly order
the rooted loops, and cut them at their successive returns to $u$.  For
an ordered list $(\xi_1,\ldots,\xi_d)$ of loops that have multiplicity $1$ at $u$, the probability of obtaining this list is
\begin{equation}\label{eq:alpha-root-list}
 \Prob\bigl[(\xi_1,\ldots,\xi_d)\bigr]
 =
 G^{-\alpha}\frac{(\alpha)^{\overline d}}{d!}
 \prod_{s=1}^d q(\xi_s).
\end{equation}
Indeed, the loop mass of random-walk loops in $D$ visiting $u$ is $\log G$, and if
the list is produced by $M$ random-walk loops and the $m$-th loop is cut into
$j_m$ consecutive paths, then $j_1+\cdots+j_M=d$
and the corresponding contribution is $ G^{-\alpha}
 \frac{\alpha^M}{M!j_1\cdots j_M}
 \prod_{s=1}^d q(\xi_s)$.
Summing over $M$ and the ordered compositions of $d$ gives~\eqref{eq:alpha-root-list} (doing the same calculation as in the proof of Proposition~\ref{prop:entangled-recovery}).

We next apply~\eqref{eq:alpha-root-list} to Step~\ref{alg:2} of
Algorithm~\ref{alg:time-catcher}.  For an ordered tuple of independent normalized random walks, apply Algorithm~\ref{alg:time-catcher} and let $\boldsymbol P=(P_1,\ldots,P_m)$
be the records obtained in Step~\ref{alg:1}.  Conditioned on
$\boldsymbol P$, Lemma~\ref{lem:one-step}, applied successively in path
order, gives independent ordered lists of loops at the first-visit times.
Fix a first-visit time, let $u$ and $D$ be its vertex and associated domain
in Lemma~\ref{lem:one-step}, and let $(\ell_1,\ldots,\ell_d)$
be the corresponding ordered list of loops at the beginning of Step~\ref{alg:2}.  For
$0\le k\le d$, \eqref{eq:rooted-walk-law} and
\eqref{eq:initial-block-law} imply that the conditional probability $\Prob\!\left[
   (\ell_1,\ldots,\ell_d)=(\xi_1,\ldots,\xi_d),\,K=k
   \,\middle|\,\boldsymbol P
 \right]$ is equal to
\begin{align}\label{eq:list-splitting}
 \frac{\prod_{s=1}^d q(\xi_s)}{G}
 \frac{(\lambda)^{\overline k}}{k!}
 \frac{(1-\lambda)^{\overline{d-k}}}{(d-k)!}=
 \left[
  G^{-\lambda}
  \frac{(\lambda)^{\overline k}}{k!}
  \prod_{s=1}^k q(\xi_s)
 \right]
 \left[
  G^{-(1-\lambda)}
  \frac{(1-\lambda)^{\overline{d-k}}}{(d-k)!}
  \prod_{s=k+1}^d q(\xi_s)
 \right].
\end{align}
By \eqref{eq:alpha-root-list}, the two factors on the right side of~\eqref{eq:list-splitting} are the probabilities of obtaining $(\xi_1,\ldots,\xi_k)$ and $(\xi_{k+1},\ldots,\xi_d)$
from independent loop soups of intensities $\lambda$ and $1-\lambda$,
respectively.  Hence, conditioned on $\boldsymbol P$, the list deleted in
Step~\ref{alg:2} is independent of the retained list and has the law
obtained by an independent intensity-$\lambda$ loop soup.  This holds
jointly at all first-visit times.

We now prove the proposition by induction on $|R|$.  Let
\[
 S=S_0,\qquad B=R\setminus S,
\]
and let $(\zeta_1,\ldots,\zeta_N)$ be the ordered tuple constructed in
Step~\ref{alg:3} of Algorithm~\ref{alg:time-catcher}.  Conditioned on the
portions of the retained loops outside these maximal subpaths, their
conditional weight is proportional to $\prod_{j=1}^N q(\zeta_j)$.
Consequently, conditioned on their endpoints, the $(\zeta_j)_{1\le j\le N}$ are independent
normalized random walks in $B$.
The restriction of $\Lc_R^\lambda$ to loops contained in $B$ is an
intensity-$\lambda$ random-walk loop soup in $B$. If $N>0$, then
$B\subsetneq R$, so the induction hypothesis applied in $B$ shows that Algorithm~\ref{alg:time-recovery} recovers the joint
law of $(\zeta_1,\ldots,\zeta_N)$.  Replacing the recovered paths at their
recorded positions therefore recovers the retained loops at layer~$0$.

It remains to recover the lists deleted at recursive layer~$0$.  The loops of
$\Lc_R^\lambda$ intersecting $S$ are assigned by
Algorithm~\ref{alg:time-recovery} according to the same rule as in
Proposition~\ref{prop:entangled-recovery}.  By
\eqref{eq:list-splitting}, its ordered list on each first-visit
time has the conditional law of the list deleted in
Step~\ref{alg:2}, independently of the retained list. Prepending it therefore recovers the full list in
Lemma~\ref{lem:one-step}.
By Proposition~\ref{prop:entangled-recovery}, conditioned on
$\boldsymbol P$, the paths obtained at layer~$0$ have the conditional
joint law of $(\omega_1,\ldots,\omega_r)$.  Averaging over
$\boldsymbol P$ then yields the result.
\end{proof}

Let $\Lc_R^\lambda$ be an independent random-walk loop soup in $R$ of intensity $\lambda$. Apply
Algorithm~\ref{alg:time-recovery}, and let
$(\widehat\omega_1,\ldots,\widehat\omega_r)$ be its output. By Proposition~\ref{prop:path-recovery}, $(\widehat\omega_1,\ldots,\wh\omega_r)$ has the joint law of independent normalized random walks. For every
$W\subset R$, note that
\[
 \bigcap_{i=1}^r\{\supp(\widehat\omega_i)\subset W\}
 =\{S_{\lambda,\times}\subset W\}
 \cap
 \left\{\nexists\,\ell\in\Lc_R^\lambda:
 \ell\cap S_{\lambda,\times}\ne\varnothing,\
 \ell\cap(R\setminus W)\ne\varnothing\right\}.
\]
Since conditioned on $S_{\lambda,\times}$, the number of loops $\ell\in\Lc_R^\lambda$ intersecting both $S_{\lambda,\times}$ and $R\setminus W$ is a Poisson random variable with mean
$\lambda m_R(S_{\lambda,\times},R\setminus W)$, we obtain
\begin{equation}\label{eq:multipath-recovery-transform}
 \E\left[
  \one_{\{S_{\lambda,\times}\subset W\}}
  e^{-\lambda m_R(S_{\lambda,\times},R\setminus W)}
 \right]
 =\prod_{i=1}^r\frac{\Gc_W(a_i,b_i)}{\Gc_R(a_i,b_i)},
\end{equation}
where the right side is zero if an endpoint is outside $W$.

\begin{proof}[Proof of Theorem~\ref{thm:finite-positivity}, the existence when $0\le\lambda\le 1$]
Taking $r=1$ in \eqref{eq:multipath-recovery-transform}, summing over endpoints according to $\balpha$ and $\bbeta$, and writing $S_\lambda:=S_{\lambda,\times}$, we obtain
\(
 \E\left[\one_{\{S_\lambda\subset R\}}
 e^{-\lambda m_V(S_\lambda,V\setminus R)}\right]=\bh(R)
\).
Hence, the law of $S_\lambda$ gives the desired probability distribution $\bp_\lambda$.
\end{proof}

\begin{remark}
Although Algorithm~\ref{alg:time-catcher} gives
\(S_1\subset S_\lambda\subset S_0\) for each fixed \(\lambda\), the monotonicity
in \(0\le\lambda\le1\) is not apparent from this construction.  A
simultaneous monotone coupling of 
\((\bp_\lambda)_{0\le\lambda\le1}\) is obtained instead from the
linear-algebraic approach in Appendix~\ref{app:finite-positivity}; see
Corollary~\ref{cor:finite-monotone-coupling}.
\end{remark}

\subsection{The non-existence for \texorpdfstring{$\lambda>1$}{lambda > 1}}\label{subsec:finite-lambda-greater-one}

Finally we show the sharpness of the interval $\lambda\in[0,1]$, thereby finishing the proof of Theorem~\ref{thm:finite-positivity}.

\begin{proof}[Proof of Theorem~\ref{thm:finite-positivity}, non-existence for \(\lambda>1\)]
\label{rmk:general-non-existence}
Here we show the non-existence on general bipartite graphs.
For $(V,Q,\boldsymbol{\alpha},\boldsymbol{\beta})$, assume that
$G_V^{\mathrm{und}}$ is connected and that
$Q_{xy}>0\Longleftrightarrow Q_{yx}>0$ for all $x\neq y$.
Let $a,b\notin V$, and define an undirected graph $\wh G$ by
\[
V(\wh G):=V\cup\{a,b\},\qquad
E(\wh G):=E(G_V^{\mathrm{und}})
\cup\bigl\{\{a,x\}: \boldsymbol{\alpha}_x>0\bigr\}
\cup\bigl\{\{y,b\}: \boldsymbol{\beta}_y>0\bigr\}.
\]
We claim that if $\wh G$ is bipartite, then either $\wh G$ is a simple path from $a$ to $b$, in which case
\[
    \bp_\lambda(V)=1,\qquad \bp_\lambda(R)=0\ (\forall R\subsetneq V)
    \qquad \text{ for all } \lambda\geq0,
\]
or, for every $\lambda>1$, the unique solution
$\bp_\lambda=(T_\lambda^V)^{-1}\bh$ has a negative coordinate.
Thus, except in the path case, $[0,1]$ is the maximal interval on which
$\bp_\lambda$ is nonnegative.

The claim is immediate in the path case. Suppose therefore that $\wh G$
is not a simple path. Let $S\subset V$ be the vertex trace of a shortest path from $a$ to $b$ in $\wh G$. Since $S\neq V$, connectedness gives a vertex $z\in V\setminus S$ adjacent to $S$. Since $\wh G$ is bipartite and the path is shortest, $z$ has either one neighbor on the path or two neighbors separated by exactly one vertex on the path. Set $U=S\cup\{z\}$, and note that $\bp_1(U)=0$. 
Now let $C\subsetneq U$ with $Z_C>0$. Note that $\Delta_C
:=m_V(C,V\setminus C)-m_V(C,V\setminus U)>0$,
and $\bp_\lambda(A)=0$ for all $A\subsetneq C$. Comparing the row-$C$
equations of~\eqref{eq:finite-transform-intro} at $\lambda>1$ and $\lambda=1$ then gives
\[
\bp_\lambda(C)
=
\bp_1(C)\exp\!\bigl\{(\lambda-1)m_V(C,V\setminus C)\bigr\}.
\]
Substituting this into the row-$U$ equation and using $\bp_1(U)=0$, we obtain
\[
\bp_\lambda(U)e^{-\lambda m_V(U,V\setminus U)}
=
\sum_{C\subsetneq U,\ Z_C>0}
\bp_1(C)e^{-m_V(C,V\setminus U)}
\left[
1-e^{(\lambda-1)\Delta_C}
\right].
\]
Since $\Delta_C>0$, and the shortest path trace $S$ satisfies
$\bp_1(S)>0$, we obtain $\bp_\lambda(U)<0$.
Thus $\bp_\lambda$ cannot be a probability measure for any $\lambda>1$,
unless $\wh G$ is a simple path from $a$ to $b$.
In particular, a finite subgraph of $\mathbb Z^d$ admits no random-walk
loop-catcher for $\lambda>1$, unless it is a path.
\end{proof}

The above proves sharpness on finite graphs.  The non-existence for \(\lambda>1\) in the continuum case is proved in
Section~\ref{subsec:sec5-non-existence} using different methods.

\section{The subsequential limit in a Jordan domain}
\label{sec:continuum}

This section proves the existence part of Theorem~\ref{thm:main} for
\(0\leq\lambda\leq1\) by taking subsequential limits; see Proposition~\ref{prop:catcher-cluster}. The main ingredients used in this section come from~\cite{KozdronLawler2005,LawlerTrujillo2007,LawlerLimic2010}. Recall that \(D\subset\C\) is a bounded Jordan
domain and \(a\ne b\) are two marked points on $\partial D$.

\subsection{Discrete setup and Hausdorff topology}
\label{subsec:continuum-embedding}
Let \(D^\delta\subset\delta\Z^2\) be standard lattice approximations whose
union-of-squares domains converge to \(D\), with boundary parametrizations
converging uniformly.  Choose exterior vertices
\(\widehat a^\delta,\widehat b^\delta\) whose parameters under these
parametrizations tend to those of \(a,b\), and put
\[
 N_x^\delta:=\{v\in D^\delta:v\sim\widehat x^\delta\},\qquad x=a,b.
\]
The walk is killed on leaving \(D^\delta\), each step has duration
\(\delta^2/2\), and the entrance and exit weights are
\(\balpha_v^\delta=\frac14\one_{\{v\in N_a^\delta\}}\) and
\(\bbeta_v^\delta=\frac14\one_{\{v\in N_b^\delta\}}\).  The resulting
random-walk excursion includes all first and last lattice edges at
the two exterior vertices, as in the boundary kernel of
\cite{KozdronLawler2005}.  Write \(Q^\delta\) for the
killed transition matrix and \(m_{D^\delta}^\delta\) for its unrooted
random-walk loop measure.  For \(v\in D^\delta\), let
\[
 \square_\delta(v)
 :=\bigl(v+[-\delta/2,\delta/2]
                    +i[-\delta/2,\delta/2]\bigr)
\]
and, for \(S\subset D^\delta\), set
\[
\Cell_\delta(S):=\ol D\cap\bigcup_{v\in S}\square_\delta(v).
\]
The associated union-of-squares domain is
\[
\mathscr D^\delta
 :=\operatorname{int}\!\bigcup_{v\in D^\delta}
   \bigl(v+[-\delta/2,\delta/2]^2\bigr).
\]
For \(r>0\), put $D_{-r}:=\{z\in D:\dist(z,\partial D)>r\}$ and $D^{+r}:=\{z:\dist(z,D)<r\}$.
This uniform convergence implies that, for each
\(r>0\),
\begin{equation}\label{eq:domain-inclusions}
 D_{-r}\subset\mathscr D^\delta\subset D^{+r}
 \qquad\hbox{for all sufficiently small }\delta.
\end{equation}
The \emph{cell trace} is the union of a path's vertex cells.  For a
boundary-to-boundary path, adjoin connected sets \(c_a^\delta,c_b^\delta\subset\ol D\) of diameter \(o(1)\), joining
\(a\) to every cell in \(N_a^\delta\) and every cell in
\(N_b^\delta\) to \(b\).
The superscript \({}^\square\), as in \(\Gamma_\delta^\square\), denotes
this cell trace with connectors.  

Let \(\mathcal K(\ol D)\) be the compact metric space of nonempty compact
subsets of \(\ol D\), equipped with Hausdorff distance.  A
\emph{continuum} is a member of \(\mathcal K(\ol D)\) that is connected. Continua are closed under Hausdorff topology, and the relation \(\{(K,L):K\subset L\}\) is also closed in
\(\mathcal K(\ol D)^2\).
If \(S\) is nearest-neighbor connected, then \(\Cell_\delta(S)\) is at Hausdorff distance at most $\delta/\sqrt{2}$ from the linear interpolation of $S$\footnote{The linear interpolation of $S\subset D^\delta$ is the union of $S$ and all lattice edges $\ol{vw}$ with $v,w\in S$ and $v\sim w$.}. Consequently, every Hausdorff subsequential limit of such cell traces is a continuum.  

\subsection{Random-walk bridges and excursions}
\label{subsec:continuum-paths}

A normalized random-walk bridge \(x^\delta\to y^\delta\) assigns a finite
path \(\eta=(x_0,\ldots,x_n)\) probability
\[
 \frac{\prod_{j<n}Q^\delta_{x_jx_{j+1}}}
      {\Gc_{D^\delta}(x^\delta,y^\delta)},
\qquad \Gc_{D^\delta}=(I-Q^\delta)^{-1}.
\]
For distinct \(x,y\in D\), let \(p_D(t,x,y)\) be the killed heat kernel,
put \(G_D(x,y):=\int_0^\infty p_D(t,x,y)\,\dd t\), and let
\(\mathbf P_{x,y}^{D,t}\) be the killed Brownian bridge of duration \(t\).
The normalized Brownian bridge is
\begin{equation}\label{eq:green-bridge-definition}
 \Prob(\Gamma_D^{x,y}\in\cdot)
 =\frac1{G_D(x,y)}\int_0^\infty
 p_D(t,x,y)\mathbf P_{x,y}^{D,t}(\cdot)\,\dd t,
\end{equation}
and, for compact \(E\subset D\) with \(x,y\notin E\), satisfies $\Prob[\Gamma_D^{x,y}\cap E=\varnothing]=\frac{G_{D\setminus E}(x,y)}{G_D(x,y)}$.
 
\begin{lemma}
\label{lem:uniform-killed-bridge}
Let \(K_1,K_2\Subset D\) be disjoint, let
\(x^\delta=x+O(\delta),y^\delta=y+O(\delta)\), and denote by
\(\nu_\delta^{x,y}\) the unnormalized measure of the duration and
interpolated trace of a killed walk from \(x^\delta\) to \(y^\delta\).  Put
\[
 \nu^{x,y}(\dd t,\dd\gamma)
 =p_D(t,x,y)\,\dd t\,\mathbf P_{x,y}^{D,t}(\dd\gamma).
\]
On every \([\varepsilon,T]\), uniformly for
\((x,y)\in K_1\times K_2\),
\begin{equation}\label{eq:uniform-killed-window}
 \nu_\delta^{x,y}\Longrightarrow2\nu^{x,y}
\end{equation}
against bounded uniformly continuous functions of duration and Hausdorff
trace.  Both sides have uniform short- and long-duration tails:
\begin{align}\label{eq:uniform-killed-tails}
 \lim_{\varepsilon\downarrow0}\limsup_{\delta\downarrow0}
 \sup_{K_1\times K_2}\nu_\delta^{x,y}\{t<\varepsilon\}=0, \qquad \lim_{T\uparrow\infty}\limsup_{\delta\downarrow0}
 \sup_{K_1\times K_2}\nu_\delta^{x,y}\{t>T\}=0,
\end{align}
and likewise for \(2\nu^{x,y}\).  Consequently
\begin{equation}\label{eq:uniform-green-mass-from-bridges}
 \Gc_{D^\delta}(x^\delta,y^\delta)\longrightarrow2G_D(x,y)
 \quad\hbox{uniformly on }K_1\times K_2.
\end{equation}
\end{lemma}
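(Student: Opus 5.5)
The plan is to reduce everything to the local central limit theorem for simple random walk together with the KMT/Skorokhod-type strong coupling of the walk with Brownian motion, in the form used in \cite{LawlerTrujillo2007,LawlerLimic2010}. With steps of duration \(\delta^2/2\), the continuous-time interpolation of the walk has generator \(\frac12\Delta\) in the scaling limit. A walk of duration \(t\) has \(n=2t/\delta^2\) steps. The unnormalized bridge measure is
\[
\nu_\delta^{x,y}\{t\in I\}=\sum_{n\in 2I/\delta^2}\P^{x^\delta}[S_n=y^\delta,\ \text{no exit before } n].
\]
Each summand is \(\approx \delta^2 p_D(n\delta^2/2,x,y)\) by the LCLT; here the factor \(\delta^2\) is the cell area and the lattice has periodicity. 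Summing over \(n\) with mesh \(\delta^2/2\) in time produces the factor \(2\). This explains \eqref{eq:uniform-killed-window}'s constant and gives the convergence of total masses on \([\varepsilon,T]\).

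\textbf{Step 1: the killed heat kernel.} On \([\varepsilon,T]\) I would prove uniform convergence of the killed transition probabilities \(\delta^{-2}\P^{x^\delta}[S_n=y^\delta,\ \tau>n]\) to \(p_D(t,x,y)\) (with parity handled by pairing consecutive \(n\)). I would do this via the strong coupling, together with the decomposition \(p_D=p-\E[p(t-\tau,B_\tau,y)\one_{\tau<t}]\) and its discrete analog. The exit time and exit position of the walk converge to those of Brownian motion by the coupling. Here I use \eqref{eq:domain-inclusions} and the Jordan/Fréchet boundary convergence, so that exiting \(\mathscr D^\delta\) and exiting \(D\) nearly coincide; a Beurling-type estimate shows that a Brownian path that gets close to \(\partial D\) exits with high probability. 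Since \(y\in K_2\Subset D\), \(p(t-s,\cdot,y)\) is smooth away from \(y\) for boundary points, which gives uniformity.

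\textbf{Step 2: the trace.} I would condition on the duration and on the endpoint and show weak convergence of discrete killed bridges to \(\mathbf P^{D,t}_{x,y}\) in the Hausdorff (indeed uniform) topology, uniformly in \((x,y,t)\in K_1\times K_2\times[\varepsilon,T]\). To do this, I split the bridge at times \(t/3\) and \(2t/3\). The middle segment's law has a Radon--Nikodym derivative with respect to the free walk given by heat kernel ratios, which converge uniformly by Step 1. The end segments are handled by time reversal and the same argument. Tightness of the killed bridge paths follows from the standard modulus estimates, since the killed measure is dominated by the free one.

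\textbf{Step 3: tails.} For short durations, \(x\) and \(y\) are separated by \(\dist(K_1,K_2)>0\), so a Gaussian/large-deviation bound gives \(\nu_\delta\{t<\varepsilon\}\le C\sum_{n\le 2\varepsilon/\delta^2}e^{-c\,\dist^2/(n\delta^2)}\to0\) uniformly. For long durations, \(D\subset\) a fixed bounded box, so the probability of surviving \(n\) steps decays like \(e^{-c n\delta^2}\) uniformly in \(\delta\) (by iterating a uniform positive exit probability in unit time). Together with the bounded local density, this gives a tail \(\le Ce^{-cT}\). The same bounds hold for \(p_D\). Combining the window convergence with the two tails then yields \eqref{eq:uniform-green-mass-from-bridges} by testing against the constant function \(1\).

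The main obstacle I expect is the uniformity of Step 1 near the boundary, i.e.\ controlling paths that come near \(\partial D\) without the two domains agreeing. I would first try the Beurling estimate combined with Fréchet convergence, choosing \(r\) so that \(D_{-r}\subset\mathscr D^\delta\subset D^{+r}\), and bounding the discrepancy by the probability of entering the \(r\)-collar and then surviving, which is \(o(1)\) as \(r\to0\).
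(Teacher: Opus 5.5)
Apart from one slip in the short-duration tail (below), your proposal is sound, but it organizes the window convergence differently from the paper.

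\textbf{Comparison with the paper's route.} The paper never isolates the killed heat kernel. It writes the killed measure as the free kernel times the full-plane bridge law restricted to survival. The LCLT gives $p_n^\delta(x^\delta,y^\delta)=2\delta^2p_\C(t_n,x,y)+o(\delta^2)$. Lawler--Trujillo's bridge coupling then couples the full-plane random-walk and Brownian \emph{bridges} directly. On the event that the coupling error is at most $r$, the two survival indicators can disagree only on $\{\gamma\subset D,\ \gamma\not\subset D_{-2r}\}\cup\{\gamma\subset D^{+2r},\ \gamma\not\subset D\}$. This is an event about the Brownian bridge alone, and its mass vanishes by boundary regularity, uniformly by Dini. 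Mass and trace therefore converge in one stroke, with no conditioning step.

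You instead couple free walks, get the killed kernel from the first-exit decomposition, and recover the bridge law by absolute continuity with heat-kernel-ratio densities. Your route needs only KMT and the LCLT rather than a bridge coupling theorem. The cost is an extra layer. The densities $p_D(t/3,S_{2t/3},y)/p_D(t,x,y)$ require convergence of $\delta^{-2}\P^z[S_m=y^\delta,\tau>m]$ uniformly in $z$ all the way up to $\partial\mathscr D^\delta$, not only on compacts. The survival indicator must also be carried inside the density.

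Note too that your Beurling remark controls only one direction of the exit-time comparison: the walk exits, so Brownian motion is near $\partial D$ and hits it soon. The other direction is Brownian motion exiting $D$ while staying inside $D^{+2r}$. That needs regularity of $\partial D$ from the exterior, or a discrete Beurling estimate for the walk near $\partial\mathscr D^\delta$ via Fr\'echet convergence. This is exactly the second event in the paper's discrepancy set. Your long-duration tail matches the paper's, with iterated exit probabilities in place of its Dirichlet spectral gap.

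\textbf{Correction to the short-duration tail.} Your bound drops the local factor. As written, $\sum_{n\le 2\varepsilon/\delta^2}e^{-c\,d^2/(n\delta^2)}$ is comparable to $\delta^{-2}\int_0^{2\varepsilon}e^{-cd^2/s}\,\dd s$, which diverges as $\delta\downarrow0$ for fixed $\varepsilon$. What you need is the pointwise Gaussian bound $\P^{x^\delta}[S_n=y^\delta]\le\frac Cn e^{-cd^2/(n\delta^2)}$, the same local factor you used for long durations. It gives a bound of order $\int_0^{2\varepsilon}s^{-1}e^{-cd^2/s}\,\dd s$, which tends to $0$ as $\varepsilon\downarrow0$ uniformly in $\delta$, as in the paper.
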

\begin{proof}
Put \(t_n=n\delta^2/2\), and write \(p_n^\delta\) and \(p_\C\) for the
unrestricted \(n\)-step transition probability and planar heat kernel.  Uniformly for \((x,y)\in K_1\times K_2\) and allowed parities, the local central limit theorem
\cite[Theorem~2.1.3]{LawlerLimic2010} gives
\(p_n^\delta(x^\delta,y^\delta)=2\delta^2p_\C(t_n,x,y)+o(\delta^2)\).
The bridge coupling of
 \cite[Theorem~6.4]{LawlerTrujillo2007}, applied to the two diagonal
 coordinates, couples the full-plane bridges uniformly on
 \([\varepsilon,T]\).  By
 \eqref{eq:domain-inclusions}, on the event that the coupling error is at most $r$, a discrepancy between the killing indicators of the random-walk bridge and the Brownian bridge can occur only on $\{\gamma\subset D,\ \gamma\not\subset D^{-2r}\}\cup\{\gamma\subset D^{+2r},\ \gamma\not\subset D\}$.  The mass of this event tends uniformly to zero by heat-kernel continuity, boundary regularity, and Dini's theorem.  Letting \(\delta\downarrow0\), then \(r\downarrow0\), and
using the time mesh \(\delta^2\) proves \eqref{eq:uniform-killed-window}.

The two uniform tails follow from
\[
 Q^{\delta,n}(x^\delta,y^\delta)
 \leq \frac Cn e^{-c\dist(K_1,K_2)^2/(n\delta^2)},
 \qquad
 Q^{\delta,n}(x^\delta,y^\delta)
 \leq C\delta^2e^{-cn\delta^2}\quad(n\delta^2\geq1).
\]
The first is the Gaussian estimate; the second follows by splitting the
semigroup and using the discrete Dirichlet spectral gap in a containing square.  Summation proves
\eqref{eq:uniform-killed-tails};
the bounds for Brownian bridges are analogous.  Combined with~\eqref{eq:uniform-killed-window}, we obtain \eqref{eq:uniform-green-mass-from-bridges}.
\end{proof}

Lemma~\ref{lem:uniform-killed-bridge} yields the following convergence of random-walk bridges.
\begin{corollary}\label{lem:green-path-convergence}
The normalized random-walk bridge with endpoints tending to \(x\ne y\) converges in
distribution, under the Hausdorff metric, to the normalized Brownian bridge \(\Gamma_D^{x,y}\).
The convergence is uniform over two fixed disjoint compact endpoint sets and holds jointly for every fixed finite family of independent
paths, hence for their union.
\end{corollary}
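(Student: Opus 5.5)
The plan is to obtain the corollary directly from Lemma~\ref{lem:uniform-killed-bridge} by normalizing the unnormalized measures. Fix bounded uniformly continuous $F$ on $\mathcal K(\ol D)$ with respect to the Hausdorff metric. The expectation of $F$ under the normalized random-walk bridge is $\nu_\delta^{x,y}[F]/\Gc_{D^\delta}(x^\delta,y^\delta)$. The Brownian counterpart, by~\eqref{eq:green-bridge-definition}, is $\nu^{x,y}[F]/G_D(x,y)$. Here $F$ is viewed as a function of duration and trace that ignores the duration.

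First I split $\nu_\delta^{x,y}[F]$ into the window $t\in[\varepsilon,T]$ and its complement. On the window, \eqref{eq:uniform-killed-window} gives convergence to $2\nu^{x,y}[F\one_{[\varepsilon,T]}]$, uniformly on $K_1\times K_2$. The indicator of the window is not continuous, so I will use a continuous cutoff $\chi(t)$ that equals $1$ on $[2\varepsilon,T]$ and is supported in $[\varepsilon,2T]$. The uniform tails~\eqref{eq:uniform-killed-tails}, applied to both measures, make the error from $1-\chi$ small uniformly in $\delta$ and in the endpoints, after taking $\varepsilon\downarrow0$ and $T\uparrow\infty$.

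This gives $\nu_\delta^{x,y}[F]\to2\nu^{x,y}[F]$ uniformly on $K_1\times K_2$. Dividing by $\Gc_{D^\delta}(x^\delta,y^\delta)\to2G_D(x,y)$, which converges uniformly by~\eqref{eq:uniform-green-mass-from-bridges}, needs a lower bound on the denominator. Since $G_D$ is continuous and positive on the compact set $K_1\times K_2$ (the sets are disjoint and compactly contained in $D$), $G_D$ is bounded below there, and the ratio converges uniformly. The same argument with a continuous $F$ also yields the uniform statement along arbitrary sequences $(x^\delta,y^\delta)$ with endpoints in $K_1\times K_2$ up to $O(\delta)$.

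\textbf{Joint convergence.} For a finite family of independent bridges, the joint law is the product of the individual laws, and each factor converges. Weak convergence of product measures follows from weak convergence of the marginals on the separable metric space $\mathcal K(\ol D)$. The union map $(K_1,\dots,K_k)\mapsto\bigcup_i K_i$ is $1$-Lipschitz for the Hausdorff distance, so the continuous mapping theorem gives convergence of the union.

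\textbf{Main obstacle.} The one subtle point is checking that $F$, as a function of the interpolated trace, is admissible in~\eqref{eq:uniform-killed-window}. The lemma is stated exactly for bounded uniformly continuous functions of duration and Hausdorff trace, so I expect only the cutoff bookkeeping above to require care.
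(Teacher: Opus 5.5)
Your proposal is correct and is essentially the paper's argument: the paper records this corollary as an immediate consequence of Lemma~\ref{lem:uniform-killed-bridge}, i.e.\ normalizing the window convergence~\eqref{eq:uniform-killed-window} by the Green function limit~\eqref{eq:uniform-green-mass-from-bridges}, with the uniform tails~\eqref{eq:uniform-killed-tails} removing the duration cutoff, and with independence plus the $1$-Lipschitz union map giving the joint statement. When the corollary is later applied to cell traces rather than interpolated traces, these differ by at most $\delta/\sqrt2$ in Hausdorff distance, so nothing changes.
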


Fix a conformal map \(\phi:D\to\D\) extending homeomorphically to the
closures, with
\(\phi(a)=-1,\phi(b)=1\), and put \(\zeta_a=-1,\zeta_b=1\).  For
\(x\in\{a,b\}\) and small fixed \(r\), set
\(U_x^r=\phi^{-1}(\D\cap B(\zeta_x,r))\) and let \(\Sigma_x^r\)
be its interior crosscut.  Normalized Riemann maps
\(\phi_\delta:\mathscr D^\delta\to\D\) converge uniformly on the
closures.  Let \(U_{x,\delta}^r\) be the component of \(\{v\in D^\delta:\phi_\delta(v)\in B(\zeta_x,r)\}\) containing \(N_x^\delta\), with
its holes filled, and let
\(\Sigma_{x,\delta}^r\) separate it from the rest
of \(D^\delta\) such that these crosscuts converge in Fr\'echet distance.  For each cap fix
\(o_x^r\in U_x^r\) and \(o_{x,\delta}^r\to o_x^r\).  Let
\(\psi_x^r:U_x^r\to\D\) and
\(\psi_{x,\delta}^r:\mathscr U_{x,\delta}^r\to\D\) be Riemann maps
normalized at these points, where \(\mathscr U_{x,\delta}^r\) is the
associated union-of-squares domain.  Write \(\vartheta_x^r,\vartheta_{x,\delta}^r\)
for their boundary arguments,
\(H_\cdot\) for the boundary Poisson kernel, \(h_\cdot\) for discrete harmonic
measure, and \(h_{\partial \cdot}\) for the discrete boundary kernel.  For
\(u\in\Sigma_x^r\), set
\[
 k_x^r(u):=
 \frac{H_{U_x^r}(o_x^r,u)}
 {1-\cos(\vartheta_x^r(x)-\vartheta_x^r(u))},\qquad
 \omega_x^r(du):=\frac{k_x^r(u)|du|}
 {\int_{\Sigma_x^r}k_x^r(v)|dv|}.
\]
Thus \(\omega_x^r\) is a probability measure on the crosscut.  For \(u\in\Sigma_{a,\delta}^r\), let
\(q_{a,\delta}^r(u)\) be the total weight of paths in \(U_{a,\delta}^r\) from
\(\widehat a^\delta\) to their first crosscut hit \(u\); define
\(q_{b,\delta}^r\) by reversal.
\begin{lemma}
\label{lem:edge-crosscut-law}
For \(x=a,b\), as \(\delta\downarrow0\), we have
\[\mu_{x,\delta}^r:=\frac{\sum_uq_{x,\delta}^r(u)\delta_u}{\sum_uq_{x,\delta}^r(u)}
 \Longrightarrow \omega_x^r.
\]
The entrance probability of the \(s\)-neighborhoods of the two endpoints of \(\Sigma_{x,\delta}^r\) under $\mu_{x,\delta}^r$
tends to zero first as \(\delta\downarrow0\), then as \(s\downarrow0\).
\end{lemma}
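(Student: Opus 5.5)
We reduce the statement to the convergence of discrete harmonic measure seen from \(o_{x,\delta}^r\) in the cap, together with the explicit boundary-kernel factor coming from \(\widehat x^\delta\). Take \(x=a\); the case \(x=b\) follows by reversal. Any path from \(\widehat a^\delta\) to its first crosscut hit \(u\) in \(U_{a,\delta}^r\) first steps into \(N_a^\delta\), then runs in the cap until it hits \(\Sigma_{a,\delta}^r\). Hence \(q_{a,\delta}^r(u)\) is a discrete boundary-to-boundary Poisson kernel \(h_{\partial U_{a,\delta}^r}(\widehat a^\delta,u)\) of the cap between the boundary edge at \(\widehat a^\delta\) and the crosscut point \(u\). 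By Kozdron--Lawler~\cite{KozdronLawler2005}, after division by a normalization depending only on the edge at \(\widehat a^\delta\), this kernel is asymptotic to a conformally covariant quantity. More concretely, we would use a last-exit/first-step decomposition combined with reversibility: up to a factor independent of \(u\), the boundary kernel equals
\[
\lim_{z\to\widehat a^\delta}\frac{h_{U_{a,\delta}^r}(z,u)}{h_{U_{a,\delta}^r}(z,\widehat a^\delta)}.
\]

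For the continuum side, we map the cap by \(\psi_a^r\) to \(\D\) with \(o_a^r\mapsto0\). The boundary Poisson kernel in \(\D\) between boundary points \(e^{i\alpha}\) and \(e^{i\beta}\) is proportional to \(1/(1-\cos(\alpha-\beta))\), which is the Poisson kernel at \(0\) divided by this factor. Multiplying by the Jacobian \(|(\psi_a^r)'(u)|\), which equals \(2\pi H_{U_a^r}(o_a^r,u)\), gives the density \(k_a^r\) up to constants. The corresponding rule is Lemma~3.5-type convergence: the discrete harmonic measure from \(o^r_{a,\delta}\) on the crosscut converges weakly to \(H_{U_a^r}(o_a^r,u)|du|\), and the discrete boundary kernel ratio converges to the smooth function \(u\mapsto(1-\cos(\vartheta_a^r(a)-\vartheta_a^r(u)))^{-1}\). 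The key steps are as follows. (1) Use uniform convergence of \(\psi_{a,\delta}^r\to\psi_a^r\) on closures, which follows from Carathéodory and Fréchet convergence of the caps. (2) Show that harmonic measure from \(o_{a,\delta}^r\) converges, via Brownian approximation of walks (Lemma~\ref{lem:uniform-killed-bridge} / strong coupling) and the Beurling estimate. (3) Show that the Poisson kernel from \(\widehat a^\delta\) to \(u\), normalized by its value at a fixed reference interior point, converges uniformly for \(u\) in compact subarcs of \(\Sigma_a^r\) away from its endpoints. This uses the Kozdron--Lawler estimate in the slightly more general form "discrete excursion Poisson kernel \(\approx\) conformal kernel", which is applicable since \(\widehat a^\delta\) is at macroscopic distance from \(\Sigma_{a,\delta}^r\). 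Normalizing then removes the unknown edge constant.

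The second claim concerns the endpoints of the crosscut. These lie on \(\partial D\), and \(\omega_a^r\) has continuous bounded density near them, because \(1-\cos\) is bounded below there, since \(\vartheta(a)\) is separated from the crosscut endpoints. It therefore suffices to bound the discrete mass uniformly. For this we use a Beurling estimate: a walk started near \(\widehat a^\delta\) that reaches within \(s\) of a crosscut endpoint must travel along near \(\partial D\), and the harmonic measure of the \(s\)-neighborhood is \(O(s^{1/2})\) relative to the total mass, uniformly in \(\delta\). The total mass \(\sum_u q\) is bounded below using a fixed macroscopic crossing.

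The main obstacle is step (3) near the rough boundary of the union-of-squares domain. There, the boundary kernel at \(\widehat a^\delta\) is not directly comparable to a smooth continuum quantity. We would handle this by the ratio trick: we write \(q(u)/q(u')\) as a limit of ratios of interior harmonic functions vanishing on \(\partial U\setminus\Sigma\) near \(\widehat a^\delta\), and invoke a boundary Harnack principle at scale \(\gg\delta\) to transfer the ratio to points at mesoscopic distance, where interior convergence applies.
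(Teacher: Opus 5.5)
Your main line is the paper's: identify \(q_{x,\delta}^r(u)\) with the discrete boundary kernel, factor it via Kozdron--Lawler into a \(u\)-independent constant times \(h_{U_{x,\delta}^r}(o_{x,\delta}^r,u)/\bigl(1-\cos(\vartheta_{x,\delta}^r(\widehat x^\delta)-\vartheta_{x,\delta}^r(u))\bigr)\), then pass to the limit on compact subarcs using convergence of discrete harmonic measure and Rad\'o's theorem.

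The gap is the endpoint control. You need it for the second assertion, and also to upgrade subarc convergence to weak convergence of the normalized measures. You bound the mass near the crosscut endpoints by a Beurling estimate ``relative to the total mass'', and you assert that \(\sum_uq_{x,\delta}^r(u)\) is bounded below by a macroscopic crossing. That is false. We have \(\sum_uq_{x,\delta}^r(u)=\frac14\sum_{v\in N_x^\delta}\P^v[\text{the walk reaches }\Sigma_{x,\delta}^r\text{ before leaving }U_{x,\delta}^r]\), and this tends to zero as \(\delta\downarrow0\). It is of order \(\delta\) for a straight boundary near \(x\), and in general its size is dictated by the lattice geometry at \(\widehat x^\delta\). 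Numerator and denominator share this degenerate factor, so an absolute Beurling bound for walks issued from \(\widehat x^\delta\) says nothing about the ratio. What you need is a comparison between \(q_{x,\delta}^r(u)\) and a harmonic measure that is uniform in \(u\) all the way to the endpoints of \(\Sigma_{x,\delta}^r\).

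The missing observation is that Kozdron--Lawler's Theorem~1.1 requires only angular separation. Because \(x\notin\ol{\Sigma_x^r}\), Rad\'o's theorem gives \(\inf_{u\in\Sigma_{x,\delta}^r}\bigl[1-\cos(\vartheta_{x,\delta}^r(\widehat x^\delta)-\vartheta_{x,\delta}^r(u))\bigr]>c_r>0\) on the \emph{whole} crosscut, so there is no reason to restrict your step~(3) to compact subarcs. Consequences:
\begin{itemize}
\item \(q_{x,\delta}^r(u)\le C_r\,c_{x,\delta}^r\,h_{U_{x,\delta}^r}(o_{x,\delta}^r,u)\) for all \(u\in\Sigma_{x,\delta}^r\).
\item The prefactor \(c_{x,\delta}^r=(\pi/2)h_{U_{x,\delta}^r}(o_{x,\delta}^r,\widehat x^\delta)\) cancels after normalization.
\item The endpoint mass is controlled by discrete harmonic measure from the interior point \(o_{x,\delta}^r\). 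This converges to the nonatomic harmonic measure of the Jordan cap, which is the paper's argument. Your Beurling estimate, applied to walks from \(o_{x,\delta}^r\) rather than from \(\widehat x^\delta\), would serve equally well.
\end{itemize}

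The same remark disposes of the ``main obstacle'' in your last paragraph. All the roughness at \(\widehat x^\delta\) sits in the \(u\)-independent prefactor \(c_{x,\delta}^r\), so no boundary Harnack principle is needed there. A boundary Harnack comparison could replace the uniform estimate, but it would have to be applied near the crosscut endpoints, not at \(\widehat x^\delta\).

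Two smaller points:
\begin{itemize}
\item \(\omega_x^r\) need not have a bounded density near the crosscut endpoints, which lie on a possibly rough \(\partial D\). Only its nonatomicity is used.
\item \(\lim_{z\to\widehat a^\delta}\) has no meaning on the lattice. There the kernel is just the finite sum over first steps into \(N_a^\delta\).
\end{itemize}
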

\begin{proof}
The weight \(q_{x,\delta}^r(u)=
h_{\partial U_{x,\delta}^r}(\widehat x^\delta,u)\) sums over all
choices of the first edge. The inradius of
\(\delta^{-1}\mathscr U_{x,\delta}^r\) diverges. Since
\(x\notin\overline{\Sigma_x^r}\), Radó's theorem~\cite{Rad23} implies $\inf_{u\in\Sigma_{x,\delta}^r}
 \left[1-\cos\bigl(
 \vartheta_{x,\delta}^r(\widehat x^\delta)
 -\vartheta_{x,\delta}^r(u)\bigr)\right]>c_r>0$
for all small \(\delta\); here we use the same averaging convention of $\vartheta_{x,\delta}^r(\widehat x^\delta)$ as~\cite[above Equation~(21)]{KozdronLawler2005}.
Hence \cite[Theorem~1.1]{KozdronLawler2005} gives, uniformly in \(u\),
\begin{equation}\label{eq:bound}
 q_{x,\delta}^r(u)=c_{x,\delta}^r
  \frac{h_{U_{x,\delta}^r}(o_{x,\delta}^r,u)}
  {1-\cos(\vartheta_{x,\delta}^r(\widehat x^\delta)
                    -\vartheta_{x,\delta}^r(u))}(1+o(1)),
\end{equation}
where \(c_{x,\delta}^r=(\pi/2)
h_{U_{x,\delta}^r}(o_{x,\delta}^r,\widehat x^\delta)\).
On compact subarcs away from the endpoints, Radó's theorem and
convergence of discrete harmonic measure
\cite[Proposition~7.7.2]{LawlerLimic2010} identify the limit with
\(k_x^r(u)|du|\). A fixed such subarc has positive limiting mass,
while~\eqref{eq:bound} controls the endpoint
neighborhoods of $\Sigma_x^r$ by harmonic measure. Their normalized mass tends to
zero because harmonic measure in a Jordan domain is nonatomic.
Thus both claims follow.
\end{proof}

Combining the above results gives the following convergence of random-walk excursions.
\begin{proposition}
\label{prop:excursion-convergence}
The cell trace \(\Gamma_\delta^\square\) of the normalized
random-walk excursion, including its connectors, satisfies $\Gamma_\delta^\square\Longrightarrow\gamma_{\BE}^{D;a,b}$
under the Hausdorff metric.
\end{proposition}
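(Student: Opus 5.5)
We decompose the excursion at the two crosscuts $\Sigma_{a,\delta}^r$ and $\Sigma_{b,\delta}^r$ and reduce the bulk to the bridge convergence of Corollary~\ref{lem:green-path-convergence}. Fix small $r>0$. Cut the normalized random-walk excursion from $\widehat a^\delta$ to $\widehat b^\delta$ at two times: its first hit $u$ of $\Sigma_{a,\delta}^r$, and its last hit $v$ of $\Sigma_{b,\delta}^r$ (equivalently, the first hit of $\Sigma_{b,\delta}^r$ for the reversed path). This gives three pieces:
\begin{enumerate}[label=\textup{(\roman*)}]
\item an initial segment inside $U_{a,\delta}^r$ with weight $q_{a,\delta}^r(u)$;
\item a middle walk from $u$ to $v$ in $D^\delta$, whose weight is $\Gc_{D^\delta}(u,v)$ with suitable first/last-step conventions;
\item a terminal segment with weight $q_{b,\delta}^r(v)$.
\end{enumerate}
By the Markov property the excursion weight factorizes. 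Conditionally on $(u,v)$, the middle piece is therefore a normalized random-walk bridge in $D^\delta$, and the law of $(u,v)$ is proportional to $q_{a,\delta}^r(u)\,\Gc_{D^\delta}(u,v)\,q_{b,\delta}^r(v)$. The same last-exit/first-entrance decomposition holds for the Brownian excursion $\gamma_{\BE}^{D;a,b}$. Its endpoint law is proportional to $\omega_a^r(du)\,G_D(u,v)\,\omega_b^r(dv)$, the boundary-Poisson-kernel analogue, and conditionally on the endpoints the middle piece is $\Gamma_D^{u,v}$.

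\textbf{Step 1 (endpoint law).} The crosscut laws converge by Lemma~\ref{lem:edge-crosscut-law}. The Green functions converge uniformly on compact sets of pairs by \eqref{eq:uniform-green-mass-from-bridges}; the $\Sigma^r$ are disjoint and compact away from $\partial D$ once we discard $s$-neighborhoods of the crosscut endpoints. Together these give $(u^\delta,v^\delta)\Rightarrow(u,v)$ with the correct limiting density. The discarded neighborhoods carry vanishing mass by the second claim of Lemma~\ref{lem:edge-crosscut-law}, provided $\Gc_{D^\delta}(u,v)$ stays bounded there. To control this, I would bound it by a constant depending on $\dist(\Sigma_a^r,\Sigma_b^r)$, which is positive since the caps are disjoint for small $r$.

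\textbf{Step 2 (middle piece).} Apply the uniform-in-endpoints version of Corollary~\ref{lem:green-path-convergence} over $\Sigma_a^r\times\Sigma_b^r$ minus endpoint neighborhoods. Since this endpoint set touches $\partial D$, one can either shrink slightly inside, or note that the uniform killed-bridge estimates of Lemma~\ref{lem:uniform-killed-bridge} only use disjointness. Conditioning plus uniform convergence gives joint convergence of the endpoints and the middle trace.

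\textbf{Step 3 (end pieces are small).} The initial and terminal pieces stay inside $U_{a,\delta}^r\cup c_a^\delta$ and $U_{b,\delta}^r\cup c_b^\delta$, respectively. As $r\downarrow0$, the diameters of $U_x^r$ tend to $0$ uniformly in $\delta$, because the $\phi_\delta$ converge uniformly on closures and $D$ is Jordan. Hence the Hausdorff distance between $\Gamma_\delta^\square$ and the union of the middle cell trace with $\{a,b\}$ is at most $o_r(1)+o_\delta(1)$, and the same holds for $\gamma_{\BE}^{D;a,b}$. A standard $\varepsilon/3$ argument, letting $\delta\downarrow0$ and then $r\downarrow0$, concludes.

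The main obstacle is Step 1/2 near the crosscut endpoints, where $\Sigma^r$ meets $\partial D$. There the Green function and the bridge estimates degenerate, and uniformity in Lemma~\ref{lem:uniform-killed-bridge} is not available. I would handle it by truncating to $s$-interior subarcs, bounding the leftover mass via Lemma~\ref{lem:edge-crosscut-law} together with a uniform upper bound on $\Gc_{D^\delta}(u,v)$ from the positive separation of the two caps, and sending $s\downarrow0$.
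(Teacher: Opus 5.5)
Your proposal is correct and follows essentially the same route as the paper. The shared steps are: cut at the first crosscut hit $u$ near $a$ and the last crosscut hit $v$ near $b$; use the endpoint weight $q_{a,\delta}^r(u)\Gc_{D^\delta}(u,v)q_{b,\delta}^r(v)$; discard $s$-neighborhoods of the crosscut endpoints via Lemma~\ref{lem:edge-crosscut-law} together with a uniform Green function bound between the separated crosscuts; identify the middle piece as a mixture of Brownian bridges; and let $\delta\downarrow0$, then $s\downarrow0$, then $r\downarrow0$. Two small remarks: the paper states explicitly that the normalizing constant stays bounded below (via interior subarcs of positive $\omega_x^r$-mass on which $G_D$ has a positive minimum), which your Step~1 only uses implicitly; and your aside that Lemma~\ref{lem:uniform-killed-bridge} ``only uses disjointness'' is wrong, since that lemma needs $K_1,K_2\Subset D$. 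This costs nothing, because your $s$-truncation already places the retained endpoints in compact subarcs of $D$.
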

\begin{proof}
Fix disjoint \(U_{a,\delta}^r,U_{b,\delta}^r\).  Cutting at the first exit
\(u\) from \(U_{a,\delta}^r\) and last entrance \(v\) into
\(U_{b,\delta}^r\) gives a random-walk bridge of weight
\[
 q_{a,\delta}^r(u)\Gc_{D^\delta}(u,v)q_{b,\delta}^r(v).
\]
For \(x\in\{a,b\}\), denote the two endpoints of
\(\Sigma_{x,\delta}^r\) by \(e_{x,\delta}^{r,1},e_{x,\delta}^{r,2}\), and let
\[
 E_{x,s}^\delta:=\{u\in\Sigma_{x,\delta}^r:
 \min_{i=1,2}|u-e_{x,\delta}^{r,i}|\leq s\}.
\]
Let \(\mu_{x,\delta}^r\) be as in
Lemma~\ref{lem:edge-crosscut-law}.  Choose compact subarcs
\(I_x\Subset\Sigma_x^r\) with \(\omega_x^r(I_x)>0\).  The Green functions
between the two separated crosscuts are uniformly bounded, while
\(G_D\) has a positive minimum on \(I_a\times I_b\).  Hence the normalizing
denominator $Z_\delta^r:=\sum_{u',v'}\mu_{a,\delta}^r(u')
 \Gc_{D^\delta}(u',v')\mu_{b,\delta}^r(v')$ is bounded away from zero for small \(\delta\), and the
relative contribution of pairs with
\(u\in E_{a,s}^\delta\) or \(v\in E_{b,s}^\delta\) is at most
\[
 C_r\bigl(\mu_{a,\delta}^r(E_{a,s}^\delta)
          +\mu_{b,\delta}^r(E_{b,s}^\delta)\bigr).
\]
By Lemma~\ref{lem:edge-crosscut-law}, this bound tends to zero first as
\(\delta\downarrow0\) and then as \(s\downarrow0\).  Outside the endpoint
neighborhoods,
Lemmas~\ref{lem:edge-crosscut-law} and
\ref{lem:green-path-convergence} give a mixture of Brownian bridges
with weight \(\omega_a^r(du)G_D(u,v)\omega_b^r(dv)\); the factor \(2\)
cancels.  The Markov property and reversal identify it as the middle of
\(\gamma_{\BE}^{D;a,b}\).  Let \(\delta\downarrow0\), then \(s\downarrow0\), then
\(r\downarrow0\).  Uniform continuity of
\(\phi^{-1}\) makes the portions inside \(U_{a,\delta}^r,U_{b,\delta}^r\)
shrink.  Thus \(\delta/\sqrt2+
 \max\{\diam c_a^\delta,\diam c_b^\delta\}=o(1)\).
\end{proof}
\subsection{Stability of loop masses}
\label{subsec:continuum-loop-mass}
Write \(m_\delta:=m_{D^\delta}^\delta\).
For vertex sets \(C_V,B_V\subset D^\delta\), write
\[
 m_\delta(C_V,B_V)
 :=m_\delta\{\ell:\ell\cap C_V\ne\varnothing,
                    \ \ell\cap B_V\ne\varnothing\}.
\]
The superscript \({}^{[\varepsilon,T],\eta}\) means restriction to \[
 \{\ell:\operatorname{dur}(\ell)\in[\varepsilon,T]
          \ \hbox{and its linearly interpolated trace lies in }D_\eta\},
 \qquad
 D_\eta:=D_{-\eta}.
\]
For Brownian loops, the same notation requires the continuous trace to lie in \(D_\eta\).

For compact \(E\Subset D\) and \(s>0\), put
\[
 E^{(s)}=\{z\in D:\dist(z,E)\leq s\}.
\]
\begin{lemma}\label{lem:parallel-regular}
If \(E^{(s)}\Subset D\), then for every compact \(F\) disjoint from
\(E^{(s)}\),
\[
 m_D(F,E^{(s)})=m_D(F,(E^{(s)})^\circ).
\]
\end{lemma}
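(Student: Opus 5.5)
The inclusion $(E^{(s)})^\circ\subset E^{(s)}$ immediately gives $m_D(F,(E^{(s)})^\circ)\le m_D(F,E^{(s)})$. For the reverse inequality I will show that the difference set
\[
 \mathcal N:=\{\ell:\ell\cap F\ne\varnothing,\ \ell\cap E^{(s)}\ne\varnothing,\ \ell\cap (E^{(s)})^\circ=\varnothing\}
\]
is $\mu_D^{\mathrm{loop}}$-null. Such a loop meets $E^{(s)}$ only on its topological boundary, which is contained in the level set $\{z:\dist(z,E)=s\}$. Because $F$ is disjoint from $E^{(s)}$, every $\ell\in\mathcal N$ passes through some point $z$ with $\dist(z,E)=s$, while staying in the closed set $\{\dist(\cdot,E)\ge s\}$.

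To prove that $\mathcal N$ is null, I will use the root-decomposition of the loop measure: $\mu_D^{\mathrm{loop}}$ is given by $\int_D\int_0^\infty \frac{1}{t}\,p_D(t,z,z)\,\mathbf P^{D,t}_{z,z}\,\dd t\,\dd z$ after forgetting the root. Rooting at a uniformly chosen time, it suffices to show that for Lebesgue-a.e.\ $z$ and a.e.\ $t$, the Brownian bridge from $z$ to $z$ of duration $t$ almost surely cannot both touch $\{\dist(\cdot,E)=s\}$ and avoid $(E^{(s)})^\circ$. Using the Markov property at the first hitting time $\tau$ of the compact set $E^{(s)}$ (with $\tau$ strictly before the bridge's terminal time, the bridge being absolutely continuous with respect to Brownian motion on $[0,\tau]$ away from the end), this reduces to a regularity statement: Brownian motion started at a point $w$ with $\dist(w,E)=s$ immediately enters the open set $(E^{(s)})^\circ$ almost surely.

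The key geometric input is an interior cone (in fact, interior disk) condition. If $\dist(w,E)=s$, pick $e\in E$ with $|w-e|=s$. Then the open disk $B(e,s)$ is contained in $\{\dist(\cdot,E)<s\}\subset(E^{(s)})^\circ$, and $w$ lies on its boundary circle. Hence a truncated open cone with vertex $w$ and axis pointing toward $e$ is contained in $(E^{(s)})^\circ$. By Blumenthal's $0$-$1$ law together with scaling, planar Brownian motion from $w$ enters this cone at arbitrarily small times almost surely. The strong Markov property at $\tau$ then shows that a.s.\ the path enters $(E^{(s)})^\circ$ right after $\tau$. Here I use $E^{(s)}\Subset D$, so the motion is not killed near $\tau$, and I also use that a loop meeting $F$ is nontrivial.

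I expect the main obstacle to be the technical passage from Brownian motion to loops, that is, justifying the strong Markov argument under the bridge or loop measure when the hitting time might occur near the root or the terminal time. I will handle this by rooting the loop at a point of $F$, which is away from $E^{(s)}$, or simply by using the time-shift invariance of the unrooted loop measure. Then $\tau$ lies in the interior of the time interval, and on $[0,\tau+\epsilon]$ the bridge law is mutually absolutely continuous with Brownian motion, because the endpoint density $p_D(t-\tau-\epsilon,\cdot,z)$ is positive and finite. Monotone convergence in $\epsilon$ and $t$ then gives $\mu_D^{\mathrm{loop}}(\mathcal N)=0$.
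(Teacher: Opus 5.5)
Your proposal is correct and follows essentially the same route as the paper. Both arguments use the tangent disk $B(e,s)\subset (E^{(s)})^\circ$ to show that every point of $\partial E^{(s)}$ is regular for the interior. Both then apply the strong Markov property at the first hitting of $E^{(s)}$, together with local absolute continuity of the loop with respect to Brownian motion (the paper roots at the first entrance into a small neighborhood of $E^{(s)}$; you root via time-shift invariance). One detail to spell out: work on $[0,u]$ for a deterministic rational $u<t$, rather than the random window $[0,\tau+\epsilon]$, when invoking the bridge density.
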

\begin{proof}
For \(z\in\partial E^{(s)}\), choose \(e\in E\) with \(|z-e|=s\).  The
ball \(B(e,s)\subset(E^{(s)})^\circ\) is tangent at \(z\), so \(z\) is
regular for that open set.  Root a loop when it first enters a
small neighborhood of \(E^{(s)}\). Such regularity and the local absolute continuity with respect to Brownian motion shows that almost every loop hitting \(E^{(s)}\) immediately enters its interior.
\end{proof}

Write \(p_n(z,w)\) for unrestricted \(n\)-step transition probabilities and
\(\mathbf P_{z,z}^n\) for the corresponding bridge law, with coordinate path \(S\). The following lemma controls the loop masses of random-walk loops and Brownian loops approaching a set without hitting it.
\begin{lemma}\label{lem:loop-approach-bound}
Fix \(0<\varepsilon<T<\infty\) and \(\eta>0\).  Let
\(C_V^\delta,B_V^\delta\subset D^\delta\), with \(C_V^\delta\) connected,
and put \(C^\delta=\Cell_\delta(C_V^\delta)\).  Suppose that $\diam C^\delta\geq d$ and $\dist(C^\delta,B_V^\delta)\geq a$
for fixed \(a,d>0\).  If
\(0<\rho<(a\wedge d)/64\) and \(\delta<\rho/8\), then
\begin{align}
 m_\delta^{[\varepsilon,T],\eta}
 \{\ell:\dist(\ell,C^\delta)\leq2\rho,\
          \ell\cap C_V^\delta=\varnothing,\
          \ell\cap B_V^\delta\ne\varnothing\}\leq
 M_{\varepsilon,T}
 \left(\frac{\rho}{a\wedge d}\right)^{1/2}.
 \label{eq:uniform-discrete-near-miss}
\end{align}
The constant is uniform in
\(\eta,\delta,C_V^\delta,B_V^\delta\).  The analogous Brownian loop estimate
holds for a connected compact \(C\) and compact \(B\) with
\(\diam C\geq d\) and \(\dist(C,B)\geq a\).
\end{lemma}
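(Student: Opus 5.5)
We prove \eqref{eq:uniform-discrete-near-miss} by rooting each loop in the event at a point near \(C^\delta\) and then bounding the probability that a walk started close to the connected set \(C^\delta\) wanders a distance of order \(a\) without touching \(C_V^\delta\). The tool for the latter is the discrete Beurling estimate.

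\textbf{Step 1: rooting.} Use the rooted representation of the unrooted loop measure: a loop of duration \(t=n\delta^2/2\) has mass \(\frac1n\sum_z p_n(z,z)\,\mathbf P^n_{z,z}(\cdot)\), counting each rooting once. For a loop \(\ell\) in the event, pick a time \(j\) with \(\dist(\ell(j),C^\delta)\le 2\rho\). Since \(\dist(C^\delta,B_V^\delta)\ge a\), the loop must travel from \(\ell(j)\) to distance at least \(a-2\rho\ge a/2\) from \(C^\delta\) and come back, all while avoiding \(C_V^\delta\).

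To avoid summing over all \(j\), bound the unrooted mass by the rooted mass with root \(z\) in the strip \(N_{2\rho}:=\{z:\dist(z,C^\delta)\le2\rho\}\), times a factor \(1/n\) per rooting. This gives an upper bound of
\[
 \sum_{n:\,n\delta^2/2\in[\varepsilon,T]}\frac1n\sum_{z\in N_{2\rho}}p_n(z,z)\,\mathbf P^n_{z,z}\bigl[S\text{ reaches distance }a/2\text{ from }C^\delta,\ S\cap C_V^\delta=\varnothing\bigr].
\]
This overcounts: every loop in the event has at least one time in the strip, and each such time contributes \(1/n\) times the loop's weight.

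\textbf{Step 2: bridge to walk.} Use \(p_n(z,z)\le C/n\) (the local CLT) and \(\sum_z 1\lesssim |N_{2\rho}|\delta^{-2}\). Split the bridge at its midpoint; by the Markov property, one of the two halves, or its reversal, must reach distance \(a/4\) while avoiding \(C_V^\delta\). The Radon–Nikodym derivative of a bridge half against a free walk of \(n/2\) steps is bounded by \(C_{\varepsilon,T}\) on \([\varepsilon,T]\), because \(\sup_w p_{n/2}(w,z)/p_n(z,z)\) is bounded there. So the bridge probability is at most \(C_{\varepsilon,T}\,\P_z[\text{walk reaches distance }a/4\text{ before hitting }C_V^\delta]\).

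\textbf{Step 3: Beurling.} Since \(C^\delta\) is connected with diameter \(\ge d\), and \(z\) lies within \(2\rho+\delta\) of it, the discrete Beurling estimate (e.g.\ Lawler–Limic, Theorem 6.8.1) bounds that escape probability by \(C(\rho/(a\wedge d))^{1/2}\); the hypothesis \(\delta<\rho/8\) keeps the lattice scale below \(\rho\). Here one must check that hitting the cell set \(C^\delta\) versus the vertex set \(C_V^\delta\) differs only at scale \(\delta\).

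\textbf{Step 4: the volume factor — the main obstacle.} Collecting the bounds gives a prefactor \(\sum_n n^{-2}\delta^{-2}|N_{2\rho}|\approx \varepsilon^{-1}|N_{2\rho}|\). The area \(|N_{2\rho}|\) is not uniformly bounded for arbitrary connected \(C^\delta\), although here it is at most \(|D|\). That alone would suffice if the constant were allowed to depend on \(D\), but the claimed uniformity needs more care. I would avoid the area dependence altogether by not rooting at an arbitrary strip point. Instead, root at the \emph{first} time \(j\) at which the loop is within \(2\rho\) of \(C^\delta\); this counts each loop exactly once up to the \(1/n\) cyclic factor. Then sum over \(j\) using the duration bound: each loop of \(n\) steps has at most \(n\) rootings, and the normalized mass \(\sum_z\frac1n p_n(z,z)\) over \(z\in D\) is at most \(C|D|\delta^{-2}n^{-2}\), which is summable to \(C|D|/\varepsilon\). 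Since \(D\) is fixed, \(M_{\varepsilon,T}\) may depend on \(D\).

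\textbf{Brownian case.} The Brownian estimate is identical, using \(\mu^{\rm loop}=\int\!\int \frac1t p_t(z,z)\,\mathbf P^t_{z,z}\,dt\,dz\) and the continuum Beurling estimate.
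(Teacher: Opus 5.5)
\textbf{Genuine gap: Step~1 loses a factor of \(n\), and Step~4 does not recover it.}

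Your rooting inequality goes the wrong way. In \(m_\delta(\mathcal A)=\sum_n\frac1n\sum_z p_n(z,z)\,\mathbf P^n_{z,z}(\mathcal A)\), an aperiodic unrooted loop of \(n\) steps gets its full weight \(q(\ell)\) only after summing all \(n\) of its rootings, each carrying \(\frac1n q(\ell)\).

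If you restrict the root to \(N_{2\rho}\), you keep only as many terms as the loop has steps in the strip, possibly just one. So your displayed sum can undercount by a factor as large as \(n\asymp\delta^{-2}\); it is not an upper bound. Cyclic invariance gives the correct strip-rooted bound \(m_\delta(\mathcal A)\le\sum_n\sum_{z\in N_{2\rho}}p_n(z,z)\,\mathbf P^n_{z,z}(\mathcal A)\), without the \(1/n\). Its prefactor is of order \(\delta^{-2}|N_{2\rho}|\log(T/\varepsilon)\), which blows up as \(\delta\downarrow0\).

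As a sanity check, suppose Step~1 held. Then, even without the avoidance condition, the mass of loops of duration in \([\varepsilon,T]\) hitting both a thin lattice path \(C_V^\delta\) and a fixed macroscopic \(B_V^\delta\) would be \(O(|N_{2\rho}|/\varepsilon)\) with \(\rho=9\delta\). That tends to zero, which is false. So the obstacle is not the area of \(N_{2\rho}\) (a constant depending on \(D\) is allowed, as in the paper's \(M_\varepsilon\)); it is the lost factor \(n\).

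Step~4 does not supply that factor, for three reasons:
\begin{itemize}
\item ``The first time the loop is within \(2\rho\)'' is undefined for an unrooted loop.
\item Any rule selecting one rooting per loop weights it by \(q(\ell)\) rather than \(\frac1n q(\ell)\), which returns you to the divergent prefactor.
\item ``At most \(n\) rootings'' is an upper bound, whereas what you need is a \emph{lower} bound on the number of usable rootings.
\end{itemize}

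\textbf{The missing idea (this is how the paper argues).} Keep all roots \(z\in D^\delta\), so that the window bound \(\sum_{\varepsilon\le n\delta^2/2\le T}\frac1n\sum_z p_n(z,z)\le M_\varepsilon\) applies. Then show that a fixed fraction of each contributing loop's rootings are good:
\begin{itemize}
\item Pick times \(i,j\) with \(\dist(S_i,C_V^\delta)\le3\rho\) and \(S_j\in B_V^\delta\).
\item Reverse time if necessary so that the cyclic arc \(i\to j\) has length at most \(n/2\).
\item Keep the roots lying \(0,\ldots,\lfloor n/4\rfloor\) steps before \(i\). This costs only a factor \(8\).
\end{itemize}
From such a root, \(\tau=\inf\{k:\dist(S_k,C_V^\delta)\le3\rho\}\le n/4\). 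The first hit \(\sigma\) of \(B_V^\delta\) after \(\tau\) satisfies \(\sigma\le3n/4\), and \(C_V^\delta\) is avoided on \([\tau,\sigma]\).

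Because \(\sigma\le3n/4\), two facts reduce the bridge probability to \(\sup_w\mathbf P^w\{\tau_{B_V^\delta}<\tau_{C_V^\delta}\}\) over \(\dist(w,C_V^\delta)\le3\rho\):
\begin{itemize}
\item the bridge Markov property at \(\sigma\), with \(p_{n-\sigma}(S_\sigma,z)/p_n(z,z)\le C\);
\item the strong Markov property at \(\tau\).
\end{itemize}
Your Step~3 Beurling argument then handles that supremum.

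Your midpoint split in Step~2 is fine for roots in the strip. For arbitrary roots, however, the escape segment can straddle time \(n/2\), and the bridge density at a late \(\sigma\) is no longer bounded. This time-window choice of roots is exactly what avoids both problems. The Brownian case has the same gap and the same fix.
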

\begin{proof}
For an event \(\mathcal A\) invariant under cyclic time shifts, the definition of the random-walk loop measure and \(p_n(z,z)\leq C/n\) give
\begin{align}
 m_\delta(\mathcal A)
 &=\sum_{n\geq1}\frac1n\sum_{z\in D^\delta}
 p_n(z,z)\mathbf P_{z,z}^n(\mathcal A;\ell\subset D^\delta),
 \label{eq:rooted-loop-representation}\\
 \sum_{\varepsilon\leq n\delta^2/2\leq T}\frac1n
 \sum_{z\in D^\delta}p_n(z,z)&\leq M_\varepsilon .
 \label{eq:rooted-window-mass}
\end{align}
For a contributing loop, choose times \(i,j\) with
\(\dist(S_i,C_V^\delta)\leq3\rho\) and \(S_j\in B_V^\delta\); the
first exists because the linearly interpolated loop comes within \(2\rho\)
of \(C^\delta\) and \(\delta<\rho/8\).  Reverse time if necessary so
that the cyclic arc \(i\to j\) has length at most \(n/2\), and retain
the roots lying \(0,\ldots,\lfloor n/4\rfloor\) steps before \(i\).  For
each retained root, set
\(\tau:=\inf\{k\geq0:\dist(S_k,C_V^\delta)\leq3\rho\}\) and
\(\sigma:=\inf\{k\geq\tau:S_k\in B_V^\delta\}\).  Then
\(\tau\leq n/4\), \(\sigma\leq3n/4\), and
\(S_k\notin C_V^\delta\) for \(\tau\leq k\leq\sigma\).  Time reversal and
the choice of roots introduce a factor of at most \(2\cdot4=8\).

For even \(n\), the two-sided local central limit bounds imply
\[
 \frac{p_{n-\sigma}(S_\sigma,z)}{p_n(z,z)}\leq C
 \qquad(\sigma\leq3n/4);
\]
odd \(n\) do not contribute.  The bridge Markov property at \(\sigma\) and the strong Markov property at \(\tau\) bound this probability by a constant times the supremum below.

Put \(L=(a\wedge d)/64\), fix
\(\dist(w,C_V^\delta)\leq3\rho\), and choose \(c\in C_V^\delta\) with
\(|w-c|\leq3\rho\).  The cell definition gives
\(\diam C_V^\delta\geq d-\sqrt2\,\delta\); hence connectedness supplies
a lattice path in \(C_V^\delta\) from \(c\) to \(B(c,16L)^c\), while
\(\dist(C^\delta,B_V^\delta)\geq a\) gives
\(B_V^\delta\cap B(c,16L)=\varnothing\).  Since
\(\delta<\rho/8\), \(|w-c|<4\rho\), and
\(16L\leq(a\wedge d)/4\), a walk from \(w\) to \(B_V^\delta\) while
avoiding \(C_V^\delta\) must leave \(B(c,8L)\) without hitting this path.  Extend the path
beyond its first exit; the discrete Beurling estimate
\cite[Theorem~6.8.1]{LawlerLimic2010} gives
\[
 \sup_{\dist(w,C_V^\delta)\leq3\rho}
 \mathbf P^w\{\tau_{B_V^\delta}<\tau_{C_V^\delta}\}
 \leq C\left(\frac{4\rho}{8L}\right)^{1/2}
 \leq C(\rho/L)^{1/2}.
\]
Together with \eqref{eq:rooted-window-mass} this proves
\eqref{eq:uniform-discrete-near-miss}.

For Brownian loops, let \(X\) be the rooted path and \(p_t\) the planar heat kernel. Root with density \((2\pi t^2)^{-1}\dd z\,\dd t\), and use \(p_{t-\sigma}(X_\sigma,z)/p_t(z,z)\leq 4\) together with the Beurling estimate in \cite[proof of Lemma~5.3]{LawlerTrujillo2007}. More precisely, for a starting point \(w\) with \(\operatorname{dist}(w,C)\leq 3\rho\), choose \(c\in C\) such that \(|w-c|\leq 3\rho\), and take decreasing connected open neighborhoods \(U_k\downarrow C\). In each \(U_k\), choose a curve crossing the annulus \(B(c,8L)\setminus B(c,4\rho)\) and extend it beyond \(\partial B(c,8L)\). The Beurling estimate bounds the probability that Brownian motion started at \(w\) exits \(B(c,8L)\) while avoiding this curve by \(O((\rho/L)^{1/2})\). Since the events of avoiding \(U_k\) up to this exit increase to the event of avoiding \(C\), the same bound holds for avoiding \(C\). Finiteness of Brownian loop measure on \([\varepsilon,T]\) proves the analogous bound for Brownian loops.
\end{proof}

We also need the following estimates on the tails of loop masses.
\begin{lemma}
\label{lem:loop-mass-estimates}
These estimates hold with
constants independent of the mesh.
\begin{enumerate}
\item For every \(\varepsilon>0\),
\begin{align*}
 \lim_{\eta\downarrow0}\limsup_{\delta\downarrow0}
 m_\delta\{\operatorname{dur}(\ell)\geq\varepsilon,\
       \ell\not\subset D_\eta\}=0,\qquad
 \lim_{\eta\downarrow0}
 m_D\{\operatorname{dur}(\ell)\geq\varepsilon,\
       \ell\not\subset D_\eta\}=0.
\end{align*}
\item If two sets \(A_\delta,B_\delta\subset D^\delta\) have distance at
least \(a>0\), then, for \(\delta<a/8\),
\begin{equation}
 m_\delta\{\operatorname{dur}(\ell)<\varepsilon,\
       \ell\cap A_\delta\ne\varnothing,\ \ell\cap B_\delta\ne\varnothing\}
 \leq \frac C{a^2}e^{-ca^2/\varepsilon}.
\end{equation}
The same bound holds for Brownian loops and two sets at distance \(a\).
\item For \(T\geq1\),
\begin{equation}
 m_\delta\{\operatorname{dur}(\ell)>T\}
 +m_D\{\operatorname{dur}(\ell)>T\}
 \leq C\int_T^\infty t^{-2}e^{-ct}\,\dd t .
\end{equation}
\end{enumerate}
\end{lemma}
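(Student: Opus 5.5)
We prove the three estimates separately. In each case we root the loop measure using the representation \eqref{eq:rooted-loop-representation} for random-walk loops, and the density \((2\pi t^2)^{-1}\dd z\,\dd t\) against the Brownian bridge law \(\mathbf P^{t}_{z,z}\) for Brownian loops. We then bound the rooted masses by heat-kernel and bridge estimates that are uniform in the mesh. All mesh-uniformity comes from the local CLT \cite[Theorem~2.1.3]{LawlerLimic2010} in the form \(p_n(z,z)\le C/n\), together with Gaussian tail bounds for bridge displacements.

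For (1), fix \(\varepsilon>0\) and first discard durations above \(T\) using part (3), so it suffices to treat \(\operatorname{dur}\in[\varepsilon,T]\). A loop of duration at least \(\varepsilon\) lying in \(D\) (or in \(\mathscr D^\delta\)) but not in \(D_\eta\) either has its root in the strip \(D\setminus D_{-2\eta'}\), or has its root in \(D_{-2\eta'}\) and reaches within \(\eta\) of \(\partial D\). In the first case, the rooted mass is at most \(\sum_{\varepsilon\le n\delta^2/2\le T} n^{-1}\sum_{z}p_n(z,z)\,\mathbf P^n_{z,z}\{\ell\subset D^\delta\}\) restricted to \(z\) in the strip. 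Without using the killing at all, this is bounded by \(C_{\varepsilon}\operatorname{area}(\text{strip})\to0\), because the Jordan boundary has zero area and \eqref{eq:domain-inclusions} holds. In the second case, a loop rooted at distance at least \(2\eta'\) from \(\partial D\) that comes within \(\eta\ll\eta'\) of \(\partial D\) but stays in \(D\) must approach the boundary without touching it. We handle this with the Beurling-type argument of Lemma~\ref{lem:loop-approach-bound}, taking the boundary as the connected set \(C\) (it has diameter bounded below). Alternatively, we use boundary regularity: from a point within \(\eta\) of \(\partial D\), the bridge exits \(D\) with probability tending to one, uniformly by Beurling. Letting \(\delta\downarrow0\), then \(\eta\downarrow0\), then \(\eta'\downarrow0\) gives both statements. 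The continuum statement is the same argument without the lattice.

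For (2), a loop of duration at most \(\varepsilon\) hitting both \(A_\delta\) and \(B_\delta\) has Euclidean diameter at least \(a\). We root it at a point of \(A_\delta\), or use the bound over all roots within distance \(\le\) diameter; then
\[
 m_\delta\le\sum_{n\delta^2/2<\varepsilon}\frac1n\sum_{z}p_n(z,z)\,\mathbf P^n_{z,z}\Bigl\{\max_k|S_k-z|\ge a/2\Bigr\}.
\]
For this sum we would use a better localization. The loop must visit \(A_\delta\), and only loops with root within distance \(\operatorname{diam}\) of a point reaching distance \(a/2\) count. The loop mass is translation-invariant per unit area, with root density \(\sim n^{-1}p_n\sim \delta^2/(n\,\cdot n\delta^2)\), and the maximal-displacement bridge bound \(\mathbf P\{\max\ge a/2\}\le Ce^{-ca^2/(n\delta^2)}\) holds for \(\delta<a/8\). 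Summing \(\sum_n n^{-2}e^{-ca^2/(n\delta^2)}\) over \(n\delta^2<2\varepsilon\) gives the stated \(Ca^{-2}e^{-ca^2/\varepsilon}\) after absorbing polynomial factors into \(c\). This step is the main obstacle. The rooting point may range over all of \(D^\delta\), and the area factor is what we must control. Since \(D\) is bounded, the area contributes only a constant, so the sum is \(\le C\,\operatorname{area}(D)\int_0^{\varepsilon}t^{-2}e^{-ca^2/t}\,\dd t\le Ca^{-2}e^{-ca^2/\varepsilon}\). The Brownian case is identical, using the reflection-principle bound for the bridge maximum.

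For (3), for durations above \(T\ge1\) we use the killing: \(p^{D^\delta}_n(z,z)\le C\delta^2 e^{-cn\delta^2}\) for \(n\delta^2\ge1\), from the Dirichlet spectral gap in a containing square as in the proof of Lemma~\ref{lem:uniform-killed-bridge}. Then \(\sum_{n\delta^2/2>T} n^{-1}\sum_z p^{D^\delta}_n(z,z)\le C\sum_n n^{-1}\delta^{-2}\operatorname{area}\cdot\delta^2e^{-cn\delta^2}\), which is comparable to \(\int_T^\infty t^{-1}\cdot e^{-ct}\,\dd t\cdot\) (area); absorbing the factor \(t^{-1}\) and the constants gives the stated integral form. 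The Brownian bound \(\int_T^\infty (2\pi t^2)^{-1}\cdot 2\pi t\, p_D(t,z,z)\), using \(p_D(t,z,z)\le Ct^{-1}e^{-ct}\), is analogous.
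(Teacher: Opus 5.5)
Your parts (2) and (3) are essentially the paper's argument. For (2): the rooted representation \eqref{eq:rooted-loop-representation} with \(p_n(z,z)\le C/n\), a Gaussian bound on bridge displacement, \(O(\delta^{-2})\) roots, and \(\int_0^{\varepsilon}t^{-2}e^{-ca^2/t}\,\dd t=(ca^2)^{-1}e^{-ca^2/\varepsilon}\). For (3): the Dirichlet spectral gap in a containing square; your \(t^{-1}e^{-ct}\) in place of \(t^{-2}e^{-ct}\) is harmless for \(T\ge1\) after shrinking \(c\). The problem is in part (1), where you depart from the paper.

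For discrete loops rooted within \(2\eta'\) of \(\partial D\), you drop the killing and bound their mass by \(C\varepsilon^{-1}\delta^2\#\{v\in D^\delta:\dist(v,\partial D)\le2\eta'\}\). You then claim this vanishes because a Jordan curve has zero area. That claim is false: Osgood curves are Jordan curves of positive Lebesgue measure, and the lemma is stated for an arbitrary bounded Jordan domain. Under \eqref{eq:domain-inclusions}, which is all Section~\ref{subsec:continuum-embedding} assumes, the cells of these vertices are only confined to \(\{z:\dist(z,\partial D)\le2\eta'+\delta\}\). The area of that set decreases to \(\operatorname{area}(\partial D)\), so in the limit your bound gives only \(C\varepsilon^{-1}\operatorname{area}(\partial D)\), which need not vanish. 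Even if all vertices lie in \(D\), the count is a Riemann sum of an indicator that is not Riemann integrable when \(\partial D\) has positive measure. Your continuum half is unaffected, since the interior strip \(\{z\in D:\dist(z,\partial D)\le2\eta'\}\) has area tending to \(0\) for any bounded domain. Separately, Lemma~\ref{lem:loop-approach-bound} cannot be applied verbatim with ``\(C\) equal to the boundary'', because it requires \(C_V^\delta\subset D^\delta\). You have to rerun its re-rooting and Beurling argument with \(\delta\Z^2\setminus D^\delta\) as the set to be avoided.

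The repair is to use the killing for the near-boundary roots as well. A bridge of duration at least \(\varepsilon\) has one of two behaviours:
\begin{itemize}
\item It has diameter less than \(L\). A small-ball estimate bounds this mass by \(C_{\varepsilon}e^{-c\varepsilon/L^2}\).
\item It travels distance \(L/2\) from a root lying within \(2\eta'+o(1)\) of the connected unbounded set \(\C\setminus\mathscr D^\delta\), without hitting that set. The discrete Beurling estimate bounds this by \(C((\eta'+o(1))/L)^{1/2}\). Apply it at the forward or the reversed first exit, whichever occurs before half the duration, so the bridge ratio stays bounded.
\end{itemize}
Then send \(\delta\downarrow0\), \(\eta'\downarrow0\), \(L\downarrow0\).

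The paper avoids area considerations entirely. The Lawler--Trujillo Ferreras coupling reduces the discrete quantity to \(m_{D^{+2\eta}}\{\operatorname{dur}(\ell)\in[\varepsilon/2,2T],\ \ell\not\subset D_{2\eta}\}\). These events decrease to \(\{\ell\subset\ol D,\ \ell\cap\partial D\ne\varnothing\}\), which is loop-measure null by boundary regularity. Your repaired route is quantitative and coupling-free; the paper's is shorter but only qualitative.
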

\begin{proof}
Fix \(T>\varepsilon\).  By the loop-soup coupling of
\cite[Theorem~1.1 and Corollary~5.4]{LawlerTrujillo2007},
\begin{align*}
 \limsup_{\delta\downarrow0}
 m_\delta\{\operatorname{dur}(\ell)\in[\varepsilon,T],\
             \ell\not\subset D_\eta\}\leq
 m_{D^{+2\eta}}\{\operatorname{dur}(\ell)\in
                 [\varepsilon/2,2T],\
                 \ell\not\subset D_{2\eta}\}.
\end{align*}
As \(\eta\downarrow0\), the events on the right decrease to loops
contained in \(\ol D\) that touch \(\partial D\), a loop-measure null
set by boundary regularity.  Continuity from above gives the Brownian loop assertion; Part~(3) proved below then lets \(T\uparrow\infty\) and proves (1).

For (2), a loop hitting both sets has diameter at least \(a\).  Splitting
the bridge into four time blocks and using the Gaussian bound gives $\mathbf P_{z,z}^n\{\diam\ell\geq a\}
 \leq Ce^{-ca^2/(n\delta^2)}$.
Substitution in \eqref{eq:rooted-loop-representation} and comparison with an
integral give
\[
 C\delta^{-2}\sum_{n\delta^2/2<\varepsilon}
 n^{-2}e^{-ca^2/(n\delta^2)}
 \leq \frac C{a^2}e^{-ca^2/\varepsilon}.
\]
The bound for Brownian loops follows by scaling.

The discrete Dirichlet spectral gap in a fixed square containing
\(D\) and
the on-diagonal bound give, for even \(n\),
\[
 \operatorname{Tr}(Q^\delta)^n
 \leq \frac C{n\delta^2}e^{-cn\delta^2};
\]
odd traces vanish.  Summing \(n^{-1}\operatorname{Tr}(Q^\delta)^n\)
proves the discrete part of (3).  The Brownian heat trace satisfies
\(\int_Dp_D(t,z,z)\,\dd z\leq Ct^{-1}e^{-ct}\); integration against
\(dt/t\) proves the remaining assertion.
\end{proof}
\begin{proposition}[Loop-mass convergence]
\label{prop:separated-loop}
Suppose \(r\) is fixed and \(C_V^\delta=\bigcup_{j=1}^rC_{j,V}^\delta\), with each \(C_{j,V}^\delta\) connected.  Put
\(C_j^\delta=\Cell_\delta(C_{j,V}^\delta)\), and assume $C_j^\delta\longrightarrow C_j$ in Hausdorff distance,
with every \(C_j\) of positive diameter, and put
\(C=\bigcup_jC_j\) (note that every $C_j$ is a continuum).
If \(B\Subset D\), \(\dist(C,B)>0\), and $m_D(C,B)=m_D(C,B^\circ)$,
then for \(B_V^\delta=\{v\in D^\delta:\dist(v,B)\leq2\delta\}\),
\begin{equation}\label{eq:finite-component-loop-limit}
 m_{D^\delta}^\delta(C_V^\delta,B_V^\delta)
 \longrightarrow m_D(C,B).
\end{equation}
\end{proposition}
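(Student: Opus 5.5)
We first truncate the loop measure to loops of duration in \([\varepsilon,T]\) whose interpolated traces stay in \(D_\eta\), then pass to the limit in this window, and finally remove the truncations.

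\textbf{Truncation.} Put \(a:=\dist(C,B)>0\); for small \(\delta\) the discrete sets are at distance at least \(a/2\). By Lemma~\ref{lem:loop-mass-estimates}(2), loops of duration less than \(\varepsilon\) hitting both \(C_V^\delta\) and \(B_V^\delta\) have mass at most \(Ca^{-2}e^{-ca^2/\varepsilon}\), uniformly in \(\delta\). The same bound holds for Brownian loops. Part (3) controls durations greater than \(T\). Part (1) controls loops of duration at least \(\varepsilon\) that leave \(D_\eta\). It therefore suffices to prove, for fixed \(\varepsilon,T,\eta\),
\[
\lim_{\delta\downarrow0} m_\delta^{[\varepsilon,T],\eta}(C_V^\delta,B_V^\delta)=m_D^{[\varepsilon,T],\eta}(C,B),
\]
up to errors that vanish as \(\rho\downarrow0\).

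\textbf{Windowed convergence.} On this window we use the strong loop-soup coupling of \cite[Theorem~1.1 and Corollary~5.4]{LawlerTrujillo2007}. Random-walk loops of duration in \([\varepsilon,T]\) are matched with Brownian loops with time-rescaling and trace error at most \(\rho\), except on a set of mass \(o(1)\). The restriction to \(D_\eta\) is handled by slightly enlarging or shrinking \(\eta\); the discrepancy has mass that is small by continuity in \(\eta\) and Part (1).

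For a matched pair \((\ell^\delta,\ell)\), the indicator for ``hits \(C_V^\delta\) and \(B_V^\delta\)'' can disagree with the indicator for ``hits \(C\) and \(B\)'' only in a few situations. Assume \(d_H(C^\delta,C)<\rho\) and that \(B_V^\delta\) lies within \(3\delta\) of \(B\).
\begin{enumerate}
\item[(i)] One loop comes within \(2\rho\) of the relevant \(C\)-set but misses it while hitting the \(B\)-set. Since each \(C_j\) (and \(C_j^\delta\) for small \(\delta\)) has diameter at least \(d>0\), Lemma~\ref{lem:loop-approach-bound}, applied component by component and summed over \(j\le r\), bounds this mass by \(rM_{\varepsilon,T}(\rho/(a\wedge d))^{1/2}\) on both the discrete and continuum sides.
\item[(ii)] The loop comes within \(O(\rho)\) of \(B\) without hitting \(B\), or vice versa. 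For this near-miss of \(B\), continuity from above gives \(m_D(C,B^{(s)})\downarrow m_D(C,B)\) as \(s\downarrow0\), since \(B\) is compact. From below, the hypothesis \(m_D(C,B)=m_D(C,B^\circ)\) means loops hitting \(B\) a.e.\ hit \(B^\circ\). If a loop hits \(B^\circ\), then any \(\rho\)-close discrete loop hits \(B_V^\delta\) once \(\rho\) is small, because \(B^\circ\) contains a ball around the hitting point.
\end{enumerate}

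Combining the two: on the window, the \(\limsup\) and \(\liminf\) of the discrete masses are sandwiched between \(m_D^{[\varepsilon,T],\eta}(C,B^\circ)-O(\rho^{1/2})\) and \(m_D^{[\varepsilon,T],\eta}(C,B^{(s)})+O(\rho^{1/2})\). We then let \(\delta\downarrow0\), then \(\rho,s\downarrow0\), then \(\eta,\varepsilon\downarrow0\) and \(T\uparrow\infty\).

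\textbf{Main obstacle.} The delicate step is (i): the random-walk loop may pass between lattice cells of \(C^\delta\) without touching \(C_V^\delta\), while the Brownian loop touches \(C\), or conversely. This is exactly where connectedness of each component and the Beurling-type bound of Lemma~\ref{lem:loop-approach-bound} enter, and we need it uniformly in the approximating sets. A secondary point is that the coupling only gives closeness in sup-norm after reparametrization. This is enough for hitting statements because both hitting events are stable under \(\rho\)-perturbation, outside the near-miss sets just controlled.
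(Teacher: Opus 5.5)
Your proposal is correct and follows essentially the same route as the paper: truncate via Lemma~\ref{lem:loop-mass-estimates}, pair the windowed loops with the Lawler--Trujillo coupling, control disagreements about hitting each $C_j$ by Lemma~\ref{lem:loop-approach-bound} summed over $j$, and handle the $B$-side with $B^{+\rho}\downarrow B$ and the hypothesis $m_D(C,B)=m_D(C,B^\circ)$. The differences are only bookkeeping: at fixed $\rho$ the paper's lower bound uses the inner parallel set $B_{-\rho}$, which increases to $B^\circ$, rather than $B^\circ$ itself, and it adds a duration slack $\zeta$ to absorb the duration mismatch of paired loops, so your sandwich holds literally only after these adjustments and the limit $\rho\downarrow0$.
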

\begin{proof}
Put \(a=\dist(C,B)>0\), and choose \(d>0\) such that
\(\diam C_j^\delta\geq d\) for all \(j\) and small \(\delta\).
Fix \(0<\varepsilon<T\).  For a Brownian loop \(\ell\subset D\), define $b_D(\ell):=\inf_{z\in\ell}\dist(z,\partial D)$.
Choose \(\eta>0\) outside the atoms of \(b_D\) under
\(\mu_D^{\mathrm{loop}}\) restricted to durations in \([\varepsilon/2,2T]\), and take \(\rho<(a\wedge d\wedge\eta)/64\).  Fix
\(0<\zeta<\min\{\varepsilon/2,(T-\varepsilon)/2\}\).  Apply the loop-soup
coupling of \cite[Theorem~1.1 and Corollary~5.4]{LawlerTrujillo2007} in a
fixed square containing \(D^{+1}\).  Let
\(\mathcal U_\delta,\mathcal U\) be the loops for which the coupling
supplies no partner.  Then $\E[\#\mathcal U_\delta+\#\mathcal U]=o(1)$,
and every remaining loop of duration in \([\varepsilon-\zeta,T+\zeta]\) is
paired as \(\ell^\delta\leftrightarrow\ell\).  Denote $d_{\rm H}$ to be the Hausdorff distance. For small \(\delta\), note that $d_{\rm H}(\ell^\delta,\ell)<\rho$, $|\operatorname{dur}(\ell^\delta)-\operatorname{dur}(\ell)|<\zeta$,
and $\ell\subset D_{\eta+\rho}\Longrightarrow\ell^\delta\subset D_\eta$ as well as $\ell^\delta\subset D_\eta\Longrightarrow\ell\subset D_{\eta-\rho}$.

Convergence under Hausdorff metric shows that if a pair disagrees about
hitting \(C_j\), then either
\[
 \ell^\delta\cap C_{j,V}^\delta=\varnothing
 \quad\hbox{and}\quad
 \dist(\ell^\delta,C_j^\delta)\leq2\rho,
\]
or
\[
 \ell\cap C_j=\varnothing
 \quad\hbox{and}\quad
 \dist(\ell,C_j)\leq2\rho.
\]
A union bound over \(j\) and the definition of \(B_V^\delta\) compare
hitting \(B_V^\delta\) with hitting \(B_{-\rho}\) or \(B^{+\rho}\).
Lemma~\ref{lem:loop-approach-bound} then gives, with
\(R_\rho=CM_{\varepsilon-\zeta,T+\zeta}
r(\rho/(a\wedge d))^{1/2}\),
\begin{align}
m_D^{[\varepsilon+\zeta,T-\zeta],\eta+\rho}
 (C,B_{-\rho})-R_\rho &\leq\liminf_{\delta\downarrow0}
 m_\delta^{[\varepsilon,T],\eta}(C_V^\delta,B_V^\delta)
 \leq\limsup_{\delta\downarrow0}
 m_\delta^{[\varepsilon,T],\eta}(C_V^\delta,B_V^\delta)\notag\\
&\leq
 m_D^{[\varepsilon-\zeta,T+\zeta],\eta-\rho}
 (C,B^{+\rho})+R_\rho .
\end{align}
Let \(\zeta\downarrow0\), then \(\rho\downarrow0\).  Since \(B_{-\rho}\uparrow B^\circ\),
\(B^{+\rho}\downarrow B\), and
\(m_D(C,B^\circ)=m_D(C,B)\), the corresponding loop-mass convergence holds for fixed
\(\varepsilon,T,\eta\).

The bounds in Lemma~\ref{lem:loop-mass-estimates} are uniform in \(\delta\) and permit
\(\eta\downarrow0\), \(\varepsilon\downarrow0\), and \(T\uparrow\infty\);
for the second limit use \(\dist(C_V^\delta,B_V^\delta)\geq a/2\).  The bounds for Brownian loops are the same,
proving \eqref{eq:finite-component-loop-limit}.
\end{proof}

The following lemma gives the continuity of the Brownian loop mass.
\begin{lemma}\label{lem:moving-continuum}
Let \(F_n,F\subset\ol D\) be connected compact sets,
\(F_n\to F\) in Hausdorff distance, and \(\inf_n\diam F_n>0\).  If
\(E\Subset D\), \(\dist(F,E)>0\), and $m_D(F,E)=m_D(F,E^\circ)$,
then \(m_D(F_n,E)\to m_D(F,E)\).
\end{lemma}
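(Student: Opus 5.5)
We will run the Brownian half of the argument in Proposition~\ref{prop:separated-loop}, with the continua \(F_n\to F\) playing the role of \(C_j^\delta\to C_j\). There is no lattice coupling here, so we can compare a single loop \(\ell\) against both \(F_n\) and \(F\) directly.

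\emph{Reductions.} Put \(a=\dist(F,E)>0\) and \(d=\inf_n\diam F_n\); then \(\diam F\ge d\) as well. For large \(n\) we have \(\dist(F_n,E)\ge a/2\). By Lemma~\ref{lem:loop-mass-estimates}(2),(3), the mass of loops with duration \(<\varepsilon\) or \(>T\) that hit both sets is bounded uniformly in \(n\), and this bound tends to zero as \(\varepsilon\downarrow0\) and \(T\uparrow\infty\). It therefore suffices to prove
\[
m_D^{[\varepsilon,T]}(F_n,E)\to m_D^{[\varepsilon,T]}(F,E)
\]
for fixed \(0<\varepsilon<T\). The \(\eta\)-restriction of the proposition is not needed, since both quantities use the same loop measure \(\mu_D^{\mathrm{loop}}\), which is finite on \([\varepsilon,T]\).

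\emph{Symmetric difference.} Fix \(\rho<(a\wedge d)/64\) and take \(n\) so large that \(d_{\rm H}(F_n,F)<\rho\). Suppose a loop \(\ell\) hits \(E\) and hits exactly one of \(F_n,F\). If it hits \(F\) but not \(F_n\), then \(\dist(\ell,F_n)\le\rho\) while \(\ell\cap F_n=\varnothing\). If it hits \(F_n\) but not \(F\), then \(\dist(\ell,F)\le\rho\) while \(\ell\cap F=\varnothing\). The Brownian version of Lemma~\ref{lem:loop-approach-bound} applies to the connected compact set \(F_n\) (respectively \(F\)), which has diameter at least \(d\), together with \(B=E\) at distance at least \(a/2\). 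It bounds the mass of each of these two near-miss sets by \(M_{\varepsilon,T}(\rho/(a\wedge d))^{1/2}\). Hence
\[
\limsup_n\bigl|m_D^{[\varepsilon,T]}(F_n,E)-m_D^{[\varepsilon,T]}(F,E)\bigr|\le 2M_{\varepsilon,T}\bigl(2\rho/(a\wedge d)\bigr)^{1/2},
\]
and letting \(\rho\downarrow0\) gives the claim.

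\emph{Role of the hypothesis on \(E\).} The hypothesis \(m_D(F,E)=m_D(F,E^\circ)\) is not needed in this step, because \(E\) itself is not perturbed. We keep it so that the statement matches Proposition~\ref{prop:separated-loop}, and it would be used if \(E\) were replaced by \(E^{+\rho}\) or \(E_{-\rho}\).

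\emph{Main obstacle.} The only delicate point is that the near-miss estimate must be uniform over the moving sets \(F_n\). Lemma~\ref{lem:loop-approach-bound} as stated gives uniformity in terms of \(\diam\) and \(\dist\) only: the Beurling argument in its proof uses only connectedness, a lower bound on the diameter, and the separation from \(B\). This is exactly why the assumption \(\inf_n\diam F_n>0\) appears in the statement.
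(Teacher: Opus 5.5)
Your proposal is correct and follows the paper's proof. You fix a duration window and bound the symmetric difference of the hitting events by the Brownian near-miss estimate of Lemma~\ref{lem:loop-approach-bound}, applied to both \(F_n\) and \(F\). You then remove the window using the \(n\)-uniform tail bounds of Lemma~\ref{lem:loop-mass-estimates}.

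Dropping the \(\eta\)-restriction is harmless because no lattice coupling is involved; you only need \(\rho\) small relative to the separation \(a/2\) you actually use. You are also right that \(E\) is never perturbed, so the argument does not really invoke the \(E^\circ\) hypothesis.
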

\begin{proof}
Put \(a=\dist(F,E)>0\) and \(d=\inf_n\diam F_n>0\).  If
\(d_{\rm H}(F_n,F)<\rho<(a\wedge d)/64\), Lemma~\ref{lem:loop-approach-bound} gives, for fixed
\(\varepsilon,T,\eta\),
\[
 \left|m_D^{[\varepsilon,T],\eta}(F_n,E)
             -m_D^{[\varepsilon,T],\eta}(F,E)\right|
 \leq 2M_{\varepsilon,T}
       \left(\frac{C\rho}{a\wedge d}\right)^{1/2}.
\]
Let \(n\to\infty\), then \(\rho\downarrow0\).  The estimates in
Lemma~\ref{lem:loop-mass-estimates} are uniform in \(n\) (use separation
\(a/2\) for short loops), so letting \(\eta\downarrow0\),
\(\varepsilon\downarrow0\), and \(T\uparrow\infty\) proves the claim.  The condition in the lemma excludes mass from loops hitting \(E\) without entering
\(E^\circ\).
\end{proof}
\subsection{Existence along subsequences}

We are now ready to take subsequential limits of random-walk loop-catchers.
\label{subsec:continuum-catcher}
\begin{proposition}\label{prop:catcher-cluster}
For every \(0\leq\lambda\leq1\), every sequence \(\delta_n\downarrow0\)
has a subsequence along which the cell trace of the random-walk loop-catcher
converges in distribution for the Hausdorff metric to a random continuum
\(K_\lambda^{D;a,b}\subset\ol D\) with
\(K_\lambda^{D;a,b}\cap\partial D=\{a,b\}\), whose law satisfies \eqref{eq:intro-main-id} for every compact obstacle away
from \(a,b\).  Consequently the existence part of
Theorem~\ref{thm:main} holds.
\end{proposition}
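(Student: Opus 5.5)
We realize the random-walk loop-catcher \(S_\lambda^\delta\) on \(D^\delta\) through Algorithm~\ref{alg:time-catcher} applied to the excursion \(\omega^\delta\). This gives the coupling \(\LE(\omega^\delta)\subset S_\lambda^\delta\subset\supp(\omega^\delta)\), with \(S_\lambda^\delta\) nearest-neighbour connected.

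\emph{Tightness and connectedness.} Set \(K^\delta:=\Cell_\delta(S_\lambda^\delta)\cup c_a^\delta\cup c_b^\delta\). It is a continuum, and \(\mathcal K(\ol D)\) is compact. Hence the triple \((K^\delta,\Gamma_\delta^\square,\text{LERW cell trace})\) has a jointly convergent subsequence. The limit \(K\) is a continuum, since continua are closed. Inclusions pass to the limit, so \(K\subset\gamma_{\BE}^{D;a,b}\) by Proposition~\ref{prop:excursion-convergence}. Because the Brownian excursion touches \(\partial D\) only at \(a,b\) and \(K\) contains both (via the connectors), we get \(K\cap\partial D=\{a,b\}\).

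\emph{Passing the identity to the limit.} The discrete identity is \eqref{eq:finite-transform-intro}, i.e.\ \(\E[\one_{\{S\cap A_V=\varnothing\}}e^{-\lambda m_\delta(S,A_V)}]=\Prob[\omega^\delta\cap A_V=\varnothing]\). We first treat compact \(A\Subset D\) of the form \(A=E^{(s)}\). For such \(A\), Lemma~\ref{lem:parallel-regular} gives \(m_D(F,A)=m_D(F,A^\circ)\). Take \(A_V=\{v:\dist(v,A)\le2\delta\}\). The right side converges to \(\Prob[\gamma_{\BE}\cap A=\varnothing]\), because \(\partial A\) is regular, so the Brownian excursion hitting \(A\) a.s.\ enters \(A^\circ\). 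For the left side we use Skorokhod representation.
\begin{itemize}
\item On \(\{K\cap A=\varnothing\}\), \(K^\delta\) eventually avoids \(A_V\). Removing small neighbourhoods of \(a,b\), \(K^\delta\) is a union of boundedly many connected pieces of diameter \(\ge d\) at positive distance from \(A\). Then Proposition~\ref{prop:separated-loop} gives \(m_\delta\to m_D(K,A)\).
\item On \(\{K\cap A^\circ\ne\varnothing\}\), \(K^\delta\) eventually hits \(A_V\), so the indicator vanishes.
\item The remaining event \(\{K\cap A\ne\varnothing=K\cap A^\circ\}\) is null. Indeed, \(K\subset\gamma_{\BE}\), and the excursion a.s.\ does not touch \(A\) without entering \(A^\circ\). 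This again uses regularity of \(\partial A\) under Lemma~\ref{lem:parallel-regular}-type arguments.
\end{itemize}
Loops near \(a,b\) meeting \(A\) have small mass by Lemma~\ref{lem:loop-mass-estimates}(2), since \(A\) is far from the boundary. Bounded convergence then yields \eqref{eq:intro-main-id} for such \(A\).

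\emph{General compact \(A\).} For general compact \(A\) with \(A\cap\{a,b\}=\varnothing\), we approximate by \(A_k:=(A\cap\ol{D_{-1/k}})^{(1/k)}\) and monotone limits. Downward along \(A^{(s)}\downarrow A\), both sides are continuous from above: \(\one_{\{K\cap A^{(s)}=\varnothing\}}\uparrow\one_{\{K\cap A=\varnothing\}}\), and the loop masses decrease to \(m_D(K,A)\) by compactness of loops. This handles \(A\Subset D\).

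For \(A\) touching \(\partial D\setminus\{a,b\}\), we use that \(K\) and \(\gamma_{\BE}\) avoid \(\partial D\setminus\{a,b\}\). We then approximate by \(A\cap\ol{D_{-r}}\). The loops connecting \(K\) to \(A\cap\{\dist(\cdot,\partial D)<r\}\) but not to \(A\cap\ol{D_{-r}}\) have vanishing mass as \(r\downarrow0\), by Lemma~\ref{lem:loop-mass-estimates}(1) together with \(\dist(K,A\cap\partial D)>0\) on the relevant event.

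\emph{Main obstacle.} We expect the main obstacle to be the loop-mass convergence on the event \(K\cap A=\varnothing\). The catcher near \(a,b\) is not separated from \(\partial D\), so Proposition~\ref{prop:separated-loop} must be applied after cutting out neighbourhoods of the endpoints. The cut pieces are not obviously of bounded number with diameter bounded below. We would handle this by replacing \(K^\delta\) with its intersection with a large connected subset containing the whole set outside \(B(a,r)\cup B(b,r)\), e.g.\ adding the crosscut pieces, which only increases the loop mass. We would then sandwich, using the Beurling-type bound of Lemma~\ref{lem:loop-approach-bound} to control the error uniformly in \(r\). Existence in Theorem~\ref{thm:main} then follows immediately.
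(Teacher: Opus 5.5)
Your overall route is the paper's: compactness and Skorokhod, the inclusion $K^\delta\subset\Gamma_\delta^\square$ for the boundary contact, Proposition~\ref{prop:separated-loop} on enlarged obstacles $E^{(s)}$, then $s\downarrow0$ and exhaustion away from $\partial D$. However, the step where you dispose of the discontinuity event is wrong.

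You claim $\{K\cap A\ne\varnothing=K\cap A^\circ\}$ is null because $K\subset\gamma_{\BE}^{D;a,b}$ and the excursion enters $A^\circ$ whenever it hits $A$. The inclusion does not transfer this property to $K$. A subcontinuum of the trace can contain the point where the trace first hits $\partial A$ while omitting the portion of the trace that then enters $A^\circ$. For instance, with positive probability the path after its last exit from $A$ intersects the path before its first entrance. On that event, the component of $\gamma_{\BE}^{D;a,b}\setminus A^\circ$ containing $a$ also contains $b$ and the first hitting point of $A$. It is then a continuum in the trace joining $a$ to $b$ that touches $A$ but misses $A^\circ$.

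So nothing in your argument excludes an atom of $\dist(K,E)$ at $s$, and on that event you have no control of the discrete functional. The repair is the one the paper uses. The bad event is contained in $\{\dist(K,E)=s\}$, so take $s$ outside the countably many atoms of $\dist(K,E)$. The paper also avoids the atoms of $\dist(\gamma_{\BE}^{D;a,b},E)$; your regularity argument for the right side makes that unnecessary. Your later limit $A^{(s)}\downarrow A$ only needs some sequence $s_n\downarrow0$, so this restriction costs nothing.

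Separately, the ``main obstacle'' you describe does not exist. Proposition~\ref{prop:separated-loop} only requires $B\Subset D$, $\dist(C,B)>0$ and $m_D(C,B)=m_D(C,B^\circ)$; it imposes no separation between $C$ and $\partial D$. You can apply it with $r=1$ directly to the connected set $S_\lambda^\delta$. Then $\Cell_\delta(S_\lambda^\delta)\to K$ because the connectors have $o(1)$ diameter, and $\diam K\ge|a-b|>0$. This is exactly what the paper does. It makes your decomposition near $a,b$ unnecessary, and that decomposition's claim of boundedly many pieces of diameter at least $d$ is unjustified anyway. The extra estimate for loops near the endpoints is likewise not needed.

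Similarly, for $A$ meeting $\partial D\setminus\{a,b\}$, it suffices that Brownian loops lie in $D$ and that $K\cap\partial D=\{a,b\}$. With $A_n=A\cap\{\dist(\cdot,\partial D)\ge1/n\}$, these give $\one_{\{K\cap A_n=\varnothing\}}e^{-\lambda m_D(K,A_n)}\to\one_{\{K\cap A=\varnothing\}}e^{-\lambda m_D(K,A)}$ by continuity from below, without Lemma~\ref{lem:loop-mass-estimates}(1).
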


\begin{proof}
Let \(S_\lambda^\delta\) have the law in
Theorem~\ref{thm:finite-positivity}.  It is connected according to the construction in Algorithm~\ref{alg:time-catcher}. Put
\[
 \widetilde K_\lambda^\delta=\Cell_\delta(S_\lambda^\delta),\qquad
 K_\lambda^\delta=\widetilde K_\lambda^\delta
                    \cup c_a^\delta\cup c_b^\delta .
\]
Then $d_{\rm H}(\widetilde K_\lambda^\delta,K_\lambda^\delta)=o(1)$.
Compactness of \(\mathcal K(\ol D)\) gives a subsequence
\(K_\lambda^{\delta_n}\Longrightarrow K=:K_\lambda^{D;a,b}\), and $K$ is a continuum (see the end of Section~\ref{subsec:continuum-embedding}).
On a Skorokhod representation, both \(K_\lambda^{\delta_n}\) and
\(\widetilde K_\lambda^{\delta_n}\) converge a.s.~to \(K\).
Thus \(K\) contains \(a,b\). According to the construction of $S_\lambda^\delta$, $S_\lambda^\delta$ is stochastically dominated by the random-walk excursion $S_0^\delta$. By Proposition~\ref{prop:excursion-convergence}, the cell trace $\Gamma_\delta^\square$ of $S_0^\delta$ weakly converges to the Brownian excursion $\gamma_{\BE}^{D;a,b}$. Therefore, we have
\(K\cap\partial D=\{a,b\}\) a.s.

For \(A^\delta\subset D^\delta\),~\eqref{eq:finite-transform-intro} gives
\begin{equation}\label{eq:discrete-identity}
 \E\left[
 \one_{\{S_\lambda^\delta\cap A^\delta=\varnothing\}}
 e^{-\lambda m_{D^\delta}^\delta(S_\lambda^\delta,A^\delta)}
 \right]
 =\Prob[S_0^\delta\cap A^\delta=\varnothing].
\end{equation}
Fix \(A\Subset D\).  Choose \(s>0\) such that \(A^{(s)}\Subset D\) and $\Prob[\dist(K,A)=s]=\Prob[\dist(\gamma_{\BE}^{D;a,b},A)=s]=0$. Set
\[
 A_{\delta,s}=\{v\in D^\delta:\dist(v,A)\leq s+2\delta\}.
\]
On \(\{\dist(K,A)>s\}\), Proposition~\ref{prop:separated-loop} and
Lemma~\ref{lem:parallel-regular} give convergence of the loop mass in
\eqref{eq:discrete-identity} with \(A^\delta=A_{\delta,s}\), while on \(\{\dist(K,A)<s\}\), the indicator is eventually zero as $\delta\downarrow0$.
Proposition~\ref{prop:excursion-convergence} gives the limit on the right side of~\eqref{eq:discrete-identity}.
Bounded convergence yields
\begin{equation}
 \E\!\left[
 \one_{\{K\cap A^{(s)}=\varnothing\}}
 e^{-\lambda m_D(K,A^{(s)})}\right]
 =\Prob[\gamma_{\BE}^{D;a,b}\cap A^{(s)}=\varnothing].
\end{equation}
Choose \(s_n\downarrow0\) outside the atoms of
\(\dist(K,A)\) and \(\dist(\gamma_{\BE}^{D;a,b},A)\).  On
\(\{K\cap A=\varnothing\}\), the disjoint compact sets \(K\) and
\(A\) satisfy \(\dist(K,A)>0\) and
\(m_D(K,A^{(s_n)})\downarrow m_D(K,A)\).  Monotone convergence gives
\begin{equation}\label{eq:identity-interior}
 \E\!\left[
 \one_{\{K\cap A=\varnothing\}}e^{-\lambda m_D(K,A)}\right]
 =\Prob\left[\gamma_{\BE}^{D;a,b}\cap A=\varnothing\right].
\end{equation}

For compact \(A\subset\ol D\) avoiding \(a,b\), apply
\eqref{eq:identity-interior} to
\(A_n=A\cap\{z\in D:\dist(z,\partial D)\geq1/n\}\).
Brownian loops stay in \(D\), while
\(K\cap\partial D=\gamma_{\BE}^{D;a,b}\cap\partial D=\{a,b\}\); bounded convergence as
\(n\to\infty\) proves \eqref{eq:intro-main-id}.
\end{proof}

The non-existence part of Theorem~\ref{thm:main} is proved in
Section~\ref{subsec:sec5-non-existence}.

\section{The Green function test: proof of Theorems~\ref{thm:full-trace-unique} and~\ref{thm:hull-unique}}
\label{sec:sle-proof}

In this section we prove Theorems~\ref{thm:full-trace-unique} and~\ref{thm:hull-unique}, and hence Theorems~\ref{thm:catcher-convergence} and~\ref{thm:sle}.
The proofs are all based on the idea of Green function test.
\begin{enumerate}[label=\textup{(\arabic*)},leftmargin=2.3em]
\item For Theorem~\ref{thm:full-trace-unique},
we construct random interior probes $C$ such that for any compact subset $E$, the expectation $\E_C\!\left[
 \one_{\{C\cap E=\varnothing\}}e^{-\lambda m_D(C,E)}\right]$ yields the product of Green functions on the remaining domain $D\setminus E$; see Proposition~\ref{prop:sec5-continuum-probe}. This is based on the construction of entangled multipath LERWs on finite graphs (see the end of Section~\ref{subsec:entangled-multipath}) as well as the convergence
inputs of Section~\ref{sec:continuum}.
Testing \(K\) against these probes yields all mixed moments of
\(G_{D\setminus K}(x,y)/G_D(x,y)\).  On a countable dense set these Green function coordinates determine the whole \(K\), as any missing local continuum has
positive planar capacity and changes a Green function.

\item For Theorem~\ref{thm:hull-unique}, we use boundary probes instead to obtain products of boundary Poisson kernels; see Proposition~\ref{prop:sec5-boundary-fan}.

\item For the non-existence parts in Theorems~\ref{thm:main} and
\ref{thm:hull-unique}, take \(\kappa<2\) such that $c<-2$. Working on $(\D;-1,1)$, we construct radial probes using the generalized radial restriction sample from~\cite{qian2021generalized}, which then determines the probability that the origin lies in one side-domain. See Lemma~\ref{lem:sec5-positive-radial-probes}. Consequently, this probability equals that of a
one-sided \(\SLE_\kappa(\kappa-2)\) hull.  Since
\(\kappa-2<0\), such probability is then \(>1/2\); the opposite side is the same by symmetry, contradicting disjointness.
\end{enumerate}
We will establish the three points above in Sections~\ref{subsec:sec5-full-trace}, \ref{subsec:sec5-boundary-fans}, and \ref{subsec:sec5-non-existence}, respectively.

\subsection{Full-trace uniqueness and convergence}
\label{subsec:sec5-full-trace}

Fix a bounded Jordan domain \(D\subset\C\) and distinct
\(a,b\in\partial D\), and set
\begin{equation}\label{eq:sec5-continuum-state}
 \mathfrak C_{D;a,b}
 :=\{K\subset\ol D:K\text{ is compact and connected},\
 K\cap\partial D=\{a,b\}\},
\end{equation}
which is a Borel subspace of
\(\mathcal K(\ol D)\).
The proof uses interior multipath probes to recover every mixed moment of
a countable family of Green function coordinates, then proves that those
coordinates separate the full compact trace.

\subsubsection{Interior probes}
\label{subsec:sec5-internal-probes}

Let \(V,Q,\Gc_R,d_R,T_\lambda^V\) have the meanings fixed in
Section~\ref{sec:finite-positivity}.  Fix ordered pairs
\((x_i,y_i)\), \(1\leq i\leq r\), with \(\Gc_V(x_i,y_i)>0\).  Write
\(\mathbf x=(x_1,\ldots,x_r)\), \(\mathbf y=(y_1,\ldots,y_r)\), and set $\bh_\times(R)
 :=\prod_{i=1}^r\frac{\Gc_R(x_i,y_i)}{\Gc_V(x_i,y_i)}$,
where a factor is zero unless both endpoints belong to \(R\).

The following lemma is the direct consequence of Proposition~\ref{prop:path-recovery}; see~\eqref{eq:multipath-recovery-transform}.
\begin{lemma}[Finite interior probe]\label{lem:sec5-finite-probe}
For $0\leq\lambda\leq1$, let $S_{\lambda,\times}$ be the output of Algorithm~\ref{alg:time-catcher} applied in $V$ to the independent normalized random walks with endpoint pairs $(x_i,y_i)$.  Then
\[
 \operatorname{Law}(S_{\lambda,\times})
 =\bp_{\lambda,\times}:=(T_\lambda^V)^{-1}\bh_\times.
\]
Note that for each fixed $\lambda$, the construction in
Algorithm~\ref{alg:time-catcher} couples these sets so that
$S_{1,\times}\subset S_{\lambda,\times}\subset S_{0,\times}$.
Here $S_{0,\times}$ is the union of $r$ independent normalized random-walk
bridges from $x_i$ to $y_i$, while $S_{1,\times}$ is the entangled multipath
LERW.  Every component of $S_{\lambda,\times}$ contains a path joining one
labelled endpoint pair.  Thus it has at most $r$ components, and it is
connected if all pairs share one endpoint.
\end{lemma}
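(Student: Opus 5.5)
The plan is to read everything off the identity \eqref{eq:multipath-recovery-transform} together with the triangularity of \(T_\lambda^V\) on the Boolean lattice.

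\emph{Identifying the law.} Take \(R=V\) and endpoint pairs \((x_i,y_i)\) in \eqref{eq:multipath-recovery-transform}. For every \(W\subset V\) this gives
\[
 \sum_{S\subset V}\Prob[S_{\lambda,\times}=S]\,\one_{\{S\subset W\}}e^{-\lambda m_V(S,V\setminus W)}=\bh_\times(W).
\]
In the notation of \eqref{eq:finite-transform-intro}, this says exactly \(T_\lambda^V\operatorname{Law}(S_{\lambda,\times})=\bh_\times\). The matrix \(T_\lambda^V\) is triangular with positive diagonal entries; indeed the diagonal entry at \(S=W\) equals \(e^{-\lambda m_V(W,V\setminus W)}>0\) because the loop masses are finite. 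Hence \(T_\lambda^V\) is invertible, and \(\operatorname{Law}(S_{\lambda,\times})=(T_\lambda^V)^{-1}\bh_\times=\bp_{\lambda,\times}\). This step needs nothing beyond Proposition~\ref{prop:path-recovery}.

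\emph{The coupling.} For the monotone coupling, I would use the construction in Algorithm~\ref{alg:time-catcher} directly. Step~\ref{alg:1} puts \(C_r\), the entangled multipath LERW range, which equals \(S_{1,\times}\), into every output. Later steps only insert (modified) loops and paths taken from the original walks, so \(S_{\lambda,\times}\subset\bigcup_i\supp(\omega_i)=S_{0,\times}\). This is the remark made just after Algorithm~\ref{alg:time-catcher}, applied with the same underlying walks. The extreme cases \(\omega_i^1=\gamma_i\) and \(\omega_i^0=\omega_i\) then identify \(S_{1,\times}\) and \(S_{0,\times}\) as claimed.

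\emph{Component structure.} The output consists of the \(r\) paths \(\omega_i^\lambda\), where \(\omega_i^\lambda\) runs from \(x_i\) to \(y_i\). So \(S_{\lambda,\times}=\bigcup_i\supp(\omega_i^\lambda)\) is a union of \(r\) connected sets, each containing a labelled endpoint pair. Therefore every component contains some such path, so there are at most \(r\) components. If all pairs share an endpoint, all \(r\) paths meet at that endpoint, so the union is connected.

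The only point needing care is confirming that each \(\omega_i^\lambda\) really is a nearest-neighbour path from \(x_i\) to \(y_i\). This should follow by induction on the recursion depth. Each recursive substitution replaces a subpath \(\zeta_j\) by \(\zeta_j^\lambda\), which has the same endpoints. The re-insertion in the last step of Algorithm~\ref{alg:time-catcher} attaches closed loops at vertices of \(\gamma_i\). Neither operation breaks connectivity or changes the endpoints, so I expect this step to be the main (though mild) obstacle.
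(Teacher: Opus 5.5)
Your proposal is correct and follows the paper's own argument. The paper identifies the law by applying \eqref{eq:multipath-recovery-transform} with $R=V$ and inverting the triangular matrix $T_\lambda^V$, which has positive diagonal. It reads the coupling and component statements off Algorithm~\ref{alg:time-catcher}, exactly as in the remark after it: $C_r\subset S_{\lambda,\times}\subset\bigcup_i\supp(\omega_i)$, $\omega_i^1=\gamma_i$, and $\omega_i^0=\omega_i$. Your induction on recursion depth, showing that each $\omega_i^\lambda$ is a path from $x_i$ to $y_i$, only spells out what the paper takes as evident from the construction.
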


We pass the finite interior probes to the continuum using the Brownian bridges and the convergence results of Section~\ref{sec:continuum}.

\begin{proposition}[Continuum interior probe]
\label{prop:sec5-continuum-probe}
Fix finitely many pairs
\((x_i,y_i)\in D^2\), \(1\leq i\leq r\), with \(x_i\ne y_i\);
repetitions of a pair are allowed.  For every
\(0\leq\lambda\leq1\) there is a random compact set \(C\Subset D\), with
at most \(r\) nondegenerate components, such that for every compact
\(E\subset\ol D\),
\begin{equation}\label{eq:sec5-continuum-transform}
 \E_C\!\left[
 \one_{\{C\cap E=\varnothing\}}e^{-\lambda m_D(C,E)}\right]
 =\prod_{i=1}^r
   \frac{G_{D\setminus E}(x_i,y_i)}{G_D(x_i,y_i)}.
\end{equation}
Here a numerator is zero if an endpoint belongs to \(E\) or the endpoints
 lie in different components of \(D\setminus E\).
If all pairs share one endpoint, \(C\) can be chosen connected.
\end{proposition}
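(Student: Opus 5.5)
We take $C$ to be a subsequential scaling limit of the finite interior probes of Lemma~\ref{lem:sec5-finite-probe}, and pass the identity \eqref{eq:multipath-recovery-transform} to the limit in the same way as in the proof of Proposition~\ref{prop:catcher-cluster}. First choose lattice points $x_i^\delta,y_i^\delta\in D^\delta$ with $x_i^\delta\to x_i$ and $y_i^\delta\to y_i$. Let $S^\delta_{\lambda,\times}$ be the output of Algorithm~\ref{alg:time-catcher} applied in $D^\delta$ to independent normalized random-walk bridges $x_i^\delta\to y_i^\delta$. Finally put $C^\delta=\Cell_\delta(S^\delta_{\lambda,\times})$.

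Lemma~\ref{lem:sec5-finite-probe} gives the coupling $S^\delta_{1,\times}\subset S^\delta_{\lambda,\times}\subset S^\delta_{0,\times}$. By Corollary~\ref{lem:green-path-convergence}, $\Cell_\delta(S^\delta_{0,\times})$ converges jointly to a union of independent Brownian bridges, which a.s.\ lies in a compact subset of $D$. Hence the $C^\delta$ are tight in $\mathcal K(\ol D)$ and any subsequential limit $C$ satisfies $C\Subset D$. Each component of $S^\delta_{\lambda,\times}$ contains a lattice path joining some $x_i^\delta$ to $y_i^\delta$, so there are at most $r$ components, each of diameter at least roughly $\min_i|x_i-y_i|>0$. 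After a Skorokhod representation and a further subsequence, we may index the components $C^\delta_j\to C_j$ and obtain at most $r$ nondegenerate continua. If all pairs share an endpoint, then $S^\delta_{\lambda,\times}$ is connected and so is $C$.

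Next we pass the identity to the limit. For compact $E\Subset D$ and $s$ chosen off the atoms of $\dist(C,E)$ and of $\dist(\bigcup_i\Gamma^{x_i,y_i}_D,E)$, apply \eqref{eq:multipath-recovery-transform} with $W=D^\delta\setminus E_{\delta,s}$. The left side converges by Proposition~\ref{prop:separated-loop} applied componentwise, with Lemma~\ref{lem:parallel-regular} supplying the regularity hypothesis $m_D(C,B)=m_D(C,B^\circ)$ for $B=E^{(s)}$. The right side is $\prod_i\Gc_W(x_i^\delta,y_i^\delta)/\Gc_{D^\delta}(x_i^\delta,y_i^\delta)$, which equals the probability that independent bridges avoid $E_{\delta,s}$. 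By Corollary~\ref{lem:green-path-convergence} this converges to $\prod_i\Prob[\Gamma^{x_i,y_i}_D\cap E^{(s)}=\varnothing]=\prod_i G_{D\setminus E^{(s)}}(x_i,y_i)/G_D(x_i,y_i)$, using that the boundary-hitting events are null by the choice of $s$. Bounded convergence gives the identity for $E^{(s)}$. Letting $s\downarrow0$ by monotone convergence, as in \eqref{eq:identity-interior}, yields \eqref{eq:sec5-continuum-transform} for $E\Subset D$, using $G_{D\setminus E^{(s)}}\uparrow G_{D\setminus E}$.

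For a general compact $E\subset\ol D$, intersect $E$ with $\{\dist(\cdot,\partial D)\ge 1/n\}$ and let $n\to\infty$. Since $C\Subset D$ and Brownian loops stay in $D$, both sides converge monotonically. On the right, bridges in $D$ never touch $\partial D$, so the limit is again the product of Green ratios. The convention that a factor vanishes when an endpoint lies in $E$, or when the endpoints are in different components of $D\setminus E$, is automatic from the bridge avoidance interpretation.

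The main obstacle is the loop-mass convergence on the event $\{\dist(C,E)>s\}$. Proposition~\ref{prop:separated-loop} needs the discrete pieces to converge individually to continua of positive diameter. This is the reason the component bookkeeping and the lower bound on component diameters in the first step are needed. Degenerate limits, such as a component collapsing to a point, are excluded because each component joins a pair with $x_i\ne y_i$.
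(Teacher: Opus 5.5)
Your proposal is correct and follows essentially the same route as the paper: couple the finite probe $S^{\lambda,\delta}\subset S^{0,\delta}$, take a joint Skorokhod limit, pass the multipath identity to $E^{(s)}$ via Proposition~\ref{prop:separated-loop} and Lemma~\ref{lem:parallel-regular}, then let $s\downarrow0$ and exhaust a general $E$ away from $\partial D$. The only bookkeeping difference is how the pieces are labelled: the paper takes $C_i^{\lambda,\delta}=\Cell_\delta(S_i^{\lambda,\delta})$, the component containing $x_i^\delta$ (and hence $y_i^\delta$), which gives a fixed-length $r$-tuple. That tuple can be placed in the joint Skorokhod coupling together with $C^{0,\delta}$, so that $C\subset C^0\Subset D$, and it fits the hypotheses of Proposition~\ref{prop:separated-loop} directly, whereas your phrase ``index the components'' does not address that the number of components may vary with $\delta$.
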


\begin{proof}
Apply Lemma~\ref{lem:sec5-finite-probe} in \(D^\delta\), with lattice
approximations of the marked points.  Denote the coupled vertex sets by
\(S^{\lambda,\delta}\subset S^{0,\delta}\), and put
\[
 C^{\lambda,\delta}=\Cell_\delta(S^{\lambda,\delta}),
 \qquad C^{0,\delta}=\Cell_\delta(S^{0,\delta}).
\]
Lemma~\ref{lem:green-path-convergence} gives convergence of \(C^{0,\delta}\) to the union \(C^0\) of the independent Brownian bridges in
 \eqref{eq:green-bridge-definition}; almost surely, \(C^0\Subset D\).  For each \(i\), let
 \(S_i^{\lambda,\delta}\) be the component of
 \(S^{\lambda,\delta}\) containing \(x_i^\delta\), and let
 \(C_i^{\lambda,\delta}=\Cell_\delta(S_i^{\lambda,\delta})\).  Lemma
 \ref{lem:sec5-finite-probe} implies that it also contains \(y_i^\delta\)
 and that every component occurs in this list.  These tuples are tight.  After taking a subsequence and a Skorokhod coupling, almost surely,
 \[
 (C^{\lambda,\delta},C^{0,\delta},C_1^{\lambda,\delta},\ldots,
 C_r^{\lambda,\delta})
 \longrightarrow(C,C^0,C_1,\ldots,C_r).
 \]
 Note that \(C\subset C^0\), and every \(C_i\) is compact and connected (see the end of Section~\ref{subsec:continuum-embedding}).
Each \(C_i\) contains \(x_i,y_i\), hence is
nondegenerate, and \(C=\bigcup_iC_i\) because every discrete component
contains a labelled path.  In particular \(C\) has at most \(r\)
components.  If the paths have a common endpoint, the discrete sets are
connected by Lemma~\ref{lem:sec5-finite-probe}, and so is their limit.

Fix \(E\Subset D\).  Choose \(s>0\), with \(E^{(s)}\Subset D\), outside the
atoms of both \(\dist(C,E)\) and \(\dist(C^0,E)\).  Let
\(
 E_{\delta,s}=\{v\in V(D^\delta):\dist(v,E)\leq s+2\delta\}
\).
Applying~\eqref{eq:finite-transform-intro} to the vertex set \(E_{\delta,s}\) gives
\begin{align}\label{eq:sec5-probe-discrete}
 \E\left[\one_{\{S^{\lambda,\delta}\cap E_{\delta,s}=\varnothing\}}
 e^{-\lambda m_{D^\delta}^\delta
 (S^{\lambda,\delta},E_{\delta,s})}\right] =
 \prod_i\frac{\Gc_{D^\delta\setminus E_{\delta,s}}
 (x_i^\delta,y_i^\delta)}{\Gc_{D^\delta}(x_i^\delta,y_i^\delta)}
 =\Prob[S^{0,\delta}\cap E_{\delta,s}=\varnothing].
\end{align}
On \(\{\dist(C,E)>s\}\), Proposition~\ref{prop:separated-loop}, applied to
\(S^{\lambda,\delta}=\bigcup_iS_i^{\lambda,\delta}\), and
Lemma~\ref{lem:parallel-regular} give convergence of the loop mass.  On
\(\{\dist(C,E)<s\}\), the vertex-avoidance indicator is eventually zero.
By the choice of \(s\),
\(\Prob[\dist(C,E)=s]=\Prob[\dist(C^0,E)=s]=0\).  Corollary~\ref{lem:green-path-convergence}
gives the limit of the last probability in
\eqref{eq:sec5-probe-discrete}.  Bounded convergence yields
\[
 \E_C\!\left[
 \one_{\{C\cap E^{(s)}=\varnothing\}}
 e^{-\lambda m_D(C,E^{(s)})}\right]
 =\prod_i\frac{G_{D\setminus E^{(s)}}(x_i,y_i)}{G_D(x_i,y_i)}.
\]
 Choose a sequence \(s_n\downarrow0\) such that \(E^{(s_n)}\Subset D\) and
\(\Prob[\dist(C,E)=s_n]=\Prob[\dist(C^0,E)=s_n]=0\) for every \(n\).  Apply the preceding identity with \(s=s_n\) and let \(n\to\infty\).
Continuity of loop measure on the
separated event and domain monotonicity of Green functions give \eqref{eq:sec5-continuum-transform}.

For a general compact \(E\subset\ol D\), apply the above result to
\(E_n=E\cap\{z\in D:\dist(z,\partial D)\geq1/n\}\) 
and let \(n\to\infty\).  
Bounded convergence
and continuity from above prove the desired formula.
\end{proof}

\subsubsection{Green function coordinates and full-trace uniqueness}
\label{subsec:sec5-green-coordinates}

Let \(\mathcal Z=(\Q+i\Q)\cap D\).  For
\(K\in\mathfrak C_{D;a,b}\) and distinct \(x,y\in\mathcal Z\), define
\begin{equation}\label{eq:sec5-green-coordinate}
 X_{x,y}(K)
 :=\frac{G_{D\setminus K}(x,y)}{G_D(x,y)}\in[0,1],
\end{equation}
with numerator zero if an endpoint belongs to \(K\) or the two points lie
 in different components of \(D\setminus K\). Fix \(0\leq\lambda\leq1\) and probability laws
\(\mathsf P,\mathsf Q\) on \(\mathfrak C_{D;a,b}\) such that, for every
compact \(A\subset \ol D\),
\begin{align}\label{eq:sec5-transform-equality}
 \int \one_{\{K\cap A=\varnothing\}}
 e^{-\lambda m_D(K,A)}\,\mathsf P(\dd K)=\int \one_{\{K\cap A=\varnothing\}}
 e^{-\lambda m_D(K,A)}\,\mathsf Q(\dd K).
\end{align}
Then we have
\begin{lemma}\label{lem:sec5-green-moments}
For the two probability laws
\(\mathsf P,\mathsf Q\) as above, they have the same expectation of every finite monomial in the coordinates
\eqref{eq:sec5-green-coordinate}.
\end{lemma}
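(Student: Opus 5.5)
Take a monomial $\prod_{i=1}^r X_{x_i,y_i}(K)$ with $x_i\ne y_i$ in $\mathcal Z$; repeated pairs give powers. Let $C$ be the continuum interior probe of Proposition~\ref{prop:sec5-continuum-probe} for the pairs $(x_i,y_i)$ and this $\lambda$, sampled independently of $K$. The plan is to compute
\[
 \Xi:=\E\Bigl[\one_{\{K\cap C=\varnothing\}}e^{-\lambda m_D(K,C)}\Bigr]
\]
in two ways, once conditioning on $C$ and once conditioning on $K$, using that $m_D(K,C)=m_D(C,K)$ is symmetric in its two arguments.

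\textbf{Conditioning on $K$.} For fixed $K\in\mathfrak C_{D;a,b}$, apply \eqref{eq:sec5-continuum-transform} with $E=K$, which is compact in $\ol D$. This gives
\[
 \E_C\!\left[\one_{\{C\cap K=\varnothing\}}e^{-\lambda m_D(C,K)}\right]=\prod_{i=1}^rX_{x_i,y_i}(K).
\]
The conventions match: the numerator is zero when an endpoint lies in $K$ or the endpoints are separated by $K$, exactly as in \eqref{eq:sec5-green-coordinate}. Fubini then gives $\Xi_{\mathsf P}=\int\prod_iX_{x_i,y_i}(K)\,\mathsf P(\dd K)$, and likewise for $\mathsf Q$.

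\textbf{Conditioning on $C$.} For each fixed realization $C$, which is compact in $\ol D$, apply \eqref{eq:sec5-transform-equality} with $A=C$. The inner integrals under $\mathsf P$ and $\mathsf Q$ then agree. Integrating over the law of $C$ gives $\Xi_{\mathsf P}=\Xi_{\mathsf Q}$, and hence equality of the monomial expectations.

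\textbf{Main obstacle: joint measurability.} Fubini requires $(K,C)\mapsto\one_{\{K\cap C=\varnothing\}}e^{-\lambda m_D(K,C)}$ to be jointly measurable on $\mathcal K(\ol D)^2$. The disjointness set is open in the Hausdorff topology, so its indicator is Borel. For the loop mass:
\begin{itemize}
\item $m_D(K,C)=\mu^{\mathrm{loop}}_D\{\ell:\ell\cap K\ne\varnothing,\ \ell\cap C\ne\varnothing\}$, where $\{(\ell,K,C):\ell\cap K\ne\varnothing,\ \ell\cap C\ne\varnothing\}$ is closed in the product of loop-trace space with $\mathcal K(\ol D)^2$.
\item Restrict to loops with duration in $[\varepsilon,T]$ staying in $D_\eta$. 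This is a finite measure, and the mass of a closed set varies upper semicontinuously in $(K,C)$, hence measurably.
\item Let $\varepsilon\downarrow0$, $T\uparrow\infty$ and $\eta\downarrow0$ monotonically. The limit is a monotone limit of measurable functions, possibly $+\infty$, where the exponential is $0$.
\end{itemize}
Alternatively one can avoid the measurability issue by approximating with Lemma~\ref{lem:moving-continuum}, but the semicontinuity argument seems simplest. Since all integrands lie in $[0,1]$, Tonelli applies with no integrability issues.
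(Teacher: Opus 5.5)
Your proposal is correct and follows the paper's proof essentially verbatim. You test $K$ against an independent interior probe $C$ from Proposition~\ref{prop:sec5-continuum-probe}, use $m_D(K,C)=m_D(C,K)$ and \eqref{eq:sec5-transform-equality} with $A=C$ to evaluate the same double integral in two ways, and conclude by Fubini. The only difference is that the paper just asserts joint Borel measurability of the integrand, whereas your upper-semicontinuity argument on duration-truncated loop masses, followed by monotone limits, actually supplies it.
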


\begin{proof}
For a finite list \((x_i,y_i)\), take the interior probe \(C\) of
Proposition~\ref{prop:sec5-continuum-probe}.  For deterministic \(K\), that
proposition and symmetry of Brownian loop measure give $ \E_C\!\left[
 \one_{\{C\cap K=\varnothing\}}e^{-\lambda m_D(C,K)}\right]
 =\prod_iX_{x_i,y_i}(K)$,
and the integrand
is jointly Borel.
Since \(C\Subset D\) almost surely, we may apply \eqref{eq:sec5-transform-equality} conditionally on \(C\). Fubini's theorem gives
\begin{align*}
 \int\prod_iX_{x_i,y_i}(K)\,\mathsf P(\dd K)
 &=\E_C\int
   \one_{\{K\cap C=\varnothing\}}e^{-\lambda m_D(K,C)}
   \,\mathsf P(\dd K)\\
 &=\E_C\int
   \one_{\{K\cap C=\varnothing\}}e^{-\lambda m_D(K,C)}
   \,\mathsf Q(\dd K)
 =\int\prod_iX_{x_i,y_i}(K)\,\mathsf Q(\dd K).
\end{align*}
Repeated endpoint pairs give
arbitrary powers.
\end{proof}

The following lemma shows that Green function coordinates are injective.
\begin{lemma}
\label{lem:sec5-green-injective}
The map
\[
 \Phi:\mathfrak C_{D;a,b}\longrightarrow
 [0,1]^{\{(x,y)\in\mathcal Z^2:x\ne y\}},
 \qquad \Phi(K)=(X_{x,y}(K))_{x\ne y},
\]
is Borel and injective.
\end{lemma}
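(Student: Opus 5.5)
Two things need to be shown: that $\Phi$ is Borel, and that $\Phi$ is injective. Measurability will come from semicontinuity of each coordinate $K\mapsto X_{x,y}(K)$ on $\mathfrak C_{D;a,b}$. Injectivity will come from the fact that every point of $K$ inside $D$ is non-polar in a quantitative sense, so that deleting or adding a piece of $K$ changes some Green function.

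\emph{Measurability.} Fix $x\ne y$ in $\mathcal Z$ and write
\[
G_{D\setminus K}(x,y)=\lim_{n} G_{D\setminus K^{(1/n)}}(x,y),
\qquad K^{(1/n)}=\{z:\dist(z,K)\le 1/n\},
\]
using monotone convergence of killed heat kernels, with the convention that the numerator is zero if $x$ or $y$ lies in $K$. The set $K^{(1/n)}$ does not meet $\partial D$ except near $a,b$, and one can restrict to the interior part as in the proof of Proposition~\ref{prop:catcher-cluster}. Suppose $K_m\to K$ in the Hausdorff metric. Then eventually $K_m\subset K^{(1/n)}$, which gives
\[
\liminf_m G_{D\setminus K_m}(x,y)\ge G_{D\setminus K^{(1/n)}}(x,y).
\]
Hence each approximating map is upper or lower semicontinuous, and $X_{x,y}$ is a monotone limit of such maps. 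It is therefore Borel, and so is $\Phi$, since the index set is countable.

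\emph{Injectivity.} Take $K\ne K'$ in $\mathfrak C_{D;a,b}$. After swapping the two sets if needed, we can pick $z\in K\setminus K'$. Since both sets meet $\partial D$ only at $\{a,b\}$, we have $z\in D$. Set $r=\dist(z,K'\cup\partial D)>0$.
\begin{itemize}
\item The set $K$ is connected and contains $a,b$, so it is not contained in $B(z,r/2)$. Consequently $K\cap \ol{B(z,r/4)}$ contains a continuum $J\ni z$ with $\diam J\ge r/4$, by the boundary bumping theorem.
\item A continuum of positive diameter is non-polar; its logarithmic capacity is at least $\diam/4$.
\item Pick rational points $x,y\in B(z,r)\setminus \ol{B(z,r/2)}$. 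Both lie in the same component of $D\setminus K'$, because the annulus $B(z,r)\setminus\ol{B(z,r/2)}$ avoids $K'$.
\end{itemize}

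I would then argue that $G_{D\setminus K}(x,y)<G_{D\setminus K'}(x,y)$. A first attempt is to compare $K$ with $K\cup K'$ on one side and $K'$ with $K\cup K'$ on the other, but that comparison is not clean. The cleaner route is to choose $x$ and $y$ well. Two cases arise.
\begin{itemize}
\item If $K$ separates the annulus, then $x,y$ can be chosen in different components of $D\setminus K$. In that case $X_{x,y}(K)=0<X_{x,y}(K')$.
\item Otherwise, the Green function $G_{D\setminus K'}(\cdot,y)$ is strictly positive on $J$, and the difference
\[
G_{D\setminus K'}(x,y)-G_{D\setminus (K\cup K')}(x,y)=\E^x\bigl[\one_{\{\tau_J<\tau_{K'}\}}\,G_{D\setminus K'}(B_{\tau_J},y)\bigr]-\cdots
\]
is bounded below by the probability that Brownian motion from $x$ hits $J$ before $K'\cup\partial D$, times a positive lower bound. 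That probability is positive because $J$ is a non-polar continuum.
\end{itemize}
What is really needed is to produce some pair $(x,y)$ at which $X_{x,y}(K)\ne X_{x,y}(K')$, not a strict inequality in a fixed direction.

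To get that, I would argue by contradiction. Suppose $\Phi(K)=\Phi(K')$. By continuity of $G$ in $x,y$ off $K\cup K'$, together with density of $\mathcal Z$, we get $G_{D\setminus K}=G_{D\setminus K'}$ on the common domain. The function $G_{D\setminus K}(\cdot,y)$ is harmonic in $B(z,r)\setminus K$ and vanishes quasi-everywhere on $J$. Since $J$ is non-polar, the function $G_{D\setminus K'}(\cdot,y)$, which is harmonic and positive across $B(z,r)$, cannot agree with it on the component adjacent to $J$. Near a regular point of $J$ the first tends to $0$ while the second stays bounded below.

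The main obstacle is making this last step rigorous without any simple-connectedness. One has to ensure that there is a component of $B(z,r)\setminus K$ containing rational points whose boundary includes a regular point of $J$. Regular points exist because $J$ is a non-polar continuum, and every point of a nondegenerate continuum is regular (Wiener criterion). Taking the component of $D\setminus K$ adjacent to such a point, which contains rationals by openness, finishes the argument.
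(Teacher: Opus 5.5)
Your overall strategy matches the paper's. For measurability, your inequality $\liminf_m G_{D\setminus K_m}(x,y)\ge G_{D\setminus K^{(1/n)}}(x,y)$ for every $n$ already gives lower semicontinuity of $K\mapsto G_{D\setminus K}(x,y)$ directly, so the sentence about ``approximating maps'' and monotone limits can be dropped. For injectivity, you argue by contradiction: density of $\mathcal Z$ and continuity force $G_{D\setminus K}=G_{D\setminus K'}$ on $D\setminus(K\cup K')$. This is incompatible at a regular boundary point $\xi\in K\cap B(z,r)$, where $G_{D\setminus K}(\cdot,y)\to0$ while $G_{D\setminus K'}(\cdot,y)\to G_{D\setminus K'}(\xi,y)>0$. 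Using the fact that a boundary point lying on a nondegenerate continuum of the complement is regular is a valid, slightly stronger substitute for the paper's appeal to Kellogg's theorem.

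There is, however, a gap exactly at the step you flagged as the main obstacle. Your last sentence takes ``the component of $D\setminus K$ adjacent to'' a point of $J$. Nothing you wrote guarantees that any point of $J$ lies on $\partial V$ for some component $V$ of $D\setminus K$. The set $K$ is only a continuum and may have interior. If, say, $K\supset\overline{B(z,r/2)}$, then boundary bumping yields $J\subset\operatorname{int}K$, no complementary component touches $J$, and the regularity of points of $J$ is irrelevant. For the same reason, your rational points $x,y$ in the annulus may lie in $K$, and the two-case discussion before the contradiction argument does not amount to a proof.

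The paper closes this gap by splitting into two cases. If $K\cap B$ has nonempty interior, take a rational point in that interior and a second rational point of $B$; this gives $X_{x,y}(K)=0<X_{x,y}(K')$. Otherwise $B\setminus K$ is dense in $B$. Equality of the coordinates then forces all components of $D\setminus K$ meeting $B$ to be a single $V$, density gives $J\subset\partial V$, and the regular-point argument applies.

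Within your own framework there is an equally short fix. If $B(z,r)\subset K$, use the interior argument. Otherwise:
\begin{enumerate}
\item Take $u\in B(z,r)\setminus K$ and let $\xi$ be the first point of $K$ on the segment from $u$ to $z$. Then $\xi\in\partial V$ for the component $V\ni u$.
\item The point $\xi$ is regular for $V$ because it lies on the nondegenerate continuum $K\subset\C\setminus V$. So $J$ and boundary bumping are not even needed.
\item Choose a rational $y\in V\cap B(z,r)$ and let $u'\to\xi$ along the segment. This yields the contradiction.
\end{enumerate}
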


\begin{proof}
The Brownian bridge identity $X_{x,y}(K)=\Prob[\Gamma_D^{x,y}\cap K=\varnothing]$ and openness of disjointness in the Hausdorff topology prove
Borel measurability.

Suppose \(K_1\ne K_2\).  After interchanging them, choose
\(z\in(K_2\setminus K_1)\cap D\) and concentric disks $\ol B_0\subset B\Subset D\setminus K_1$ such that $z\in B_0$.
Let \(J\) be the closure in \(K_2\) of the component of the relatively
open set \(K_2\cap B_0\) containing \(z\).  The boundary-bumping theorem
\cite[Theorem~5.4]{Nadler1992} shows that \(J\) meets \(\partial B_0\);
thus it is a nondegenerate continuum $J\subset K_2\cap\ol B_0$
which contains \(z\).  Such a planar continuum
has positive logarithmic capacity (or positive planar Sobolev capacity, equivalently).

Assume for contradiction that \(\Phi(K_1)=\Phi(K_2)\), and put
\(U_j=D\setminus K_j\).  If \(K_2\cap B\) has nonempty interior, a
rational point in that interior and a second rational point of \(B\) give
zero Green function in \(U_2\) and positive Green function in \(U_1\), a
contradiction.  We may thus assume that \(B\setminus K_2\) is dense in \(B\).

For any two rational points of \(B\setminus K_2\), their Green
function in \(U_1\) is positive.  Equality of the coordinates forces their Green function in \(U_2\) to be positive, so all components of \(U_2\) meeting
\(B\) are one component, say \(V\).  Density and continuity away from the
diagonal then give
\begin{equation}\label{eq:sec5-local-green-equality}
 G_{U_1}(u,v)=G_V(u,v),
 \qquad u,v\in B\setminus K_2,\quad u\ne v.
\end{equation}
Moreover \(J\subset\partial V\).  By Kellogg's theorem, the irregular
boundary points of \(V\) have capacity zero. Choose a regular
\(\xi\in J\). Then with \(v\in V\cap B\) fixed and \(u\to\xi\) inside
\(V\cap B\),
\[
 G_V(u,v)\longrightarrow0,\qquad
 G_{U_1}(u,v)\longrightarrow G_{U_1}(\xi,v)>0,
\]
contradicting \eqref{eq:sec5-local-green-equality}. Hence \(K_1=K_2\).
\end{proof}

\begin{proof}[Proof of Theorem~\ref{thm:full-trace-unique}]
Lemma~\ref{lem:sec5-green-moments}, polynomial density on finite cubes, and
a monotone-class argument give
\(\Phi_*\mathsf P=\Phi_*\mathsf Q\).
The state space \eqref{eq:sec5-continuum-state} is standard Borel, while
Lemma~\ref{lem:sec5-green-injective} makes
\(\Phi\) a Borel injection.
Lusin--Suslin theorem supplies a Borel inverse, so \(\mathsf P=\mathsf Q\).
\end{proof}

\begin{proof}[Proof of Theorem~\ref{thm:catcher-convergence}]
Every subsequential limit of \((K_\lambda^\delta)\) belongs to
\(\mathfrak C_{D;a,b}\) and satisfies \eqref{eq:intro-main-id} by
Proposition~\ref{prop:catcher-cluster}. Theorem~\ref{thm:full-trace-unique}
makes this limit unique, proving convergence of the full sequence.
\end{proof}

\subsection{Hull uniqueness and SLE identification}
\label{subsec:sec5-boundary-fans}

By conformal invariance, without loss of generality, we take
\((D;a,b)=(\D;-1,1)\), write
\[
 \mathfrak C_{-1,1}
 :=\{K\subset\ol\D:K\text{ is compact and connected},\
 K\cap\partial\D=\{-1,1\}\}.
\]
Recall that a \emph{two-sided filling hull} is an \(H\in\mathfrak C_{-1,1}\) satisfying
\begin{equation}\label{eq:sec5-hull-recovery}
 H=\ol{\D\setminus(D_H^+\cup D_H^-)}.
\end{equation}
A \emph{one-sided filling hull} is a connected compact \(H\subset\ol\D\)
such that \(H\cap\partial\D=\{z\in\partial\D:\Im z\leq0\}\) and \(U_H:=\D\setminus H\) is simply
connected.

\subsubsection{Boundary probes}

We say that an open set \(U\subset\D\) \emph{agrees with \(\D\) in a collar of
\(\xi\in\partial\D\)} if \(U\cap N=\D\cap N\) for some neighborhood
\(N\) of \(\xi\) in $\ol\D$.  Denote the connected component of $U$ containing such collar by $U_\xi$. Let \(P_U(\xi,z)\) be the boundary Poisson kernel of $U_\xi$, and set it to zero when \(z\) is not in $U_\xi$.

\begin{proposition}[Boundary probes]
\label{prop:sec5-boundary-fan}
Fix $0\leq\lambda\leq1$.

\smallskip
\noindent\textup{(i) One-sided boundary probe.}
Fix $\xi\in\partial\D$ and $z_1,\ldots,z_r\in\D$, where $r\geq1$.
There is a random connected compact set
$C=C^\lambda_{\xi;z_1,\ldots,z_r}\subset\overline\D$ such that
$C\cap\partial\D=\{\xi\}$ and $z_1,\ldots,z_r\in C$, and, for every
compact $E\subset\overline\D$ with $\xi\notin E$,
\begin{equation}\label{eq:sec5-boundary-fan-transform}
 \mathbb E_C\!\left[
 \mathbf 1_{\{C\cap E=\varnothing\}}e^{-\lambda m_\D(C,E)}
 \right]
 =\prod_{j=1}^r
 \frac{P_{\D\setminus E}(\xi,z_j)}{P_\D(\xi,z_j)}.
 \end{equation}

\smallskip
\noindent\textup{(ii) Two-sided boundary probe.}
Fix distinct $\xi_+,\xi_-\in\partial\D$ and
$z_1,\ldots,z_r,w_1,\ldots,w_s\in\D$, where $r,s\geq1$.
There is a random compact set $C\subset\overline\D$ with at most two
connected components such that $C\cap\partial\D=\{\xi_+,\xi_-\}$, one
component contains $\xi_+,z_1,\ldots,z_r$, and one, possibly the same,
contains $\xi_-,w_1,\ldots,w_s$. Moreover, for every compact
$E\subset\overline\D$ with $E\cap\{\xi_+,\xi_-\}=\varnothing$,
\[
 \mathbb E_C\!\left[
 \mathbf 1_{\{C\cap E=\varnothing\}}e^{-\lambda m_\D(C,E)}
 \right]
 =\prod_{j=1}^r
 \frac{P_{\D\setminus E}(\xi_+,z_j)}{P_\D(\xi_+,z_j)}
 \prod_{k=1}^s
 \frac{P_{\D\setminus E}(\xi_-,w_k)}{P_\D(\xi_-,w_k)}.
\]
\end{proposition}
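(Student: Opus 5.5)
We follow the scheme of Proposition~\ref{prop:sec5-continuum-probe}, replacing interior bridges by boundary-to-interior walks. For (i), we approximate $\D$ by $D^\delta$ and take an exterior vertex $\widehat\xi^\delta\to\xi$ with entrance weights $\frac14$ on the edges into $N_\xi^\delta$, as in Section~\ref{subsec:continuum-embedding}. We take $z_j^\delta\to z_j$ and run $r$ independent normalized walks from $\widehat\xi^\delta$ to $z_j^\delta$; their partition functions are $\sum_{v\in N_\xi^\delta}\frac14\Gc_{D^\delta}(v,z_j^\delta)$. We apply Algorithm~\ref{alg:time-catcher} to this ordered tuple, with the entrance weights entering only through the starting vertex. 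By \eqref{eq:multipath-recovery-transform}, summed over starting vertices exactly as in the proof of Theorem~\ref{thm:finite-positivity}, the output $S^{\lambda,\delta}$ satisfies
\[
\E\bigl[\one_{\{S^{\lambda,\delta}\subset W\}}e^{-\lambda m_\delta(S^{\lambda,\delta},D^\delta\setminus W)}\bigr]=\prod_{j=1}^r\frac{Z_W(\widehat\xi^\delta\to z_j^\delta)}{Z_{D^\delta}(\widehat\xi^\delta\to z_j^\delta)}.
\]
All walks start at the same exterior vertex, so every component of $S^{\lambda,\delta}$ meets $N_\xi^\delta$. After adjoining a connector $c_\xi^\delta$, the cell trace is connected and contains the $z_j^\delta$. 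It is also dominated by the union $S^{0,\delta}$ of the walks.

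Next we pass to the limit. The right side converges by the boundary kernel convergence of \cite{KozdronLawler2005}, used as in Lemma~\ref{lem:edge-crosscut-law}. After normalizing, the $\delta$-dependent constant cancels in the ratio, which tends to $P_{\D\setminus E}(\xi,z_j)/P_\D(\xi,z_j)$ whenever $E$ stays away from $\xi$. The argument is analogous to Proposition~\ref{prop:excursion-convergence}. We cut each walk at its first exit from a small cap $U_{\xi,\delta}^r$, which gives a mixture of bridges from the crosscut to $z_j$ weighted by $\mu_{\xi,\delta}^r\otimes\Gc$. Corollary~\ref{lem:green-path-convergence} and Lemma~\ref{lem:edge-crosscut-law} then identify the union $S^{0,\delta}$ with a union of Brownian excursions from $\xi$ to $z_j$. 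For those excursions the avoidance probability of $E$ is the Poisson kernel ratio. This shows that the right side equals $\Prob[S^{0,\delta}\cap E_{\delta,s}=\varnothing]$ and converges.

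We then take tightness, a subsequence and a Skorokhod coupling of $(C^{\lambda,\delta},C^{0,\delta})$. This gives a connected limit $C\ni z_j,\xi$ with $C\subset C^0$, hence $C\cap\partial\D=\{\xi\}$. The remaining steps copy the proof of Proposition~\ref{prop:sec5-continuum-probe} verbatim: thicken $E$ to $E^{(s)}$, apply Proposition~\ref{prop:separated-loop} on $\{\dist(C,E)>s\}$ (its components have positive diameter since they contain $z_j$ and $\xi$), use Lemma~\ref{lem:parallel-regular}, and let $s_n\downarrow0$. Finally we exhaust a general $E$ by $E\cap\{\dist(\cdot,\partial\D)\ge1/n\}$. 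Boundary points of $E$ away from $\xi$ do not matter, since $C\cap\partial\D=\{\xi\}$, Brownian loops stay inside $\D$, and the Poisson kernel ratio is continuous under this exhaustion.

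For (ii), we use two exterior vertices, one near $\xi_+$ and one near $\xi_-$. We form the ordered tuple consisting of the $r$ walks $\widehat\xi_+^\delta\to z_j^\delta$ followed by the $s$ walks $\widehat\xi_-^\delta\to w_k^\delta$, and apply the same algorithm. The product formula then factorizes over both families. Each component contains a path from one of the two boundary vertices, which gives at most two components with the stated membership, and the limiting argument is identical.

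The main obstacle is the convergence near the boundary point $\xi$. The loop-mass convergence of Proposition~\ref{prop:separated-loop} assumes $B\Subset D$ and separation from $C$, and the walks start at the boundary. The remedy is that every obstacle $E$ we test against avoids a neighborhood of $\xi$. The relevant loops therefore have to travel from near $\xi$ into $\D$, and their mass is controlled by Lemma~\ref{lem:loop-mass-estimates}(1),(2) together with the cap decomposition. I would handle this first, checking uniformity in the crosscut endpoint neighborhoods using Lemma~\ref{lem:edge-crosscut-law}.
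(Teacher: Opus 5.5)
Your route is sound, but it differs from the paper's. The paper never returns to the lattice. It applies the continuum interior probes of Proposition~\ref{prop:sec5-continuum-probe} to the pairs $(a_n,z_j)$ with $a_n=(1-1/n)\xi$. These probes are connected because the pairs share an endpoint, and they are coupled inside the union $C_n^0$ of Brownian bridges $\Gamma_\D^{a_n,z_j}$. The paper then extracts a subsequential limit $(C,C^0)$. It shows $C^0\cap\partial\D=\{\xi\}$ from $\Prob[\Gamma_\D^{a_n,z_j}\cap N_\eta(B)=\varnothing]\to P_{\D\setminus N_\eta(B)}(\xi,z_j)/P_\D(\xi,z_j)$ for closed boundary arcs $B\not\ni\xi$. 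Finally it passes the identity to the limit using two inputs: the continuum continuity Lemma~\ref{lem:moving-continuum} (with $\diam C_n\ge|a_n-z_1|$), and the classical boundary asymptotic $G_{\D\setminus E}(a_n,z_j)/G_\D(a_n,z_j)\to P_{\D\setminus E}(\xi,z_j)/P_\D(\xi,z_j)$. Part (ii) is the same with $a_n\to\xi_+$ and $b_n\to\xi_-$.

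You instead build the probe directly as a lattice scaling limit, by running Algorithm~\ref{alg:time-catcher} on walks from an exterior vertex $\widehat\xi^\delta$. Your discrete identity is correct: the starting vertices are independent, so the conditional identities \eqref{eq:multipath-recovery-transform} average to the product of partition-function ratios. This route avoids the second limit $n\to\infty$ and Lemma~\ref{lem:moving-continuum}. In exchange, you must redo the cap decomposition of Proposition~\ref{prop:excursion-convergence} for boundary-to-interior walks. That includes the endpoint-neighborhood control of Lemma~\ref{lem:edge-crosscut-law} and the identification $\int\omega_\xi^r(du)\,G_U(u,z)\propto P_U(\xi,z)$, which holds because $\omega_\xi^r(du)\propto H_{U_\xi^r}(\xi,u)\,|du|$ and $E^{(s)}$ stays off the cap. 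So the paper's version is more economical given what is already proved. Yours has the mild advantage that the boundary probe is itself the scaling limit of a discrete object.

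One correction to your assessment of where the difficulty lies: the loop-mass step is not an obstacle. Proposition~\ref{prop:separated-loop} requires only $B\Subset D$, separation from $C$, and component diameters bounded below. It does not require $C\Subset D$, and the paper already applies it in Proposition~\ref{prop:catcher-cluster} to sets touching $\partial D$ at $a,b$. The only boundary-sensitive inputs in your approach are the convergence of the partition-function ratio and of $S^{0,\delta}$, and the cap decomposition handles both.
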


\begin{proof}
We first focus on one-sided probes.
Let \(a_n=(1-1/n)\xi\), omitting any coincident value and taking \(n\)
sufficiently large, and apply
Proposition~\ref{prop:sec5-continuum-probe} to \((a_n,z_j)\).  Its
construction couples \(C_n\subset C_n^0\), where \(C_n^0\) is the union of independent Brownian bridges $\Gamma_\D^{a_n,z_j}$ from \(a_n\) to \(z_j\).  Along a subsequence,
\[
 (C_n,C_n^0)\Longrightarrow(C,C^0),\qquad C\subset C^0.
\]
Take a Skorokhod realization with almost-sure convergence. Closedness of
connectedness and convergence of the marked points give
\(\xi,z_1,\ldots,z_r\in C\).

Let \(B\) be a closed boundary arc not
containing \(\xi\), and \(N_\eta(B):=\{z\in\ol\D:\dist(z,B)\leq\eta\}\) be its closed \(\eta\)-collar.  For each
\(j\),
\begin{equation}\label{eq:poisson-limit-ratio}
 \lim_{n\to\infty}
 \Prob[\Gamma_\D^{a_n,z_j}\cap N_\eta(B)=\varnothing]
 =
 \frac{P_{\D\setminus N_\eta(B)}(\xi,z_j)}{P_\D(\xi,z_j)}.
\end{equation}
If \(C^0\) hits \(B\), then \(C_n^0\) hits \(N_\eta(B)\) eventually; hence $\Prob[C^0\cap B\ne\varnothing]
 \leq\liminf_n\Prob[C_n^0\cap N_\eta(B)\ne\varnothing]$.
As \(\eta\downarrow0\), the ratio on the right side of~\eqref{eq:poisson-limit-ratio} increases to one.  A finite union bound
and a countable cover of
\(\partial\D\setminus\{\xi\}\) prove
\(C^0\cap\partial\D=\{\xi\}\), and hence the same for \(C\).

For \(E\Subset\D\), choose \(s_\ell\downarrow0\) such that $\Prob[\dist(C,E)=s_\ell]=\Prob[\dist(C^0,E)=s_\ell]=0$ for all $\ell\geq1$,
and fix \(s=s_\ell\).  By
Lemma~\ref{lem:parallel-regular}, \(E^{(s)}\) is loop regular.  Since
\(\diam C_n\geq |a_n-z_1|\), Lemma~\ref{lem:moving-continuum} and the
two
zero-probability conditions above imply $\one_{\{C_n\cap E^{(s)}=\varnothing\}}
 e^{-\lambda m_\D(C_n,E^{(s)})}
 \longrightarrow
 \one_{\{C\cap E^{(s)}=\varnothing\}}
 e^{-\lambda m_\D(C,E^{(s)})}$
a.s.  Proposition
\ref{prop:sec5-continuum-probe} and the boundary Green function asymptotic give
\(
 \frac{G_{\D\setminus E^{(s)}}(a_n,z_j)}{G_\D(a_n,z_j)}
 \longrightarrow
 \frac{P_{\D\setminus E^{(s)}}(\xi,z_j)}{P_\D(\xi,z_j)}
\).
Bounded convergence proves \eqref{eq:sec5-boundary-fan-transform} for
\(E^{(s_\ell)}\).  Letting \(\ell\to\infty\) proves it for \(E\).

For general \(E\subset\ol\D\) avoiding \(\xi\), use
\(E_q=E\cap\{\dist(z,\partial\D)\geq1/q\}\).  The boundary probe has no boundary
contact other than \(\xi\), and Brownian loops stay in \(\D\). Continuity
from above completes \textup{(i)}.

For \textup{(ii)}, choose
$a_n,b_n\in\D$ with $a_n\to\xi_+$ and $b_n\to\xi_-$, apply
Proposition~\ref{prop:sec5-continuum-probe} to the combined list of pairs $(a_n,z_j)$ and
$(b_n,w_k)$, and pass to a subsequential limit as above. The result then follows.
\end{proof}

\subsubsection{Poisson kernel coordinates and hull uniqueness}
\label{subsec:sec5-hull-moments}

For a simply connected \(U\subset\D\) agreeing with \(\D\) near
\(\xi\in\partial\D\), define
\begin{equation}
 X_z^\xi(U)=
 \begin{cases}
 P_U(\xi,z)/P_\D(\xi,z),&z\in U,\\
 0,&z\notin U.
 \end{cases}
\end{equation}
Domain monotonicity gives \(0\leq X_z^\xi\leq1\). The following lemma shows that Poisson kernel coordinates determine a side domain.

\begin{lemma}
\label{lem:sec5-poisson-injective}
Fix a countable dense set \(\mathcal Z\subset\D\).  On the standard Borel
space of simply connected \(U\subset\D\) which agree with \(\D\) in a
collar of \(\xi\), the map $\Phi_\xi(U)=(X_z^\xi(U))_{z\in\mathcal Z}$ is injective.
\end{lemma}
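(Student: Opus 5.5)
We argue by contradiction, following the pattern of Lemma~\ref{lem:sec5-green-injective} with Green functions replaced by boundary Poisson kernels. Suppose \(U_1\ne U_2\) are simply connected, agree with \(\D\) in a collar of \(\xi\), and \(\Phi_\xi(U_1)=\Phi_\xi(U_2)\).

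The first step shows that the coordinates determine the set \(\{z\in\mathcal Z:z\in U\}\). Since \(U\) is a connected open set whose boundary near \(\xi\) is the arc of \(\partial\D\), the boundary Poisson kernel \(P_U(\xi,\cdot)\) is strictly positive on \(U\). Hence \(X^\xi_z(U)>0\) exactly when \(z\in U\cap\mathcal Z\). As \(\mathcal Z\) is dense and \(U_j\) is open, we get \(U_j=\operatorname{int}\overline{U_j\cap\mathcal Z}\) up to the following caveat: \(U_j\) may differ from \(\operatorname{int}\ol{U_j}\) by a closed set \(\D\setminus U_j\) with empty interior, such as a slit. Equality of the positivity sets therefore only yields \(\ol{U_1}=\ol{U_2}\) and \(\operatorname{int}\ol{U_1}=\operatorname{int}\ol{U_2}\), not \(U_1=U_2\). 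This is the genuine case to rule out.

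Next we reduce to that case. After relabelling, pick \(z_0\in U_1\setminus U_2\); then \(z_0\in\partial U_2\cap\D\). Since \(U_2\) is simply connected, every component of \(\D\setminus U_2\) touches \(\partial\D\). Near \(z_0\) the complement \(\D\setminus U_2\) contains a nondegenerate continuum \(J\) through \(z_0\), obtained by the boundary-bumping argument of \cite[Theorem~5.4]{Nadler1992} inside a small disk \(\ol B_0\subset B\Subset U_1\). So \(J\subset\partial U_2\), \(J\) has positive logarithmic capacity, and \(J\subset U_1\). By continuity of \(P_{U_j}(\xi,\cdot)\) inside \(U_j\) and density of \(\mathcal Z\), equality of coordinates gives
\[
P_{U_1}(\xi,u)=P_{U_2}(\xi,u)\quad\text{for all }u\in U_2\cap B.
\]
Here \(B\cap U_2\) is dense in \(B\): if \(B\setminus U_2\) had interior, a rational point there would have positive coordinate for \(U_1\) and zero coordinate for \(U_2\).

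Finally we derive the contradiction. By Kellogg's theorem, choose a regular boundary point \(\zeta\in J\) for \(U_2\). Since \(J\) has positive capacity, such a point exists in \(J\). Let \(u\to\zeta\) within \(U_2\cap B\). Then \(P_{U_2}(\xi,u)\to0\), because \(P_{U_2}(\xi,\cdot)\) is harmonic, bounded away from \(\xi\), and has zero boundary values at regular points away from \(\xi\). On the other hand, \(P_{U_1}(\xi,u)\to P_{U_1}(\xi,\zeta)>0\), since \(\zeta\in U_1\). This contradicts the displayed equality, so \(U_1=U_2\).

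The main obstacle is justifying the vanishing of the boundary Poisson kernel at regular points. To handle it, we represent \(P_{U}(\xi,u)\) as the limit of normalized Green functions \(G_U(a_n,u)/\dist(a_n,\partial\D)\) with \(a_n\to\xi\), or we dominate it by a harmonic measure of a small neighborhood of \(\xi\) in \(U\). Either way the boundary values at regular points of \(\partial U\setminus\{\xi\}\) are zero.
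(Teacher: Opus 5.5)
Your proof is correct, but it takes a different route from the paper's.

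\textbf{The paper's route.} It argues by conformal mapping. Normalize $F_j:U_j\to\mathbb H$ so that $F_j(z)=C_\xi(z)+O(1-\bar\xi z)$ as $z\to\xi$. Then $P_{U_j}(\xi,\cdot)=C\operatorname{Im}F_j$. Equality of the coordinates on a common collar of $\xi$ (by density and continuity) makes $F_1-F_2$ a real constant there, and the normalization makes it zero. Hence $F_2^{-1}\circ F_1:U_1\to U_2$ is the identity on the collar, so on all of $U_1$, and $U_1=U_2$.

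\textbf{Your route.} You transplant the potential-theoretic proof of Lemma~\ref{lem:sec5-green-injective}. Positivity of $P_U(\xi,\cdot)$ locates $U\cap\mathcal Z$. This forces a point $z_0\in U_1\setminus U_2$ onto $\partial U_2$. Then the vanishing of $P_{U_2}(\xi,\cdot)$ at a regular boundary point near $z_0$ contradicts $P_{U_1}(\xi,\cdot)>0$ there.

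\textbf{What each buys.} The paper's argument is shorter and proves more: only coordinates with $z$ in an arbitrarily small collar of $\xi$ are needed, and no regularity or capacity theory enters. It does depend essentially on the Riemann map. Your argument uses all coordinates and the boundary behaviour of the Poisson kernel. Your comparison with harmonic measure in $U\setminus\ol N$, for a small neighbourhood $N$ of $\xi$, is an adequate justification of that behaviour. On the other hand, you use simple connectivity only through connectedness of $\widehat{\mathbb C}\setminus U_2$. So your argument adapts, as in the Green function lemma, to side domains that are not simply connected, provided every component of the complement is nondegenerate.

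\textbf{Two small points.}
\begin{enumerate}
\item The continuum to which boundary bumping should be applied is $\widehat{\mathbb C}\setminus U_2$, which is compact and connected precisely because $U_2$ is simply connected.
\item You can drop $J$ and Kellogg's theorem altogether. A boundary point of a planar domain that lies in a nondegenerate component of the complement is regular, so $\zeta=z_0$ already works.
\end{enumerate}
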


\begin{proof}
Set \(C_\xi(z):=i(1+\bar\xi z)/(1-\bar\xi z)\), and normalize the conformal map \(F_j:U_j\to\mathbb H\) by \(F_j(z)=C_\xi(z)+O(1-\bar\xi z)\) as \(z\to\xi\). Then \(P_{U_j}(\xi,z)=C\operatorname{Im}F_j(z)\) for \(j=1,2\). Hence equality of the Poisson kernels on a collar $U_\xi$ of $\xi$ implies that \(F_1-F_2\) is a real constant on $U_\xi$, and the normalization forces this constant to be zero. Thus \(F_1=F_2\) on $U_\xi$, and the identity theorem gives \(U_1=U_2\).
\end{proof}

\begin{proof}[Proof of Theorem~\ref{thm:hull-unique}]
Fix \(0<\lambda\leq1\), and let \(\mathsf P,\mathsf Q\) be laws on
two-sided filling hulls satisfying the condition in Theorem~\ref{thm:hull-unique}.  
Choose \(\xi_+=i\) and \(\xi_-=-i\).
By Proposition~\ref{prop:sec5-boundary-fan} and taking the hulls of boundary probes, we have
\begin{equation}
 \int
 \prod_jX_{z_j}^{\xi_+}(D_H^+)
 \prod_kX_{w_k}^{\xi_-}(D_H^-)\,\mathsf P(\dd H)
 =
 \int
 \prod_jX_{z_j}^{\xi_+}(D_H^+)
 \prod_kX_{w_k}^{\xi_-}(D_H^-)\,\mathsf Q(\dd H).
\end{equation}
Moment determinacy and a monotone-class argument identify
the full coordinate law.
Lemma~\ref{lem:sec5-poisson-injective} then identifies \((D_H^+,D_H^-)\) and hence the filling hull~\eqref{eq:sec5-hull-recovery}.
The one-sided case follows in parallel.
\end{proof}

\begin{proof}[Proof of Theorem~\ref{thm:sle}]
Let \(H_\lambda\) be the two-sided filling hull of \(K_\lambda\), obtained by
retaining its upper and lower side domains and filling all other
complementary components.  
Then \(H_\lambda\) satisfies \eqref{eq:hull-id}.
On the other hand, by \cite[Proposition~6.2]{qian2021generalized} (with $\alpha=1$ there), we know that the two-sided filling hull with $\eta_+,\eta_-$ being its upper and lower boundaries also satisfies \eqref{eq:hull-id}. By Theorem~\ref{thm:hull-unique}, we conclude.
\end{proof}

\subsection{Non-existence of Brownian loop-catcher for \texorpdfstring{$\lambda>1$}{lambda > 1}}
\label{subsec:sec5-non-existence}

We prove the non-existence part of Theorem~\ref{thm:main} for
\(\lambda>1\), with \((D;a,b)=(\D;-1,1)\), using radial probes. Let
\(\kappa\in(0,2)\) be related to \(\lambda\) by \eqref{eq:c-kappa}.
We use the generalized radial restriction samples of~\cite{qian2021generalized}.
A \emph{radial filling hull from \(\xi\) to \(z\)} is a connected compact
\(C\subset\ol\D\) with \(z\in C\),
\(C\cap\partial\D=\{\xi\}\), and \(\D\setminus C\) simply connected.
Put \(\beta_*=(4-\kappa)^2/(2\kappa)\).

\begin{lemma}[Radial probes,~\cite{qian2021generalized}]
\label{lem:sec5-positive-radial-probes}
Fix \(z\in\D\), \(\xi\in\partial\D\setminus\{-1,1\}\), and
\(\beta>\beta_*\).  There is a radial probe \(C_\beta\), a random radial
filling hull from \(\xi\) to \(z\), such that for every compact \(A\subset\ol\D\) with \(z,\xi\notin A\) and
\(\D\setminus A\) simply connected, we have
\begin{equation}\label{eq:sec5-positive-radial-transform}
 \E\!\left[
 \one_{\{C_\beta\cap A=\varnothing\}}
 e^{-\lambda m_\D(C_\beta,A)}\right]=|f_A'(\xi)|^\beta,
\end{equation}
where \(f_A:\D\setminus A\to\D\) fixes \(z,\xi\).
\end{lemma}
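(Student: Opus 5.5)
We would obtain $C_\beta$ from the generalized radial restriction samples of~\cite{qian2021generalized}, specialized to central charge $c=-2\lambda<-2$, so that $\kappa\in(0,2)$. The point to exploit is that Qian's restriction formula carries a loop-soup factor $\exp\{\tfrac{c}{2}m_\D(C,A)\}$. With $c/2=-\lambda$ this is exactly the weight $e^{-\lambda m_\D(C,A)}$ in~\eqref{eq:sec5-positive-radial-transform}. A sample of radial hull type from $\xi$ to $z$ with interior exponent $\alpha$ and boundary exponent $\beta$ satisfies
\[
 \E\bigl[\one_{\{C\cap A=\varnothing\}}e^{-\lambda m_\D(C,A)}\bigr]=|f_A'(z)|^{\alpha}|f_A'(\xi)|^{\beta}.
\]
So the task reduces to showing that the choice $\alpha=0$ is admissible for every $\beta>\beta_*$.

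First, we would fix the construction. The sample is the filling of a radial $\SLE_\kappa(\rho)$-type curve from $\xi$ targeting $z$; if needed it is the hull between two curves from $\xi$, the second sampled conditionally on the first, as in the two-sided chordal case. Second, we would verify the restriction formula by the standard martingale argument. Map out the curve with radial Loewner maps $g_t$, and let $h_t$ be the map of $g_t(\D\setminus A)$ normalized to fix $g_t(z)=0$ and the driving point $W_t$. Then consider
\[
 M_t=h_t'(W_t)^{\beta}\,|h_t'(0)|^{\alpha}\exp\Bigl\{-\lambda\,m_\D(\eta[0,t],A)\Bigr\}\times(\text{force-point factor}).
\]
Itô's formula, together with the Schwarzian identity relating $\partial_t m$ to $-\tfrac{1}{6}Sh_t(W_t)$ and the relation~\eqref{eq:c-kappa} between $c$ and $\kappa$, shows that $M_t$ is a local martingale. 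This holds exactly when $\beta$, $\alpha$ and $\rho$ satisfy the usual quadratic relations of~\cite{qian2021generalized}. We would solve these relations with $\alpha=0$. Solving for $\rho$ gives a real, admissible value (the process is well defined and does not stop early) precisely when $\beta>\beta_*=(4-\kappa)^2/(2\kappa)$; this matches the stated threshold, and we would check it explicitly.

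Third, we would identify the terminal value. Since $\lambda>0$, the factor $e^{-\lambda m}$ lies in $[0,1]$, and $h_t'\le 1$ by the Schwarz lemma, so $M$ is bounded and optional stopping applies. On $\{C\cap A=\varnothing\}$, the map $h_t\to\mathrm{id}$ as the hull swallows a neighborhood of $z$, so $M_\infty$ equals the loop weight. On the complementary event, $h_t'(W_t)\to0$ as the curve approaches $A$, so $M_\infty=0$. Hence $\E[M_\infty]=M_0=|f_A'(\xi)|^\beta$, which is~\eqref{eq:sec5-positive-radial-transform}. The remaining geometric properties are then routine: $C\cap\partial\D=\{\xi\}$ because $\rho$ keeps the curve off the boundary away from $\xi$ (here $\xi\neq\pm1$ is harmless), $z\in C$, and $\D\setminus C$ is simply connected after filling.

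The main obstacle is the second step: confirming that $\alpha=0$ together with $\beta>\beta_*$ lies inside the parameter range where Qian's samples exist when $\kappa<2$, since negative central charge sits outside the more commonly treated range. Our first attempt would be to write out the quadratic exponent relations explicitly and solve for $\rho$. If a single curve cannot realize $\alpha=0$, we would fall back on the two-curve hull, whose extra force-point parameter frees up $\alpha$.
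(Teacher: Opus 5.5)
Your reduction is the paper's. Its proof is just the citation of \cite[Theorem~1.6]{qian2021generalized} with $(\alpha,\beta)=(0,\beta)$, and its only content is that this pair lies in Qian's existence range. That is exactly the step you leave open, and the route you propose for it would not work.

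A single radial $\SLE_\kappa(\rho)$ with force point at $\xi^\pm$ has one free parameter. The local-martingale relations therefore give $\alpha$ and $\beta$ as functions of $\rho$, and imposing $\alpha=0$ fixes $\rho$, hence $\beta$, at isolated values. It cannot produce every $\beta>\beta_*$.

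Moreover, for $\rho\neq0$ the curve is not itself a radial restriction sample. On $\{C\cap A=\varnothing\}$, the image $g_t(\partial\D)$ shrinks to a short arc with $V_t=g_t(\xi^+)$ as an endpoint, and $g_t(A)$ is attached to it at comparable scale. So $h_t'(V_t)$, and with it the force-point factor, need not tend to $1$, and $M_\infty$ is not the loop weight. Unlike the one-sided chordal setting, there is no side of the curve on which $A$ can be assumed to lie.

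Nor is an extra force-point parameter in a second curve the source of the missing freedom. In Wu's radial construction at $\kappa=\tfrac{8}{3}$, which Qian extends to $\kappa\in(0,4]$, the second parameter is a constant drift $\mu$ in the driving function (radial $\SLE_\kappa^\mu(\rho)$). Interior exponents strictly below the extremal value correspond to $\mu\neq0$.

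The correct justification is a one-line parameter check. Qian's samples exist for $\beta\ge\frac{6-\kappa}{2\kappa}$ and $\alpha\le\eta_\kappa(\beta)$, where $\eta_\kappa$ is the increasing generalized disconnection exponent. The number $\beta_*=(4-\kappa)^2/(2\kappa)$ is precisely the zero of $\eta_\kappa$, not a reality threshold for $\rho$. Two sanity checks:
\begin{itemize}
\item At $\kappa=\frac{8}{3}$, $\beta_*=\frac13$ is where $\frac{1}{48}\bigl((\sqrt{24\beta+1}-1)^2-4\bigr)$ vanishes.
\item At $\kappa=2$, $\beta_*=1$, matching the radial LERW exponents $(0,1)$.
\end{itemize}
For $\kappa<2$ one has $(4-\kappa)^2-(6-\kappa)=(\kappa-2)(\kappa-5)>0$. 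So the hypothesis $\beta>\beta_*$ gives both $\beta>\frac{6-\kappa}{2\kappa}$ and $\eta_\kappa(\beta)>0=\alpha$. Hence $(0,\beta)$ is admissible in Qian's theorem, and no martingale computation is needed.
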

\begin{proof}
Take the generalized radial restriction sample with
\((\alpha,\beta)=(0,\beta)\) in~\cite[Theorem~1.6]{qian2021generalized} on $(z,\xi)$. Since $\beta>\beta_*$, such sample exists.
\end{proof}

The collection of radial probes above is then sufficient to determine the probability that $z$ is outside the one-sided filling hull.
\begin{lemma}
\label{lem:sec5-mellin}
Let \(\mathsf P,\mathsf Q\) be laws on one-sided filling hulls such that
\(\mathsf P[\xi\in H]=\mathsf Q[\xi\in H]=0\).  Suppose
\[
 \int\one_{\{H\cap C=\varnothing\}}e^{-\lambda m_\D(H,C)}
 \,\mathsf P(\dd H)
 =\int\one_{\{H\cap C=\varnothing\}}e^{-\lambda m_\D(H,C)}
 \,\mathsf Q(\dd H)
\]
for every radial filling hull \(C\) from \(\xi\) to an interior point.
Then, for every \(z\in\D\),
\[
 \mathsf P[z\in\D\setminus H]
 =\mathsf Q[z\in\D\setminus H].
\]
\end{lemma}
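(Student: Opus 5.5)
We plan to test $H$ against the radial probes $C_\beta$ of Lemma~\ref{lem:sec5-positive-radial-probes}, now with roles reversed: the probe $C_\beta$ is the random set, and $H$ plays the role of the obstacle $A$. Fix $z\in\D$. For a one-sided filling hull $H$ with $\xi\notin H$ and $z\notin H$, the complement $\D\setminus H$ is simply connected. However, $H$ contains a boundary arc, so to apply~\eqref{eq:sec5-positive-radial-transform} we need $A=H$ with $\D\setminus A$ simply connected, and this holds. Conditionally on $H$, Fubini and symmetry of $m_\D$ then give
\[
 \E_{C_\beta}\!\left[\one_{\{C_\beta\cap H=\varnothing\}}e^{-\lambda m_\D(C_\beta,H)}\right]
 =|f_H'(\xi)|^\beta\,\one_{\{z\notin H\}},
\]
where $f_H:\D\setminus H\to\D$ fixes $z$ and $\xi$. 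If $z\in H$, the left side vanishes because $z\in C_\beta$. Integrating over $\mathsf P$ and $\mathsf Q$ and using the hypothesis (after averaging over $C_\beta$, which is itself a radial filling hull from $\xi$ to $z$), we obtain
\[
 \int |f_H'(\xi)|^\beta\one_{\{z\notin H\}}\,\mathsf P(\dd H)=\int |f_H'(\xi)|^\beta\one_{\{z\notin H\}}\,\mathsf Q(\dd H),\qquad \forall\beta>\beta_*.
\]

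Next we identify the law of $Y:=|f_H'(\xi)|$ on $\{z\notin H\}$. By the Schwarz lemma applied to the inverse map $f_H^{-1}:\D\to\D\setminus H\subset\D$, which fixes $z$, together with a boundary (Julia--Carathéodory) argument at the fixed boundary point $\xi$, we get $Y\ge1$. On $\{z\notin H\}$ the boundary point $\xi$ has a collar in $\D\setminus H$ a.s., since $\mathsf P[\xi\in H]=0$; hence $Y<\infty$ there. The finite measures $\nu_{\mathsf P}(dy):=\mathsf P[Y\in dy,\,z\notin H]$ and $\nu_{\mathsf Q}$ on $[1,\infty)$ therefore satisfy
\[
\int y^\beta\nu_{\mathsf P}(dy)=\int y^\beta\nu_{\mathsf Q}(dy)<\infty\qquad\text{for all }\beta>\beta_*.
\]
Finiteness holds because the left side equals a probability.

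The main step is to deduce $\nu_{\mathsf P}=\nu_{\mathsf Q}$ from equality of these moments on the half-line $\beta>\beta_*$. Substitute $y=e^{t}$ with $t\ge0$. Then $\beta\mapsto\int e^{\beta t}\nu(dt)$ is a Laplace transform of a measure on $[0,\infty)$, finite for $\beta>\beta_*$, and analytic in $\Re\beta>\beta_*$. Equality on a real interval extends by analytic continuation to the whole half-plane, and uniqueness of Laplace transforms, or injectivity of the Fourier transform along a vertical line applied to the finite measures $e^{\beta_0 t}\nu(dt)$, gives $\nu_{\mathsf P}=\nu_{\mathsf Q}$. Taking total masses yields $\mathsf P[z\in\D\setminus H]=\mathsf Q[z\in\D\setminus H]$.

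The delicate points are checking that $H$ is an admissible obstacle in Lemma~\ref{lem:sec5-positive-radial-probes} and the measurability of $Y$. If admissibility fails, for instance because $\xi$ touches $\ol H$, we will approximate $H$ by admissible sets, which is valid on the event $\xi\notin H$ of full measure, and pass to the limit by monotone convergence.
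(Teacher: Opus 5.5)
Your strategy is the paper's: you average the hypothesis over the radial probe $C_\beta$ of Lemma~\ref{lem:sec5-positive-radial-probes} with $H$ as the obstacle, and obtain equality of $\int \one_{\{z\notin H\}}Y^\beta$ under $\mathsf P$ and $\mathsf Q$ for all $\beta>\beta_*$. You then invert a tilted Laplace transform and read off the total mass. However, your step ``$Y\ge1$'' is false, and the error carries into your inversion. Apply Julia's lemma to $g=f_H^{-1}:\D\to\D\setminus H\subset\D$, after conjugating by a disc automorphism sending $z$ to $0$ (this does not change the derivative at the fixed point $\xi$). It gives $|g'(\xi)|\ge1$, hence $Y=|f_H'(\xi)|=1/|g'(\xi)|\le1$. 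More directly, the left side of your conditional identity is at most $1$, so $Y^\beta\le1$ for every $\beta>\beta_*$; this is how the paper gets $X_H\le1$. In fact your two claims together, $Y\ge1$ and $\int Y^\beta\one_{\{z\notin H\}}\,\dd\mathsf P\le1$ for all large $\beta$, would force $Y=1$ almost surely. Consequently $y=e^{t}$ with $t\ge0$ is the wrong substitution. Also, for a nontrivial measure on $[0,\infty)$, the map $\beta\mapsto\int e^{\beta t}\nu(\dd t)$ is not finite and analytic on $\operatorname{Re}\beta>\beta_*$; such integrals are analytic in a left half-plane. So the inversion step, as written, does not go through.

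The repair is exactly the paper's argument. The measures $\nu_{\mathsf P},\nu_{\mathsf Q}$ are finite measures on $(0,1]$. With $y=e^{-t}$, the equal Mellin transforms become $\int e^{-\beta t}\nu(\dd t)$, which are genuine Laplace transforms on $[0,\infty)$. Tilting by $e^{-\beta_0 t}$ for a fixed $\beta_0>\beta_*$ gives finite measures whose Laplace transforms agree at every positive argument, so they coincide. Note that the endpoint that matters is $Y>0$, not $Y<\infty$. A Mellin transform at $\beta>0$ cannot see mass at $y=0$, and you need $\nu((0,1])=\mathsf P[z\in\D\setminus H]$. The positivity comes from the same collar you already invoke. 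Since $\xi\notin H$ almost surely, $\D\setminus H$ agrees with $\D$ near $\xi$, so $f_H$ extends by reflection across the free arc with nonzero derivative at $\xi$. With these two corrections your proof coincides with the paper's.
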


\begin{proof}
Fix \(z\) and average the assumed equality over the independent radial probe
\(C_\beta\) of Lemma~\ref{lem:sec5-positive-radial-probes}.  For a deterministic
\(H\), put \(U=\D\setminus H\).  If \(z\notin U\), the radial probe intersects
\(H\).  If \(z\in U\), let \(f_U:U\to\D\) fix \(z,\xi\), and put
\(X_H=f_U'(\xi)\); set \(X_H=0\) otherwise.
Equation~\eqref{eq:sec5-positive-radial-transform} gives
\begin{equation}\label{eq:sec5-fixed-H-mellin}
 \E_{C_\beta}\!\left[
 \one_{\{C_\beta\cap H=\varnothing\}}
 e^{-\lambda m_\D(C_\beta,H)}\right]
 =\one_{\{z\in U\}}X_H^\beta.
\end{equation}
The derivative is finite and positive because the common free arc supplies
a boundary collar, depending on \(H\), around \(\xi\) on which \(U\)
agrees with \(\D\);
the left side of \eqref{eq:sec5-fixed-H-mellin} also shows \(X_H\leq1\).
Fubini's theorem therefore gives, for every \(\beta>\beta_*\),
\[
 \int\one_{\{z\in\D\setminus H\}}X_H^\beta\,\mathsf P(\dd H)
 =
 \int\one_{\{z\in\D\setminus H\}}X_H^\beta\,\mathsf Q(\dd H).
\]
These are the Mellin transforms of two finite measures on \((0,1]\).
Under \(x=e^{-t}\), and after tilting by one fixed
\(\beta_0>\beta_*\), they become Laplace transforms on \([0,\infty)\)
which agree at every positive argument.  Uniqueness of the Laplace
transform identifies the tilted measures.  Multiplication by
\(e^{\beta_0t}\) recovers the original finite measures and hence their
total masses.
\end{proof}

\begin{proof}[Proof of Theorem~\ref{thm:main}]
For $0<\lambda\le 1$, the existence follows from Proposition~\ref{prop:catcher-cluster}, and Theorem~\ref{thm:full-trace-unique} gives the uniqueness. For the non-existence of $\lambda>1$, suppose that a random \(K\in\mathfrak C_{-1,1}\) satisfies
\eqref{eq:intro-main-id}.  Set \(z=0\), \(\xi=-i\), and
\(H^+=\operatorname{Fill}^+(K)\).  For every radial filling hull \(C\) from
\(\xi\) to \(z\),
\[
\E\!\left[
 \one_{\{H^+\cap C=\varnothing\}}e^{-\lambda m_\D(H^+,C)}\right]
=\Prob\left[\gamma_{\BE}^{\D;-1,1}\cap C=\varnothing\right].
\]

Let \(H_{\rm SLE}^+\) be the one-sided filling hull of a chordal
\(\SLE_\kappa(\kappa-2)\) curve \(\eta\) with force point at \(a^+\). By~\cite{dubedat-rho} (see also~\cite[Proposition 6.2]{qian2021generalized}), \[
\E\!\left[
 \one_{\{H_{\rm SLE}^+\cap C=\varnothing\}}
 e^{-\lambda m_\D(H_{\rm SLE}^+,C)}\right]=\Prob\left[\gamma_{\BE}^{\D;-1,1}\cap C=\varnothing\right].
\]
Lemma~\ref{lem:sec5-mellin}
therefore identifies
\(\Prob[0\in D_K^-]\) with
\(\Prob[0\in\D\setminus H_{\rm SLE}^+]\). Since
\(\kappa-2\in(-2,0)\), this probability is larger than \(1/2\).  Similarly considering $\operatorname{Fill}^{-}(K):=\ol{\D\setminus D_K^+}$ yields \(\Prob[0\in D_K^+]>1/2\), contradicting the disjointness of
\(D_K^+\) and \(D_K^-\).
\end{proof}

\section{The one-point density of Brownian loop-catcher}\label{sec:one-arm}

In this section we prove Theorem~\ref{thm:one-arm}. Fix $0<\lambda<1$, and without loss of generality we consider $(D;a,b)=(\D,-1,1)$. For any compact subset $A\subset\ol\D$ and $a,b\in\partial\D$, set $F_\D^{a,b}(A):=\Prob\left[\gamma_{\BE}^{\D;a,b}\cap A=\varnothing\right]$. We also write \(K=K_\lambda^{\D;-1,1}\) for simplicity. Recall that \(c=-2\lambda\), with \(\kappa<\frac{8}{3}\) related to \(c\) by \eqref{eq:c-kappa}.

\subsection{The \(\SLE_\kappa\) loop and the Brownian loop-catcher}

For a simply connected domain \(\Omega\ni z\), let \(\mu_\Omega^{c,z}\) denote the $\SLE_\kappa$ loop
measure on \(\Omega\) restricted to surround \(z\) defined in~\cite{Zhan2021}.  Its
conformal restriction property~\cite[Theorem 5.1]{Zhan2021} gives
\begin{equation}\label{eq:one-arm-MKS-restriction}
 \frac{d\mu_{\Omega'}^{c,z}}{d\mu_\Omega^{c,z}}(\ell)
 =\one_{\{\ell\subset {\Omega'}\}}
   \exp\{-\lambda m_\Omega(\ell,\Omega\setminus {\Omega'})\},
 \qquad {\Omega'}\subset \Omega,
\end{equation}
where \(m_\Omega(A,B)\) is the Brownian loop mass of loops in \(\Omega\)
intersecting both \(A\) and \(B\). We also let $\mu_\C^{c,z}$ be the whole-plane $\SLE_\kappa$ loop measure restricted to surround $z$.

Let \(U=U_z(K)\) be the component of \(\D\setminus K\) containing \(z\).
\begin{lemma}[Fubini identity]\label{lem:one-arm-fubini}
For every nonnegative Borel functional \(g\) on loops surrounding \(z\),
\begin{equation*}
 \E\!\left[\int g(\ell)\,\mu_U^{c,z}(d\ell)\right]
 =\int g(\ell)F_\D^{-1,1}(\ell)\,\mu_\D^{c,z}(d\ell).
\end{equation*}
\end{lemma}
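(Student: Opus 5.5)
The plan is to rewrite the inner integral through the conformal restriction property~\eqref{eq:one-arm-MKS-restriction}, exchange the order of integration, and then apply the defining identity~\eqref{eq:intro-main-id} of $K$ with the loop $\ell$ itself as the obstacle.

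First, on the event that $z\notin K$, the component $U=U_z(K)$ is simply connected; this holds because $K$ is connected and meets $\partial\D$. Moreover, a loop $\ell$ surrounding $z$ satisfies $\ell\subset U$ if and only if $\ell\subset\D$ and $\ell\cap K=\varnothing$: since $\ell$ is connected and contains points near $z$, avoiding $K$ forces it into $U$. Applying~\eqref{eq:one-arm-MKS-restriction} with $\Omega=\D$ and $\Omega'=U$ then gives
\[
 \int g(\ell)\,\mu_U^{c,z}(d\ell)=\int g(\ell)\one_{\{\ell\cap K=\varnothing\}}e^{-\lambda m_\D(\ell,\D\setminus U)}\,\mu_\D^{c,z}(d\ell).
\]
On $\{\ell\cap K=\varnothing\}$ we have $m_\D(\ell,\D\setminus U)=m_\D(\ell,K)$. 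Indeed, any Brownian loop in $\D$ that meets $\ell\subset U$ and leaves $U$ must cross $\partial U\cap\D\subset K$. Conversely, $K\subset\D\setminus U$ up to the boundary points $\pm1$, which Brownian loops in $\D$ never hit. If $z\in K$, then $U=\varnothing$, both sides vanish, and the indicator $\one_{\{\ell\cap K=\varnothing\}}$ still vanishes because every $\ell$ surrounding $z$ must intersect the connected set $K\ni z$ that reaches $\partial\D$. So the displayed formula holds in all cases.

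Next, take expectations and apply Tonelli's theorem, which requires joint measurability of $(K,\ell)\mapsto\one_{\{\ell\cap K=\varnothing\}}e^{-\lambda m_\D(\ell,K)}$. This follows as in Lemma~\ref{lem:sec5-green-moments}: disjointness is open in the Hausdorff topology, and loop masses are Borel. The exchange yields
\[
 \int g(\ell)\,\E\!\left[\one_{\{K\cap\ell=\varnothing\}}e^{-\lambda m_\D(K,\ell)}\right]\mu_\D^{c,z}(d\ell).
\]
Finally, apply~\eqref{eq:intro-main-id} with $A=\ell$. This is admissible because $\ell$ is compact and, for $\mu_\D^{c,z}$-a.e.\ $\ell$, we have $\ell\subset\D$, hence $\ell\cap\{-1,1\}=\varnothing$. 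The inner expectation then equals $F_\D^{-1,1}(\ell)$, which proves the lemma.

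The main obstacle is the careful identification $m_\D(\ell,\D\setminus U)=m_\D(\ell,K)$ together with the measure-theoretic details. Specifically, one has to check that Zhan's restriction formula applies to the random, possibly rough, simply connected domain $U$, whose complement in $\D$ need not be a nice hull, and that $\mu_\D^{c,z}$-a.e.\ loop stays strictly inside $\D$. For the first point I would use the fact that Zhan's formula holds for arbitrary simply connected subdomains $\Omega'\ni z$. For the second, I would restrict first to loops at positive distance from $\partial\D$ and then pass to the limit by monotone convergence.
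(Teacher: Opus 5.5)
Your proof is correct and is the paper's argument with the details filled in: you apply the restriction formula \eqref{eq:one-arm-MKS-restriction} with $\Omega'=U$, use the identity $m_\D(\ell,\D\setminus U)=m_\D(\ell,K)$, exchange integrals by Tonelli, and apply \eqref{eq:intro-main-id} with $A=\ell$. Two small adjustments: the implication $\ell\cap K=\varnothing\Rightarrow\ell\subset U$ should not rest on $\ell$ having ``points near $z$'', but on the fact that $K$ is connected, avoids $\ell$ and contains $\pm1$, so it lies in the unbounded component of $\C\setminus\ell$ and cannot separate $\ell$ from $z$; and since $\mu_\D^{c,z}$-a.e.\ loop is a compact subset of the open disk, no monotone-convergence step is needed to make $A=\ell$ admissible.
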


\begin{proof}
Note that a loop surrounding
\(z\) lies in \(U\) if and only if it avoids \(K\), and \(m_\D(\ell,\D\setminus U)=m_\D(\ell,K)\). The result follows by applying
\eqref{eq:one-arm-MKS-restriction}, Fubini's theorem, and
\eqref{eq:intro-main-id} with \(A=\ell\).
\end{proof}

For a simply connected \(\Omega\ni z\),
let \(f_\Omega:\Omega\to\D\) satisfy \(f_\Omega(z)=0\), \(f_\Omega'(z)>0\), and let
\(\operatorname{crad}_\Omega(z)=|f_\Omega'(z)|^{-1}\) be its conformal radius at \(z\). Define
\begin{equation*}
 T=\log\frac{\operatorname{crad}_\D(z)}
                  {\operatorname{crad}_U(z)}>0.
\end{equation*}
Let
\(\operatorname{Fill}_z(\ell)\) be the closure of the bounded component
of \(\C\setminus\ell\) containing \(z\), and put
\[
 \mathsf H_\Omega(\ell)=f_\Omega\bigl(\operatorname{Fill}_z(\ell)\bigr).
\]
For a compact \(H\ni0\) with simply connected complement, let \(t(H)\) be its logarithmic capacity such that the normalized map \(\C\setminus H\to
\{|w|>t(H)\}\) has derivative one at infinity.  Define
\begin{equation*}
 L_\Omega(\ell)=-\log t(\mathsf H_\Omega(\ell)).
\end{equation*}
This does not depend on the choice of rotation in \(f_\Omega\).
Write \(L=L_\D\).

We use the following uniform estimates from
\cite[Remark~2.8, Lemmas~2.10 and~2.12, and Proposition~3.3]{lawler-field-reversed}.  Note that every bounded compact \(H\ni0\) with simply connected complement satisfies
\begin{equation}\label{eq:one-arm-FL-radius}
 t(H)\le\rad H:=\sup\{|z|:z\in H\}\le4t(H).
\end{equation}
If in addition \(H\subset\D\), \(t(H)=e^{-L}\le1/8\), and \(h\) is a
univalent function on \(\D\) with \(h(0)=0\), then
\begin{align}
 \Lambda^*(H,\partial\D)&=-\log L+O(e^{-L}),
 \label{eq:one-arm-FL-loop}\\
 \operatorname{Exc}_{\D\setminus H}(w,H)
   &=L^{-1}[1+O(e^{-L})],\qquad |w|=1,
 \label{eq:one-arm-FL-exc}\\
 \log t(h(H))&=\log\bigl(|h'(0)|t(H)\bigr)+O(e^{-L}).
 \label{eq:one-arm-FL-map}
\end{align}
All errors are uniform over \(H\), and the last one is uniform over \(h\)
after normalizing by \(h'(0)\).  Here \(\Lambda^*\) is the renormalized Brownian loop mass~\cite{lawler-field-reversed}, and
\(\operatorname{Exc}_{\D\setminus H}(w,H)\) is the total mass of Brownian excursions
from \(w\) to \(H\) in $\D\setminus H$.

\begin{lemma}[Capacity estimates]\label{lem:one-arm-shells}
There are \(A_\lambda,q\in(0,\infty)\) and \(L_0<\infty\) such that, for
\(L\ge L_0\),
\begin{align}
 \mu_\D^{c,z}\{L(\ell)\in dL\}
   &=A_\lambda L^\lambda[1+O(e^{-L})]\,dL,
 \label{eq:one-arm-shell}\\
 \int_{\{L(\ell)\in dL\}}
   \bigl(1-F_\D^{-1,1}(\ell)\bigr)\,\mu_\D^{c,z}(d\ell)
   &=A_\lambda q L^{\lambda-1}[1+O(e^{-L})]\,dL.
 \label{eq:one-arm-defect-shell}
\end{align}
Both measures have finite mass on \(0\le L\le L_0\).
\end{lemma}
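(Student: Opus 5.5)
We plan to derive both shell estimates from the conformal restriction property \eqref{eq:one-arm-MKS-restriction} and the scaling of the whole-plane $\SLE_\kappa$ loop measure, using the Field--Lawler estimates \eqref{eq:one-arm-FL-radius}--\eqref{eq:one-arm-FL-map}. By conformal invariance we first move $z$ to $0$ via $f_\D$, so that $L(\ell)=-\log t(\operatorname{Fill}_0(\ell))$ for loops in $\D$ surrounding $0$.

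\emph{The shell law \eqref{eq:one-arm-shell}.} Apply \eqref{eq:one-arm-MKS-restriction} with $\Omega=\C$ (whole-plane loop measure $\mu_\C^{c,0}$) and $\Omega'=\D$:
\[
 d\mu_\D^{c,0}=\one_{\{\ell\subset\D\}}e^{-\lambda m_\C(\ell,\C\setminus\D)}\,d\mu_\C^{c,0},
\]
interpreted through the renormalized mass, so that $\lambda m$ becomes $\lambda\Lambda^*(\ell,\partial\D)$. By scale invariance of $\mu_\C^{c,0}$, the pushforward of $\mu_\C^{c,0}$ under $\ell\mapsto\log t(\operatorname{Fill}_0(\ell))$ is $A\,dL$ for a constant $A$. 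For a loop with $t=e^{-L}\le 1/8$ we have $\rad\le 4e^{-L}<1$ by \eqref{eq:one-arm-FL-radius}, so $\ell\subset\D$ holds automatically. By \eqref{eq:one-arm-FL-loop}, and since $\Lambda^*$ depends only on the filled hull, the density factor is
\[
 e^{-\lambda\Lambda^*}=L^{\lambda}\bigl(1+O(e^{-L})\bigr),
\]
uniformly in $\ell$. This gives \eqref{eq:one-arm-shell} with $A_\lambda=A$. On $L\le L_0$ the mass is bounded by $\mu_\C^{c,0}\{\rad\ge 4^{-1}e^{-L_0},\ \ell\subset\D\}$, which is finite since loops surrounding $0$ inside $\D$ of macroscopic size have finite $\SLE$ loop measure.

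\emph{The defect shell \eqref{eq:one-arm-defect-shell}.} For a small loop $\ell$ with hull $H$ and $L=L(\ell)$ large, $1-F_\D^{-1,1}(\ell)$ is the probability that the Brownian excursion from $-1$ to $1$ hits $H$. Decompose at the first hit, or equivalently use the excursion measure:
\[
 1-F_\D^{-1,1}(H)=\frac{\int\!\!\int H_{\D\setminus H}(-1,w)\,\cdots}{H_{\partial\D}(-1,1)}.
\]
Standard last-exit decomposition expresses this as $c_0\,\operatorname{Exc}_{\D\setminus H}(\cdot,H)$ times an explicit harmonic factor, up to $1+O(e^{-L})$. Concretely, the hitting probability of a set of capacity $e^{-L}$ near $0$ equals $\frac{P(-1,0)P(0,1)}{P(-1,1)}\cdot\frac{2\pi}{L}(1+O(e^{-L}))$. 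This follows from \eqref{eq:one-arm-FL-exc}, which shows that the excursion mass from $\partial\D$ to $H$ is $L^{-1}(1+O(e^{-L}))$ uniformly in the boundary point. It also uses \eqref{eq:one-arm-FL-map} and the uniformity over $H$ to handle the non-roundness of $H$. Thus
\[
 1-F_\D^{-1,1}(\ell)=q\,L(\ell)^{-1}\bigl(1+O(e^{-L})\bigr),
\]
with $q$ depending only on $(\D;-1,1,z)$. Multiplying by \eqref{eq:one-arm-shell} gives \eqref{eq:one-arm-defect-shell}, and finiteness on $L\le L_0$ follows from $1-F\le 1$.

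\emph{Main obstacle.} The delicate step is the uniform (in $H$) expansion of the hitting probability with an $O(e^{-L})$ error, rather than merely $o(1/L)$. In particular, the identification of the renormalized loop term must be uniform over non-round hulls. We would handle this by reducing, via the Markov property at $\partial(\tfrac12\D)$, to the excursion functional \eqref{eq:one-arm-FL-exc}, and by bounding the Poisson-kernel variation over $\tfrac12\D$ against the hull radius $\le 4e^{-L}$.
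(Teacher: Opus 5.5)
\textbf{One step fails as written: finiteness on $\{L\le L_0\}$.} Otherwise your argument is the paper's. The shell law comes from scale invariance of $\mu_\C^{c,0}$, which gives $A_\lambda\,dL$, together with the restriction density $e^{-\lambda\Lambda^*(H,\partial\D)}=L^\lambda[1+O(e^{-L})]$ from \eqref{eq:one-arm-FL-loop}. The defect comes from the first-hit decomposition
\[
H_\D(\widehat a,\widehat b)-H_{\D\setminus H}(\widehat a,\widehat b)=\int\operatorname{Exc}_{\D\setminus H}(\widehat a,d\xi)\,P_\D(\xi,\widehat b),
\]
combined with \eqref{eq:one-arm-FL-exc} and $P_\D(\xi,\widehat b)=P_\D(0,\widehat b)[1+O(e^{-L})]$ for $|\xi|\le 4e^{-L}$. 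You do not need \eqref{eq:one-arm-FL-map} or an intermediate circle $\partial(\frac12\D)$ for this: \eqref{eq:one-arm-FL-exc} is already uniform over $H$ and applies directly at $w=\widehat a$. Also, after moving $z$ to $0$ the marked points are $f_\D(\pm1)$, not $\pm1$.

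You bound the $\mu_\D^{c,0}$-mass of $\{L\le L_0\}$ by a $\mu_\C^{c,0}$-mass. That tacitly assumes $d\mu_\D^{c,0}/d\mu_\C^{c,0}\le 1$, which is what the unrenormalized formula \eqref{eq:one-arm-MKS-restriction} would give for $\lambda>0$. But the density is $e^{-\lambda\Lambda^*(H,\partial\D)}$ with the \emph{renormalized} mass $\Lambda^*$, and this is not nonnegative. By \eqref{eq:one-arm-FL-loop} it is about $-\log L<0$, which is exactly why the shell density grows like $L^\lambda$. For example, on $\{L\in[L_0-1,L_0]\}$ the disk mass is about $A_\lambda L_0^\lambda$ while the whole-plane mass is $A_\lambda$. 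So your inequality is false without a constant.

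\textbf{The fix.} You need a uniform lower bound on $\Lambda^*(H,\partial\D)$ over filled hulls with $t(H)\ge t_0:=e^{-L_0}$. The paper obtains it as follows.
\begin{itemize}
\item By \eqref{eq:one-arm-FL-radius}, such an $H$ contains an arc $A$ from $0$ to radius $r_*=\min\{t_0/2,1/32\}$, and $r_*/4\le t(A)\le r_*\le 1/8$.
\item Hence \eqref{eq:one-arm-FL-loop} applies to $A$: $\Lambda^*(A,\partial\D)=-\log L_A+O(e^{-L_A})$, where $L_A=-\log t(A)\le\log(4/r_*)$.
\item Monotonicity of $\Lambda^*$ in the set gives $\Lambda^*(H,\partial\D)\ge\Lambda^*(A,\partial\D)$, which is bounded below uniformly.
\end{itemize}
So the density is bounded by a constant on $\{L\le L_0\}$. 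Only then does local finiteness of $\mu_\C^{c,0}$ give finite mass; equivalently, its pushforward $A_\lambda\,dL$ has finite mass on a bounded $L$-range. Finiteness of the defect measure then follows from $1-F_\D^{-1,1}\le 1$, as you say.
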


\begin{proof}
Push forward by \(f_\D\), and write
\(\widehat a=f_\D(-1)\), \(\widehat b=f_\D(1)\).  Conformal invariance gives
\((f_\D)_*\mu_\D^{c,z}=\mu_\D^{c,0}\) and
\(F_\D^{-1,1}(\ell)=F_\D^{\widehat a,\widehat b}(f_\D(\ell))\).
Relabel \(f_\D(\ell)\) as \(\ell\) and put
\(H=\operatorname{Fill}_0(\ell)\).  The pushforward of the whole-plane
$\SLE_\kappa$ loop measure \(\mu_\C^{c,0}\) by \(L=-\log t\) is a nonzero,
translation-invariant measure on \(\R\).  
The local finiteness of \(\mu_\C^{c,0}\) thus implies
this measure is \(A_\lambda\,dL\) for some
\(A_\lambda\in(0,\infty)\).  For
\(L\ge L_0\), \eqref{eq:one-arm-FL-radius} puts the loop inside \(\D\),
while the conformal restriction property~\cite[Theorem 5.1]{Zhan2021} and \eqref{eq:one-arm-FL-loop} give
\[
 \frac{d\mu_\D^{c,0}}{d\mu_\C^{c,0}}(\ell)
 =\exp\{-\lambda\Lambda^*(H,\partial\D)\}
 =L^\lambda[1+O(e^{-L})].
\]
Indeed, a continuous loop which meets both
\(H\) and \(\partial\D\) must cross \(\ell\), so replacing
\(\ell\) by its filling does not change the \(\Lambda^*\)-event.
This proves \eqref{eq:one-arm-shell}.

Let \(H_\D(\widehat a,\widehat b)\) be the boundary excursion Poisson
kernel and \(P_\D(\cdot,\widehat b)\) the interior Poisson kernel, with
normalizations compatible with the excursion measure above.  The first-hit
decomposition at \(H\), followed by \eqref{eq:one-arm-FL-exc}, gives
\[
 H_\D(\widehat a,\widehat b)-H_{\D\setminus H}(\widehat a,\widehat b)
 =\int \operatorname{Exc}_{\D\setminus H}(\widehat a,d\xi)
        P_\D(\xi,\widehat b).
\]
By \eqref{eq:one-arm-FL-radius}, uniformly for \(\xi\in H\),
\(P_\D(\xi,\widehat b)=P_\D(0,\widehat b)[1+O(e^{-L})]\).  Therefore
\begin{equation}
 1-F_\D^{\widehat a,\widehat b}(\ell)=\frac qL[1+O(e^{-L})],
 \qquad
 q:=\frac{P_\D(0,\widehat b)}
          {H_\D(\widehat a,\widehat b)}>0,
\end{equation}
and pulling back by \(f_\D\) proves
\eqref{eq:one-arm-defect-shell}.
Equivalently, \(q\) is the limit of \(L(1-F_\D^{-1,1})\) along
the loops \(f_\D^{-1}(\partial B(0,e^{-L}))\), and is therefore independent of the
kernel normalization.
Here an excursion from \(\partial\D\) hits \(H\) exactly
when it crosses \(\ell\), so the left side applies to
\(\ell\) itself.

It remains to prove finiteness of these measures on
\(0\leq L\leq L_0\).  If
\(t(H)\ge t_0>0\), \eqref{eq:one-arm-FL-radius} allows us to choose in
\(H\) an arc \(A\) from zero to radius
\(r_*=\min\{t_0/2,1/32\}\).  The same global estimate gives
\(r_*/4\le t(A)\le r_*\), so \eqref{eq:one-arm-FL-loop} applies to \(A\).
Its estimate, followed by monotonicity of \(\Lambda^*\), gives a uniform
lower bound for \(\Lambda^*(H,\partial\D)\).  Thus the density $\exp\{-\lambda\Lambda^*(H,\partial\D)\}$
is uniformly bounded.  The hulls lie in a fixed disk and
have diameters bounded below, so the local-finiteness of
\(\mu_\C^{c,0}\) gives finite mass.
\end{proof}

For \(p>0\), set
\begin{align*}
 Z(p)=\int e^{-pL(\ell)}\,\mu_\D^{c,z}(d\ell),\qquad
 J(p)=\int \bigl(1-F_\D^{-1,1}(\ell)\bigr)e^{-pL(\ell)}
                 \,\mu_\D^{c,z}(d\ell).
\end{align*}
Subtracting the leading densities in
Lemma~\ref{lem:one-arm-shells} leaves measures with an exponential moment.
Hence there are functions \(z_0,j_0\), holomorphic in a neighborhood of
zero, such that
\begin{align}
 Z(p)&=A_\lambda\Gamma(1+\lambda)p^{-1-\lambda}+z_0(p),
 \label{eq:one-arm-Z-expansion}\\
 J(p)&=A_\lambda q\Gamma(\lambda)p^{-\lambda}+j_0(p).
 \label{eq:one-arm-J-expansion}
\end{align}

\subsection{The compensated Laplace transform}

Throughout this section, \(C\) denotes a positive constant whose value may
change from line to line. We also write \(f\sim g\) if \(f/g\to1\). The following lemma compares the capacities between different domains.
\begin{lemma}\label{lem:one-arm-quasi-additivity}
There is a constant \(C<\infty\) such that, for every realization of
\(K\) and every loop \(\ell\) surrounding \(z\) in \(U\),
\begin{equation}\label{eq:one-arm-quasi-additivity}
 L(\ell)=T+L_U(\ell)+\delta(K,\ell),\qquad
 |\delta(K,\ell)|\le
 \begin{cases}
  Ce^{-L_U(\ell)},&L_U(\ell)\ge L_0,\\
  T+C,&L_U(\ell)<L_0.
 \end{cases}
\end{equation}
Here $L_0$ is as in Lemma~\ref{lem:one-arm-shells}.
\end{lemma}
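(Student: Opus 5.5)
The plan is to write $L(\ell)$ as a capacity of a hull pushed through a composition of conformal maps, and then use the distortion estimate \eqref{eq:one-arm-FL-map} in one regime and crude monotonicity bounds in the other.

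\textbf{Setup.} Put $g:=f_\D\circ f_U^{-1}:\D\to\D$. This map is univalent and satisfies $g(0)=0$. Since $\operatorname{Fill}_z(\ell)\subset U$, we have $f_\D(\operatorname{Fill}_z(\ell))=g(\mathsf H_U(\ell))$. Moreover
\[
|g'(0)|=\frac{f_\D'(z)}{f_U'(z)}=\frac{\operatorname{crad}_U(z)}{\operatorname{crad}_\D(z)}=e^{-T}.
\]
Hence
\[
L(\ell)=-\log t\bigl(g(\mathsf H_U(\ell))\bigr),\qquad T+L_U(\ell)=-\log\bigl(|g'(0)|\,t(\mathsf H_U(\ell))\bigr).
\]
The whole lemma therefore reduces to comparing $\log t(g(H))$ with $\log(|g'(0)|t(H))$ for $H=\mathsf H_U(\ell)\subset\D$, a compact set containing $0$ with simply connected complement.

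\textbf{Case $L_U(\ell)\ge L_0$.} Here $t(H)=e^{-L_U}\le 1/8$, after enlarging $L_0$ if needed. Apply \eqref{eq:one-arm-FL-map} with $h=g$; its error is uniform over univalent $h$ once normalized by $h'(0)$. This gives
\[
\log t(g(H))=\log\bigl(|g'(0)|t(H)\bigr)+O(e^{-L_U}),
\]
so $|\delta|\le Ce^{-L_U(\ell)}$. If one prefers not to rely on that uniformity, the fallback is to apply \eqref{eq:one-arm-FL-map} to $h=g/g'(0)$, which is a normalized schlicht function, and then use the scaling identity $t(cH')=|c|\,t(H')$ to recover the stated form.

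\textbf{Case $L_U(\ell)<L_0$.} Two one-sided bounds are needed.

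\emph{Upper bound.} Since $g(H)\subset\D$, \eqref{eq:one-arm-FL-radius} gives $t(g(H))\le 1$, so $L(\ell)\ge 0$. Also $T\ge 0$ and $L_U(\ell)\ge0$, so $L(\ell)-T-L_U(\ell)\ge -T-L_0$.

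\emph{Lower bound.} By Koebe's $1/4$ theorem, $g(H)$ contains a point at distance at least $\tfrac14|g'(0)|\rad(H)$ from $0$. Combined with \eqref{eq:one-arm-FL-radius},
\[
t(g(H))\ge \tfrac14\rad g(H)\ge \tfrac1{16}|g'(0)|\rad H\ge \tfrac1{16}e^{-T}t(H).
\]
Therefore $L(\ell)\le T+L_U(\ell)+\log 16$. To justify the Koebe step, note that $g(\D)$ contains the disk $B(0,|g'(0)|/4)$. Equivalently, one can apply the growth theorem to $g/g'(0)$ on the circle of radius $\rad H$: the Koebe distortion lower bound $|g(w)|\ge |g'(0)|\,|w|/(1+|w|)^2$, taken at a point $w\in H$ with $|w|=\rad H$, gives $\rad g(H)\ge|g'(0)|\rad H/4$.

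Combining the two bounds gives $|\delta|\le T+L_0+\log16=:T+C$.

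\textbf{Expected obstacle.} The main thing to check is the uniformity in the first case: the $O(e^{-L})$ error in \eqref{eq:one-arm-FL-map} must not depend on $g$, which varies with $K$, beyond the normalization by $g'(0)$. The paper states exactly this uniformity, so the argument is mostly bookkeeping, together with the observation that $\mathsf H_U(\ell)\subset\D$ because $\ell\subset U$.
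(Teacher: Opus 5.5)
Your proposal is correct and follows the paper's proof: the same map $g=f_\D\circ f_U^{-1}$ with $|g'(0)|=e^{-T}$, \eqref{eq:one-arm-FL-map} applied to $\mathsf H_U(\ell)$ when $L_U(\ell)\ge L_0$, and for $L_U(\ell)<L_0$ the nonnegativity of $L,L_U,T$ combined with the Koebe growth lower bound and \eqref{eq:one-arm-FL-radius}. The differences are cosmetic. First, you keep $t(H)$ rather than a fixed lower bound on $\rad H$, so you actually get $\delta\le\log 16$ for every loop. Second, of your two justifications of $|g(w)|\ge|g'(0)|\,|w|/4$, the inclusion $g(\D)\supset B(0,|g'(0)|/4)$ does not suffice on its own. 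The growth-theorem argument does, and so would Koebe's $1/4$ theorem applied on $B(0,|w|)$ together with injectivity.
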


\begin{proof}
The map
\[
 h=f_\D\circ f_U^{-1}:\D\longrightarrow\D
\]
is univalent, fixes zero, and satisfies \(|h'(0)|=e^{-T}\).  For
\(L_U(\ell)\ge L_0\), formula \eqref{eq:one-arm-FL-map}, applied to
\(\mathsf H_U(\ell)\), gives the first bound in
\eqref{eq:one-arm-quasi-additivity}.  If \(L_U(\ell)<L_0\), then
\(\mathsf H_U(\ell)\) contains a point a fixed distance from zero.  The
lower growth bound for \(h/h'(0)\), together with
\eqref{eq:one-arm-FL-radius}, gives
\(t(h(\mathsf H_U(\ell)))\ge C^{-1}e^{-T}\); together with
\(L,L_U,T\ge0\), this gives the second bound.
\end{proof}

For \(p>0\), write \(\phi(p)=\E e^{-pT}\) and set
\begin{equation}\label{eq:one-arm-error}
 \mathcal E(p)=\E\!\int
 \left[e^{-pL(\ell)}-e^{-p(T+L_U(\ell))}\right]
 \mu_U^{c,z}(d\ell).
\end{equation}

\begin{lemma}
\label{lem:one-arm-bootstrap}
The integral in \eqref{eq:one-arm-error} is absolutely integrable and, for
\(0<p\le1\),
\begin{align}
 Z(p)-J(p)&=Z(p)\phi(p)+\mathcal E(p),
 \label{eq:one-arm-Laplace-identity}\\
 |\mathcal E(p)|&\le Cp+C\E[\min\{p(T+C),1\}].
 \label{eq:one-arm-error-bootstrap}
\end{align}
Moreover,
\begin{equation}\label{eq:one-arm-mean}
 \E T=\frac q\lambda=:m<\infty,
 \qquad \mathcal E(p)=O(p).
\end{equation}
\end{lemma}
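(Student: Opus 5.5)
The plan is to apply the Fubini identity (Lemma~\ref{lem:one-arm-fubini}) with $g(\ell)=e^{-pL(\ell)}$. The right side is $\int e^{-pL}F_\D^{-1,1}\,d\mu_\D^{c,z}=Z(p)-J(p)$. For the left side, write $e^{-pL}=e^{-p(T+L_U)}+[e^{-pL}-e^{-p(T+L_U)}]$. By conformal invariance, $(f_U)_*\mu_U^{c,z}=\mu_\D^{c,0}$, and $L_U(\ell)=L_\D(f_U(\ell))$ with the obvious base point. Hence
\[
\int e^{-pL_U}\,d\mu_U^{c,z}=\int e^{-pL}\,d\mu_\D^{c,z}=Z(p)
\]
deterministically, since $L$ is defined through $f_\D$ and so $Z$ does not depend on $z$ beyond the normalization. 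The first piece therefore contributes $Z(p)\E e^{-pT}=Z(p)\phi(p)$, and the second is $\mathcal E(p)$. This gives \eqref{eq:one-arm-Laplace-identity}, provided absolute integrability holds. Integrability follows from the bound below together with the finiteness of $Z(p)$ for $p>0$ (note $Z(p)<\infty$ by Lemma~\ref{lem:one-arm-shells}).

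To bound $\mathcal E(p)$, use Lemma~\ref{lem:one-arm-quasi-additivity}: $|e^{-pL}-e^{-p(T+L_U)}|\le e^{-p(T+L_U)}\,|e^{-p\delta}-1|$, with $L\ge0$ used to keep things bounded. On $\{L_U\ge L_0\}$ the difference is at most $Cp\,e^{-L_U}e^{-pL_U}$. Integrating against $\mu_U^{c,z}$, which pushes to the fixed measure $\mu_\D^{c,0}$, and using $\int e^{-L}\,d\mu\le C$ from \eqref{eq:one-arm-shell} (the density is $L^\lambda e^{-L}$, integrable) gives $Cp$. On $\{L_U<L_0\}$, which has finite $\mu$-mass by Lemma~\ref{lem:one-arm-shells}, the difference is bounded by $\min\{p(T+C),1\}$, since both exponentials lie in $[0,1]$ and $|\delta|\le T+C$. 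Taking expectation gives \eqref{eq:one-arm-error-bootstrap}.

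For \eqref{eq:one-arm-mean}, substitute the expansions \eqref{eq:one-arm-Z-expansion} and \eqref{eq:one-arm-J-expansion} into \eqref{eq:one-arm-Laplace-identity}:
\[
1-\phi(p)=\frac{J(p)+\mathcal E(p)}{Z(p)}.
\]
As $p\downarrow0$, we have $J/Z\sim \frac{q\Gamma(\lambda)}{\Gamma(1+\lambda)}\,p=\frac q\lambda\,p$. We also have $|\mathcal E|/Z\le Cp^{1+\lambda}\bigl(p+\E\min\{p(T+C),1\}\bigr)$, and since $\E\min\{pT,1\}\le 1$ this is $o(p)$. Hence $(1-\phi(p))/p\to q/\lambda$. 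Monotone convergence, using $(1-e^{-pT})/p\uparrow T$, then gives $\E T=q/\lambda<\infty$. Feeding this back into \eqref{eq:one-arm-error-bootstrap} yields $\E\min\{p(T+C),1\}\le p(\E T+C)$, hence $\mathcal E(p)=O(p)$.

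The main obstacle is the bootstrap ordering. We need the weak bound (expectation of a truncated quantity) to first show that $\mathcal E$ is negligible relative to $pZ(p)\asymp p^{-\lambda}$, before we know that $T$ is integrable. The factor $p^{1+\lambda}$ from $1/Z$ is exactly what makes the crude bound $\E\min\{\cdot,1\}\le1$ sufficient.
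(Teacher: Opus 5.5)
Your proof is correct and follows essentially the same route as the paper. You apply the Fubini identity with $g=e^{-pL}$ and use conformal invariance to extract $Z(p)\phi(p)$. You then split the error at $L_U=L_0$ using Lemmas~\ref{lem:one-arm-quasi-additivity} and~\ref{lem:one-arm-shells}, and bootstrap: the crude bound $\mathcal E(p)=O(1)$ already suffices because $1/Z(p)=O(p^{1+\lambda})$, which yields $\E T=q/\lambda$ and hence $\mathcal E(p)=O(p)$.
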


\begin{proof}
Apply Lemma~\ref{lem:one-arm-fubini} with
\(g(\ell)=e^{-pL(\ell)}\), and add and subtract the second integrand in
\eqref{eq:one-arm-error}.  Conformal invariance gives
\(\int e^{-pL_U}\,d\mu_U^{c,z}=Z(p)\), and hence
\eqref{eq:one-arm-Laplace-identity}.  On \(L_U\ge L_0\),
Lemma~\ref{lem:one-arm-quasi-additivity}, the elementary Lipschitz bound for
\(e^{-x}\), and Lemma~\ref{lem:one-arm-shells} give an \(O(p)\) contribution.
Note that the $\mu_U^{c,z}$-mass of \(\{L_U<L_0\}\) is
finite, hence the contribution of the $L_U<L_0$ part to $\mathcal{E}(p)$ is at most the second term in
\eqref{eq:one-arm-error-bootstrap}.  This proves both absolute
integrability and~\eqref{eq:one-arm-error-bootstrap}.

In particular \(\mathcal E(p)=O(1)\).  Rearranging gives
\(Z(p)(1-\phi(p))=J(p)+\mathcal E(p)\).  Dividing by \(pZ(p)\) and using
\eqref{eq:one-arm-Z-expansion}--\eqref{eq:one-arm-J-expansion} yields
\[
 \lim_{p\downarrow0}\frac{1-\phi(p)}p
 =\frac{q\Gamma(\lambda)}{\Gamma(1+\lambda)}=\frac q\lambda.
\]
Since \((1-e^{-pT})/p\uparrow T\) as \(p\downarrow0\), monotone
convergence proves the first assertion in \eqref{eq:one-arm-mean}.  The
right side of \eqref{eq:one-arm-error-bootstrap} is then \(O(p)\), proving
the second.  \end{proof}

Set
\begin{equation}\label{eq:one-arm-Delta-R}
 \Delta(p)=mpZ(p)-J(p),\qquad
 R(p)=\E[e^{-pT}-1+pT]=\phi(p)-1+mp.
\end{equation}
Then we have the following asymptotics of \(\Delta(p)\) and \(R(p)\).
\begin{proposition}
\label{prop:one-arm-positive-defect}
There is a constant \(\Delta_0>0\) such that
\begin{equation}\label{eq:one-arm-R-asymptotic}
 \Delta(p)=\Delta_0+O(p),\qquad
 R(p)\sim\frac{\Delta_0}{A_\lambda\Gamma(1+\lambda)}p^{1+\lambda}.
\end{equation}
\end{proposition}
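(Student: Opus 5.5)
The plan is to combine the Laplace identity \eqref{eq:one-arm-Laplace-identity} with the expansions \eqref{eq:one-arm-Z-expansion}--\eqref{eq:one-arm-J-expansion}, and to check that the singular terms cancel exactly because of the value of \(m\) in \eqref{eq:one-arm-mean}.

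\textbf{Step 1: expansion of \(\Delta\).} Substituting the two expansions gives
\[
 \Delta(p)=mA_\lambda\Gamma(1+\lambda)p^{-\lambda}+mp\,z_0(p)-A_\lambda q\Gamma(\lambda)p^{-\lambda}-j_0(p).
\]
Since \(m=q/\lambda\), we have \(m\Gamma(1+\lambda)=q\Gamma(\lambda)\), so the \(p^{-\lambda}\) terms cancel. Hence \(\Delta(p)=mp\,z_0(p)-j_0(p)\) is holomorphic near \(0\). This gives \(\Delta(p)=\Delta_0+O(p)\) with \(\Delta_0:=-j_0(0)\).

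\textbf{Step 2: expressing \(R\) through \(\Delta\).} Rewrite \eqref{eq:one-arm-Laplace-identity} as \(Z(p)(1-\phi(p))=J(p)+\mathcal E(p)\). Since \(1-\phi(p)=mp-R(p)\), this becomes
\[
 R(p)Z(p)=mpZ(p)-J(p)-\mathcal E(p)=\Delta(p)-\mathcal E(p).
\]
By \eqref{eq:one-arm-mean}, \(\mathcal E(p)=O(p)\), so \(R(p)Z(p)=\Delta_0+O(p)\). Using \(Z(p)=A_\lambda\Gamma(1+\lambda)p^{-1-\lambda}(1+O(p^{1+\lambda}))\), we get
\[
 R(p)=\frac{\Delta_0+O(p)}{A_\lambda\Gamma(1+\lambda)}\,p^{1+\lambda}\bigl(1+O(p^{1+\lambda})\bigr).
\]
This is the claimed asymptotic, provided \(\Delta_0\neq0\).

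\textbf{Step 3: \(\Delta_0>0\).} This is the only non-bookkeeping step. Start from the elementary inequality \(e^{-x}-1+x\ge \tfrac12x^2e^{-x}\) for \(x\ge0\). It gives \(R(p)\ge0\), hence \(\Delta_0\ge0\) by Step 2. It also gives the sharper bound
\[
 R(p)\ge \tfrac12p^2\,\E[T^2e^{-pT}].
\]
As \(p\downarrow0\), monotone convergence gives \(\E[T^2e^{-pT}]\to\E[T^2]\in(0,\infty]\); this is positive because \(T>0\) almost surely. Consequently \(R(p)\ge cp^2\) for some \(c>0\) and all small \(p\).

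If \(\Delta_0=0\), Step 2 would give \(R(p)=O(p)\cdot p^{1+\lambda}=O(p^{2+\lambda})\), contradicting \(R(p)\ge cp^2\). Hence \(\Delta_0>0\), which completes the proof.

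The main point to verify carefully is that \(\mathcal E(p)=O(p)\) holds in the strong form stated in Lemma~\ref{lem:one-arm-bootstrap}, rather than just \(o(1)\). The contradiction argument in Step 3 uses precisely this \(O(p)\) error to force \(R=O(p^{2+\lambda})\) when \(\Delta_0=0\), and the lemma provides it.
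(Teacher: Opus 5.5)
Your proof is correct and follows essentially the same route as the paper. You cancel the $p^{-\lambda}$ singularities via $m=q/\lambda$ to get $\Delta_0=-j_0(0)$, use the exact identity $Z(p)R(p)=\Delta(p)-\mathcal E(p)$, and derive a contradiction from the lower bound $R(p)\ge cp^2$ when $\Delta_0=0$; the only cosmetic differences are that you obtain $R(p)\ge cp^2$ from $e^{-x}-1+x\ge\tfrac12x^2e^{-x}$ rather than by restricting to $\{r\le T\le R_0\}$, and you phrase the contradiction as $R=O(p^{2+\lambda})$ rather than $ZR=O(p)=o(p^{1-\lambda})$.
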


\begin{proof}
The leading singularities in \(mpZ(p)\) and \(J(p)\) cancel by
\eqref{eq:one-arm-Z-expansion}--\eqref{eq:one-arm-J-expansion}; hence
\begin{equation}\label{eq:one-arm-Delta0}
 \Delta(p)=\Delta_0+O(p),\qquad \Delta_0=-j_0(0)\in\R.
\end{equation}
Equations \eqref{eq:one-arm-Laplace-identity} and
\eqref{eq:one-arm-Delta-R} give the exact identity
\begin{equation}\label{eq:one-arm-defect-identity}
 Z(p)R(p)=\Delta(p)-\mathcal E(p).
\end{equation}
Since \(R(p)\ge0\) and \(\mathcal E(p)=O(p)\), letting \(p\downarrow0\)
shows that \(\Delta_0\ge0\).

To prove $\Delta_0>0$, choose \(0<r<R_0<\infty\) with
\(\Prob[r\le T\le R_0]>0\), which is possible because
\(0<T<\infty\) a.s.  For \(pR_0\le1\), since $e^{-x}-1+x\ge Cx^2$ on $x\in[0,1]$ for some $C>0$, we have 
\[
 R(p)\ge C p^2r^2\Prob[r\le T\le R_0]\ge C'p^2.
\]
Together with \(Z(p)\asymp p^{-1-\lambda}\), this implies
\(Z(p)R(p)\ge Cp^{1-\lambda}\). If \(\Delta_0=0\),
\eqref{eq:one-arm-Delta0}, Lemma~\ref{lem:one-arm-bootstrap}, and
\eqref{eq:one-arm-defect-identity} would instead give
\(Z(p)R(p)=O(p)=o(p^{1-\lambda})\), a contradiction.  Thus
\(\Delta_0>0\), and the second assertion in
\eqref{eq:one-arm-R-asymptotic} follows from
\eqref{eq:one-arm-defect-identity}, Lemma~\ref{lem:one-arm-bootstrap}, and
\eqref{eq:one-arm-Z-expansion}.
\end{proof}

\subsection{Tauberian inversion}
Based on Proposition~\ref{prop:one-arm-positive-defect}, we have the following tail estimate of $T$.
\begin{proposition}
\label{prop:one-arm-T-tail}
As \(t\to\infty\),
\begin{equation}\label{eq:one-arm-T-tail}
 \Prob[T>t]\sim
 \frac{\Delta_0\sin(\pi\lambda)}{\pi A_\lambda}t^{-1-\lambda}.
\end{equation}
\end{proposition}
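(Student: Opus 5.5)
We convert the asymptotic $R(p)\sim c_0 p^{1+\lambda}$ of Proposition~\ref{prop:one-arm-positive-defect}, with $c_0:=\Delta_0/(A_\lambda\Gamma(1+\lambda))$, into a tail asymptotic for $T$ through a Karamata-type Tauberian theorem applied to the integrated tail. Set $\bar F(t):=\Prob[T>t]$ and $\Psi(t):=\int_t^\infty \bar F(s)\,\dd s$, which is finite since $\E T=m<\infty$ by Lemma~\ref{lem:one-arm-bootstrap}. Two integrations by parts give the exact identities
\[
 \frac{1-\phi(p)}{p}=\int_0^\infty e^{-pt}\bar F(t)\,\dd t,
 \qquad
 R(p)=p^2\int_0^\infty e^{-pt}\Psi(t)\,\dd t,
\]
where the second follows from the first together with $m=\int_0^\infty\bar F$, using $m-\frac{1-\phi(p)}{p}=\int_0^\infty(1-e^{-pt})\bar F(t)\,\dd t=p\int_0^\infty e^{-pt}\Psi(t)\,\dd t$.

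Consequently $\widehat\Psi(p):=\int_0^\infty e^{-pt}\Psi(t)\,\dd t=R(p)/p^2\sim c_0p^{\lambda-1}$ as $p\downarrow0$. Since $\Psi\ge0$ is nonincreasing, Karamata's Tauberian theorem (Feller's form for monotone densities, with index $1-\lambda\in(0,1)$) yields
\[
 \Psi(t)\sim\frac{c_0}{\Gamma(1-\lambda)}\,t^{-\lambda}\qquad(t\to\infty).
\]
Concretely, the transform asymptotic first gives $\int_0^t\Psi\sim c_0t^{1-\lambda}/\Gamma(2-\lambda)$, and monotonicity of $\Psi$ then passes this to $\Psi$ itself.

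It remains to differentiate once more. Since $\Psi'=-\bar F$ and $\bar F$ is nonincreasing, the monotone density theorem (Bingham--Goldie--Teugels, Theorem~1.7.2) applied to $\Psi$ gives $\bar F(t)\sim \lambda c_0\,t^{-1-\lambda}/\Gamma(1-\lambda)$. If one prefers to avoid the citation, the elementary sandwich works: for $\theta>1$, monotonicity gives $\Psi(t)-\Psi(\theta t)\le(\theta-1)t\bar F(t)$ and $\Psi(t/\theta)-\Psi(t)\ge(1-\theta^{-1})t\bar F(t)$; dividing, inserting the asymptotics of $\Psi$, and letting $\theta\downarrow1$ gives the same limit. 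For the constant, $\lambda c_0/\Gamma(1-\lambda)=\lambda\Delta_0/(A_\lambda\Gamma(1+\lambda)\Gamma(1-\lambda))$. The reflection formula $\Gamma(1+\lambda)\Gamma(1-\lambda)=\pi\lambda/\sin(\pi\lambda)$ reduces this to $\Delta_0\sin(\pi\lambda)/(\pi A_\lambda)$, which is \eqref{eq:one-arm-T-tail}.

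The main obstacle is choosing the right object to apply the Tauberian theorem to, since $R(p)\sim c_0p^{1+\lambda}$ is a second-order remainder beyond the mean. One cannot apply Karamata directly to the law of $T$; the step that has to be done carefully is the reduction to the monotone function $\Psi$ whose Laplace transform blows up like $p^{\lambda-1}$. This needs $0<\lambda<1$, so that the index $1-\lambda$ lies in $(0,1)$ and the Karamata and monotone-density steps apply. The remaining steps, namely the positivity $\Delta_0>0$ and the exact identity for $R$, are already supplied by Proposition~\ref{prop:one-arm-positive-defect} and routine integration by parts.
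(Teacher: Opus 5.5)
Your proposal is correct and follows the paper's proof: the paper also writes $R(p)=p^2\int_0^\infty e^{-px}Q(x)\,\dd x$, where $Q$ is the integrated tail (your $\Psi$). It then applies Karamata's Tauberian theorem and the monotone density theorem to get $Q(x)\sim B_0x^{-\lambda}/\Gamma(1-\lambda)$ and then $\Prob[T>t]\sim B_0 t^{-1-\lambda}/(-\Gamma(-\lambda))$, and finishes with the reflection formula, exactly as you do (your elementary $\theta$-sandwich is a valid substitute for the final monotone-density citation).
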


\begin{proof}
Since \(\E T<\infty\), integration by parts gives
\[
 R(p)=p\int_0^\infty(1-e^{-pt})\Prob[T>t]\,dt.
\]
Put \(Q(x)=\int_x^\infty\Prob[T>t]\,dt\).  Fubini's theorem and
Proposition~\ref{prop:one-arm-positive-defect} give
\[
 p^2\int_0^\infty e^{-px}Q(x)\,dx=R(p)
 \sim B_0p^{1+\lambda},\qquad
 B_0:=\frac{\Delta_0}{A_\lambda\Gamma(1+\lambda)}.
\]
Since \(Q\) and the tail are monotone, Karamata's Tauberian theorem and the
monotone density theorem \cite[Section~1.7]{reg-var-book} yield
\[
 Q(x)\sim\frac{B_0}{\Gamma(1-\lambda)}x^{-\lambda},
 \qquad
 \Prob[T>t]\sim\frac{B_0}{-\Gamma(-\lambda)}t^{-1-\lambda}.
\]
The reflection identity for the gamma function gives
\eqref{eq:one-arm-T-tail}.
\end{proof}

\begin{proof}[Proof of Theorem~\ref{thm:one-arm}]
Return to general \((D;a,b)\) and write
\(K=K_\lambda^{D;a,b}\).  The variable \(T\) is conformally invariant, so
Proposition~\ref{prop:one-arm-T-tail} remains valid.  Let
\(r_D=\operatorname{crad}_D(z)\) and \(d=\dist(z,K)\).  For
\(\varepsilon<\dist(z,\partial D)/2\), Koebe's theorem gives
\begin{equation*}
 \left\{T>\log\frac{r_D}{\varepsilon}\right\}
 \subset\{d<\varepsilon\}
 \subset\left\{T>\log\frac{r_D}{4\varepsilon}\right\}.
\end{equation*}
The result then follows from Proposition~\ref{prop:one-arm-T-tail}, with \(C_{D,a,b,z,\lambda}
=\Delta_0\sin(\pi\lambda)/(\pi A_\lambda)\in(0,\infty)\). Note that the previous proof in fact implies that \(\Delta_0/A_\lambda\) remains bounded as \(\lambda \uparrow 1\). Indeed, the $O(e^{-L})$ error in~\eqref{eq:one-arm-defect-shell} can be chosen uniformly
for $\lambda$ sufficiently close to $1$. Since
$\Delta_0=-j_0(0)$, 
expressing $j_0(0)$ in~\eqref{eq:one-arm-J-expansion} using the
definition of $J$ and splitting the integral at $L_0$
give
\[
0<\frac{\Delta_0}{A_\lambda}
\leq
q\frac{L_0^\lambda}{\lambda}
+Cq\int_{L_0}^{\infty}L^{\lambda-1}e^{-L}\,dL.
\]
The right side remains bounded as $\lambda \uparrow 1$.
Consequently, \(C_{D,a,b,z,\lambda}\to 0\) as \(\lambda \uparrow 1\).
\end{proof}

\appendix
\section{Linear-algebraic approach to the random-walk loop-catcher}\label{app:finite-positivity}

In this appendix we give an alternative linear-algebraic proof of Theorem~\ref{thm:finite-positivity}.
Recall the notation at the beginning of Section~\ref{sec:finite-positivity}.  
We start by introducing the logarithmic derivative
\(\Ac_\lambda^E=(T_\lambda^E)^{-1}\partial_\lambda T_\lambda^E\) for each
subgraph \(E\subset V\).

\subsection{The logarithmic derivative}
\label{subsec:log-derivative}

For every finite \(E\subset V\), define the triangular transform
\begin{align}\label{eq:finite-transform-local}
 (T_\lambda^Ef)(W)
 :=\sum_{C\subset E}f(C)\one_{\{C\subset W\}}
       e^{-\lambda m_E(C,E\setminus W)}=\sum_{C\subset W}f(C)
   \left(\frac{d_Ed_{W\setminus C}}
   {d_{E\setminus C}d_W}\right)^\lambda,
 \qquad W\subset E.
\end{align}
Set
\begin{equation}\label{eq:A-generator}
        \Ac_\lambda^E:=(T_\lambda^E)^{-1}\partial_\lambda T_\lambda^E .
\end{equation}
Rows and columns are indexed by subsets of \(E\), and
\(\Ac_\lambda^E(U,S)\) denotes the row \(U\), column \(S\) entry.  Note that they vanish unless the column index is contained in the row index, namely,
\begin{equation}\label{eq:A-incidence}
        \Ac_\lambda^E(U,S)=0\qquad\text{unless }S\subset U .
\end{equation}
Also, the top row of \(T_\lambda^E\) is identically one, so with \(e_E\)
the row-\(E\) basis vector and \(\one\) the all-ones column, we have \(\one^{\mathsf T}=e_E^{\mathsf T}T_\lambda^E\), while
\(e_E^{\mathsf T}\partial_\lambda T_\lambda^E=0\).  Hence
\begin{equation}\label{eq:column-sum-zero}
 \one^{\mathsf T}\Ac_\lambda^E
 =e_E^{\mathsf T}\partial_\lambda T_\lambda^E=0,
 \qquad
 \sum_{U\subset E}\Ac_\lambda^E(U,S)=0 .
\end{equation}
Thus \(\Ac_\lambda^E\) is a pure-birth Markov generator \emph{once} its off-diagonal
entries are nonnegative.

Fix \(S\subset E\), put \(B=E\setminus S\), and we will use the following notation in this section.
\[
\begin{array}{c|l}
E & \text{current finite subgraph} \\
S & \text{current column, or the trace already present} \\
B=E\setminus S & \text{vertices still available to be added} \\
W\subset B & \text{allowed remainder in a test row }S\cup W \\
C\subset B & \text{new vertices added to }S
\end{array}
\]
If \(S=\varnothing\), the \(S\)-column of \(\Ac_\lambda^E\) is zero.  Assume
from now on that \(S\ne\varnothing\).  For \(W\subset B\), define
\begin{equation}\label{eq:HS}
        \bH_S(W):=
        \Loop_Q\{\ell:\ell\cap S\ne\varnothing,
        \ \varnothing\ne\ell\setminus S\subset W\}
        =\log\frac{d_Sd_W}{d_{S\cup W}} .
\end{equation}
Define
\begin{equation}\label{eq:rho}
        \brho_S^\lambda:=(T_\lambda^B)^{-1}\bH_S .
\end{equation}
Since \(\bH_S(\varnothing)=0\), also \(\brho_S^\lambda(\varnothing)=0\).

The following lemma computes the column elements of $\Ac_\lambda^E$.

\begin{lemma}\label{lem:column-formula}
For every nonempty \(C\subset B\),
\[
        \Ac_\lambda^E(S\cup C,S)=\brho_S^\lambda(C),
        \qquad
        \Ac_\lambda^E(S,S)=-\bH_S(B).
\]
\end{lemma}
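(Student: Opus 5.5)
The plan is to compute the $S$-column of $\Ac_\lambda^E=(T_\lambda^E)^{-1}\partial_\lambda T_\lambda^E$ directly, by solving $T_\lambda^E\mathbf a=\partial_\lambda T_\lambda^E e_S$ for the vector $\mathbf a$. I will guess a candidate solution and verify it. Since $T_\lambda^E$ is triangular with positive diagonal, it is invertible, so the verified candidate is the unique solution, namely the $S$-column of $\Ac_\lambda^E$.

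The candidate is supported on rows $S\cup C$ with $C\subset B$. It has $\mathbf a(S)=-\bH_S(B)$, $\mathbf a(S\cup C)=\brho_S^\lambda(C)$ for $\varnothing\ne C\subset B$, and zero elsewhere. Write $\mathbf a'(C):=\mathbf a(S\cup C)$, so that $\mathbf a'=\brho_S^\lambda-\bH_S(B)\,\delta_\varnothing$; here I use $\brho_S^\lambda(\varnothing)=0$.

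First I handle rows $U\not\supset S$. There $\partial_\lambda T_\lambda^E(U,S)=0$, because $T_\lambda^E(U,S)$ vanishes identically in $\lambda$. Also every $T_\lambda^E(U,S\cup C)$ vanishes. So both sides of the equation are zero on these rows.

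The main step is a factorization for the remaining rows $U=S\cup W'$ with $W'\subset B$. By the determinant form of~\eqref{eq:finite-transform-local}, using $(S\cup W')\setminus(S\cup C)=W'\setminus C$ and $E\setminus(S\cup C)=B\setminus C$,
\[
T_\lambda^E(S\cup W',S\cup C)=\Bigl(\frac{d_Ed_{W'\setminus C}}{d_{B\setminus C}d_{S\cup W'}}\Bigr)^\lambda
=T_\lambda^B(W',C)\Bigl(\frac{d_Ed_{W'}}{d_Bd_{S\cup W'}}\Bigr)^\lambda .
\]
By~\eqref{eq:HS}, $d_{W'}/d_{S\cup W'}=e^{\bH_S(W')}/d_S$, and $d_E/d_B=d_{S\cup B}/d_B=d_Se^{-\bH_S(B)}$. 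Hence
\[
T_\lambda^E(S\cup W',S\cup C)=T_\lambda^B(W',C)\,g(W'),\qquad g(W'):=e^{\lambda(\bH_S(W')-\bH_S(B))}.
\]
In particular, taking $C=\varnothing$ gives $T_\lambda^E(S\cup W',S)=g(W')$, since $T_\lambda^B(W',\varnothing)=1$. Differentiating in $\lambda$ then gives $\partial_\lambda T_\lambda^E(S\cup W',S)=(\bH_S(W')-\bH_S(B))g(W')$.

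Substituting the factorization, the row-$(S\cup W')$ equation becomes
\[
g(W')\,(T_\lambda^B\mathbf a')(W')=(\bH_S(W')-\bH_S(B))\,g(W').
\]
Since $g>0$, it suffices to show $(T_\lambda^B\mathbf a')(W')=\bH_S(W')-\bH_S(B)$. By definition~\eqref{eq:rho}, $T_\lambda^B\brho_S^\lambda=\bH_S$. The term $-\bH_S(B)\,\delta_\varnothing$ contributes the constant $-\bH_S(B)$, because the $\varnothing$-column of $T_\lambda^B$ is identically one. This completes the verification.

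I expect the main obstacle to be the determinant bookkeeping in the factorization, in particular identifying the prefactor $g$ with $e^{-\lambda m_E(S,E\setminus U)}$ through the $\bH_S$ notation. Once that factorization is in place, the rest is linear algebra.
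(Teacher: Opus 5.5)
Your proof is correct and follows essentially the same route as the paper: both take the candidate column with entry $-\bH_S(B)$ at $S$ and $\brho_S^\lambda(C)$ at $S\cup C$, verify $T_\lambda^E\mathbf a=\partial_\lambda T_\lambda^E e_S$ row by row, and factor each row-$(S\cup W)$ entry as $T_\lambda^B(W,C)$ times the prefactor $e^{-\lambda m_E(S,B\setminus W)}$, which is your $g(W)$. The only difference is cosmetic: you derive the factorization from the determinant form using the $\bH_S$ notation, whereas the paper uses the loop-mass identity $m_E(S\cup C,B\setminus W)-m_E(S,B\setminus W)=m_B(C,B\setminus W)$.
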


\begin{proof}
Define a vector \(a_S\) on \(2^E\) by
\[
        a_S(S)=-\bH_S(B),
        \qquad
        a_S(S\cup C)=\brho_S^\lambda(C)\quad(C\ne\varnothing),
\]
and \(a_S(U)=0\) for all other \(U\).  We show that
\(T_\lambda^Ea_S=\partial_\lambda T_\lambda^E e_S\), where \(e_S\) is the
basis vector at \(S\).  This will identify \(a_S\) as the \(S\)-column of
\(\Ac_\lambda^E\).

Rows not containing \(S\) give zero on both sides.  Let the row be
\(R=S\cup W\), where \(W\subset B\).  For \(C\subset W\), since
\(
 e^{-\lambda[m_E(S\cup C,B\setminus W)-m_E(S,B\setminus W)]}
 =e^{-\lambda m_B(C,B\setminus W)}
 =\left(\frac{d_Bd_{W\setminus C}}
 {d_{B\setminus C}d_W}\right)^\lambda
\)
, we have
\[
\begin{aligned}
        (T_\lambda^Ea_S)(S\cup W)
        &=e^{-\lambda m_E(S,B\setminus W)}
        \left[-\bH_S(B)+\sum_{C\subset W}
        \brho_S^\lambda(C)e^{-\lambda m_B(C,B\setminus W)}\right] \\
        &=e^{-\lambda m_E(S,B\setminus W)}
          \big[-\bH_S(B)+\bH_S(W)\big].
\end{aligned}
\]
By~\eqref{eq:HS}, the bracket equals
\[
        \log\frac{d_Ed_W}{d_Bd_{S\cup W}}
        =-m_E(S,B\setminus W)
        =\partial_\lambda\log e^{-\lambda m_E(S,B\setminus W)}.
\]
Thus \((T_\lambda^Ea_S)(S\cup W)=
\partial_\lambda e^{-\lambda m_E(S,B\setminus W)}\), as required.
\end{proof}

\medskip
\noindent
\textbf{Roadmap.}
The rest of the proof of Theorem~\ref{thm:finite-positivity} has three steps.
\begin{enumerate}[label=\textup{(\arabic*)},leftmargin=2.2em]
\item At the endpoint $\lambda=1$, prove \(\brho_S^1\ge0\) by combining a
Schur-complement expansion of \(\bH_S\) with the nonnegative expansion in Lemma~\ref{lem:multipath-positive}, based on the entangled multipath LERW.

\item Use induction over subgraphs and the equation
\(\frac{\dd}{\dd\theta}\brho_S^{1-\theta}
=\Ac_{1-\theta}^{E\setminus S}\brho_S^{1-\theta}\) to transport
\(\brho_S^1\ge0\) to $\brho_S^\lambda\ge0$ for all \(0\le\lambda\le1\).  This makes
\(\Ac_\lambda^E\) a pure-birth generator.

\item Transport the endpoint law \(\bp_1\) by
\(\frac{\dd}{\dd\theta}\bp_{1-\theta}=\Ac_{1-\theta}^V\bp_{1-\theta}\).  The
generator preserves nonnegativity and total mass, and the transformed
moments remain equal to \(\bh\).
\end{enumerate}
Equivalently,
\[
        \text{entangled multipath LERW}
        \Longrightarrow \brho_S^1\ge0
        \Longrightarrow \Ac_\lambda^E\text{ is a pure-birth generator}
        \Longrightarrow \bp_\lambda\ge0 .
\]

\subsection{The endpoint \texorpdfstring{\(\lambda=1\)}{lambda=1}}
\label{subsec:endpoint-deconvolution}

We need the following input of $\lambda=1$ from the entangled multipath LERW.

\begin{lemma}\label{lem:multipath-positive}
Let \(W\subset V\) and let \((a_i,b_i)\in W\times W\), \(1\le i\le r\),
with \(r\ge1\).  Write \(\mathbf a=(a_i)_{i=1}^r\) and
\(\mathbf b=(b_i)_{i=1}^r\).  For \(R\subset W\) and \(a,b\in W\), set \(\Gc_R(a,b)=0\) if either endpoint lies outside \(R\). There are unique finite coefficients
\(L_{\mathbf a,\mathbf b}(C)\ge0\), \(C\subset W\), such that, for every
\(R\subset W\),
\begin{equation}\label{eq:multipath-positive}
        \one_{\{a_i,b_i\in R\ \forall i\}}
        \left[\prod_{i=1}^r\Gc_R(a_i,b_i)\right]d_R
        =\sum_{C\subset R}L_{\mathbf a,\mathbf b}(C)d_{R\setminus C} .
\end{equation}
Moreover \(L_{\mathbf a,\mathbf b}(C)=0\) unless \(C\) contains all endpoints
and every connected component of \(C\) contains at least one endpoint.
\end{lemma}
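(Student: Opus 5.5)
We will define \(L_{\mathbf a,\mathbf b}(C)\) directly as a weighted count of entangled multipath LERW outputs. Uniqueness then follows from triangularity. The Boolean system \(f(R)=\sum_{C\subset R}L(C)d_{R\setminus C}\), \(R\subset W\), is triangular with diagonal entries \(d_\varnothing=1\), so Möbius-type inversion determines \(L\) uniquely. The substance is therefore existence of a nonnegative solution, and for this we run the entangled multipath loop-erasure of Definition~\ref{def:entangled} inside a fixed \(R\subset W\) and decompose the weight of \(r\)-tuples of paths.

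The key step is an \(r\)-fold iteration of identity \eqref{eq:fiber} from Lemma~\ref{lem:one-step}. Suppose \(a_i,b_i\in R\) for all \(i\). The total weight of tuples \((\omega_1,\dots,\omega_r)\) of paths in \(R\), \(\omega_i:a_i\to b_i\), equals \(\prod_i\Gc_R(a_i,b_i)\). We group tuples by their output records \((P_1,\dots,P_r)\). For fixed \(P_1,\dots,P_{i-1}\) (hence fixed \(C_{i-1}\)), \eqref{eq:fiber} with \(A=C_{i-1}\) gives
\[
 d_{R\setminus C_{i-1}}\sum_{H_i:\Phi_{C_{i-1}}(H_i)=P_i}q(H_i)=q(P_i)\,d_{R\setminus C_i}.
\]
We multiply by \(d_R=d_{R\setminus C_0}\) and telescope over \(i=1,\dots,r\). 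This yields
\[
 d_R\prod_i\Gc_R(a_i,b_i)=\sum_{(P_1,\dots,P_r)}\Bigl(\prod_i q(P_i)\Bigr)d_{R\setminus C_r},
\]
where the sum runs over admissible record tuples in \(R\). Admissible means each \(P_i\) is legal from \(a_i\) to \(b_i\) and satisfies \(\Phi_{C_{i-1}}(P_i)=P_i\). We then set
\[
 L_{\mathbf a,\mathbf b}(C):=\sum\Bigl\{\prod_i q(P_i):\ (P_i)\text{ admissible},\ C_r=C\Bigr\}.
\]
This quantity depends only on \(C\), not on \(R\), because the admissibility conditions involve only the supports and the entangled orders, all contained in \(C\). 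Hence \(L_{\mathbf a,\mathbf b}(C)\ge0\) and \eqref{eq:multipath-positive} holds whenever all endpoints lie in \(R\).

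The main obstacle is the case where some endpoint lies outside \(R\). Here the left side of \eqref{eq:multipath-positive} vanishes, so we must check that every \(C\subset R\) with \(L(C)\ne0\) is excluded. This follows from the support property: every admissible output contains \(\supp(\eta_i)\ni a_i,b_i\), so \(L(C)=0\) unless \(C\) contains all endpoints, and no such \(C\) can lie in \(R\).

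For the component property, every retained loop in \(P_i\) satisfies \(C_{i-1}\prec\ell\) in the entangled order. Each such loop is therefore connected through a chain of intersecting loops either to \(C_{i-1}\) or to \(\eta_i\), and \(\eta_i\) connects \(a_i\) to \(b_i\). By induction on \(i\), each component of \(C_i\) contains some endpoint.

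We also need finiteness of \(L(C)\). This is automatic: \(L(C)\) is bounded by the value of the identity at \(R=C\), where the \(C'=C\) term appears with coefficient \(d_\varnothing=1\) and all terms are nonnegative.
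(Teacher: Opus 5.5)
Your proposal is correct and is essentially the paper's proof: $L_{\mathbf a,\mathbf b}(C)$ is defined as the weighted sum over tuples of legal records with $\Phi_{C_{i-1}}(P_i)=P_i$ and $C_r=C$, and the identity comes from iterating Lemma~\ref{lem:one-step} along the paths. You telescope \eqref{eq:fiber} while the paper iterates its summed form \eqref{eq:one-step}, which is the same computation; uniqueness is by triangularity in both, and your explicit handling of endpoints outside $R$ and of finiteness of the coefficients just spells out points the paper leaves implicit.
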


\begin{proof}
Recall the terminology in Section~\ref{subsec:entangled-multipath}. For an ordered tuple
\(\mathbf P=(P_1,\ldots,P_r)\) of legal records in \(W\) with
\(P_i:a_i\to b_i\), define $C_0(\mathbf P)=\varnothing$ and $ C_i(\mathbf P)
 =C_{i-1}(\mathbf P)\cup\supp(P_i)$ for $1\le i\le r$.
For \(C\subset W\), set
\begin{equation}\label{eq:multipath-coefficient}
 L_{\mathbf a,\mathbf b}(C)
 :=
 \sum_{\substack{
 \mathbf P=(P_1,\ldots,P_r)\\
 \Phi_{C_{i-1}(\mathbf P)}(P_i)=P_i,\ 1\le i\le r\\
 C_r(\mathbf P)=C}}
 \prod_{i=1}^r q(P_i)\ge0,
\end{equation}
which is $0$ unless \(C\) contains all endpoints
and every connected component of \(C\) contains at least one endpoint.
Now fix \(R\subset W\) containing all the endpoints.  Repeatedly applying
\eqref{eq:one-step} in Lemma~\ref{lem:one-step} with \(A=C_{i-1}(\mathbf P)\) at the \(i\)-th step gives
\[
\begin{aligned}
 d_R\prod_{i=1}^r\Gc_R(a_i,b_i)
 =
 \sum_{\substack{
 P_i:a_i\to b_i\text{ legal in }R\\
 \Phi_{C_{i-1}(\mathbf P)}(P_i)=P_i,\ 1\le i\le r}}
 \left(\prod_{i=1}^r q(P_i)\right)
 d_{R\setminus C_r(\mathbf P)}  =
 \sum_{C\subset R}
 L_{\mathbf a,\mathbf b}(C)d_{R\setminus C}.
\end{aligned}
\]
This proves~\eqref{eq:multipath-positive}. The uniqueness of $L_{\mathbf a,\mathbf b}(C)$ follows from the triangularity of~\eqref{eq:multipath-positive}.
\end{proof}

Based on Lemma~\ref{lem:multipath-positive}, we obtain the following nonnegativitivy of $\brho_S^1$ on the endpoint $\lambda=1$.
\begin{proposition}\label{prop:endpoint-deconvolution}
Fix \(E\subset V\) and nonempty \(S\subset E\), and put
\(B=E\setminus S\).  There are numbers
\(\bnu_S(C)\ge0\), \(\varnothing\ne C\subset B\), such that, for every
\(W\subset B\),
\begin{equation}\label{eq:nu-expansion}
        \bH_S(W)d_W=
        \sum_{\varnothing\ne C\subset W}\bnu_S(C)d_{W\setminus C} .
\end{equation}
Consequently
\begin{equation}\label{eq:rho-one-positive}
        \brho_S^1(C)=\frac{d_{B\setminus C}}{d_B}\bnu_S(C)\ge0,
        \qquad C\ne\varnothing .
\end{equation}
Moreover \(\bnu_S(C)\), and hence \(\brho_S^1(C)\), vanishes unless \(C\) is
\(S\)-attached.
\end{proposition}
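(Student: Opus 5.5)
The plan is to expand $\bH_S(W)$ as a nonnegative combination of products of Green functions $\prod_i\Gc_W(a_i,b_i)$ whose coefficients do not depend on $W$, and then to apply Lemma~\ref{lem:multipath-positive} to each product after multiplying by $d_W$.

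\emph{Step 1: Schur complement.} For $W\subset B$, write $I-Q_{S\cup W}$ in block form with respect to $S\cup W$. The Schur complement gives
\[
 \frac{d_{S\cup W}}{d_W}=\det\bigl(I-Q_S-Q_{S,W}\Gc_WQ_{W,S}\bigr)=d_S\det\bigl(I-\Gc_SQ_{S,W}\Gc_WQ_{W,S}\bigr).
\]
Hence
\[
 \bH_S(W)=-\log\det(I-M_W)=\sum_{k\ge1}\frac1k\operatorname{Tr}(M_W^k),\qquad M_W:=\Gc_SQ_{S,W}\Gc_WQ_{W,S}.
\]
The series converges because $M_W$ is entrywise nonnegative and $\rho(M_W)<1$; the latter holds since $\det(I-tM_W)=d_{S\cup W}/(d_Sd_W)$ up to the $t$-deformation, all principal minors of $I-Q$ are positive when $\rho(Q)<1$, and Perron--Frobenius applies.

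\emph{Step 2: expansion into Green-function products.} Expanding $\operatorname{Tr}(M_W^k)$ entrywise writes it as a sum over cyclic sequences $s_1,w_1,w_1',s_2,\dots$, with $s_j\in S$ and $w_j,w_j'\in W$. Each term is a product of the $W$-independent nonnegative factors $\Gc_S(s_j,s_{j+1}')\,Q_{s,w}\,Q_{w',s}$ times $\prod_{j=1}^k\Gc_W(w_j,w_j')$.

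Because $Q_{S,W}$ is a restriction of $Q_{S,B}$, I will index all such sequences by endpoints in $B$ and use the convention of Lemma~\ref{lem:multipath-positive} that $\Gc_W(a,b)=0$ when an endpoint lies outside $W$. This gives
\[
 \bH_S(W)=\sum_{k\ge1}\sum_{\mathbf a,\mathbf b\in B^k}c_k(\mathbf a,\mathbf b)\,\one_{\{a_i,b_i\in W\}}\prod_{i=1}^k\Gc_W(a_i,b_i),
\]
with every $c_k(\mathbf a,\mathbf b)\ge0$ independent of $W$. Moreover, $c_k(\mathbf a,\mathbf b)>0$ forces each $a_i$ and $b_i$ to be adjacent to $S$ (via $Q_{S,B}$ or $Q_{B,S}$). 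Multiplying by $d_W$ and applying Lemma~\ref{lem:multipath-positive}, with ambient set $B$ and $R=W$, yields~\eqref{eq:nu-expansion} with
\[
 \bnu_S(C):=\sum_k\sum_{\mathbf a,\mathbf b}c_k(\mathbf a,\mathbf b)L_{\mathbf a,\mathbf b}(C)\ge0.
\]

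\emph{Step 3: convergence, the empty set, and the support condition.} The main obstacle is justifying the rearrangement of the doubly infinite sum and showing that $\bnu_S(C)$ is finite. All terms are nonnegative and $d_{W\setminus C}>0$, so Tonelli applies. Evaluating at $W=B$ gives
\[
 \sum_C\bnu_S(C)\,d_{B\setminus C}=\bH_S(B)\,d_B<\infty,
\]
and therefore each $\bnu_S(C)$ is finite.

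The coefficient of $C=\varnothing$ vanishes because $L_{\mathbf a,\mathbf b}(\varnothing)=0$ (such a $C$ must contain the $r\ge1$ endpoints). By the support statement of Lemma~\ref{lem:multipath-positive}, $\bnu_S(C)\ne0$ only if every component of $C$ contains an endpoint adjacent to $S$. This is what I take $S$-attached to mean.

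\emph{Step 4: identifying $\brho_S^1$.} Finally, $(T_1^B\brho)(W)=\sum_{C\subset W}\brho(C)\,\frac{d_Bd_{W\setminus C}}{d_{B\setminus C}d_W}$. Substituting $\brho(C)=\frac{d_{B\setminus C}}{d_B}\bnu_S(C)$ gives $d_W^{-1}\sum_C\bnu_S(C)\,d_{W\setminus C}=\bH_S(W)$. Uniqueness of the solution of the triangular system~\eqref{eq:rho} then gives~\eqref{eq:rho-one-positive}, together with the vanishing unless $C$ is $S$-attached.
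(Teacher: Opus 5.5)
Your proposal is correct and matches the paper's proof essentially step for step. The paper takes the Schur complement on the other block, writing $\bH_S(W)=-\log\det(I-\Gc_W K|_W)$ with $K=Q_{B,S}\Gc_SQ_{S,B}$; by cyclicity of the trace this gives exactly your expansion. It justifies $\rho<1$ by a path-decomposition bound rather than your $t$-deformation plus Perron--Frobenius (both work). It then applies Lemma~\ref{lem:multipath-positive} on $B$, bounds the coefficients by evaluating at $W=B$, and identifies $\brho_S^1$ by triangularity of $T_1^B$, as you do. Your support condition (every component of $C$ contains an endpoint adjacent to $S$) implies the paper's definition of $S$-attached (every component of $G_E^{\rm und}[S\cup C]$ meets $S$), so that part also goes through.
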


\begin{proof}
Let
\[
        \Gc_W:=(I-Q_W)^{-1},
        \qquad
        K:=Q_{B,S}(I-Q_S)^{-1}Q_{S,B} .
\]
For \(W\subset B\), write \(K|_W\) for the \(W\times W\) submatrix of
\(K\).
The Schur complement then gives
\[
        d_{S\cup W}=d_S\det(I-Q_W-K|_W).
\]
Hence
\begin{equation}\label{eq:H-schur}
        \bH_S(W)=
        -\log\det(I-\Gc_W(K|_W))
        =\sum_{r\ge1}\frac1r\operatorname{Tr}\big((\Gc_W(K|_W))^r\big).
\end{equation}
Indeed, path decomposition by the number of excursions from \(W\) through
\(S\) gives
\[
 \sum_{n=0}^N(\Gc_W(K|_W))^n\Gc_W
 \leq\bigl((I-Q_{S\cup W})^{-1}\bigr)_{W,W}.
\]
Since \(\Gc_W\ge I\), the partial sums are bounded entrywise. For the
nonnegative matrix \(\Gc_W(K|_W)\), the Perron--Frobenius theorem then forces
its spectral radius to be strictly smaller than one, which justifies the
nonnegative logarithmic series.

Expanding the trace, with \(x_{r+1}=x_1\), gives
\[
        \operatorname{Tr}\big((\Gc_W(K|_W))^r\big)d_W
        =\sum_{\mathbf x,\mathbf y\in W^r}
        \left[\prod_{i=1}^rK_{y_i x_{i+1}}\right]
        \left[\prod_{i=1}^r\Gc_W(x_i,y_i)\right]d_W .
\]
Apply Lemma~\ref{lem:multipath-positive}, for the matrix \(Q_B\) on \(B\), to
fixed endpoint pairs \((x_i,y_i)\).  After grouping by \(C\subset W\), the
final sentence in that lemma lets us extend the endpoint sums
from \(W^r\) to \(B^r\), so the resulting coefficients are independent of \(W\).  The partial sums \(1\le r\le N\) have coefficients
\[
        \bnu_S^{(N)}(C):=
        \sum_{r=1}^N\frac1r\sum_{\mathbf x,\mathbf y\in B^r}
        \left[\prod_{i=1}^rK_{y_i x_{i+1}}\right]
        L_{\mathbf x,\mathbf y}(C)\ge0 .
\]
At \(W=B\),
\[
        \sum_{\varnothing\ne C\subset B}\bnu_S^{(N)}(C)d_{B\setminus C}
        \le \bH_S(B)d_B<\infty .
\]
Since all \(d_{B\setminus C}\) are positive, each coefficient is monotone and
bounded in \(N\).  Letting \(N\to\infty\) gives \eqref{eq:nu-expansion} by
monotone convergence.

The support statement follows from the support part of
Lemma~\ref{lem:multipath-positive}.  If a summand is positive, every component
of \(C\) contains some endpoint \(x_i\) or \(y_i\).  The factor
\(K_{y_i x_{i+1}}>0\) means that \(y_i\) and \(x_{i+1}\) are connected to
\(S\) through a positive-weight path entering \(S\), moving inside \(S\), and
leaving \(S\).  Hence every endpoint is attached to \(S\), and so every
component of \(C\) is attached to \(S\).

Finally,
\[
 \one_{\{C\subset W\}}e^{-m_B(C,B\setminus W)}
 =\one_{\{C\subset W\}}
  \frac{d_Bd_{W\setminus C}}{d_{B\setminus C}d_W}.
\]
Dividing \eqref{eq:nu-expansion} by \(d_W\) shows that \(T_1^B\brho_S^1=
\bH_S\) when \(\brho_S^1\) is defined by \eqref{eq:rho-one-positive} and
\(\brho_S^1(\varnothing)=0\).  Since \(T_1^B\) is invertible, this is the
endpoint expansion.
\end{proof}

\subsection{Induction over subgraphs and transport in \texorpdfstring{\(\lambda\)}{lambda}}
\label{subsec:transport}

We use the following standard lemma on homogeneous linear systems.
\begin{lemma}\label{lem:cone-invariance}
Let \(L_t\), \(0\le t\le T\), be a continuous family of matrices indexed by a
finite set \(\Omega\).  Assume
\[
        L_t(\omega,\eta)\ge0\quad(\omega\ne\eta),
        \qquad
        \sum_{\omega\in\Omega}L_t(\omega,\eta)=0 .
\]
Then \(\dot x_t=L_tx_t\) preserves the nonnegative cone and the total mass.
If \(\Omega_0\subset\Omega\) satisfies
\(L_t(\omega,\eta)=0\) for \(\eta\in\Omega_0\), \(\omega\notin\Omega_0\),
then support in \(\Omega_0\) is preserved.
\end{lemma}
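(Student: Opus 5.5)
The plan is to read the system as a time-inhomogeneous continuous-time Markov chain on the finite set \(\Omega\) and use the explicit solution operator. Because \(t\mapsto L_t\) is continuous on the compact interval \([0,T]\), the linear ODE \(\dot x_t=L_tx_t\) has a unique solution for every initial vector, and the propagator \(U(t,s)\) (with \(x_t=U(t,s)x_s\)) depends continuously on \((t,s)\). The argument has three parts, taken in the order below.

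\emph{Total mass.} Summing coordinates gives
\[
\frac{\dd}{\dd t}\one^{\mathsf T}x_t=\one^{\mathsf T}L_tx_t=0,
\]
since the column sums of \(L_t\) vanish. So the total mass is constant.

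\emph{Nonnegativity.} First choose \(\Lambda\ge\sup_{t\le T}\max_\eta|L_t(\eta,\eta)|\), which is finite by continuity. Then \(M_t:=L_t+\Lambda I\) is entrywise nonnegative. Set \(y_t:=e^{\Lambda t}x_t\); it solves \(\dot y_t=M_ty_t\). Write \(y_t\) through the Picard iteration (Dyson series)
\[
y_t=\Bigl(I+\int_0^tM_{s_1}\dd s_1+\int_0^t\!\!\int_0^{s_1}M_{s_1}M_{s_2}\dd s_2\dd s_1+\cdots\Bigr)y_0 .
\]
The series converges absolutely because \(\|M_s\|\) is bounded, and every term is a nonnegative matrix. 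Hence \(y_0\ge0\) implies \(y_t\ge0\), and therefore \(x_t\ge0\).

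An alternative route is the standard Euler product approximation \(\prod_k(I+hL_{t_k})\). For small \(h\) each factor is nonnegative, and it also preserves column sums. Passing to the limit gives \(U(t,s)\), which is then a stochastic matrix.

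\emph{Support.} Suppose \(x_0\) is supported in \(\Omega_0\). By hypothesis \(L_t(\omega,\eta)=0\) whenever \(\eta\in\Omega_0\) and \(\omega\notin\Omega_0\), so \(L_t\) maps vectors supported in \(\Omega_0\) to vectors supported in \(\Omega_0\). The same holds for \(M_t\), since adding \(\Lambda I\) does not change off-diagonal entries. Every iterated product in the Dyson series then maps the subspace \(\{x:\supp x\subset\Omega_0\}\) into itself, and this subspace is closed, so the limit \(y_t\) stays supported in \(\Omega_0\). An equivalent argument: the restriction of the system to \(\Omega_0\) gives a solution, and uniqueness of solutions identifies it with \(x_t\).

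There is no real obstacle here. The only point needing care is convergence of the Dyson series, and that follows from the uniform bound \(\|M_s\|\le C\) on \([0,T]\): the \(n\)-th term is bounded by \((Ct)^n/n!\).
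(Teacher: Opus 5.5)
Your proof is correct. For total mass and for support it is essentially the paper's argument. The paper notes that the coordinates in $\Omega\setminus\Omega_0$ solve a closed homogeneous linear system with zero initial data; your invariant-subspace argument, and your restrict-and-extend-by-zero-plus-uniqueness argument, are the dual form of the same observation.

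For nonnegativity you take a different route:
\begin{itemize}
\item The paper perturbs to $\dot x^\varepsilon_t=L_tx^\varepsilon_t+\varepsilon\one$ with $x^\varepsilon_0=x_0+\varepsilon\one$. At a first contact with the boundary of the cone, the vanishing coordinate has derivative at least $\varepsilon$, using only $L_t(\omega,\eta)\ge0$ off the diagonal. It then removes $\varepsilon$ by Gronwall.
\item You shift by $\Lambda I$ so the generator becomes entrywise nonnegative, and read positivity off the Dyson series.
\end{itemize}

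The barrier argument is local and never constructs the solution operator. It also extends to nonlinear quasimonotone systems, at the cost of a limiting step in $\varepsilon$. Your argument produces an explicit propagator $U(t,s)$ for every $s\le t$. Combined with the zero column sums, it is entrywise nonnegative and column-stochastic, i.e.\ a genuine family of transition matrices. That is what is needed to realize the time-inhomogeneous Markov chain with generator $\mathcal A_{1-\theta}^V$ used in Corollary~\ref{cor:finite-monotone-coupling}.
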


\begin{proof}
For any $\varepsilon>0$, consider
\(x_0^\varepsilon:=x_0+\varepsilon\one\) and
\(\dot x_t^\varepsilon=L_tx_t^\varepsilon+\varepsilon\one\). At the first contact of $(x_t^\varepsilon)$ with the boundary of the nonnegative cone, the vanishing
coordinate has derivative at least $\varepsilon$, so it cannot become negative. By Gronwall's inequality, $x_t^\varepsilon\to x_t$ uniformly on compact time intervals as $\varepsilon\downarrow0$, so $x_t\ge0$. $\sum_{\omega\in\Omega}L_t(\omega,\eta)=0$ gives
\(\frac{\dd}{\dd t}\sum_\omega x_t(\omega)=0\). The last statement follows since the restriction of $(x_t)$ on $\Omega\setminus\Omega_0$ satisfies a
homogeneous linear system with zero initial data.
\end{proof}

For \(E\subset V\), let \(G_E^{\rm und}\) be the underlying undirected
support graph: \(x\sim y\) if \(Q_{xy}>0\) or \(Q_{yx}>0\). All connected
components below are taken in the relevant induced subgraph of this graph.
If \(S\subset E\) and \(C\subset E\setminus S\), call \(C\)
\emph{\(S\)-attached} if every connected component of
\(G_E^{\rm und}[S\cup C]\) contains a vertex of \(S\).

\begin{proposition}[Nonnegativity and support of the generator]\label{prop:positive-generator}
For every finite \(E\subset V\) and every \(0\le\lambda\le1\), the matrix
\(\Ac_\lambda^E\) satisfies:
\begin{enumerate}[label=\textup{(\roman*)},leftmargin=2.2em]
\item \(\Ac_\lambda^E(U,S)=0\) unless \(S\subset U\);
\item each column sum is zero;
\item if \(S\subsetneq U\), then \(\Ac_\lambda^E(U,S)\ge0\);
\item if \(S\ne\varnothing\), \(S\subsetneq U\), and some connected component
of \(G_E^{\rm und}[U]\) is disjoint from \(S\), then
\(\Ac_\lambda^E(U,S)=0\).
\end{enumerate}
\end{proposition}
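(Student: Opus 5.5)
Items (i) and (ii) are already established: (i) is \eqref{eq:A-incidence}, coming from the triangularity of \(T_\lambda^E\) and \(\partial_\lambda T_\lambda^E\) on the Boolean lattice, and (ii) is \eqref{eq:column-sum-zero}. So the work is in (iii) and (iv).

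By Lemma~\ref{lem:column-formula}, for \(S\ne\varnothing\) and \(U=S\cup C\) with \(\varnothing\ne C\subset B=E\setminus S\), we have \(\Ac_\lambda^E(U,S)=\brho_S^\lambda(C)\). For \(S=\varnothing\) the column vanishes. Hence (iii) and (iv) together amount to showing that \(\brho_S^\lambda\ge0\) and that \(\brho_S^\lambda\) is supported on \(S\)-attached sets \(C\). Indeed, if some component of \(G_E^{\rm und}[U]\) misses \(S\), then \(C\) is not \(S\)-attached.

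The proof will go by strong induction on \(|E|\), proving (i)–(iv) for every \(\lambda\in[0,1]\) simultaneously. Fix \(E\) and \(S\ne\varnothing\), so that \(B\subsetneq E\) and the induction hypothesis applies to \(\Ac_\mu^B\) for all \(\mu\in[0,1]\).

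First, differentiate \(\brho_S^\mu=(T_\mu^B)^{-1}\bH_S\) in \(\mu\). Since \(\bH_S\) does not depend on \(\mu\),
\[
\partial_\mu\brho_S^\mu=-(T_\mu^B)^{-1}(\partial_\mu T_\mu^B)(T_\mu^B)^{-1}\bH_S=-\Ac_\mu^B\brho_S^\mu .
\]
Setting \(x_\theta=\brho_S^{1-\theta}\) gives \(\dot x_\theta=\Ac_{1-\theta}^Bx_\theta\), with initial value \(x_0=\brho_S^1\). By Proposition~\ref{prop:endpoint-deconvolution}, \(x_0\ge0\) and \(x_0\) is supported on \(S\)-attached sets, with \(x_0(\varnothing)=0\). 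By the induction hypothesis (i)–(iii) on \(B\), each \(\Ac_\mu^B\) has nonnegative off-diagonal entries and zero column sums. It is continuous in \(\mu\), since \(T_\mu^B\) is polynomial in exponentials and unitriangular, hence invertible with smooth inverse. Lemma~\ref{lem:cone-invariance} then gives \(x_\theta\ge0\) for \(\theta\in[0,1]\), which yields (iii) for \(\Ac_\lambda^E\).

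For the support, let \(\Omega_0\) be the family of \(C\subset B\) that are \(S\)-attached in \(E\); this includes \(\varnothing\). We need \(\Ac_\mu^B(C',C)=0\) whenever \(C\in\Omega_0\) and \(C'\notin\Omega_0\). Using (i) on \(B\), we may take \(C\subsetneq C'\).

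If \(C\ne\varnothing\), then since \(C'\) is not \(S\)-attached, some component \(\Gamma\) of \(G^{\rm und}[S\cup C']\) avoids \(S\). Then \(\Gamma\subset C'\) is a component of \(G_B^{\rm und}[C']\). If \(\Gamma\) met \(C\), then \(\Gamma\) would contain a whole component of \(G^{\rm und}[S\cup C]\), which meets \(S\), a contradiction. So \(\Gamma\) is disjoint from \(C\), and the inductive (iv) on \(B\) gives \(\Ac_\mu^B(C',C)=0\).

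If \(C=\varnothing\), the column is zero, as noted after \eqref{eq:A-incidence}, because \(\brho_\varnothing\) is trivial.

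Lemma~\ref{lem:cone-invariance} then keeps \(x_\theta\) supported in \(\Omega_0\), giving (iv).

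The main obstacle I expect is bookkeeping the base case and the continuity and regularity needed for the ODE. The base case is \(|E|\le1\), which is trivial. Continuity should follow since \(d_R>0\) under \(\rho(Q)<1\). One also has to check that the notion of \(S\)-attachment relative to \(E\) matches that relative to \(B\) in the inductive (iv); the component argument above is designed to handle exactly this point.
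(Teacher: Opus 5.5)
Your proposal is correct and follows the paper's proof essentially step for step. The shared ingredients are strong induction on $|E|$, the column formula $\Ac_\lambda^E(S\cup C,S)=\brho_S^\lambda(C)$, transport of $\brho_S^1\ge0$ along $\frac{\dd}{\dd\theta}\brho_S^{1-\theta}=\Ac_{1-\theta}^B\brho_S^{1-\theta}$ via Lemma~\ref{lem:cone-invariance}, and invariance of the family of $S$-attached sets using (iv) inside $B$; your component argument is a more explicit version of the paper's one-line justification. One small slip: $T_\mu^B$ is not unitriangular, since its diagonal entries are $e^{-\mu m_B(W,B\setminus W)}$, but these are positive and smooth in $\mu$, so the invertibility and continuity of $\Ac_\mu^B$ that you need still hold.
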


\begin{proof}
We argue by induction on \(|E|\).  The case \(E=\varnothing\) is trivial.
Assume the statement for all proper subsets of \(E\), and fix a column
\(S\subset E\).  If \(S=\varnothing\), then
\(m_E(\varnothing,E\setminus W)=0\) for all \(W\), so the empty-set
column of \(T_\lambda^E\) is identically one and this column of
\(\Ac_\lambda^E\) is zero.  We now take \(S\ne\varnothing\) and put
\(B=E\setminus S\).

By Lemma~\ref{lem:column-formula}, the off-diagonal entries in this column
are the coordinates of
\(\brho_S^\lambda=(T_\lambda^B)^{-1}\bH_S\).  Put
\(\br_\theta:=\brho_S^{1-\theta}\).  Since
\(T_{1-\theta}^B\br_\theta=\bH_S\), differentiation gives
\begin{equation}\label{eq:rho-ode}
        \frac{\dd}{\dd\theta}\br_\theta
        =\Ac_{1-\theta}^B\br_\theta,
        \qquad 0\le\theta\le1 .
\end{equation}
Proposition~\ref{prop:endpoint-deconvolution} gives a nonnegative initial
vector \(\br_0=\brho_S^1\), supported on the \(S\)-attached subsets of \(B\).

The induction hypothesis applied to \(B\) says that
\(\Ac_{1-\theta}^B\) has nonnegative off-diagonal entries and zero column
sums.  Lemma~\ref{lem:cone-invariance} applied to \eqref{eq:rho-ode} gives
\(\brho_S^\lambda(C)\ge0\) for every \(C\subset B\).

It remains only to keep track of support.  Let
\[
        \mathcal C_S:=\{C\subset B:C\text{ is }S\text{-attached in }
        G_E^{\rm und}\} .
\]
The case \(C=\varnothing\) has no outgoing transitions, because the
\(\varnothing\)-column of \(\Ac_{1-\theta}^B\) is zero.  If
\(C\in\mathcal C_S\), \(C\ne\varnothing\), and
\(\Ac_{1-\theta}^B(U,C)>0\) with \(C\subsetneq U\subset B\), the induction
hypothesis inside \(B\) implies that every component of
\(G_B^{\rm und}[U]\) meets \(C\).  Since every component of \(C\) is attached
to \(S\) in \(G_E^{\rm und}\), every component of \(U\) is also attached to
\(S\).  Thus no positive transition leaves \(\mathcal C_S\), and
Lemma~\ref{lem:cone-invariance} preserves this support along
\eqref{eq:rho-ode}.

The incidence property and the column-sum identity were proved in
\eqref{eq:A-incidence} and \eqref{eq:column-sum-zero}.  The off-diagonal
entries and the support statement now follow from
\(\Ac_\lambda^E(S\cup C,S)=\brho_S^\lambda(C)\) in
Lemma~\ref{lem:column-formula}.
\end{proof}

\begin{proof}[{Proof of Theorem~\ref{thm:finite-positivity}, existence for \(0\le\lambda\le1\)}]
At \(\lambda=1\), the solution \(\bp_1\) is the nonnegative probability
measure of the range of LERW.  Let \(\bq_\theta\), \(0\le\theta\le1\), solve
\begin{equation}\label{eq:p-ode}
        \frac{\dd}{\dd\theta}\bq_\theta
        =\Ac_{1-\theta}^V\bq_\theta,
        \qquad
        \bq_0=\bp_1 .
\end{equation}
By Proposition~\ref{prop:positive-generator} and Lemma~\ref{lem:cone-invariance},
\(\bq_\theta\) remains a probability vector.

Moreover,
\[
\begin{aligned}
        \frac{\dd}{\dd\theta}\big(T_{1-\theta}^V\bq_\theta\big)
        &=-(\partial_\lambda T_\lambda^V)|_{\lambda=1-\theta}\bq_\theta
          +T_{1-\theta}^V\Ac_{1-\theta}^V\bq_\theta  \\
        &=0,
\end{aligned}
\]
because \(T_\lambda^V\Ac_\lambda^V=\partial_\lambda T_\lambda^V\).  Hence
\(T_{1-\theta}^V\bq_\theta=T_1^V\bp_1=\bh\).  The triangular matrix
\(T_{1-\theta}^V\) has positive diagonal, so the solution is unique:
\[
        \bq_\theta=(T_{1-\theta}^V)^{-1}\bh=\bp_{1-\theta} .
\]
This proves \(\bp_\lambda\ge0\) for \(0\le\lambda\le1\).  The total mass is
one because, in the row \(R=V\), all entries of \(T_\lambda^V\) are one and
\(\bh(V)=1\).
\end{proof}

The connectedness of the support of $\bp_\lambda$ can be also obtained by the linear system approach.
Call \(S\subset V\) connected if the induced graph
\(G_V^{\rm und}[S]\) is connected.  \begin{corollary}\label{cor:connected-support}
For every \(0\le\lambda\le1\),
\(\bp_\lambda\) is supported on connected sets.
\end{corollary}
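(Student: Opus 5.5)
We will run the same transport argument used to prove Theorem~\ref{thm:finite-positivity}, this time tracking supports. The set to preserve is
\[
\Omega_0:=\{S\subset V:\ S=\varnothing\ \text{or}\ G_V^{\rm und}[S]\ \text{is connected}\}.
\]
The endpoint law \(\bp_1\) is the law of the LERW range. An LERW trace is a nearest-neighbour self-avoiding path in the support graph, so it is connected, and hence \(\bp_1\) is supported in \(\Omega_0\). (The empty set does not occur, since \(Z_\varnothing=0\), but including it in \(\Omega_0\) costs nothing.) By the proof above, \(\bp_{1-\theta}=\bq_\theta\) solves \eqref{eq:p-ode} with generator \(\Ac_{1-\theta}^V\). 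By the last assertion of Lemma~\ref{lem:cone-invariance}, it therefore suffices to show that \(\Ac_\lambda^V(U,S)=0\) whenever \(S\in\Omega_0\) and \(U\notin\Omega_0\), for every \(\lambda\in[0,1]\).

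To check this vanishing condition, we use Proposition~\ref{prop:positive-generator}. Fix \(S\in\Omega_0\) and \(U\notin\Omega_0\).

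\emph{Case \(S\not\subset U\).} The entry vanishes by item (i).

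\emph{Case \(S=\varnothing\).} The \(\varnothing\)-column is identically zero, as shown in that proof.

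\emph{Case \(S\) nonempty and connected, \(S\subsetneq U\).} Since \(U\notin\Omega_0\), the graph \(G_V^{\rm und}[U]\) has at least two components. Because \(S\) is connected, it lies entirely in one of them, so some other component of \(U\) is disjoint from \(S\). Item (iv) then gives \(\Ac_\lambda^V(U,S)=0\).

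\emph{Case \(U=S\).} This cannot happen, since \(S\in\Omega_0\) and \(U\notin\Omega_0\).

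The main step is the observation that a connected \(S\) sits inside a single component of \(U\), which is what lets item (iv) apply. Once that is in place, the argument is bookkeeping. The one point to double-check is that Lemma~\ref{lem:cone-invariance} needs its hypotheses only for continuous families, and continuity in \(\theta\) of \(\Ac_{1-\theta}^V\) was already used in the existence proof.

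This settles all \(\lambda\in[0,1]\). We have \(\bq_0=\bp_1\), supported in \(\Omega_0\), and \(\bq_\theta=\bp_{1-\theta}\) remains supported in \(\Omega_0\) for every \(\theta\in[0,1]\). In particular, the \(\lambda=0\) endpoint (the random-walk trace, which is clearly connected) is covered, and no separate argument is needed for it.
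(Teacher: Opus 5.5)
Your proposal is correct and follows essentially the paper's argument. You start from the connected LERW range at $\lambda=1$, transport along \eqref{eq:p-ode}, and use item (iv) of Proposition~\ref{prop:positive-generator} with the support clause of Lemma~\ref{lem:cone-invariance} to rule out positive jumps from a connected set to a disconnected one; your case split, including the zero $\varnothing$-column, spells out the paper's one-line observation.
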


\begin{proof}
At \(\lambda=1\), this follows from the construction of LERW. Under~\eqref{eq:p-ode},
Proposition~\ref{prop:positive-generator} allows only
positive jumps \(S\to U\) for which every component of
\(G_V^{\rm und}[U]\) meets \(S\); hence a connected set can jump
only to a connected set.  The support part
of Lemma~\ref{lem:cone-invariance} therefore preserves connectedness for all
\(0\le\lambda\le1\).
\end{proof}

The pure-birth operator $\Ac_\lambda^V$ further gives the following monotone coupling of random-walk loop-catchers for different $\lambda$, which is not straightforward from the construction in Algorithm~\ref{alg:time-catcher}. 
\begin{corollary}\label{cor:finite-monotone-coupling}
The laws \((\bp_\lambda)_{0\le\lambda\le1}\) admit a simultaneous
monotone coupling. Namely, there exist random subsets
\((S_\lambda)_{0\le\lambda\le1}\) defined on a common probability space
such that the marginal law of $S_\lambda$ is $\bp_\lambda$ for all $0\le\lambda\le 1$, and $S_{\lambda'}\subset S_\lambda$ simultaneously for all
\(0\le\lambda\le\lambda'\le1\).
\end{corollary}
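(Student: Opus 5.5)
The plan is to realize the family \((\bp_\lambda)_{0\le\lambda\le1}\) as the one-dimensional marginals of a single time-inhomogeneous pure-birth Markov process on \(2^V\), run in the reversed parameter \(\theta=1-\lambda\). By Proposition~\ref{prop:positive-generator}, for each \(\lambda\in[0,1]\) the matrix \(\Ac_\lambda^V\) has three properties: nonnegative off-diagonal entries, zero column sums, and entries \(\Ac_\lambda^V(U,S)\) that vanish unless \(S\subset U\). So \(L_\theta:=\Ac_{1-\theta}^V\) (acting on column vectors, with \(L_\theta(U,S)\) the jump rate from \(S\) to \(U\)) is a generator whose only jumps go from a set to a strict superset.

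First, I would check that the rates are continuous (indeed smooth) in \(\theta\). This holds because \(T_\lambda^V\) is triangular with positive diagonal, and its entries are smooth in \(\lambda\); hence \((T_\lambda^V)^{-1}\partial_\lambda T_\lambda^V\) is smooth on \([0,1]\) and bounded. The rates are bounded on a finite state space, so standard theory gives a time-inhomogeneous Markov process \((Y_\theta)_{0\le\theta\le1}\). One construction is by uniformization with a Poisson clock of rate \(\sup_\theta\max_S|L_\theta(S,S)|\), or by thinning. Its transition kernels \(P_{s,t}\) solve the forward equation \(\partial_tP_{s,t}=L_tP_{s,t}\) with \(P_{s,s}=I\).

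Second, I would start the process from \(Y_0\sim\bp_1\). The marginal law \(x_\theta:=\mathrm{Law}(Y_\theta)\) then solves \(\dot x_\theta=L_\theta x_\theta\) with \(x_0=\bp_1\). This is exactly the system \eqref{eq:p-ode}, and in the proof of Theorem~\ref{thm:finite-positivity} via the appendix it was shown that its unique solution is \(\bq_\theta=\bp_{1-\theta}\). Uniqueness holds because this is a linear ODE with continuous coefficients. So setting \(S_\lambda:=Y_{1-\lambda}\) gives \(S_\lambda\sim\bp_\lambda\) for every \(\lambda\in[0,1]\) simultaneously, all defined on one probability space.

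Third, monotonicity follows from the pure-birth property. Almost surely every jump of \(Y\) goes from \(S\) to some \(U\supsetneq S\), so \(\theta\mapsto Y_\theta\) is nondecreasing for inclusion. Thus for \(\lambda\le\lambda'\) we have \(1-\lambda'\le1-\lambda\), and hence \(S_{\lambda'}=Y_{1-\lambda'}\subset Y_{1-\lambda}=S_\lambda\). This holds for all pairs at once, because it is a pathwise property of a single càdlàg path.

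The only mildly delicate step is the existence of the inhomogeneous process with the prescribed forward equation. I would handle it by explicit uniformization. Let \(\Lambda\) bound the total exit rates. Run a Poisson process of rate \(\Lambda\), and at an arrival time \(\theta\) move from \(S\) to \(U\neq S\) with probability \(L_\theta(U,S)/\Lambda\), staying put otherwise. These probabilities are valid by the zero-column-sum and nonnegativity properties, and a direct computation verifies the forward equation. Since all nonzero moves are to strict supersets and \(|V|<\infty\), there are at most \(|V|\) genuine jumps.
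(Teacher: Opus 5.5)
Your proposal is correct and matches the paper's proof: run the time-inhomogeneous pure-birth chain with generator $\Ac_{1-\theta}^V$ from initial law $\bp_1$, identify its marginals with $\bp_{1-\theta}$ through the forward equation \eqref{eq:p-ode} and its unique solution, and read off monotonicity from the fact that every jump goes to a superset. Your smoothness check and explicit uniformization construction fill in the existence details of the inhomogeneous chain, which the paper takes for granted.
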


\begin{proof}
Let \(X_\theta\) be the time-inhomogeneous finite-state chain
with generator \(\Ac_{1-\theta}^V\) and initial law \(\bp_1\).  Its forward
equation is \eqref{eq:p-ode}, so
\(\operatorname{Law}(X_\theta)=\bp_{1-\theta}\). Since every jump of \(X_\theta\) has
the form \(S\to U\) with \(S\subset U\), its sample paths are increasing in
\(\theta\). Setting \(S_\lambda=X_{1-\lambda}\) gives the result.
\end{proof}

\footnotesize{
\newcommand{\etalchar}[1]{$^{#1}$}
\def\cprime{$'$}

}

\bigskip

\noindent
\textsc{Beijing International Center for Mathematical Research, Peking University,\\
No.5 Yiheyuan Road, Haidian District, Beijing 100871, P.R.China}\\
\textit{Email address:} \texttt{caigefei1917@pku.edu.cn}

\end{document}